\documentclass[10pt]{elsarticle-ppn}

\makeatletter
\let\c@author\relax
\makeatother

\usepackage{fancyhdr}
\usepackage{comment}
\usepackage{marginnote}
\usepackage{booktabs}
\usepackage{multirow}
\usepackage[scr=rsfs,cal=boondox]{mathalfa}
\usepackage{soul}

\usepackage[backend=biber, citestyle=numeric-comp, bibstyle=trad-abbrv, url=false, doi=false, isbn=false]{biblatex}
\setlength\bibitemsep{0\itemsep}

\addbibresource{bib.bib}

\makeatletter
\def\namedlabel#1#2{\begingroup
    #2%
    \def\@currentlabel{#2}%
    \phantomsection\label{#1}\endgroup
}
\makeatother

\usepackage{amsmath}
\usepackage{mathtools}
\usepackage{amsfonts}
\usepackage{amsthm}
\usepackage{amssymb}
\usepackage{color}
\usepackage{dsfont}
\usepackage{geometry}
\usepackage{todonotes}
\usepackage{yhmath}
\usepackage[normalem]{ulem}
\newgeometry{tmargin=2.8cm, bmargin=3.5cm, lmargin=1.7cm, rmargin=1.7cm}

\usepackage{mathrsfs}
\usepackage{dutchcal}
\usepackage{hyperref}

\newcommand{\R}{{\mathbb{R}}}
\newcommand{\rn}{{\mathbb{R}^N}}

\newcommand{\rp}{{[0,\infty)}}

\newcommand{\N}{{\mathbb{N}}}

\newcommand{\wt}{\widetilde}
\newcommand{\wh}{\widehat}
\newcommand{\vp}{{\varphi}}
\newcommand{\vr}{{\varrho}}
\newcommand{\Wphi}{W^{1, 1}_{\vp}}
\newcommand{\Sphere}{{\mathbb{S}}}

\newcommand{\mres}{\mathbin{\vrule height 1.6ex depth 0pt width
0.13ex\vrule height 0.13ex depth 0pt width 1.3ex}}
\newcommand{\ve}{{\varepsilon}}
\newcommand{\vt}{{\vartheta}}
\newcommand{\vk}{{\varkappa}}
\newcommand{\mv}{{\mathsf{v}}}

\newcommand{\mw}{{\mathsf{w}}}

\newcommand{\Gu}{{\mathrm{Graph}_u}}
\newcommand{\Gr}{{\mathrm{Graph}}}
\newcommand{\constK}{K}
\def\d{{\,{\rm d}}}
\newcommand{\dx}{{\,{\rm d}x}}
\newcommand{\dy}{{\,{\rm d}y}}
\newcommand{\dt}{{\,{\rm d}t}}

\newcommand{\dhf}{{\,{\rm d}\wh{f}}}
\newcommand{\dho}{{\,{\rm d}\wh{1}}}

\newcommand{\unp}{{{u}_i^{s,\vp}}}

\newcommand{\bun}{{\overline{u}_n^s}}

\newcommand{\bunp}{{\overline{u}_n^{s,\vp}}}

\newcommand{\bunpj}{{\overline{u}_n^{\vp,j}}}
\newcommand{\odel}{\overline{\delta}}
\newcommand{\Pnd}{{\Phi_{n,\odel}}}

\newcommand{\cE}{{\mathscr{E}}}

\newcommand{\cEp}{{\mathscr{E}_\vp}} 
\newcommand{\cC}{{\mathcal{C}}}

\newcommand{\wcC}{\wt{\cC}_\delta}
\newcommand{\cH}{{\mathscr{H}}}
\newcommand{\ca}{{\mathcal{a}}}
\newcommand{\dxodt}{{\cH^{N+1}}}

\newcommand{{\SP}}{{\mathcal{Z}}}

\DeclareMathOperator{\supp}{supp}

\DeclareMathOperator{\diam}{diam}

\DeclareMathOperator*{\esssup}{ess\,sup}
\DeclareMathOperator*{\essinf}{ess\,inf}

\usepackage{refcount}

\newcounter{cte}

\setcounter{cte}{0}

\definecolor{darkgreen}{rgb}{0.00, 0.50, 0.00}

\theoremstyle{plain}
\newtheorem{proposition}{Proposition}[section]
\newtheorem{lemma}[proposition]{Lemma}
\newtheorem{theorem}{Theorem}
\newtheorem{corollary}[proposition]{Corollary}
 
\newtheorem{remark}[proposition]{Remark}

\theoremstyle{definition}
  
\newtheorem{example}{Example}[section]


\numberwithin{equation}{section}


\allowdisplaybreaks

\begin{document}
\begin{frontmatter}
    
\title{Discarding  Lavrentiev's Gap\\ in  
Non-autonomous and~Non-Convex Variational Problems  
}

\author[1]{Michał Borowski}\ead{m.borowski@mimuw.edu.pl}
\author[2]{Pierre Bousquet}\ead{pierre.bousquet@math.univ-toulouse.fr}
\author[1]{Iwona Chlebicka\corref{mycorrespondingauthor}}
\cortext[mycorrespondingauthor]{Corresponding author}\ead{i.chlebicka@mimuw.edu.pl} 
\author[3]{Benjamin Lledos}\ead{benjamin.lledos@unimes.fr}
\author[1]{Błażej Miasojedow}\ead{b.miasojedow@mimuw.edu.pl}

\address[1]{Institute of Applied Mathematics and Mechanics,
University of Warsaw, ul. Banacha 2, 02-097 Warsaw, Poland
}
\address[2]{Institut de Mathématiques de Toulouse, UMR 5219, Université de Toulouse, CNRS,
UPS, F-31062 Toulouse Cedex 9, France
}
\address[3]{Laboratoire MIPA, Université de Nîmes, Site des Carmes, Place Gabriel P\'eri, 30000 Nîmes, France
}

\fntext[myfootnote]{MSC2020: 49J45 (46E30,46E40,46A80)}
\fntext[myfootnote]{M.B. and I.C. are supported by NCN grant 2019/34/E/ST1/00120. The authors express gratitude to Initiative of Excellence at University of Warsaw IDUB that enabled several fruitful meetings and discussions.}

\begin{abstract} 
We establish that the Lavrentiev gap between Sobolev and Lipschitz maps does not occur for a  scalar variational problem of the form:
\[
\textrm{to minimize}  \qquad   u \mapsto \int_\Omega f(x,u,\nabla u)\dx \,,
\]
under a Dirichlet boundary condition. Here, \(\Omega\) is a bounded Lipschitz open set in \(\rn\), \(N\geq 1\) and the function $f$ is required to be measurable with respect to the spatial variable, continuous with respect to the second one, and  convex with respect to the last variable. Under these assumptions alone, Lavrentiev gaps may occur, as illustrated by classical examples in the literature. We identify an additional natural condition on $f$ to discard such phenomena, that can be interpreted as a balance between the variations with respect to the first variable and the growth with respect to the last one. This unifies most of the structural assumptions that have been introduced so far to prevent the occurence of Lavrentiev gaps.
Remarkably, typical assumptions that are usually imposed on $f$ in this setting are dropped here: we do not require $f$ to be bounded or convex with respect to the second variable, nor impose any condition of $\Delta_2$-kind with respect to the last variable.
\end{abstract}

\end{frontmatter}
{\small\tableofcontents}

\section{Introduction}
 We are interested in excluding the Lavrentiev phenomenon, which occurs when the infimum of a variational problem over a family of regular functions is strictly greater than the infimum taken over all functions satisfying the same boundary conditions. Understanding the phenomenon in the multidimensional calculus of variations is of significant importance due to its implications not only in mathematical analysis but also in various applied fields, such as physics, engineering, and materials science, where optimization problems frequently arise, cf.~\cite{numerics1,numerics2, IC-pocket, Buttazzo-Mizel, DeATr, Buttazzo-Belloni, MinRad} and references therein. By its very nature, excluding the Lavrentiev phenomenon is closely related to the approximation theory~\cite{AmGoSe,Bor-Chl,C-b,Dal-Maso,Gossez}. Moreover, it is intrinsically connected to the regularity theory in the Calculus of Variations. Indeed, for a Sobolev minimizer to be Lipschitz continuous,  the absence of the Lavrentiev phenomenon is clearly a necessary condition. It turns out that this is also a sufficient condition for the regularity of minimizers for a large family of functionals~\cite{BaaBy, eslemi, comi, colmar, HaOk1, HaOk2, bacomi-cv}.
 
Lavrentiev~\cite{Lav} presented the first example of a one-dimensional variational problem for which the infimum on the set of absolutely continuous functions differs from the infimum on the set of Lipschitz functions. This example was later simplified by Mani\`a~\cite{Mania}, who considered the following problem:  
\[
I_M : u\in W^{1,1}(0,1)\mapsto \int_{0}^{1}(u(x)^3-x)^2 u'(x)^6\,\d x,
\]
where the admissible functions \(u\) are subject to the boundary conditions \(u(0)=0\) and \(u(1)=1\). Then, one has 
\begin{equation}\label{eq-Mania}
\inf_{\substack{u\in W^{1,1}(0,1),\\
u(0)=0, u(1)=1}} I_M(u) < \inf_{\substack{u\in W^{1,\infty}(0,1),\\ u(0)=0, u(1)=1}} I_M(u)\,.
\end{equation}
This example demonstrates that the Lavrentiev phenomenon is not related to the smoothness of the integrand, as it can occur even when the latter is a polynomial. The example also shows that convexity with respect to the last variable does not discard the Lavrentiev phenomenon. In fact, it persists for modified versions of the above problem where the integrand is uniformly convex in the last variable~\cite{BallMizel}. 
One may wonder whether an even more elementary example exists, where the Lagrangian would depend only on \((x, u')\) or \((u,u')\). In the one-dimensional case, the answer is negative, see~\cite{Lav, AlSeCa, Cerf-Mariconda}. The three variables \((x, u, u')\) have to be summoned for the  Lavrentiev phenomenon to occur.

In the higher-dimensional case, we study the Lavrentiev gap between Sobolev and Lipschitz maps for a general scalar functional of the form
\begin{equation}\label{def-E-of-u}
    E_\Omega: u \in W^{1,1}(\Omega)\mapsto \int_\Omega f(x,u,\nabla u)\dx \,,
\end{equation}
where \(\Omega\) is a bounded Lipschitz open set in \(\rn\), \(N\geq 1\).
Our standing assumptions require that \(f\) be a nonnegative Carathéodory map which is convex with respect to the last variable. For integrands  \(f\) depending just on \((x,\nabla u)\) or just on \((u, \nabla u)\), one may be tempted to conjecture for the case \(N\geq 2\) the non-occurrence of the Lavrentiev phenomenon established when \(N=1\). It turns out that the situation is much more complex in the higher-dimensional setting. For autonomous Lagrangians, namely those which do not depend on the spatial variable \(x\), this conjecture is correct. There is no Lavrentiev phenomenon in that case, as in the one-dimensional situation~\cite{Bousquet-Pisa}. On the contrary, when \(f\) depends just on \((x,\nabla u)\), Lavrentiev phenomena have been detected for many functionals~\cite{zh, eslemi, badi, bcdfm}.

Let us conjure the functional which has attracted notable attention in the past few years~\cite{Bor, bacomi-cv, comi, colmar, bcdfm, bgs-arma}, the double phase functional:
\begin{equation}\label{eq228}
u\mapsto \int_{\Omega}|\nabla u|^p + a(x)|\nabla u|^q\,\d x\,,
\end{equation}
where \(1<p<q\) and \(a\) is a  nonnegative and bounded function
. A considerable amount of work has been achieved to obtain sharp conditions on \(p, q\) and the  {behaviour}  of \(a\) that prevent the Lavrentiev phenomenon~\cite{eslemi, badi, bcdfm, Bor, bgs-arma}. Significant effort has been devoted to extending these conditions to more general Lagrangians \(f\), which still depend only on \(x\) and \(\nabla u\). As many examples suggest, the harsh changes of growth of the Lagrangian \(f(x,\nabla u)\) with respect to \(\nabla u\) when \(x\) varies are responsible for the Lavrentiev phenomenon.
 In the case of the double phase functional \eqref{eq228}, the Lagrangian has a \(p\) growth (in terms of \(\nabla u\)) at the points \(x\) where \(a\) vanishes, whereas it has a \(q\) growth where \(a\) is strictly positive. If the variations of \(a\) are not suitably small, in terms of the difference \(q-p\) or the ratio \(q/p\), then the Lavrentiev phenomenon may occur.
The competition of several regimes of \(\nabla u\)-growth associated with several subregions of \(\Omega\) is acknowledged as the only source of Lavrentiev phenomena.  From a technical point of view, however, most of the articles discarding the Lavrentiev phenomenon have found it convenient to introduce additional growth restrictions in the \(\nabla u\) variable, such as the \(\Delta_2\) condition. For later use, let us define this condition for a general function \(f:\Omega\times \R\times \rn\to \R\): There exists \(C>0\) such that for every \((x, t, \xi)\in \Omega\times \R\times  \rn\), one has
\begin{equation}\label{eqDELTA2-intro}
f(x, t, 2\xi)\leq C(f(x, t, \xi)+1).
\end{equation}
We then write \(f(x,t,\cdot)\in \Delta_2\).
Polynomials do satisfy this assumption, which, on the contrary, fail for integrands involving the exponential map. In this paper, our main result does not require such growth conditions.

For more general Lagrangians where the \(u\) dependence comes into play, it is \emph{a priori} unclear whether this competition between different \(\nabla u\)-growth regimes is solely responsible for the Lavrentiev phenomenon. Let us recall that in the one-dimensional case, the Lavrentiev phenomenon can occur only when \(f\) depends on the three variables, emphasizing the role of \(u\) in producing the gaps between the two infima, compare~\eqref{eq-Mania}. This suggests that in the higher dimensional case, new phenomena could arise when passing from Lagrangians depending just on \((x, \nabla u)\) to Lagrangians fully depending on the three variables \((x,u,\nabla u)\). 

The main goal of this article is to address this possibility. Roughly speaking, our main results state that the spatial variations of the growth with respect to \(u\) \emph{and} with respect to \(\nabla u\) should now be taken into account. This does not mean, however, that those variables \(u\) and \(\nabla u\) have to be considered on an equal footing, for otherwise unnecessary restrictions would be artificially introduced on the \(u\)-dependence. Without entering into too many details at this stage, let us mention two important aspects:
\begin{itemize}
\item The summability of \(W^{1,p}\) functions is (much) higher than the summability of their gradients, as a consequence of the Sobolev embeddings. It is thus expectable that the conditions involving the summability exponents on \(u\) and \(\nabla u\) should not be the same.
\item More importantly, \(f\) is not assumed to be convex with respect to \((u,\nabla u)\) but just with respect to \(\nabla u\).  
\end{itemize} 
Regarding the second point, we stress the fact that the convexity assumption just with respect to \(\nabla u\) is the natural condition to impose in the general framework of the direct method of existence in the Calculus of Variations. Indeed,  this assumption is essentially equivalent to the weak lower semicontinuity of the functional. In contrast, requiring the convexity of \(f\) jointly with respect to \(u\) and \(\nabla u\) is unnecessarily restrictive in this regard.

In fact, the study of the Lavrentiev phenomenon can be placed within a broader framework.
The weak sequential lower semicontinuity of the functional \(E_\Omega\) is equivalent to the identity
\[
E_\Omega(u)=
\inf\left\lbrace \liminf_{n\to +\infty} E_\Omega(u_n) : (u_n)_{n\geq 1}\subset W^{1,1}(\Omega), u_n\rightharpoonup_{W^{1,1}} u \right\rbrace.
\]
The corresponding relaxed functional on \(W^{1,\infty}(\Omega)\) is instead defined as
\[
E^{\textrm{rel}}_{\Omega}(u):=\inf\left\lbrace\liminf_{n\to +\infty} E_\Omega(u_n) : (u_n)_{n\geq 1}\subset W^{1,\infty}(\Omega), u_n\rightharpoonup_{W^{1,1}} u\right\rbrace.
\]
The problem of the integral representation of \(E^{\textrm{rel}}_{\Omega}(u)\) amounts to the existence of some function \(g:\Omega\times \R\times \rn\to \R\) such that
\begin{equation}\label{eq253}
E^{\textrm{rel}}_{\Omega}(u) = \int_{\Omega}g(x,u(x),\nabla u(x))\,dx.
\end{equation}
One may expect that \(g=f\) under suitable conditions on \(f\). As a consequence of our main results, we can assert that the assumptions on \(f\) which discard the Lavrentiev phenomenon are sufficient to establish \eqref{eq253} with \(g=f\). Actually, we have an even stronger conclusion: there is no \emph{Lavrentiev gap}, in the sense that 
for every \(u\in W^{1,1}(\Omega)\), there exists a sequence \((u_n)_{n\geq 1}\subset W^{1,\infty}(\Omega)\) which strongly converges  to \(u\) in \(W^{1,1}(\Omega)\) and such that \(\lim_{n\to +\infty}E_\Omega (u_n)=E_{\Omega}(u)\). Moreover, we can ensure that each \(u_n\) agrees with \(u\) on \(\partial \Omega\) (provided of course that the trace of \(u\) on \(\partial \Omega\) is Lipschitz continuous).  It is   expected, as the absence of the Lavrentiev phenomenon is typically inferred by excluding the Lavrentiev gap, see \cite{AmGoSe,C-b,bgs-arma,Bor-Chl,BCM,hahab,Lukas23}. \newline

\noindent{\bf Main results.} In order to present our framework, let us settle some more notation. We consider the space $W^{1,1}_\vp (\Omega)$ of those functions $u\in W^{1,1}(\Omega)$ which coincide with $\vp$ on $\partial\Omega$ and the
subset $W^{1,\infty}_\vp(\Omega)$ of those Lipschitz continuous functions \(u\) on $\Omega$ which agree with  $\vp$ on $\partial\Omega$. 
Let us define 
\begin{equation}\label{eq:spaceE}
\cE(\Omega) =\left\{u\in W^{1,1}(\Omega)\colon\ E_\Omega(u)<\infty\right\}\qquad\text{and}\qquad \cEp(\Omega) =\left\{u\in \Wphi(\Omega)\colon\ E_\Omega(u)<\infty\right\}\,.
\end{equation}
Our main results state the absence of the Lavrentiev gap between Lipschitz functions and the energy space of the functional $E_\Omega$, defined in~\eqref{def-E-of-u}, under a Dirichlet boundary condition formulated in terms of a Lipschitz function $\vp$. 
As already mentioned, this requires a condition on \(f\) which controls the growth in the last two variables \(u\) and \(\nabla u\) when the spatial variable \(x\) varies. Since this condition, in its more general unifying form (see \eqref{eqHZconv} below), looks quite uninterpretable at first glance, we prefer to begin with some specific situations where our main result can be formulated in more friendly-looking terms. Let us emphasize, however, that the following statements are already interesting in their own right: they extend and simplify many related theorems on the non-occurrence of the Lavrentiev gap.
 
In the first simplified situation that we address, we assume that $f$ is isotropic, i.e., $f(x,t,\xi)=f(x,t,|\xi|)$, where \(|\xi|\) is the Euclidean norm of \(\xi\). In that case, the condition connecting the $(t,\xi)$-growth of $f$ with the spatial variable $x$ requires that:\\
\noindent For every \(L=(L_1,L_2)\in (0,\infty)^2\), there exists a constant \(C_{L}>0\)  such that  for a.e. $x,y\in\Omega$ and every \((t,\xi)\in [-L_1,L_1]\times \rn\), for every \(\ve>0\), it holds
\begin{equation}\label{eqHZ-iso-easy}\tag{$H^{\rm iso}_0$}
 |x - y| < \ve\ \text{ and }\ |\xi| \leq L_2\ve^{-\min\big(1, \frac{N}{p}\big)}  \Longrightarrow 
f(x,t,|\xi|) \leq C_{L}  \Big(
f(y,t,|\xi|)+1\Big)\,.
\end{equation} 
Hence, if \(x\) and \(y\) are not too far apart and \(\xi\) is not too large, then \(f(x,t,\xi)\) and \(f(y,t, \xi)\) are comparable. Observe that the distance between \(x\) and \(y\) on the one hand, and the Euclidean norm of \(\xi\) on the other hand, are controlled with the same parameter \(\ve\). In particular, we do not require that the right-hand side holds for any \(\xi\), but just for those which are suitably small with respect to \(|x-y|\). To the best of our knowledge, this type of condition was first introduced by Zhikov in his seminal paper \cite{zh}  on the double phase problem, compare~\eqref{eq228}.
It should be emphasized that the second variable \(t\) and the third variable \(\xi\) are not on equal terms in \eqref{eqHZ-iso-easy}. More specifically, once \(L\) is fixed, the variable \(t\) is confined to a bounded interval, and the large values of \(t\) are thus ignored, whatever the choice of \(\ve\) may be. For isotropic Lagrangians, \eqref{eqHZ-iso-easy} is enough to discard any Lavrentiev gap. This is the content of the following statement, which can be deduced from our main result Theorem~\ref{th-main-conv}:

\begin{theorem}\label{theo:iso}
    Let \(\Omega\) be a bounded Lipschitz open set in \(\rn\), \(\vp:\rn\to \R\) be Lipschitz continuous, $p\geq 1$, and \(f:\Omega \times \R\times \rp\to \rp\) be a Carath\'eodory function, which is convex with respect to the last variable and satisfies $f(x,t,0)=0$ for a.e. $x\in\Omega$ and all $t\in\R$. 
Suppose that \eqref{eqHZ-iso-easy} holds.
Consider the functional $E_\Omega$  defined  in~\eqref{def-E-of-u}. Then, for every \(u\in \cEp(\Omega)\cap W^{1, p}(\Omega)\), there exists a sequence \((u_{n})_{n\in\N}\subset W^{1,\infty}_\vp(\Omega)\) such that  $u_n \to u$ in $W^{1, p}(\Omega)$ as $n \to \infty$ and
\begin{equation*}%
\lim_{n\to \infty}E_\Omega(u_n)=E_\Omega(u) \,.
\end{equation*}
\end{theorem}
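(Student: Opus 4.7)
The overall strategy is the classical \emph{truncate-then-mollify} scheme, carefully calibrating the mollification scale to the Zhikov-type condition \eqref{eqHZ-iso-easy}.

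First, I would reduce to the case where $u$ is bounded. Fix $L_1>\norm{\vp}_{L^\infty(\Omega)}$ and set $u^{L_1}:=\max(-L_1,\min(L_1,u))$. Since $|\vp|\le L_1$ on $\partial\Omega$, the truncation preserves the boundary trace, and on $\{|u|>L_1\}$ one has $\nabla u^{L_1}=0$ together with $f(x,\pm L_1,0)=0$ by the standing hypothesis. Monotone convergence gives $E_\Omega(u^{L_1})\to E_\Omega(u)$, and $u^{L_1}\to u$ in $W^{1,p}(\Omega)$. By a diagonal argument it suffices to approximate bounded functions~$u$.

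Assuming now $\|u\|_{L^\infty}\le L_1$, I extend $u$ to $\rn$ by setting it equal to $\vp$ outside $\Omega$ (legitimate since $u-\vp\in W^{1,p}_0(\Omega)$), and denote the extension again by~$u$. For $\rho>0$ small, let $v_\rho:=u\star\eta_\rho$ with a standard radial mollifier $\eta_\rho$; $v_\rho$ is smooth and uniformly bounded. To restore the boundary data, pick a cut-off $\theta_\rho$ with $\theta_\rho=1$ on $\Omega_{2\rho}$, $\theta_\rho=0$ outside $\Omega_\rho$, $|\nabla\theta_\rho|\lesssim \rho^{-1}$, and define $u_\rho:=\theta_\rho v_\rho+(1-\theta_\rho)\vp$. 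Then $u_\rho\in W^{1,\infty}_\vp(\Omega)$ and $u_\rho\to u$ in $W^{1,p}(\Omega)$. The heart of the matter is $E_\Omega(u_\rho)\to E_\Omega(u)$. On $\Omega_{2\rho}$, where $u_\rho=v_\rho$, convexity of $f$ in the last variable and Jensen's inequality yield
\begin{equation*}
f\bigl(x,v_\rho(x),\nabla v_\rho(x)\bigr) \;\le\; \int_{B_\rho}\eta_\rho(z)\,f\bigl(x,v_\rho(x),\nabla u(x-z)\bigr)\,\d z.
\end{equation*}
Splitting the domain of integration according to whether $|\nabla u(x-z)|\le L_2\rho^{-\min(1,N/p)}$ or not, on the \emph{good} set condition \eqref{eqHZ-iso-easy} permits swapping the outer $x$ for $x-z$ up to the multiplicative constant $C_L$ and the additive $C_L$, producing a bound by a constant times the convolution $\eta_\rho\star\bigl[f(\cdot,u(\cdot),\nabla u(\cdot))+1\bigr]$, whose $L^1$ integral on $\Omega_{2\rho}$ converges to $E_\Omega(u)+|\Omega|$. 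The continuity of $f$ in $t$ coupled with $v_\rho\to u$ a.e.\ handles the replacement of $v_\rho(x)$ by $u(x-z)$ in the $t$-slot in the limit.

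The main obstacle is the \emph{bad} set $\{|\nabla u|>L_2\rho^{-N/p}\}$: Chebyshev's inequality in $W^{1,p}$ gives it measure $\lesssim \rho^N$, exactly matching the size of the mollifier support. Without a $\Delta_2$ assumption, one cannot directly dominate $f(x,v_\rho,\nabla v_\rho)$ pointwise on this region; instead, one must exploit that this set is thin at the mollifier's scale, and that convexity of $f$ in $\xi$ together with $f(\cdot,\cdot,0)=0$ yields the sub-linearity $f(x,t,\lambda\xi)\le\lambda f(x,t,\xi)$ for $\lambda\in[0,1]$, which forces the contribution to this set to vanish as $\rho\to 0$. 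Parallel care is required for the boundary strip $\Omega\setminus\Omega_{2\rho}$, where $|\nabla\theta_\rho|\lesssim \rho^{-1}$ forces an estimate of $\rho^{-1}\norm{v_\rho-\vp}_{L^p(\Omega\setminus\Omega_\rho)}$, which is handled by Hardy's inequality in the Lipschitz domain $\Omega$ thanks to $u-\vp\in W^{1,p}_0(\Omega)$. Combining these controls with the two previous steps yields $\limsup_\rho E_\Omega(u_\rho)\le E_\Omega(u)$; since $u_\rho\to u$ in $W^{1,p}$ and $f$ is Carath\'eodory and convex in $\xi$, Ioffe-type lower semicontinuity provides the matching $\liminf$, and hence equality.
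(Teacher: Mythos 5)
Your proposal is the classical convolution argument, but it has a fundamental gap that the paper explicitly identifies as the reason this route cannot work here: the $t$-dependence of $f$ is merely continuous, not jointly convex with $\xi$ (and may be unbounded and non-monotone). After the Jensen step you have
\begin{equation*}
f\bigl(x,v_\rho(x),\nabla v_\rho(x)\bigr)\;\le\;\int_{B_\rho}\eta_\rho(z)\,f\bigl(x,v_\rho(x),\nabla u(x-z)\bigr)\,\d z\,,
\end{equation*}
with the second slot frozen at $v_\rho(x)$; the anti-jump condition then swaps $x$ for $x-z$ in the \emph{first} slot only, leaving you with $f\bigl(x-z,v_\rho(x),\nabla u(x-z)\bigr)$. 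To land on the desired majorant $\eta_\rho\star\bigl[f(\cdot,u(\cdot),\nabla u(\cdot))+1\bigr]$ you must replace $v_\rho(x)$ by $u(x-z)$ in the $t$-slot, and your proposed justification --- continuity of $f(x-z,\cdot,\xi)$ plus $v_\rho\to u$ a.e.\ --- is not enough: there is no $L^1$ dominating function because $f$ is merely measurable in $x$, so the modulus of continuity of $f(x-z,\cdot,\xi)$ is uncontrolled in $x-z$, and without a $\Delta_2$ (or any growth) condition the factor $f(x-z,\cdot,\nabla u(x-z))$ is not estimated uniformly in the $t$-slot. Pointwise a.e.\ convergence of the integrand does not transfer to the integral. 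This is precisely the obstruction discussed in the introduction of the paper: "even when $f$ does not depend on $x$, it is not possible to justify that $f(u_\ve,\nabla u_\ve)\lesssim f(u,\nabla u)\ast\vr_\ve$."

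A second gap is the treatment of the bad set $\{|\nabla u|>L_2\rho^{-N/p}\}$. Chebyshev only tells you it has measure $\lesssim\rho^N$; it does not control the quantity
\begin{equation*}
\int_{B_\rho\cap\{|\nabla u(x-\cdot)|>L_2\rho^{-N/p}\}}\eta_\rho(z)\,f\bigl(x,v_\rho(x),\nabla u(x-z)\bigr)\,\d z\,,
\end{equation*}
because without any $\Delta_2$ or growth restriction $f$ can be enormous at large gradients on a small set. The sub-linearity $f(x,t,\lambda\xi)\le\lambda f(x,t,\xi)$ for $\lambda\in[0,1]$ does not make this vanish. Similarly, the boundary strip argument (cut-off plus Hardy) does not survive without a growth condition on $f$, because $\rho^{-1}\|v_\rho-\vp\|_{L^p}$ being bounded does not bound $f(\cdot,\cdot,\nabla u_\rho)$ there. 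The paper circumvents all three obstacles at once by passing to the convexified parametric energy $\wh E_{\Omega\times\R}$ on subgraphs, approximating $1_u$ by the partial convolution $1_u\ast_x\vr_\ve+\delta\alpha(t)$ and splitting $\Omega\times\R$ according to $q^t_{\ve,\delta}\lessgtr -1/k$: the good region is handled by the Reshetnyak continuity theorem, the bad region by a disintegration/coarea argument exploiting the anti-jump condition lifted to $\wh f$, and the boundary by decentered convolution rather than a partition-of-unity cut-off.
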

\noindent We stress that the above result directly generalises~\cite[Theorem~2.3]{bgs-arma}, since it includes a non-trivial $u$-dependence of the integrand, which is possibly unbounded and non-convex. Moreover, there is no need here for the integrand to satisfy the $\Delta_2$-condition nor to have super-linear growth in the last variable (nor to grow faster than a fixed power function). Consequently, we retrieve classical results for variable exponents and double phase functionals~\cite{eslemi, zh}, as well as new ones for the latter~\cite{bcdfm}.  We present novel examples of functionals to which Theorem~\ref{theo:iso} applies in Section~\ref{sec:results}. We also refer to Theorem~\ref{theo:iso-ortho} below for the extended isotropic and orthotropic version of this result.

The formulation of \eqref{eqHZ-iso-easy} can be simplified by introducing the following notation:
 For a measurable function $f:\Omega\times \R\times \rn\to \R$ and a given ball $B\subset \rn$ intersecting \(\Omega\), we denote
\begin{equation}\label{f-min-f-pl}
f_B^-(t,\xi) \coloneqq \essinf_{x \in B \cap \Omega} f(x, t,\xi)\,.
\end{equation}
Then, \eqref{eqHZ-iso-easy} reads
$$
|\xi| \leq L_2\ve^{-\min\big(1, \frac{N}{p}\big)}  \Longrightarrow 
f(x,t,|\xi|) \leq C_{L}  \Big(f_{B(x,\ve)}^-(t,\xi)
+1\Big)\,.
$$
The isotropic structure of the functionals considered in Theorem~\ref{theo:iso} can be relaxed to the fully anisotropic one, i.e.,  when $f$ depends on the last variable not necessarily via its length. We present here a simple, fully anisotropic consequence of our main result, Theorem~\ref{th-main-conv}.  The new condition on \(f\) balancing the \((t,\xi)\)-growth in terms of the spatial variations now reads:\\
\noindent For every \(L=(L_1,L_2)\in (0,\infty)^2\), there exists a constant \(C_L>0\)  such that  for a.e. $x\in\Omega$ and every \((t,\xi)\in [-L_1,L_1]\times \rn\), for every \(\ve>0\), it holds
\begin{equation}\label{eqHZD2}\tag{$H_{\Delta_2}$}
f_{B(x,\ve)}^{-}(t,\xi)
\leq {L_2}{\ve^{-N}} \Longrightarrow
f(x,t,\xi) \leq C_{L }\Big( f_{B(x,\ve)}^{-}(t,\xi)+1\Big)\,.
\end{equation}  
Observe that \eqref{eqHZD2} differs from \eqref{eqHZ-iso-easy} only through the way that the length of \(t\) and \(\xi\) are measured. While in \eqref{eqHZ-iso-easy}, the quantities \(f(x,t,\xi)\) and \(f_{B(x,\ve)}^{-}(t,\xi)\) are comparable when the norm of \(\xi\) alone is not too large, in \eqref{eqHZD2} instead, this holds under a suitable control on \(f_{B(x,\ve)}^{-}(t,\xi)\). As illustrated by many examples in Section~\ref{sec:results}, the latter condition often yields sharper results, even for isotropic functionals, provided that the \(\Delta_2\) condition  \eqref{eqDELTA2-intro} is satisfied.

\begin{theorem}
    \label{theo:doubling}
Suppose \(\Omega\) is a bounded Lipschitz open set in \(\rn\) and \(\vp:\rn\to \R\) is Lipschitz  continuous. Let $f:\Omega\times\R\times\rn\to\rp$ be a Carath\'eodory function which is convex with respect to the last variable, which satisfies $f(x,t,0)=0$ and $f(x,t,\cdot)\in\Delta_2$ for a.e. $x\in\Omega$ and {all} $t\in\R$. Suppose that  \eqref{eqHZD2} holds. 
Consider the functional $E_\Omega$  defined  in~\eqref{def-E-of-u}.  Then, for every \(u\in \cEp(\Omega)\), there exists a sequence \((u_{n})_{n\in\N}\subset W^{1,\infty}_\vp(\Omega)\) such that  $u_n \to u$ in $W^{1, 1}(\Omega)$ as $n \to \infty$ and
\begin{equation*}%
\lim_{n\to \infty}E_\Omega(u_n)=E_\Omega(u) \,.
\end{equation*}
\end{theorem}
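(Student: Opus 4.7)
The plan is to construct the approximating sequence in three stages---truncation of the range of $u$, mollification at scale $\ve$, and interpolation with $\vp$ on a boundary strip of width $\delta$---then extract a diagonal subsequence. The condition \eqref{eqHZD2} supplies a scale-$\ve$ comparison between $f(x,t,\xi)$ and its ball-infimum $f_{B(x,\ve)}^{-}(t,\xi)$, which combined with Jensen's inequality in the convex variable $\xi$ is the key tool allowing the energy estimate to be transferred from the mollification back to $u$; the $\Delta_2$ assumption is used throughout to absorb errors produced by the various perturbations.

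First, since $\vp$ is Lipschitz on $\overline{\Omega}$, set $L_0:=\|\vp\|_{L^\infty(\Omega)}$ and, for $k\geq L_0$, truncate $u$ to $T_k u := \max(-k,\min(u,k))\in \Wphi(\Omega)\cap L^\infty(\Omega)$. Since $f(x,t,0)=0$, $\nabla T_k u = \nabla u \cdot \mathbf{1}_{\{|u|<k\}}$, and $f$ is Carath\'eodory, dominated convergence gives $E_\Omega(T_k u)\to E_\Omega(u)$, so one may assume $|u|\leq L_1$ for some $L_1\geq L_0+1$. Next, extend $u$ off $\Omega$ preserving this bound and set $u_\ve:=\rho_\ve * u$ for a standard mollifier. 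The central estimate is the following: on the set where $f_{B(x,\ve)}^-(u_\ve(x),\nabla u_\ve(x))\leq L_2\ve^{-N}$, \eqref{eqHZD2} yields $f(x,u_\ve,\nabla u_\ve)\leq C_L(f_{B(x,\ve)}^-(u_\ve,\nabla u_\ve)+1)$, while Jensen's inequality applied to the $x$-independent convex function $\xi\mapsto f_{B(x,\ve)}^{-}(u_\ve(x),\xi)$ provides
\[
f_{B(x,\ve)}^{-}(u_\ve(x),\nabla u_\ve(x))\leq \frac{1}{|B(x,\ve)|}\int_{B(x,\ve)} f(y,u_\ve(x),\nabla u(y))\,dy.
\]
The $t$-argument is then swapped from $u_\ve(x)$ to $u(y)$ using continuity of $f$ in $t$, the uniform bound $|u|,|u_\ve|\leq L_1$, and $L^1$-convergence $u_\ve\to u$, with $\Delta_2$ absorbing the resulting error. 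On the complementary ``bad set'' where the threshold $L_2\ve^{-N}$ is exceeded, a Chebyshev-type argument combined with $\Delta_2$ shows the measure and the contribution both shrink as $\ve\to 0$ for fixed $L_2$ large enough.

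To restore the Dirichlet condition, pick a cutoff $\eta_\delta\in C_c^\infty(\Omega)$ equal to $1$ outside the $2\delta$-neighbourhood of $\partial\Omega$ with $|\nabla\eta_\delta|\lesssim 1/\delta$, and define $v_{\ve,\delta}:=\eta_\delta u_\ve + (1-\eta_\delta)\vp$, which is Lipschitz and agrees with $\vp$ near $\partial\Omega$. On $\{\eta_\delta=1\}$ the preceding estimate applies; on the strip, $|\nabla v_{\ve,\delta}|\leq |\nabla u_\ve|+\Lip(\vp)+\delta^{-1}|u_\ve-\vp|$. Since $u-\vp\in W^{1,1}_0(\Omega)$, a Hardy-type inequality bounds $\int_{\{\dist(\cdot,\partial\Omega)<2\delta\}}\delta^{-1}|u-\vp|\,dx$ by the gradient energy of $u-\vp$ on the strip, which tends to $0$ as $\delta\to 0$ by absolute continuity. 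Applying \eqref{eqHZD2} and $\Delta_2$ once more then controls $f(x,v_{\ve,\delta},\nabla v_{\ve,\delta})$ on the strip by averages of $f(y,u,\nabla u)$ which integrate to $o(1)$. A diagonal extraction with $\ve\ll\delta\to 0$ and $k\to\infty$ yields the required sequence $(u_n)\subset W^{1,\infty}_\vp(\Omega)$.

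The principal obstacle is the simultaneous calibration of the three parameters $(k,\ve,\delta)$ together with the tracking of the ``bad set'' on which the hypothesis of \eqref{eqHZD2} fails. Swapping the $t$-argument inside Jensen's inequality is delicate because $f$ may be unbounded in $\xi$, so the $t$-modulus of continuity depends on the gradient size; the $\Delta_2$ condition is precisely what allows the resulting errors to be absorbed back into the energy. The boundary matching is equally subtle: the scale-dependent threshold $L_2\ve^{-N}$ in \eqref{eqHZD2} must cover all gradient values that appear on the strip, which forces $\ve$ to be taken much smaller than $\delta$ and the constant $L_2$ to be chosen in function of the $W^{1,1}$-norm of $u-\vp$ on shrinking neighbourhoods of $\partial\Omega$.
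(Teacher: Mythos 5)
Your proposal follows the classical ``truncate, mollify, and cut off near the boundary'' strategy, which is precisely the approach the paper singles out as \emph{not} viable here: as explained in the paper's Methods discussion, when the Lagrangian has an essential, non-convex dependence on $t=u(x)$, the Jensen inequality combined with an anti-jump condition only produces something like $f(y,u_\ve(x),\nabla u(y))$ inside the convolution, and there is no legitimate way to replace the argument $u_\ve(x)$ by $u(y)$. The step where you ``swap the $t$-argument from $u_\ve(x)$ to $u(y)$'' is the genuine gap. Continuity of $f(y,\cdot,\xi)$ is not uniform in $\xi$, and the $\Delta_2$-condition controls only the $\xi$-growth, not the $t$-modulus of continuity: even for $f(y,t,\xi)=|\xi|^2+t^2|\xi|^4$ (convex, vanishes at $\xi=0$, $\Delta_2$, trivially satisfies $(H_{\Delta_2})$ since it is $y$-independent), the ratio $f(y,t,\xi)/f(y,t',\xi)$ is unbounded in $\xi$ whenever $t\neq t'$, so the errors cannot be ``absorbed by $\Delta_2$'' uniformly. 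Dominated convergence also fails because the only available majorant is $C(1+|\nabla u(y)|)^q$, which need not be integrable for $u\in W^{1,1}$. A secondary slip is that you invoke Jensen for $\xi\mapsto f^-_{B(x,\ve)}(t,\xi)$, which as an essential infimum of convex functions need not itself be convex; that is why the paper works with the greatest convex minorant $(f^-_{B(x,\ve)}(t,\cdot))^{**}$ and must first upgrade $(H_{\Delta_2})$ to $(H^{\mathrm{conv}})$.

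The paper's actual proof of Theorem~\ref{theo:doubling} is very short and takes an entirely different route: it uses convexity and $\Delta_2$ to rewrite $(H_{\Delta_2})$ in the form \eqref{eq:delta2_conv}, then applies H\"ast\"o's result \cite[Theorem~1.2]{ha-aniso} to pass from $f^-_B$ to $(f^-_B)^{**}$, and deduces $(H^{\mathrm{conv}})$ via \eqref{eq:delta2_delta2}; it then invokes Theorem~\ref{th-main-conv}. Theorem~\ref{th-main-conv} in turn is proved not by convolving $u$, but by passing to the parametric (subgraph) formulation $v=1_u$ and working with the convexified energy $\widehat{E}_{\Omega\times\R}$: the $t$-variable is absorbed as an extra spatial variable, the approximation of $Dv$ is performed by convolution in $x$ only together with the additive $\delta\alpha(t)$ correction, the liminf is Reshetnyak, and the limsup is handled by the good/bad region split in $q^t_{\ve,\delta}$ together with the transfer of the anti-jump condition to $\widehat{f}$. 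Your boundary-matching by cut-off is also flagged by the paper as problematic without explicit growth hypotheses; even under $\Delta_2$, your Hardy-type bound controls $\delta^{-1}\|u-\vp\|_{L^1(\mathrm{strip})}$, whereas $f$ has $q$-growth with $q$ possibly large, so the strip contribution is not $o(1)$. The paper instead uses decentered convolution near the boundary after localization to special Lipschitz domains, followed by a truncation with a cut-off whose gradient is killed by the a priori $L^\infty$-closeness to $\vp$ obtained from the decentered mollifier.
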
We shall stress that already this special case is new, as it allows for an essentially non-convex structure of $f$. Indeed, the dependence of $f$ on the second variable is just continuous, but not bounded nor convex as e.g. in~\cite{Bor-Chl,BCM,BMT}. This makes the proof a non-trivial task, which cannot be solved by merely varying previous reasonings. Moreover, let us emphasize the following facts.
\begin{enumerate}
    \item The above statements are interesting even for \(N=1\) as the continuity of $f$ with respect to $x$ is not required. 
    \item The assumption $f(x,t,0)=0$ is given here for the clarity of the statement and might be relaxed, see the growth condition \eqref{eq:f-origin} below, which controls even possibly unbounded behaviour of $f$ in the origin.   
    \item A bunch of examples to the theorem mentioned above, covering classical functionals as well as new ones, are provided in Section~\ref{ssec:examples}. {For a quick summary see Table~\ref{tab:examples} and Table~\ref{tab:examples2}.}
\end{enumerate}
In our main result, Theorem~\ref{th-main-conv}, we introduce a fully anisotropic condition~\eqref{eqHZconv} related to~\eqref{eqHZ-iso-easy} and~\eqref{eqHZD2}. Still, unlike them, it involves a balance between  \(f\) and the greatest convex minorant of $f_B^-$ in the \(\xi\)-variable. More specifically, given an open ball \(B\) intersecting \(\Omega\) and \(t\in \R\), we denote by $\left(f^{-}_{B}(t,\cdot)\right)^{**}$ the supremum of all those convex functions \(g:\rn\to \rn\) which lie below \(f^{-}_{B}(t,\cdot)\) on \(\rn\) (since \(f\) and thus also \(f^{-}_{B}(t,\cdot)\) are nonnegative, this supremum is well-defined). We are now in a position to define the main assumption involved in the forthcoming Theorem~\ref{th-main-conv}: 
\\
\noindent Let $p \geq 1$. We assume that for every \(\mathcal{k}=(\mathcal{k}_1,\mathcal{k}_2)\in (0,\infty)^2\), there exists a constant 
$\wt\cC_\mathcal{k}>0$ such that for  a.e. $x\in{\Omega}$ and all $(t,\xi)\in (-\mathcal{k}_1,\mathcal{k}_1)\times\rn$, for every $\ve>0$,  it holds 
\begin{equation}\label{eqHZconv}\tag{$H^{\textrm{conv}}$}
{\left(f^{-}_{B(x,\ve)}(t,\cdot)\right)^{**}\left(\xi\right)+|\xi|^{\max(p,N)}
\leq {\mathcal{k}_2}{\ve^{-N}}}\qquad \Longrightarrow\qquad  f\left(x,t, \xi\right)\leq
\wt\cC_\mathcal{k} \left[\left(f^{-}_{B(x,\ve)}(t,\cdot)\right)^{**}\left(\xi\right)+1\right]\,.
\end{equation} 

The following remarks are in order.
\begin{enumerate}
\item The condition \eqref{eqHZconv} is reminiscent of many articles in the field, compare~\cite{gszg,ha-aniso,C-b}, and more  recently~\cite{BCM,BMT,Lukas23}. Although not necessarily intuitive at first, it is quite handy and allows us to handle many situations. Observe in particular that the length of \(\xi\) is now quantified both in terms of the Euclidean norm of \(\xi\)  and the function \(\left(f^{-}_{B(x,\ve)}(t,\cdot)\right)^{**}\left(\xi\right)\). As a consequence, we will prove that \eqref{eqHZconv} implies \eqref{eqHZ-iso-easy} (for isotropic Lagrangians) and \eqref{eqHZD2} (for \(\Delta_2\) Lagrangians).
\item   For every \(\ve\geq \diam \Omega\),  for every \((x,t,\xi )\in \Omega\times\R\times \R^N\), one has \(f_{B(x,\ve)}^{-}(t,\xi) = f_{B(x,\diam\Omega)}^{-}(t,\xi)\) and thus
\[
\left(f_{B(x,\ve)}^{-}(t,\cdot)\right)^{**}(\xi) = \left(f_{B(x,\diam\Omega)}^{-}(t,\cdot)\right)^{**}(\xi)\,.
\]
Consequently, in the  assumption \eqref{eqHZconv}, the parameter \(\varepsilon>0\) can be restricted to the interval \((0, \diam \Omega]\).


\item The parameter $p$ plays a role when $p > N$, in which case~\eqref{eqHZconv} allows excluding the Lavrentiev gap on the entire energy space of $E_\Omega$ if the latter is included in $W^{1, p}{(\Omega)}$, see Theorem~\ref{th-main-conv}.

\item A function $f$ of the form $f(x,t,\xi)=g(x,t, \xi)+h(x,t,\xi)$ satisfies~\eqref{eqHZconv} if both \(g\) and $h$ do. This follows from the fact that for every ball \(B\), one has \(f_{B}^{-}\geq g_{B}^- + h_{B}^-\) and thus for every \(t\in \R\),  the function $(g_{B}^-(t,\cdot))^{**}+(h_{B}^-(t,\cdot))^{**}$ is a convex minorant of $f_{B}^-(t,\cdot)$, so we can infer that $(g_{B}^-(t,\cdot))^{**}+(h_{B}^-(t,\cdot))^{**}\leq (f_{B}^-(t,\cdot))^{**}$ and the conclusion easily follows. In particular, this observation can be useful for  multi-phase functions of the form $f(x,t,\xi)=\sum_{i=1}^k\left(\tfrac{1}{k}\psi_0(t, \xi) +  a_i(x, t)\psi_i(t, \xi)\right)$. In this case,  when \(k>2\),  the function $f_{B}^{-}(t,\cdot)$ is not convex in general and the explicit expression of its greatest convex minorant is out of reach. It is much easier to check whether each term of the sum \(\vt_i(x,t,\xi)\coloneqq \frac{1}{k}\psi_0(t, \xi) +  a_i(x, t)\psi_i(t, \xi)\) satisfies~\eqref{eqHZconv}, since each \((\vt_{i})_{B}^{-}(t,\cdot)\) is convex.

\item Section~\ref{ssec:examples} presents many illustrative special cases of $f$ satisfying \eqref{eqHZconv}. A representative choice of examples is summarized in Tables~\ref{tab:examples} and~\ref{tab:examples2}.
\end{enumerate}

Our main result asserts that under \eqref{eqHZconv}, no Lavrentiev gap may occur:
\begin{theorem}\label{th-main-conv}
Let \(\Omega\) be a bounded Lipschitz open set in \(\rn\), $N\geq 1$, \(\vp:\rn\to \R\) be Lipschitz continuous,  \(p\geq 1\), and \(f:\Omega \times \R\times \rn\to \rp\) be a Carath\'eodory function which is convex with respect to the last variable {and} satisfies~\eqref{eqHZconv}. When \(N>1\), we further assume that the behaviour of $f$ in the origin is constrained, namely that there exists \(\vt\in [1,\infty]\), \(a\in L^{\vt}(\Omega)\) and \(t_0>0\) which satisfy
    \begin{equation}\label{eq:f-origin}
     0\leq f(x,t,0) \leq a(x)|t|^{p^*/\vt'} \qquad \text{for }\ |t|\geq t_0 \textrm{ and for a.e. } x\in \Omega\,.   
   \end{equation} 
Consider the functional $E_\Omega$  defined  in~\eqref{def-E-of-u}.  Then, for every \(u\in \cEp(\Omega)\cap W^{1,p}(\Omega)\), there exists a sequence \((u_{n})_{n\in\N}\subset W^{1,\infty}_\vp(\Omega)\)  such that  $u_n \to u$ in $W^{1, p}(\Omega)$ as $n \to \infty$ and
\begin{equation*}
\lim_{n\to \infty}E_\Omega(u_n)=E_\Omega(u) \,.
\end{equation*}
\end{theorem}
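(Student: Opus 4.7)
The plan is a truncation-plus-mollification strategy in which the mollification scale is calibrated so that the premise of~\eqref{eqHZconv} is satisfied pointwise, in the spirit of existing approaches to the Lavrentiev gap.

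\emph{Step 1: Truncation in $t$.} Since $\vp$ is Lipschitz on $\overline\Omega$ hence bounded by some $M$, for $L\geq M$ the truncation $u^{(L)}:=\max(-L,\min(u,L))$ belongs to $\Wphi(\Omega)\cap W^{1,p}(\Omega)$ and satisfies $\nabla u^{(L)}=\mathbf 1_{\{|u|<L\}}\nabla u$. Writing
\[
f(x,u^{(L)},\nabla u^{(L)})=f(x,u^{(L)},\nabla u)\mathbf 1_{\{|u|<L\}}+f(x,u^{(L)},0)\mathbf 1_{\{|u|\geq L\}}\,,
\]
the continuity of $f$ in $t$ and dominated convergence dispose of the first piece, while the growth bound~\eqref{eq:f-origin} combined with the Sobolev embedding $u\in L^{p^*}(\Omega)$ handles the second. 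Hence $E_\Omega(u^{(L)})\to E_\Omega(u)$ as $L\to\infty$, and a diagonal extraction reduces the problem to approximating a bounded $u$.

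\emph{Step 2: Mollification.} Assume from now on that $\|u\|_{L^\infty}\leq L$ and write $u=\vp+v$ with $v\in W^{1,p}_0(\Omega)$ extended by zero to $\rn$. Fix a standard mollifier $\rho_\eta$ supported in $B(0,\eta)$ and set $u_n:=\vp+v\ast\rho_{\eta_n}$ with $\eta_n\to 0$. A localisation near the Lipschitz boundary ensures $u_n\in W^{1,\infty}_\vp(\Omega)$ and $u_n\to u$ in $W^{1,p}(\Omega)$. The two a priori bounds
\[
|\nabla u_n(x)|\leq\|\nabla\vp\|_\infty+C\|v\|_\infty\eta_n^{-1}
\quad\text{and}\quad
|\nabla u_n(x)|^p\leq C+C\eta_n^{-N}\int_{B(x,\eta_n)}|\nabla v|^p\dy
\]
yield $|\nabla u_n(x)|^{\max(p,N)}\leq C\eta_n^{-N}$, providing one half of the premise of~\eqref{eqHZconv}.

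\emph{Step 3: Invoking~\eqref{eqHZconv}.} By Jensen's inequality applied to the convex function $(f^-_{B(x,\eta_n)}(u_n(x),\cdot))^{**}$, together with the pointwise bound $(f^-_{B(x,\eta_n)}(u_n(x),\cdot))^{**}\leq f^-_{B(x,\eta_n)}(u_n(x),\cdot)\leq f(y,u_n(x),\cdot)$ valid for a.e.~$y\in B(x,\eta_n)\cap\Omega$,
\[
\bigl(f^-_{B(x,\eta_n)}(u_n(x),\cdot)\bigr)^{**}(\nabla u_n(x)) \leq \frac{C}{\eta_n^N}\int_{B(x,\eta_n)} f\bigl(y,u_n(x),\nabla u(y)\bigr)\dy\,,
\]
after a convexity manipulation to absorb the $\nabla\vp$-contribution. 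Together with Step~2 this verifies the full premise of~\eqref{eqHZconv} with $\mathcal k=\mathcal k(L,\|\nabla u\|_{L^p})$. Applying~\eqref{eqHZconv} and integrating over $\Omega$ then gives
\[
E_\Omega(u_n) \leq \wt\cC_{\mathcal k}\left(\int_\Omega\frac{1}{|B(x,\eta_n)|}\int_{B(x,\eta_n)} f\bigl(y,u_n(x),\nabla u(y)\bigr)\dy\dx + |\Omega|\right).
\]

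\emph{Step 4: Passage to the limit and the main obstacle.} The delicate step is to show that the above double integral tends to $\int_\Omega f(y,u(y),\nabla u(y))\dy$. After Fubini, the $x$-average against $x\in B(y,\eta_n)$ should collapse by pushing $u_n(x)$ to $u(y)$, but continuity of $f$ in $t$ is only pointwise and no uniform modulus is available. This is handled by combining: (i)~a Lebesgue-point argument giving $u_n(x)\to u(y)$ for $x\in B(y,\eta_n)$ at a.e.~$y$; (ii)~Egoroff's theorem to upgrade this to uniform convergence outside an arbitrarily small exceptional set, on which the continuity of $f(y,\cdot,\xi)$ becomes effectively uniform; and (iii)~Vitali's convergence theorem to close the limit, using the equi-integrability of $\{f(\cdot,u,\nabla u)\}$ inherited from $u\in\cEp(\Omega)$ together with the uniform $L^\infty$ bound on the sequence $(u_n)$. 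Combined with the standard weak lower semicontinuity of $E_\Omega$ for the reverse inequality, this yields $E_\Omega(u_n)\to E_\Omega(u)$; reversing the diagonal extraction of Step~1 then completes the proof.
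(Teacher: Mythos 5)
The strategy you propose---mollification, a Jensen step in the $\xi$-slot only, then the anti-jump condition, then Vitali---is the classical approach that works for Lagrangians of the form $f(x,\xi)$, and the paper explicitly discusses it in the \textbf{Methods} paragraph of the Introduction only to conclude that it is ``doomed to fail for a Lagrangian having an essential and non-convex dependence with respect to~$t$.'' Your proof runs into exactly the wall that discussion describes, so this is not a different route to the same result; it is a route that does not reach it.

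The gap is in Step~4. Because Jensen's inequality in Step~3 can only be applied in the $\xi$-slot (the only slot where $f$ is convex), the $t$-argument is frozen at $u_n(x)$, so after Fubini the quantity you must control is
\[
G_n(y)\;\coloneqq\;\frac{1}{|B(y,\eta_n)|}\int_{B(y,\eta_n)\cap\Omega}f\bigl(y,u_n(x),\nabla u(y)\bigr)\dx
\]
and \emph{not} a convolution of $f(\cdot,u,\nabla u)$. Even granting that your items~(i) and~(ii) produce $G_n(y)\to f(y,u(y),\nabla u(y))$ for a.e.\ $y$, Fatou then gives only $\liminf_n\int_\Omega G_n\geq\int_\Omega f(\cdot,u,\nabla u)$, which is the trivial direction. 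For the limsup direction---the heart of the theorem---you need equi-integrability of the family $(G_n)_n$, and this is what collapses. The only universal pointwise bound available is $G_n(y)\leq H(y)\coloneqq\sup_{|t|\leq L}f\bigl(y,t,\nabla u(y)\bigr)$, and there is no reason for $H$ to lie in $L^1(\Omega)$ when $f$ is not assumed convex or bounded in $t$. Concretely, take $f(y,t,\xi)=b(t)|\xi|^q$ with $b$ continuous and strictly positive, chosen so that $b(u(y))$ is small precisely where $|\nabla u(y)|$ is large: here $f^-_{B(x,\ve)}(t,\cdot)=f(x,t,\cdot)$ is already convex, so~\eqref{eqHZconv} holds with $\wt\cC_{\mathcal{k}}=1$ and~\eqref{eq:f-origin} holds trivially, and one can easily arrange $f(\cdot,u,\nabla u)\in L^1(\Omega)$ while $H(y)=\bigl(\sup_{[-L,L]}b\bigr)|\nabla u(y)|^q\notin L^1(\Omega)$. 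Your invocation of ``equi-integrability of $\{f(\cdot,u,\nabla u)\}$'' is therefore vacuous: that is one fixed $L^1$ function, and it has no bearing on the family $(G_n)_n$.

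This is a structural obstacle, not a repairable local slip, which is why the paper proceeds through an entirely different device: it passes to the subgraph indicator $1_u$, the positively homogeneous Lagrangian $\widehat{f}$, and the convexified energy $\widehat{E}_{\Omega\times\R}$, so that $t$ becomes an additional \emph{spatial} variable and the non-convex $t$-dependence is rendered harmless. The limsup estimate (Proposition~\ref{prop-limsup}) is then obtained by splitting into the regions where $q^t_{\ve,\delta}$ is, resp.\ is not, bounded away from zero, applying the Reshetnyak continuity theorem on the good region, and the Jensen-type measure estimate of Lemma~\ref{lm-disintegration-nonautonomous} on the bad one, whose measure is then sent to zero. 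A mollification-only proof along your lines would require $f$ to be convex (or at least controllably monotone) jointly in $(t,\xi)$, which is precisely the hypothesis the theorem is designed to dispense with.
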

Here, \(p*\) is defined as \(Np/(N-p)\) if \(p<N\) while $p^*$ is any number larger or equal to \(N\) otherwise.

\begin{remark}\rm 
The above theorem includes a condition on the behaviour of $f$ when $\xi=0$, namely~\eqref{eq:f-origin}. It is only used to approximate any Sobolev map \(u\in \cEp(\Omega)\cap W^{1,p}(\Omega)\) by a sequence \((u_{n})_{n\in\N}\subset L^{\infty}(\Omega)\cap W^{1,p}_\vp(\Omega)\) in norm and energy. 
When we assume a priori that \(u\) is bounded, we do not need this assumption to conclude.
In particular, when \(N=1\), \(u\) is automatically bounded and condition~\eqref{eq:f-origin} can be dropped.
\end{remark}

Roughly speaking, the conclusion of Theorem~\ref{th-main-conv} is stable with respect to certain variations of the assumptions: 
\begin{remark}  
If $f$ is not convex with respect to the last variable, but comparable to a convex function, one can still infer the absence of the Lavrentiev gap. More precisely -- if for a Carath\'eodory function \(f:\Omega \times \R\times \rn\to \rp\),  there exist a function \(g:\Omega \times \R\times \rn\to \rp\)  satisfying the conditions of Theorem~\ref{th-main-conv}, a constant $C > 0$, and a non-negative function $\vartheta \in L^1(\Omega;\R)$ such that 
    \begin{equation}\label{eq:ass}
        {\tfrac{1}{C}g(x, t, \xi) - \vartheta(x) \leq f(x, t, \xi) \leq Cg(x, t, \xi) + \vartheta(x)\,,}
    \end{equation} then by Proposition~\ref{prop:comp}, the conclusion of  Theorem~\ref{th-main-conv} holds for $f$.
\end{remark}

It is fair to say that the direct verification of the conditions in Theorem~\ref{th-main-conv} may be difficult in practice, due to the presence of the greatest convex minorant. One may wonder whether the conclusion of Theorem~\ref{th-main-conv} remains true when  
\(\left(f^{-}_{B(x,\ve)}(t,\cdot)\right)^{**}\) in \eqref{eqHZconv} is replaced by \(f^{-}_{B(x,\ve)}(t,\cdot)\). Let us restrict for an instant to the isotropic situation when \(f\) depends on \(\xi\) only through its Euclidean norm and consider the following condition:\\
\noindent For every \(\mathcal{k}=(\mathcal{k}_1,\mathcal{k}_2)\in (0,\infty)^2\), there exists a constant 
$\cC_\mathcal{k}>0$ such that for  a.e. $x\in\Omega$ and all $(t,\xi)\in (-\mathcal{k}_1,\mathcal{k}_1)\times\rn$, for every $\ve>0$,  it holds 
\begin{equation}\label{eqHZiso}\tag{$H^{\textrm{iso}}$}
{ f^{-}_{B(x,\ve)}(t,|\xi|) +|\xi|^{\max(p,N)}
\leq {\mathcal{k}_2}{\ve^{-N}}}\qquad \Longrightarrow\qquad  f\left(x,t, |\xi|\right)\leq
\cC_\mathcal{k} \left[ f^{-}_{B(x,\ve)}(t,|\xi|)+1\right]\,.
\end{equation}   
This condition is significantly more intuitive and easier to verify than \eqref{eqHZconv}, while it substantially generalizes ~\eqref{eqHZ-iso-easy} in Theorem~\ref{theo:iso}. 
Since \(f^{-}_{B(x,\ve)}(t,|\xi|)\geq \left(f^{-}_{B(x,\ve)}(t,\cdot)\right)^{**}(|\xi|)\), the assumption \eqref{eqHZconv} implies \eqref{eqHZiso}.
Generally speaking, however, the converse implication is not true, compare Example~\ref{ex:counter} below. But under an isotropic or orthotropic regime, those two conditions are equivalent. This is the content of our next result.

\begin{theorem}\label{theo:final}Let $f : \Omega \times \rn \times \R \to [0, \infty)$ be a Carath\'eodory function, which is convex with respect to the last variable. Then, the following assertions are true.
\begin{enumerate}
    \item If $f$ is isotropic and satisfies \eqref{eqHZiso}, then it satisfies~\eqref{eqHZconv}.
    \item If $f$ admits an  orthotropic decomposition, i.e. $f:\Omega\times\R\times\rn\to\rp$ is such that 
\begin{equation}
    \label{f-is-ortho}
f(x,t,\xi)=\sum_{i=1}^N f_i(x,t,|\xi_{i}|)\,,\quad\text{where \(\xi=(\xi_1, \dots, \xi_N)\)}\,,
\end{equation} and for every $i=1, \dots, N\,,$ the function $f_i:\Omega\times\R\times\rp\to\rp$ is a Carath\'eodory function which is convex with respect to the last variable, and satisfies \eqref{eqHZiso},  then $f$ satisfies~\eqref{eqHZconv}.
\end{enumerate}
\end{theorem}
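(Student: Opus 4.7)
My plan is to derive Part~(2) from Part~(1) and then attack the isotropic Part~(1) via a convex-combination argument.

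For the \emph{orthotropic case}, I observe that for $f(x,t,\xi)=\sum_{i=1}^N f_i(x,t,|\xi_i|)$, the inequality $f^-_{B(x,\varepsilon)}(t,\xi)\ge\sum_i(f_i)^-_{B(x,\varepsilon)}(t,|\xi_i|)$ (essential infimum of a sum dominates the sum of essential infima) yields
\begin{equation*}
\sum_{i=1}^N\bigl((f_i)^-_{B(x,\varepsilon)}(t,\cdot)\bigr)^{**}(|\xi_i|)\;\le\;\bigl(f^-_{B(x,\varepsilon)}(t,\cdot)\bigr)^{**}(\xi),
\end{equation*}
since the left-hand side is a convex minorant of $f^-_{B(x,\varepsilon)}(t,\cdot)$. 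Combined with $|\xi_i|\le|\xi|$ and non-negativity of the remaining summands, the hypothesis of~\eqref{eqHZconv} for $f$ forces the (isotropic) hypothesis of~\eqref{eqHZconv} for each $f_i$, up to enlarging $\mathcal{k}_2$ by a factor of $2$. Applying Part~(1) to each $f_i$ and summing closes Part~(2).

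For the \emph{isotropic case}, I exploit the identification $(f^-_{B(x,\varepsilon)}(t,\cdot))^{**}(\xi)=H(|\xi|)$, where $H$ is the largest convex nondecreasing function on $[0,\infty)$ dominated by $G(r):=f^-_{B(x,\varepsilon)}(t,r)$ (in $\rn$, the radial convex envelope of a radial function has a convex nondecreasing radial profile). Set $r_0:=|\xi|$ and assume $H(r_0)+r_0^{\max(p,N)}\le\mathcal{k}_2\varepsilon^{-N}$. If $H(r_0)=G(r_0)$, then~\eqref{eqHZiso} applies at $r_0$ with the same $\mathcal{k}$ and we are done. Otherwise $r_0$ lies in an open interval $(r_1,r_2)\subseteq[0,\infty]$ on which $H$ is affine and equals $G$ at finite endpoints. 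Writing $r_0=\lambda r_1+(1-\lambda)r_2$ (for $r_2<\infty$) and using convexity of $f(x,t,\cdot)$,
\begin{equation*}
f(x,t,r_0)\;\le\;\lambda\,f(x,t,r_1)+(1-\lambda)\,f(x,t,r_2),
\end{equation*}
while affinity of $H$ yields $(1-\lambda)H(r_2)\le H(r_0)$ and $(1-\lambda)r_2\le r_0$. At $r_1\le r_0$,~\eqref{eqHZiso} applies with the original $\mathcal{k}$ by monotonicity; at $r_2$,~\eqref{eqHZiso} applies with the enlarged parameter $\mathcal{k}_2'=\mathcal{k}_2/(1-\lambda)^{\max(p,N)}$. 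The multiplicative $(1-\lambda)$ in front of $f(x,t,r_2)$ then absorbs the corresponding factor on $H(r_2)$, producing a bound of the form $\wt\cC[H(r_0)+1]$.

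The \emph{main obstacle} is removing the $\lambda$-dependence of $\wt\cC$, inherited from the enlarged parameter $\mathcal{k}_2'=\mathcal{k}_2/(1-\lambda)^{\max(p,N)}$, so that the final constant depends only on $\mathcal{k}$. I expect this to require a dichotomy on the size of the gap $G(r_0)-H(r_0)$: when the gap is modest,~\eqref{eqHZiso} applies at $r_0$ directly with a slightly enlarged $\mathcal{k}_2$, yielding a uniform constant; when the gap is large, the right-continuity of $G$ together with the touching relation $G(r_1)=H(r_1)$ forces a quantitative lower bound on $r_0-r_1$, and hence on $1-\lambda$. The degenerate case $r_2=+\infty$ (where the affine piece extends to infinity) is treated separately, either by a limiting argument on finite truncations of the affine ray, or by a direct estimate that exploits the affine growth of $H$ together with the growth constraint on $r_0^{\max(p,N)}$.
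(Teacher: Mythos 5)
Your orthotropic reduction matches the paper's in substance: both exploit the inequality $\sum_i \bigl((f_i)^-_{B}\bigr)^{**}(|\xi_i|)\le (f^-_B)^{**}(\xi)$, reduce the hypothesis for $f$ to hypotheses for each $f_i$, and sum. The isotropic part, however, is where your route genuinely diverges from the paper and where the argument is not yet complete. You share the paper's starting point — the existence of a left endpoint $r_1 = a_s$ of the affine piece of $H$ where $H(r_1)=G(r_1)$ (this is Lemma~\ref{lem:at}) — but then try to reach $r_0$ by writing it as a convex combination of $r_1$ and the \emph{right} endpoint $r_2$. The paper avoids $r_2$ altogether: it stays at the left endpoint and propagates forward by a one-sided-derivative estimate (Lemma~\ref{lem:est-der}), using $D^+w^{**}(s')\ge\essinf_y D^+ w_y(s')$ and the monotonicity of $D^+w_y$ to replace $w_y(a_s)+D^+w_y(s')(s'-a_s)\ge w_y(s')$. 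That device is precisely what makes the constant independent of where $r_0$ sits in the affine interval.

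Your version, by contrast, has to invoke~\eqref{eqHZiso} at $r_2$ with the enlarged parameter $\mathcal{k}_2'=\mathcal{k}_2/(1-\lambda)^{\max(p,N)}$, and you correctly flag the resulting $\lambda$-dependence of the constant as the main obstacle — but the dichotomy you sketch does not close the gap. When $\lambda\to 1^-$ you would need a quantitative lower bound on $r_0-r_1$, hence on $1-\lambda$, in terms of the gap $G(r_0)-H(r_0)$; this would require a \emph{uniform} (in $x,t,\xi$, with constants depending only on $\mathcal{k}$) modulus of continuity or Lipschitz bound for $G=f^-_{B(x,\varepsilon)}(t,\cdot)$. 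No such uniform bound is available: the local Lipschitz constant of $G$ is inherited from $\sup f$ on bounded sets (Lemma~\ref{lem-bdd-f}), which is not controlled by $\mathcal{k}$. So "gap small $\Rightarrow$ $1-\lambda$ not small" is not quantitative, and the "gap modest $\Rightarrow$ apply~\eqref{eqHZiso} at $r_0$ with slightly enlarged $\mathcal{k}_2$" branch only works if "modest" means $\Delta\le c(H(r_0)+1)$ for a universal $c$, which is exactly what would need proving. Separately, the case $r_2=+\infty$ is not degenerate — it arises even for simple examples such as $G(r)=\min(r^4,8r)$ — and your proposed "limiting argument on finite truncations" is not spelled out and does not obviously work, since there is then no point where $H$ touches $G$ to the right. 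The paper's derivative-based argument sidesteps both issues entirely, which is the substantive advantage of that approach over yours.
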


The above statement 
can be seen as a  counterpart of \cite[Theorem~1.2]{ha-aniso} for functions having isotropic or orthotropic structure. In both cases, it is shown that a balance between $f$ and $f^-_B$ on sub-level sets of $f^-_B$ given by~\eqref{eqHZiso} implies a similar balance between  $f$ and $(f^-_B)^{**}$ as in \eqref{eqHZconv}.
The main difference between Theorem~\ref{theo:final} and \cite[Theorem~1.2]{ha-aniso} is the position of the constant (we have it outside $f_{B}^-$ and $(f_{B}^-)^{**}$, respectively). Our proof is based on essentially different geometrical observations.  We point out that the result above also allows reformulating~\cite[Hypothesis (H)]{BMT} or~\cite[Assumption~1~(a4)]{Lukas23} in the isotropic or orthotropic setting by eliminating the $**$ operator.

As a simple consequence of Theorem~\ref{th-main-conv} and Theorem~\ref{theo:final}, we derive the non-occurence of Lavrentiev gaps for isotropic and orthotropic functionals:

\begin{theorem}\label{theo:iso-ortho}
Let \(\Omega\) be a bounded Lipschitz open set in \(\rn\), \(\vp:\rn\to \R\) be Lipschitz continuous,  \(p\geq 1\), \(f:\Omega \times \R\times \rn\to \rp\)  be a Carath\'eodory function which is convex with respect to the last variable and which satisfies~\eqref{eq:f-origin}. Consider the functional $E_\Omega$ defined  in~\eqref{def-E-of-u}.\\ We assume either that $f$ is isotropic and satisfies \eqref{eqHZiso},  or that  $f$ admits an orthotropic decomposition in the sense of \eqref{f-is-ortho}, with each \(f_i\) satisfying \eqref{eqHZiso}.
 Then, for every \(u\in \cEp(\Omega)\cap W^{1,p}(\Omega)\), there exists a sequence \((u_{n})_{n\in\N}\subset W^{1,\infty}_\vp(\Omega)\)  such that  $u_n \to u$ in $W^{1, p}(\Omega)$ as $n \to \infty$ and
\begin{equation*}
\lim_{n\to \infty}E_\Omega(u_n)=E_\Omega(u) \,.
\end{equation*} 
\end{theorem}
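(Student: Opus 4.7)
The plan is to obtain Theorem~\ref{theo:iso-ortho} as a direct combination of Theorem~\ref{th-main-conv} and Theorem~\ref{theo:final}. The standing condition \eqref{eqHZiso} is substantially easier to verify than the convex-minorant condition \eqref{eqHZconv} required by the main approximation result, but Theorem~\ref{theo:final} bridges the two exactly in the isotropic and orthotropic regimes considered here.

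First, in the isotropic case, I would invoke Theorem~\ref{theo:final}(1) to upgrade the assumed \eqref{eqHZiso} into the convex-minorant inequality \eqref{eqHZconv}. Second, in the orthotropic case where $f(x,t,\xi) = \sum_{i=1}^N f_i(x,t,|\xi_i|)$, each summand $f_i$ is by hypothesis a Carath\'eodory function, convex in the last variable and satisfying \eqref{eqHZiso}; Theorem~\ref{theo:final}(2) then delivers \eqref{eqHZconv} for the full orthotropic sum $f$ (alternatively, one could apply Theorem~\ref{theo:final}(1) to each $f_i$ and then combine by the additivity observation in the list of remarks following \eqref{eqHZconv}, since the greatest convex minorant of a sum dominates the sum of greatest convex minorants).

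Having established \eqref{eqHZconv}, all the remaining hypotheses of Theorem~\ref{th-main-conv} are already in the statement: $\Omega$ is a bounded Lipschitz domain, $\vp$ is Lipschitz, $f$ is a nonnegative Carath\'eodory function convex in $\xi$, and the origin growth control \eqref{eq:f-origin} is assumed. We may therefore apply Theorem~\ref{th-main-conv} to any $u \in \cEp(\Omega)\cap W^{1,p}(\Omega)$ and read off the conclusion: an approximating sequence $(u_n)_{n\in\N}\subset W^{1,\infty}_\vp(\Omega)$ with $u_n\to u$ in $W^{1,p}(\Omega)$ and $E_\Omega(u_n)\to E_\Omega(u)$.

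No genuine obstacle arises: the theorem is essentially a corollary, and the whole proof amounts to verifying that the assumptions of Theorem~\ref{th-main-conv} are indeed transferred by Theorem~\ref{theo:final}. The only subtlety worth flagging is the orthotropic case, where one must check that the convexity-in-$\xi$ of $f$ is compatible with the individual convexity-in-$|\xi_i|$ of each $f_i$ — this is automatic in the decomposition \eqref{f-is-ortho} since a sum of convex coordinatewise functions is convex, and each $f_i(x,t,\cdot)$ on $\rp$ being convex is exactly the hypothesis made on the $f_i$'s in Theorem~\ref{theo:final}(2).
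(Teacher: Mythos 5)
Your proposal is correct and matches the paper's own argument exactly: apply Theorem~\ref{theo:final} (part (1) in the isotropic case, part (2) in the orthotropic case) to convert \eqref{eqHZiso} into \eqref{eqHZconv}, and then invoke Theorem~\ref{th-main-conv} to conclude. The extra remarks you include (the alternative route via additivity of \eqref{eqHZconv}, and the compatibility check for convexity in the orthotropic case) are accurate but redundant, since Theorem~\ref{theo:final}(2) already packages precisely what is needed.
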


\noindent{\textbf{Methods.}} The Lagrangian \(f\) that we consider in~\eqref{def-E-of-u} depends on three variables: the spatial variable \(x\in \Omega\), the second variable \(t\) corresponding to the values of \(u(x)\), and the gradient variable \(\xi\) representing the values of \(\nabla u(x)\). It is known that the Lavrentiev gap does not arise when \(f\) is autonomous; that is, when \(f\) depends only on \(t\) and \(\xi\), see \cite{Bousquet-Pisa}. 
In contrast, the \(x\)-dependence usually produces Lavrentiev gaps, except when \(f\) depends just on $x$ and $\xi$ and satisfies a balance (or anti-jump) condition involving the spatial oscillations and the growth in the gradient variable. In that case,  one approximates any given map \(u\in W^{1,1}(\Omega)\) by a family \((u_\ve)_{\ve >0}\) obtained by convolution of \(u\) with a smooth kernel \(\vr_{\ve}\) (here we ignore the boundary condition for the moment). In order to prove the approximation in energy, namely that \(\lim_{\varepsilon\to 0}E_{\Omega}(u_\ve)=E_\Omega(u)\), one first observes that {\it the liminf inequality}: \(\liminf_{\ve\to 0}E_{\Omega}(u*\vr_\ve)\geq E_{\Omega}(u)\) easily follows from the Fatou lemma. 
The heart of the matter is {\it the limsup inequality}: \(\limsup_{\ve\to 0}E_{\Omega}(u*\vr_\ve)\leq E_{\Omega}(u)\). At this stage, one needs to exploit an anti-jump condition (\eqref{eqHZconv} for instance) which implies that: \(f(x,\nabla u_\ve)\lesssim \left(f_{B(x,\ve)}^{-}\right)^{**}(\nabla u_\ve)\), on the range where $\nabla u_\ve$ is controlled, up to innocuous additive terms. Then, one applies the Jensen inequality to \(\left(f_{B(x,\ve)}^{-}\right)^{**}\) to get
\[
f(x,\nabla u_\ve)\lesssim
\left(f_{B(x,\ve)}^{-}\right)^{**}(\nabla u_\ve) \leq \int_{\rn} \left(f_{B(x,\ve)}^{-}\right)^{**}(\nabla u(x-y))\vr_\ve(y)\dy\leq  \int_{\rn} f(x-y, \nabla u(x-y))\vr_\ve(y)\dy\,,
\]
where the last inequality follows from the definition of \(\left(f_{B(x,\ve)}^{-}\right)^{**}\). We thus obtain that the functions \(f(x,\nabla u_\ve)\) are dominated (still up to multiplicative constants and harmless additive terms) by 
\(f(x, \nabla u)*\vr_\ve\). Since \(\lim_{\ve\to 0} f(x, \nabla u)*\vr_\ve =f(x,\nabla u)\) in \(L^1\), the generalized dominated convergence theorem allows to conclude that 
\(\lim_{\ve\to 0}E_\Omega(u_\ve)=E_\Omega(u)\). 
The above calculation illustrates that the anti-jump condition is perfectly suited for approximation by convolution with a smooth kernel. 
As a matter of fact, one could even have the impression that it has been formulated precisely to make this interplay with the Jensen inequality work (for Lagrangians depending just on \(x\) and \(\xi\)). But even so, the anti-jump condition is simultaneously very efficient to discard the Lavrentiev gaps under (almost) sharp conditions on the parameters involved in the definition of \(f\) (see~\cite{eslemi}, that exploits this fact for the exponents \(p\) and \(q\) and the H\"older exponent of the coefficient \(a\) in the double phase functional \eqref{eq228},  and \cite{badi,bcdfm,Bor} for generalized versions).

The above approximation by mere convolution does not take into account the boundary condition, namely the fact that the approximating sequence should agree with \(u\) on \(\partial \Omega\). We will revisit this issue subsequently. We note here that for Lagrangians depending only on \(x\) and \(\nabla u\), this may lead to very intricate difficulties and subtle arguments to overcome them; see, e.g.,~\cite{C-b,Lukas}. 

We emphasize that this classical approximation of a Sobolev map \(u\) by convolution with a smooth kernel is doomed to fail for a Lagrangian having an essential and non-convex dependence with respect to \(t\). The obstacle is basically that the Jensen inequality, which plays so important a role in {\it the limsup inequality}, requires a convexity assumption. However, in our situation,  the Lagrangian \(f\) is assumed to be convex just with respect to \(\xi\), and not jointly convex with respect to \((t,\xi)\). Hence, even when \(f\) does not depend on \(x\), it is not possible to justify that \(f(u_\ve, \nabla u_\ve)\lesssim f(u, \nabla u)*\vr_\ve\). 
To overcome this difficulty,  as in \cite{Bousquet-Pisa}, we use the classical formulation of non-parametric variational problems in terms of parametric ones. The most prominent example of such a formulation is the area functional \(u\in W^{1,1}(\Omega)\mapsto \int_{\Omega}\sqrt{1+|\nabla u|^2}\dx\) which can be expressed as the perimeter of the subgraphs of the competing functions \(u\); that is,
\begin{equation}\label{eq506}
\int_{\Omega}\sqrt{1+|\nabla u|^2}\dx = \int_{\Omega\times \R} \d|D1_u|(t,x)\,,
\end{equation}
where \(|D1_u|\) is the total variation of the (finite) measure \(D1_u\) and \(1_u\) is the indicator function of the hypograph of \(u\), namely \(1_u(x,t)=1\) when \(t\leq u(x)\) and \(1_u(x,t)=0\) otherwise, see~\eqref{1u}. The right-hand side of \eqref{eq506} is the perimeter of the subgraph of \(u\) in the sense of Cacciopoli sets.
More generally, starting from an integrand \(f:\Omega\times \R\times \R^N\to \rp\), we can associate to the corresponding energy \(E_{\Omega}\) its convex counterpart \(\widehat{E}_{\Omega\times \R}\) which is another energy that acts on the subgraphs of the functions \(u\). 
To be more specific, while the original energy \(E_{\Omega}\) is defined on \(W^{1,1}(\Omega)\),  the new energy \(\widehat{E}_{\Omega\times \R}\) is defined on the set of those maps \(v\in L^{\infty}(\Omega\times \R)\) such that their distributional derivatives \(Dv\)  are finite \(\R^{N+1}\)-valued measures. The two energies are related by the formula
\begin{equation*}
\forall u \in W^{1,1}(\Omega) \qquad
E_{\Omega}(u)=\widehat{E}_{\Omega\times \R}(1_u)\,.
\end{equation*}
This generalizes to any function \(f\) the classical formulation \eqref{eq506} of the non-parametric minimum area problem via the minimization of the perimeter. The new energy \(\widehat{E}_{\Omega\times \R}\) involves a Lagrangian \(\widehat{f}:\Omega\times \R\times \R^{N+1}\to [0,\infty]\):
\[
\widehat{E}_{\Omega\times \R} (v)=\int_{\Omega\times \R} \dhf(x, t, Dv)\,.
\]
This Lagrangian \(\widehat{f}\) is homogeneous of degree \(1\) and convex in the last variable, which entitles one to define \(\dhf(x, t, Dv)\) as a measure whenever \(Dv\)  is a measure,  see~\eqref{eq776}. In the case of \eqref{eq506}, the Lagrangian \(\widehat{f}\) is simply the Euclidean norm in \(\R^{N+1}\).

In this new formulation, the \(t\) variable from the original functional becomes an additional spatial variable in the new functional.  The fact that the parametric formulation is a way to convexify the original one has already been exploited in the setting of \(\Gamma\)-convergence for \(BV\)-functions, see \cite{Dal-Maso},  and for the formulation of necessary conditions, see \cite{BouFra}. The key consequence for us is that the non-convex behaviour of \(f\) with respect to \(t\) becomes harmless and the convexity with respect to \(\xi\) is enough to ensure the convexity of \(\widehat{E}_{\Omega\times \R}\).

Hence, the approximate problem is transferred to this new functional \(\widehat{E}_{\Omega\times \R}\), except that the function \(v\) to be approximated is not a Sobolev map anymore. Moreover, for some reasons that will be clarified subsequently, the desired regularity of the approximating maps \(v_n\) is not the Lipschitz continuity but a \emph{cone}  condition of the following form: there exists \(C_n>0\)  such that for every \(x,y \in \Omega\) and for every \(t\in \R\), 
\begin{equation}\label{eq-Lip-vn}
v_n(x,t+C_n|x-y|)\leq v_n(y, t)\,.
\end{equation}
If \(v_n\) were the indicator function of the subgraph of a certain function \(u_n:\Omega\to \R\), this condition would mean that the graph of \(u_n\) lies below a family of cones with bounded apertures, which in turn is equivalent to the Lipschitz continuity of  \(u_n\).  {To construct such a sequence \(v_n\),   we introduce} two scales of parameters \(\ve\) and \(\delta\) and define
\[
v_{\ve, \delta}(x,t)\coloneqq v*_x\vr_\ve(x,t)+\delta\alpha(t)\,,
\]
where \(*_x\) refers to  \emph{partial} convolution just with respect to \(x\), while \(\alpha\) is a decreasing function depending just on \(t\), see the very  beginning of Section~\ref{sec:approx}. This second term is crucial to force \(t\mapsto v_n(x,t)\) to decrease uniformly with respect to \(x\), which enables us to obtain a bound \(C_n\) as in \eqref{eq-Lip-vn}.

For every sequence \((\ve_n)_n\) and \((\delta_n)_n\) decreasing to \(0\), the resulting maps \(v_n\coloneqq v_{\ve_n,\delta_n}\) converge a.e. to \(v\). Moreover, {\it the liminf inequality}, namely \(\liminf_{n\to \infty}\widehat{E}_{\Omega\times \R}(v_n)\geq \widehat{E}_{\Omega\times \R}(v)\), is an easy consequence of the Reshetnyak semicontinuity theorem for functionals of measures, see Proposition~\ref{prop-liminf}. 
Up to this point, all our arguments closely follow those developed in \cite{Bousquet-Pisa}.
The key challenge in reaching the conclusion lies in establishing {\it the limsup inequality}  of Proposition~\ref{prop-limsup} reading 
\begin{equation}\label{eq353}
\limsup_{n\to \infty}\widehat{E}_{\Omega\times \R}(v_n)\leq \widehat{E}_{\Omega\times \R}(v)\,.
\end{equation}
When \(f\) depends only on \((u,\nabla u)\), and assuming further that \(f(u,0)=0\) (which is not really restrictive), {\it the limsup inequality} can be easily derived from a  Jensen-type inequality for measures:
\begin{equation}\label{eq548}
\widehat{f}(t,Dv_{\ve,\delta}) \leq \widehat{f}(t,Dv)*_x\vr_{\ve}\,,
\end{equation}
see Remark~\ref{remark-Jensen-measure-autonomous} below. 
One derives therefrom the integral inequality
\begin{equation}\label{eq544}
\int_{\Omega\times \R}\d \widehat{f}(t,Dv_{\ve,\delta})
\leq \int_{\Omega\times \R}\d \widehat{f}(t,Dv)*_x\vr_{\ve}\,,
\end{equation}
from which {\it the limsup inequality} \eqref{eq353} easily follows.

In our situation where \(f\) depends in an essential way on the three variables, the anti-jump condition on \(f\) should play a role in the proof of {\it the limsup inequality} \eqref{eq353}, since otherwise the Lavrentiev phenomenon would never occur. As already explained, the anti-jump condition is perfectly adapted to the Jensen inequality when \(f\) only depends on \(x\) and \(\xi\) and when \(u\) is approximated by convolution with a smooth kernel. In contrast, it is not obvious how such an anti-jump condition should be exploited for the new Lagrangian \(\widehat{f}\) and the new approximating sequence \(v_{\ve_n,\delta_n}\), which is now defined from \(u\) in a more intricate way.  In particular, the inequality \eqref{eq548} is markedly false when \(f\) depends on the spatial variable \(x\) (since again no Lavrentiev gap would occur otherwise). 

In order to take into account the spatial dependence of the Lagrangian \(f\), we need to introduce a new strategy to prove {\it the limsup inequality}. 
The first task to establish \eqref{eq353} under the condition \eqref{eqHZconv} is an effective transfer of the anti-jump behaviour exhibited by the integrand  \(f\)  into pertinent information for the integrand \(\widehat{f}\) of the convexified energy $\wh E_{\Omega\times \R}$. In particular, we present in Section~\ref{ssec:Zh-for-hat-f}  estimates for the functions \(v_{\ve, \delta}\) which make it possible to exploit the anti-jump condition satisfied by \(\widehat{f}\).
This entitles us to get a suitable bound on the measure \(\widehat{f}(x,t,Dv_{\ve,\delta})\), see Lemma~\ref{lm-disintegration-nonautonomous}. Up to an additive term (depending on \(\delta\)) that is not difficult to handle, the corresponding estimate reads: for every Borel subset \(A'\subset \Omega\times \R\),
\begin{equation}\label{eq561}
\int_{A'}\d \wh{f}(x,t,Dv_{\ve, \delta}) \leq C\int_{A'}\d\left(\wh{f}(x,t,Dv)*_x \vr_{\ve}\right)\,.
\end{equation}
In contrast to \eqref{eq544}, there is a multiplicative constant \(C\) in the right-hand side (that is not necessarily equal to \(1\)). So even if we were able to pass to the limit \((\ve,\delta)\to (0,0)\), we would get (with $A'=\Omega\times \R$):
\begin{equation}\label{eq572}
\limsup_{(\ve, \delta)\to (0,0)}\int_{\Omega \times \R}\d \wh{f}(x,t,Dv_{\ve, \delta}) \leq C\int_{\Omega\times \R}\d \wh{f}(x,t,Dv)\,.
\end{equation}
One would then fail to obtain the expected {\it limsup inequality}. 

A possible way to overcome this obstacle is to rely on another naive attempt to prove \eqref{eq353}, namely applying the \emph{continuity Reshetnyak theorem} (Lemma~\ref{lm-Reschetnyak}) to the functional \(\widehat{E}_{\Omega\times \R}\),  so as to get the identity:
\[
\lim_{(\ve,\delta)\to (0,0)} \int_{\Omega\times \R} \dhf(x, t, Dv_{\ve,\delta}) = \int_{\Omega\times \R} \dhf(x, t, Dv)\,,
\]
which is exactly the desired conclusion.
However, this theorem requires among other assumptions that  \(\widehat{f}(x,t,\cdot)\) is continuous on the unit sphere of \(\R^{N+1}\). This certainly holds if \(f\) satisfies a linear growth condition, but fails to be true for a superlinear \(f\), since in that case \(\widehat{f}(x,t,(q^x,q^t))=+\infty\)  for every \( (q^x,q^t)\in \rn\times \rp\). Introducing the Lebesgue-Radon-Nikodym derivative \((q_{\ve, \delta}^x, q_{\ve, \delta}^t)\) of \(Dv_{\ve,\delta}\) with respect to its total variation \(|Dv_{\ve,\delta}|\), the measure \(\widehat{f}(x,t,Dv_{\ve, \delta})\) is simply defined as:
\[
\widehat{f}(x,t,Dv_{\ve, \delta}) = \widehat{f}(x,t,(q_{\ve, \delta}^x, q_{\ve, \delta}^t))|Dv_{\ve, \delta}|\,.
\]
It follows from the definition of \(v_{\ve,\delta}\) that \(q_{\ve,\delta}^{t}<0\) but \(\widehat{f}(x,t,(q_{\ve, \delta}^x, q_{\ve, \delta}^t))\) tends to \(+\infty\) when \(q_{\ve,\delta}^{t}\) approaches \(0\), which happens, roughly speaking, when \(\nabla u\) is very large. As a consequence,  the Reschetnyak continuity theorem seems to be of no use in our framework.

So far, we have presented two possible ways to establish {\it the limsup inequality}. They both fail for different reasons: the estimate \eqref{eq572} is an unsatisfactory limsup inequality, due to the nasty multiplicative constant \(C\), while the Reshetnyak continuity theorem only applies on subdomains of \(\Omega\) where the function \(q^{t}_{\ve,\delta}\vcentcolon=\partial_t v_{\ve, \delta}/|Dv_{\ve, \delta}|\) is bounded from above by a negative constant. It turns out that by suitably intertwining these two failing strategies, one can produce a successful one.

Roughly speaking, one introduces a new parameter \(k\geq 1\), and one splits \(\Omega\) into a good region and a bad region as follows:
\[
\int_{\Omega\times \R}\d\widehat{f}(x,t,Dv_{\ve,\delta}) = \int_{\{q^{t}_{\ve, \delta}\leq -\frac{1}{k}\}}\d\widehat{f}(x,t,Dv_{\ve,\delta})+ \int_{\{q^{t}_{\ve, \delta}> -\frac{1}{k}\}}\d\widehat{f}(x,t,Dv_{\ve,\delta})\,.
\]
For every fixed \(k\), one can apply the Reshetnyak theorem to the first term, since \(\widehat{f}\) is bounded in the good region where \(q^{t}_{\ve, \delta}\leq -1/k\). The second term is more delicate to handle because the parameter \(\ve\) arises in two different positions, namely through the two functions \(q^{t}_{\ve,\delta}\) and \(v_{\ve, \delta}\). In a nutshell, one first estimates the integrand by relying on the anti-jump condition obtained for \(\widehat{f}\) as in~\eqref{eq561}. Up to multiplicative constants and additive terms, this leads  to
\[
\int_{\{q^{t}_{\ve, \delta}> -\frac{1}{k}\}}\d\widehat{f}(x,t,Dv_{\ve,\delta})
\lesssim \int_{\{q^{t}_{\ve, \delta}> -\frac{1}{k}\}} \d\left(\widehat{f}(x,t,Dv)*_x \vr_\ve\right)\,.
\]
In order to pass to the limit when \(\ve\to 0\), we have to understand how \(q^{t}_{\ve, \delta}\) behaves when \(\ve\) varies. This requires a careful use of some classical ingredients from geometric measure theory, such as the coarea and the area formula, as well as the approximate tangent spaces to the level sets of \(u\). The fact that the bad region \(\{q^{t}_{\ve, \delta}> -\frac{1}{k}\}\) has a small measure is crucial to conclude. At a technical level, this entitles one to get rid of all the undesirable multiplicative constants when \(k\to +\infty\). 

Once {\it the limsup inequality}  \eqref{eq353} is proved, we reach a safe and familiar shore. We
can conclude with the equality \(\lim_{n\to \infty}\widehat{E}_{\Omega\times \R}(v_n)=\widehat{E}_{\Omega\times \R}(v)\) (Corollary~\ref{coro-lim-hat-E}). It remains to derive from this approximating sequence for \(v\) an approximating sequence for \(u\). This step of the proof is similar to the corresponding one in \cite{Bousquet-Pisa}: one can select some \(s\in (0,1)\) and define functions \(u_n\) on \(\Omega\) such that \(1_{u_n}\) coincides with the indicator function of the super level set  \(\{(x,t):\ v_n(x,t)>s\}\). The convergence of \(v_n\)  to \(1_u\) easily implies the convergence of \(u_n\) to \(u\).  That each \(u_n\) is Lipschitz continuous is a consequence of the condition~\eqref{eq-Lip-vn} satisfied by the maps \(v_n\).
All those arguments are provided in the proof of  Proposition~\ref{prop-inner-approximation}.

However, this is not the end of the story, since the maps \(u_n\) that we have just obtained do not necessarily agree with \(\varphi\) on \(\partial \Omega\). A non-trivial task is to modify \(u_n\) to get a new map $u_{n}^{\vp}$ that satisfies this boundary condition. A usual construction to achieve this goal is based on a partition of unity argument. Due to the absence of any growth condition of \(f\) with respect to \(\xi\), however, this approach does not work directly in our situation. Instead, we rely again on the convexified energy \(\widehat{E}_{\Omega\times \R}\) to localize the problem and reduce to the case when \(\Omega\) is the epigraph of a Lipschitz function. However, even in this setting, the strategy employed in~\cite{Bousquet-Pisa}, based on local translations, cannot be repeated due to the \(x\)-dependence.  We rely on a different approach and modify the construction of the maps \((u_n)_n\) described above by using the trick of decentered convolution, that is, the smooth kernel \(\vr_\ve\) is not taken radially symmetric but decentered with respect to the origin. This entitles us to exploit the regularity of \(\varphi\) and get first an approximating sequence which is uniformly close (but not necessarily equal) to the map \(\varphi\) on a neighborhood of \(\partial \Omega\). A final truncation argument with a cut-off function then yields an approximating sequence that agrees with \(\varphi\) on the boundary.  \newline

The above strategy works well under the anti-jump condition \eqref{eqHZconv}, which is the most general when compared to \eqref{eqHZD2} or \eqref{eqHZiso}. 
The equivalence between \eqref{eqHZconv} and \eqref{eqHZD2} when \(f\) satisfies a further \(\Delta_2\) condition  follows from a remarkable result by H\"ast\"o~\cite{ha-aniso}. Our Theorem~\ref{theo:final} provides a similar conclusion for the equivalence between \eqref{eqHZconv} and \eqref{eqHZiso}. This may seem counterintuitive, in view of the fact that the functions \(f_{B}^{-}(t,\xi)\) and \(\left(f_{B}^{-}(t,\cdot)\right)^{**}(\xi)\) are not comparable, even in the isotropic setting where \(f\), and thus also \(f_{B}^{-}(t,\cdot)\) and \(\left(f_{B}^{-}(t,\cdot)\right)^{**}(\cdot)\) only depend on the Euclidean norm of \(\xi\). 
Theorem~\ref{theo:final} is also reminiscent of  a similar result~\cite[Lemma 6.2]{bgs-arma} which is stated and proved for superlinear  functions \(f\) satisfying a suitable growth condition for large \(|\xi|\)'s and not depending on the variable \(t\). The major difference  is the way we measure the length of \(\xi\), which in Theorem~\ref{theo:final} 
involves both \(|\xi|\) and the function \(f_{B(x,\ve)}^-(t,|\xi|)\) (and not just \(|\xi|\)).
Our proof is also substantially different,  even if it relies as in \cite{bgs-arma} on several properties of the convex envelopes of one-variable functions, see Lemma~\ref{lem:at} and Lemma~\ref{lem:est-der}. 
\newline

\textbf{Organization of the paper. } In Section~\ref{sec:results}  we illustrate by numerous examples the above results on the absence of the Lavrentiev phenomenon.  Preliminary information is presented in Section~\ref{sec:prelim}.  Section~\ref{sec:twostars} translates the consequences of the main structural and anti-jump conditions to conditions satisfied on sub-level sets of the greatest convex minorant of the infimum of $f$ over small ball, namely $(f^-_B)^{**}$, see Theorem~\ref{theo:final}. Section~\ref{sec:prelim-to-main-proof} introduces the  construction of the initial approximation $(v_{\ve,\delta})$. The next two sections contain the main ingredients for the convergence of convexified energies of the initial approximate sequence, i.e., `the liminf estimate' for initial approximation is given in Section~\ref{sec:liminf} and `the limsup estimate' can be found in Section~\ref{sec:limsup-est}. In Section~\ref{sec:localization}, under {reduced} assumptions on \(f\), we establish the inner approximation (ignoring the boundary condition) in Proposition~\ref{prop-inner-approximation} and next we consider the  boundary approximation, first for special Lipschitz domains in Section~\ref{ssec:boundary-approx} and then for any Lipschitz sets in Section~\ref{ssec:proof-reduced}.  The final proof of Theorem~\ref{th-main-conv} is presented in Section~\ref{ssec:proof_conv} while the proofs of Theorems~\ref{theo:doubling},~\ref{theo:iso-ortho} are presented in Section~\ref{ssec:proofs_rest}.

\section{On classical and new functionals}\label{sec:results}

\subsection{One-dimensional examples of functionals exhibiting the Lavrentiev gap.}\label{ssec:counterexamples}
  We aim at showing why certain integrands known to be associated with some functionals with the Lavrentiev gap, do not fall into our regime.  
 One-dimensional problems are particularly illustrative 
 due to their simplicity and ease of visualization. Their analysis dates back to classical papers of Lavrentiev~\cite{Lav} and Mania~\cite{Mania}, providing the first examples of occurrence and conditions for the non-occurrence of the Lavrentiev gap. Concerning later contributions, we spotlight~\cite{Buttazzo-Mizel}, where the relaxation of functionals is considered.
 See also the survey~\cite{Buttazzo-Belloni} and the recent expository paper~\cite{Cerf-Mariconda}.

Starting with recalling probably the best known example of a functional with the Lavrentiev gap, we point out that the integrand does not satisfy any of our anti-jump conditions.
\begin{example}[Mania's example \cite{Mania}]\label{ex:mania}
    If $f:(0,1)\times\R\times\R\to \R$ is defined as $f(x,t,\xi)\coloneqq(t^3-x)^2\xi^6$, then we have, for boundary conditions $u(0) = 0$ and $u(1) =1$, that $\inf_{AC}\cE=0<\inf_{Lip}\cE<\infty$.
    However,      $f_{B(1, \ve)}^{-}(1,\ve^{-1})=0$ and $f(1-\ve,1,\ve^{-1})=\ve^{-4}$, which tends to infinity as $\ve \to 0$. Hence, the smooth function $f$ does not satisfy \eqref{eqHZconv} nor \eqref{eqHZiso}. 
\end{example}
The next example illustrates that superlinearity of \(f\) is not sufficient to exclude the Lavrentiev phenomenon.
\begin{example}[Ball \& Mizel's example~\cite{BallMizel}]\label{ex:ball-mizel}   Let $f:\R\times\R\times\R\to \R$ be given by \(f(x,t,\xi) \coloneqq (x^4-t^6)^2|\xi|^{27}+\mathcal{v}|\xi|^2\) with $\mathcal{v} > 0$. We have $f^{-}_{B(0, \ve)}(0, \ve^{-1/2}) = \mathcal{v}\ve^{-1}$, while $f(\ve, 0, \ve^{-1/2}) = \ve^{-11/2} + \mathcal{v}\ve^{-1}$, which divided by $\mathcal{v}\ve^{-1}$ grows to infinity with $\ve$ converging to $0$. Therefore, $f$ does not satisfy \eqref{eqHZconv} nor \eqref{eqHZiso}.
It is known that for the boundary conditions \(u(-1)=k_1, u(1)=k_2\)  with \(-1\leq k_1<0<k_2\leq 1\), and for \(\mathcal{v}\) sufficiently small, we have $\inf_{AC}\cE=0<\inf_{Lip}\cE<\infty$. 
\end{example}

One strength of the new results proved in the paper is the fact that we do not require continuity with respect to the first variable. However, we point out that the continuity in the second variable is needed.

\begin{example}[Cerf \&{} Mariconda's example \cite{Cerf-Mariconda}]\label{ex:cerf-mar}
    If $f:(0,1)\times\R\times\R\to \R$ is defined as $f(x,t,\xi)\coloneqq \left(\xi-\tfrac{1}{2t}\right)^2$ when \(t\not=0\) and $f(x,0,\xi)=0$, then $\inf_{AC}\cE=0$ and for every Lipschitz function $u$ that is not identically $0$, $\cE(u)=+\infty$. Note that $f_{B(x,\ve)}^{-}(t,\xi)=f(x,t,\xi)$, so $f$ does satisfy the balance condition \eqref{eqHZconv}, but it is not continuous with respect to $t$.
\end{example}

\subsection{Multidimensional examples }\label{ssec:examples}
In this section, we give examples of functionals 
\begin{equation}\label{eq:E-examples} 
    E_\Omega(u)=\int_\Omega f(x,u,\nabla u)\dx \,,
\end{equation}
for which there is no Lavrentiev phenomenon according to Theorems~\ref{theo:doubling}, \ref{th-main-conv}, and~\ref{theo:iso-ortho}. For a quick summary, see Tables~\ref{tab:examples} and~\ref{tab:examples2}. We denote the integrand in~\eqref{eq:E-examples} as $f=f(x,t,\xi)$, where $x$ stands for the spacial variable, the variable $t$ corresponds to the $u$-dependence of $f$, and the variable $\xi$ to its $\nabla u$-dependence. Let us emphasize that all examples with explicit $u$-dependence are novel, with the only remark that for $f=f(t,\xi)$ they follow from~\cite{Bousquet-Pisa} and for $f=f(x,t,\xi)$ with $(t,\xi)\mapsto f(x,t,\xi)$ being convex they are embraced by \cite{BMT}. {In this section we will use the following notation: for any real-valued function $w$ and a constant $C\in\R$, we write $C \ll w$ whenever there exists a constant $c\in\R$ such that $C<c \leq w(\cdot)$.}\newline


\noindent{\bf Examples built upon power models. } The typical examples of functionals exhibiting the Lavrentiev phenomenon are those of the variable exponent and double phase growth, i.e., when the integrand in \eqref{eq:E-examples} is given by $\wt f_{\text{v}}(x, \xi) \coloneqq |\xi|^{\wt r(x)}$ and $\wt f_{\text{d}}(x, \xi) \coloneqq  |\xi|^{p} + \wt a(x)|\xi|^q$, respectively. To exclude the Lavrentiev phenomenon for the functional involving $\wt f_{\text{v}}$, one usually requires that $1\ll \wt r  \in L^{\infty}(\Omega)$ is log-H\"older continuous ($\wt r \in \mathcal{P}^{\text{log}}$), cf.~\cite{zh}. For the functional defined by $\wt f_{\text{d}}$, the typical assumptions are $1< p< q$ and $0 \leq \wt a \in C^{0, \vk}$ with $q \leq p + \vk\max(1, p/N)$, see \cite{zh, eslemi, bgs-arma}. The latter was recently improved in~\cite{bcdfm}, allowing for $p$ and $q$ arbitrary far from each other, by considering in the place of $C^{0, \vk}$ a broader class of weights $\SP^{\vk}$, where $\vk\in (0,\infty)$ dictates a polynomial rate of vanishing decay of $\wt a$. We stress that upon these choices of parameters, the functions $\wt f_{\text{v}}$ and $\wt f_{\text{d}}$ satisfy condition~\eqref{eqHZ-iso-easy} and the corresponding energy  functionals are thus covered by Theorem~\ref{theo:iso}.

Let us prepare some notation before presenting their $u$-dependent counterparts falling into the same realm. With some abuse of typical notation, we say that a continuous function $r:\Omega\times\R\to [1,\infty)$ belongs to $\mathcal{P}^{\text{log}}(\Omega)$, if there exists a locally bounded function $C :\R \to [0, \infty)$, such that
\begin{equation}\label{eq615}
    | r(x,t) -  r(y,t)| \leq -\frac{C(t)}{\log(|x-y|)} \qquad\text{for all }\ x,y \in \Omega\,, t\in\R\,.
\end{equation}
We need also to introduce a class of functions generalizing $\SP^\vk$. Let us take a function $\omega : \rp\times \R \to \rp$ {such that}  $\omega(0,t) = 0$  and  $\omega(\cdot,t)$ is non-decreasing for every $t$. We say that a function $ a:\Omega\times\R \to [0, \infty)$ belongs to $\SP_{\omega(\cdot,\cdot)}(\Omega {\times \R})$ if $t\mapsto  a(x,t)$ is continuous for a.e. $x\in\Omega$ and there exists a locally bounded function $C:\R\to\rp$ such that
\begin{equation}\label{eq:Z-omega-t}
    a(x,t) \leq C(t)\big(a(y,t) + \omega(|x-y|,t)\big) \qquad\text{for all }\ x,y \in \Omega\,, t\in\R\,.
\end{equation}
Due to the function \(t\mapsto C(t)\) and the lack of concavity assumption  on $\omega(\cdot,t)$, the condition \eqref{eq:Z-omega-t} does not describe  the modulus of continuity of $a$ with respect to $x$. In the special case of  $\omega(s,t) = s^\vk$, the sub-class of $\SP_{\omega(\cdot,\cdot)}$  is $\SP^{\vk}$, which is meaningful for all $\vk\in (0,\infty)$. For more information about the class $\SP^{\vk}$ in the case when $a$ does not depend on $t$ we refer to~\cite{bcdfm}. 

We are now in a position to consider 
\begin{equation*}
     f_{\text{v}}(x, t, \xi) \coloneqq  |\xi|^{r(x, t)} \qquad \text{and} \qquad  f_{\text{d}}(x, t, \xi) \coloneqq|\xi|^{p} + a(x, t)|\xi|^q\,.
\end{equation*}
Let us show that $f_{\text{v}}$ verifies~\eqref{eqHZ-iso-easy} with $1=p\leq r(\cdot, \cdot)$  if $r\in\mathcal{P}^{\log}$. Given \(L_2\geq 1\) and $\ve < \exp(-1)$, let us take $|\xi| \leq L_2\ve^{-1}$. For every \(x, y\in \Omega\) with \(|x-y|<\ve\),   if \(|\xi|\geq 1\), then
\begin{equation*}
     f_{\text{v}}(x, t, \xi) = |\xi|^{  r(y, t)}  |\xi|^{ r(x, t) -   r(y, t)} \leq   f_{\text{v}}(y, t, \xi) \cdot (L_2\ve^{-1})^{-C(t)/\log(\ve)} \leq L_2^{C(t)}\exp({C(t)}) f_{\text{v}}(y, t, \xi)\,,
\end{equation*}
If instead \(|\xi|\leq 1\), we simply observe that \(f_{\text{v}}(x, t, \xi)\leq 1\).
Hence~\eqref{eqHZ-iso-easy} is proven. Additionally, $f_{\text{v}}(x, t, 0) = 0$, so by Theorem~\ref{theo:iso}, the Lavrentiev gap for the corresponding functional is excluded. We stress that the only property of $t\mapsto r(x,t)$ that is needed, besides \eqref{eq615}, is the continuity of this mapping. \\
Let us show that $f_\text{d}$ satisfies~\eqref{eqHZ-iso-easy} if  $a \in \SP^{\vk}$ with ${p\leq q} \leq p + \vk\max(1, \tfrac{p}{N})$. If $|\xi| \leq L_2\ve^{-\min(1,  {N}/{p})}$  {with \(\xi\not=0\)}, we have for every \(x, y\in \Omega\) with \(|x-y|<\ve\),
\begin{equation*}
    \frac{f_\text{d}(x, t, \xi)}{f_\text{d}(y, t, \xi)} = \frac{1 +   a(x, t)|\xi|^{q-p}}{1 +   a(y, t)|\xi|^{q-p}} \leq 1 + C(t) + C(t)|x-y|^{\vk}|\xi|^{q-p} \leq 1 + C(t) + C(t)L_2^{q-p}\,,
\end{equation*}
which means that~\eqref{eqHZ-iso-easy} holds true. Given that $f_{\text{d}}(x, t, 0) = 0$, Theorem~\ref{theo:iso} implies the absence of the Lavrentiev phenomenon in this case. Again, we stress that the only property of $t\mapsto a(x,t)$ that is needed, besides \eqref{eq:Z-omega-t}, is the continuity of this mapping. Moreover, with similar computations as for $f_\text{d}$, one can check that the function $(x,t,\xi)\mapsto |\xi|^{p(t)} +  a(x, t)|\xi|^{ q(t)}$ satisfies~\eqref{eqHZiso}  if $1 \leq p(\cdot), q(\cdot) \in C(\R)$, $a \in \SP_{\omega(\cdot,\cdot)}$ for $\omega(s,t)=s^{\vk(t)}$ where $\vk:\R\to\rp$, and $q(t) \leq p(t) +  \vk(t)\max(1, \tfrac{p(t)}{N})$ for every $t$. Moreover, for every  example given above, we can find conditions to exclude the Lavrentiev phenomenon for its orthotropic counterpart also including more phases. For instance, Theorem~\ref{theo:iso-ortho} applies to a multi-phase orthotropic function
\begin{equation*}
 f(x,t,\xi)=\sum_{i=1}^{N}\left(|\xi_i|^{p_i}+\sum_{j=1}^k a_{i,j}(x,t)|\xi_i|^{q_{i,j}}\right)\quad\text{with $a_{i,j} \in \SP^{\vk_{i,j}}$, $\vk_{i,j}>0$, and $q_{i,j} \leq p_i + \vk_{i,j}\max(1, \tfrac{p_i}{N})$}\,.
\end{equation*}
We observe that already mentioned examples embrace the best known conditions without $u$-dependence of the functional, cf.~\cite{eslemi, BCM, DeF-multi, badi, bcdfm, zh, bgs-arma}. At the same time, introducing this extra dependence does not complicate the conditions in an artificial way.\newline

{\begin{table}[ht!]
\centering
\begin{tabular}{lc}
\toprule[1.5pt]
{$f(x,t,\xi)$} & 
{{Parameters}}  \\ 
\midrule[1.5pt]
\multirow{1}{*}{$a(x,t)|\xi|^{p(x,t)}$} & \multirow{1}{*}{$1\leq p\in {\cal P}^{\log}$, $0\ll a \in L^{\infty}{(\Omega \times \R)}$, $a(x,\cdot)\in C(\R)$}\\
\midrule 
 \multirow{1}{*}{$ b(t)|\xi|^{p}\big(1+a(x,t)\log(e+|\xi|)\big)$} & \multirow{1}{*}{$1\leq p<\infty$,  $ a\in {\cal P}^{\log} $} \\
\midrule
{$b(t)\left[|\xi|^p+a(x,t)|\xi|^q\right]$}&$\ a\in \mathcal{Z}^\varkappa, \varkappa\in(0,\infty)$, $ q\leq p+\varkappa\max(1,\frac{p}{N})$  \\
\midrule
{$b(t)\big[|\xi|\log(1+|\xi|)+a(x,t)|\xi|^q\big]$}&$
a\in \mathcal{Z}^\varkappa, \varkappa\in(0,\infty)$, $q\leq 1+\varkappa\max(1,\frac{1}{N})$\\ 
\midrule
\multirow{2}{*}{{ $b(t)\left[|\xi|^{p(t)}+a(x,t)|\xi|^{q(t)}\right]$}} &{$
p, q,\vk\in C(\R;\R_+),\ a\in \mathcal{Z}_{\omega(\cdot,\cdot)},\ \omega(s,t)= s^{\vk(t)}$} \\ &{$\forall\,{t } \ q(t)\leq p(t)+\varkappa(t)\max(1,\frac{p(t)}{N})$}
\\
\midrule
{$b(t)\left[\exp(|\xi|)+a(x,t)\exp(\gamma|\xi|)\right]$} & {${|b(t)| \leq c(|t|^{N'} +1)},
a\in \mathcal{Z}^{N(\gamma-1)} {\cap L^{\infty}(\Omega\times\R)}, \gamma > 1$}
\\
\midrule
\multirow{2}{*}{$b(t)\left[|\xi|^p + a(x, t)\exp(|\xi|^q)\right]$}& \multirow{1}{*}{${|b(t)| \leq c(|t|^{N'} +1)},\ 
a\in \mathcal{Z}_{\omega(\cdot, \cdot)}{\cap L^{\infty}(\Omega\times\R)}$}\\& \multirow{1}{*}{$\omega(s) \leq \exp(s^{-\vk}),\ \vk > q\min(1, N/p)$} 
\\
\midrule
\multirow{3}{*}{$b(t)\left[\psi_0(t, |\xi|) + a(x,t)\psi_1(t, |\xi|)\right]$}
 & \multirow{1}{*}{$
\forall\,t\ \psi_0(t, \cdot), \psi_1(t, \cdot)$ -- $N$-function*, $a\in \SP_{\omega(\cdot, \cdot)}$} \\
& \multirow{1}{*}{$\forall\,{t} \ \psi_1(t, \cdot)/\psi_0(t, \cdot)$ -- non-decreasing, $\forall\,{r} \  \psi_0(\cdot, r), \psi_1(\cdot, r) \in C(\R)$} \\
& $\forall\,{L} \ \exists\,{ c } \ \forall\,{s, t} \ \ \omega(s, t) \leq c \max\left( \frac{s^{-N}}{\psi_1(t, \psi_0(t)^{-1}(Ls^{-N}))}, \frac{\psi_0(t, L^{1/N}s^{-1})}{\psi_1(t, L^{1/N}s^{-1})} \right)$ \\ 
\bottomrule[1.5pt]
\end{tabular}
\caption{Main examples of integrands for functionals $\int_\Omega f(x,u,\nabla u)\dx$ for which \eqref{eqHZiso} is satisfied and so for which we provide the absence of any Lavrentiev phenomenon by Theorem~\ref{theo:iso-ortho}. We write $f=f(x,t,\xi)$, where $x$ stands for the spacial variable, variable $t$ corresponds to $u$-dependence of $f$, and variable $\xi$ to its $\nabla u$-dependence. In all examples in the table, $b \in C(\R, (0,+\infty))$, which might be relaxed due to Remark~\ref{rem:extra-t}. The class $\SP_{\omega(\cdot,\cdot)}$  from~\eqref{eq:Z-omega-t} embraces (but is not restricted to) H\"older continuity; $\SP^\vk$, $\vk>0$ is its special case. Conditions on $\omega$ in the last example simplify under growth restrictions. *Assumption that $\psi_0,\psi_1$ are $N$-functions is given for the simplicity of the exposition.}
\label{tab:examples}
\end{table}}

\noindent{\bf Generalized Orlicz examples. } In order to include isotropic functionals of essentially non-power growth, we need to study when they verify~\eqref{eqHZiso}. The simplest example under no growth restriction is constructed by the use of an increasing,  non-negative convex function $\psi$  and reads
\begin{equation*}
     f_{\text{e}}(x, t, \xi) \coloneqq \psi(|\xi|) + a(x, t)\psi^\gamma(|\xi|)\quad \text{with }\ \gamma > 1\,.
\end{equation*}
In this case, to satisfy~\eqref{eqHZiso} one can require that $a \in \SP^{N(\gamma - 1)}$. Indeed, note that whenever $(f_{\text{e}})^{-}_{B(x, \ve)}(t, \xi) \leq L_2\ve^{-N}$  with \(\xi\not=0\), then $0<|\xi| \leq \psi^{-1}(L_2\ve^{-N})$. Therefore, for any $y \in B(x, \ve)$, we have
\begin{align*}
    \frac{f_{\text{e}}(x, t, \xi)}{f_{\text{e}}(y, t, \xi)}   
    &\leq 1 + C(t) + C(t)|x-y|^{N(\gamma-1)}\psi^{\gamma - 1}(\psi^{-1}(L_2\ve^{-N})))\leq 1 + C(t) + C(t)L_2^{\gamma-1}\,.
\end{align*}
{If $\psi$ is such that $\psi(0)=0$, or $a\in L^\infty(\Omega\times\R)$}, we can deduce the absence of any Lavrentiev gap from Theorem~\ref{theo:iso-ortho}. This example can be applied to various functions $\psi$, including slowly growing ones, e.g. $\psi(s)=s$ or $\psi(s)=s\log(1+\log(\dots (1+\log(1+s))))$, as well as fastly growing ones, e.g. when for all sufficiently large $s$ it holds $\psi(s)=\exp(s)$ or $\psi(s)=\exp(\exp(\dots \exp(s)))$. We stress that condition $a \in \SP^{N(\gamma - 1)}$ is meaningful for arbitrary $\gamma > 1$  and the only property of $t\mapsto a(x,t)$, that is needed, is the continuity of this mapping. 

 To construct a more general example, we pick two increasing, non-negative, convex functions  $\psi_0$ and $\psi_1$ on \(\rp\) such that the function $\psi_1/\psi_0$ is non-decreasing. Upon setting
\begin{equation*}
    f_{\text o}(x, t, \xi) \coloneqq \psi_0(|\xi|) + a(x, t)\psi_1(|\xi|)\,,
\end{equation*}
 one can prove that $f_{\text o}$ satisfies condition~\eqref{eqHZiso}, if $a \in \SP_{\omega(\cdot, \cdot)}\cap L^{\infty}(\Omega\times \R)$ and
\begin{equation}\label{eq:omega-cond}
    \forall\,{L > 0} \quad \exists\,{c > 0} \quad \forall\,{s > 0} \quad \omega(s) \leq c \max\left( \frac{s^{-N}}{\psi_1(\psi_0^{-1}(Ls^{-N}))}, \frac{\psi_0(L^{1/N}s^{-1})}{\psi_1(L^{1/N}s^{-1})} \right)\,,
\end{equation}
Let us point out that the assumption that $a \in L^\infty(\Omega \times \R)$ is imposed in order to guarantee~\eqref{eq:f-origin}, and is satisfied whenever $a$ does not depend on $t$.
We refer to Example~\ref{ex:f-o} for details. The condition~\eqref{eq:omega-cond} specializes to the given above conditions for functions $f_{\text v}$, $f_{\text d}$, and $f_{\text{e}}$. We note that it has also a simpler form under additional compatibility conditions on $\psi_0, \psi_1$. Namely\begin{enumerate}
    \item if $\psi_1/\psi_0 \in \Delta_2$, then~\eqref{eqHZiso} is satisfied  with $a\in\SP_{\omega(\cdot,\cdot)}$ for  $\omega(s) \leq \frac{\psi_0(s^{-1})}{\psi_1(s^{-1})}$;
    \item if $\psi_1 \circ \psi_0^{-1} \in \Delta_2$,  then~\eqref{eqHZiso} is satisfied with $a\in\SP_{\omega(\cdot,\cdot)}$ for $\omega(s) \leq \frac{s^{-N}}{\psi_1(\psi_0^{-1}(s^{-N}))}$.
\end{enumerate}

 Inspired by~\cite[Example 2.7 (2)]{Lukas23} let us consider
\begin{equation*}
    f_{\text{D}}(x, t, \xi) \coloneqq |\xi|^p + a(x, t)\exp(|\xi|^q)\,,
\end{equation*}
where $a\in \SP_{\omega(\cdot,\cdot)} \cap L^{\infty}(\Omega \times \R)$ for $\omega(s) = \exp(-s^{-\vk})$. In~\cite[Example 2.7 (2)]{Lukas23} 
the authors allow the parameter $q$ to be strictly {less} than \(1\) so that \(f_{\text{D}}\) is not convex in that case.
We demand -- via~\eqref{eq:omega-cond} -- that $\vk > q\min(1, \tfrac{N}{p})$, which means that $p\geq 1$ and $q > 0$ might be arbitrary if the function $a$ {decays fast enough near the points where it vanishes}. Moreover, unlike~\cite{Lukas23}, for the absence of the Lavrentiev gap obtained via Theorem~\ref{th-main-conv}, we allow for functionals with explicit $u$-dependence (i.e. $a=a(x,t)$), see Example~\ref{ex:Lukas} for details.

\begin{table}[ht!]
\centering
\begin{tabular}{lcc}
\toprule[1.5pt]
{$f(x,t,\xi)$} & 
{{Parameters}} & Theorem \\ 
\midrule[1.5pt]  
\multirow{1}{*}{$\sum_{i=1}^N a_i(x,t)|\xi_i|^{p_i(x,t)}$} & \multirow{1}{*}{$\forall i\quad 1\leq p_i\in {\cal P}^{\log}$, $0\ll a_i\in L^\infty{(\Omega \times \R)}$, $a_i(x,\cdot)\in C(\R)$} & Theorem \ref{theo:iso-ortho} \\  \cmidrule{1-3}  
\multirow{2}{*}{$b(t)\left(\sum_{i=1}^N |\xi_i|^{p_i}+\sum_{i=1}^N a_i(x,t)|\xi_i|^{q_i}\right)$}& \multirow{1}{*}{ $\forall\, i\ \big\{\big(p_i\leq q_i\leq p_i+\vk_i{\max\left(1, \frac{p_i}{N}\right)}$ and $ a_i\in \mathcal{Z}^{\vk_i},\,\vk_i>0\big)$,} & \multirow{2}{*}{Theorem \ref{theo:iso-ortho}}  
\\
  &
  or  $\big(q_i\leq p_i$ and $a_i\in L^\infty{(\Omega \times \R)}\big)\big\}$ & \\
\midrule
\multirow{2}{*}{$b(t)\left[\psi_0(|\xi|)+\sum_{i=1}^N a_i(x,t)\psi_i(|\xi_i|)\right]$} & \multirow{1}{*}{ 
$\forall\,i\ \psi_i$ -- $N$-function*, $\psi_i/\psi_0$ -- non-decreasing }& \multirow{2}{*}{Theorem \ref{th-main-conv}} \\&
 $a_i \in \mathcal{Z}_{\omega_i(\cdot,\cdot)},\ \omega_i$ as in~\eqref{eq:omega-cond} &\\
\midrule
\multirow{1}{*}{\ $\sum_{i=1}^N f_i(x,t, |\xi_i|)$} & \multirow{1}{*}{$\forall\,{i} \ f_i \text{ is substituted by any $f$ from Table~\ref{tab:examples}}$} & Theorem \ref{theo:iso-ortho} \\
\midrule
\multirow{1}{*}{\ $b(t)\left(\psi(|\langle {\upsilon}(x), \xi \rangle|) + |\xi|^{N/\gamma}\right)$} & \multirow{1}{*}{$\psi \in \Delta_2$ -- $N$-function*, ${\upsilon} \in C^{0, \gamma}(\Omega;\rn)$, $\gamma \in (0, 1]$} & Theorem \ref{theo:doubling} \\
 \bottomrule[1.5pt] 
\end{tabular}
\caption{Main examples of anisotropic integrands for functionals $\int_\Omega f(x,u,\nabla u)\dx$ for which we provide the absence of the Lavrentiev phenomenon. We write $f=f(x,t,\xi)$, where $x$ stands for the spacial variable, variable $t$ corresponds to $u$-dependence of $f$, and variable $\xi$ to its $\nabla u$-dependence. In all examples in the table, $b \in C(\R, (0,+\infty))$, which might be relaxed due to Remark~\ref{rem:extra-t}. The class $\SP_{\omega(\cdot,\cdot)}$  from~\eqref{eq:Z-omega-t} embraces (but is not restricted to) H\"older continuity; $\SP^\vk$, $\vk>0$ is its special case. *Assumption that certain functions are $N$-function is given for the simplicity of the exposition.  }
\label{tab:examples2}
\end{table}
 
 \bigskip

{\textbf{Fully anisotropic examples.}} Inspired by \cite{BaaBy,DeFilippisRegularity, BaaByOh,DeF-multi} we present an anisotropic Orlicz multi-phase example. Let us consider radially increasing,  convex  functions $(\psi_j)_{j=0}^k$, such that each $\psi_j$ with \(1\leq j \leq k\) grows essentially faster than $\psi_0$ at infinity:
\begin{equation*}
    f (x, t, \xi) \coloneqq \psi_0(\xi) + \sum_{j=1}^k a_j(x, t)\psi_j(\xi)=\sum_{j=1}^k\left(\tfrac{1}{k}\psi_0(\xi) +  a_j(x, t)\psi_j(\xi)\right)=:\sum_{j=1}^k\vt_j(x,t,\xi)\,.
\end{equation*}

As already observed in item {\it (iii)} before Theorem~\ref{th-main-conv}, to justify that this function satisfies \eqref{eqHZconv}, it is enough that this fact holds for each $\vt_j$. 
Assuming further that the functions \(\psi_j\) are isotropic, and relying on Theorem~\ref{theo:final} below, we only need to require that
condition~\eqref{eqHZiso} is satisfied for every ${\vartheta}_j$ which is the case when $a_j\in \SP_{\omega_j(\cdot,\cdot)}$ for  $\omega_j$ satisfying~\eqref{eq:omega-cond} with $\psi_j$ in the place of $\psi_1$. This condition applied to isotropic Lagrangian fully covers the scope of \cite[Theorem 3.1]{BaaBy} and extends it in three directions. We  allow for explicit $u$-dependence of the considered functional and do not need to assume that $\psi_j \in \Delta_2$ for any $j$. Moreover, in~\cite{BaaBy} the imposed compatibility condition forces the closeness of phases expressed as $\limsup_{|\xi|\to\infty}\frac{\psi_j(x,|\xi|)}{|\xi|\psi_0(x,|\xi|)}<\infty$ for every $j$. Upon the condition above, the function $\omega_j$ does not play a role of the modulus of continuity, because we do not require its concavity, so no closeness of phases is needed. 

Let us present another fully anisotropic example using the fact that the function $|\langle x, \xi \rangle|$ does not admit an orthotropic decompostion. We can consider Lagrangians of the form
\begin{equation*}
    f_{\text{a}}(x, \xi) \coloneqq \psi(|\langle {\upsilon}(x), \xi \rangle|) + |\xi|^{N/\gamma}\,,
\end{equation*}
where $\psi \in \Delta_2$ is an increasing,  convex  function, ${\upsilon} : \rn \to \rn$ is in $C^{0, \gamma}$, $\gamma \in (0, 1]$. Then, the integrand $f_{\text{a}}$ satisfies assumptions of Theorem~\ref{theo:doubling}, see Example~\ref{ex:f-a} for details.

\begin{remark}\label{rem:extra-t}
    \rm In every above example,
 we can freely multiply the Lagrangian \(f\) by an extra $t$-dependent continuous function $b:\R\to (0,\infty)$ still keeping the  balance conditions satisfied.  In the case when ~\eqref{eqHZ-iso-easy} is satisfied, the function $b$ could also vanish. A similar remark holds for a more restrictive variant of \eqref{eqHZconv} when the condition  $\left(f^{-}_{B(x,\ve)}(t,\cdot)\right)^{**}\left(\xi\right)+|\xi|^{\max(p,N)}
\leq {\mathcal{k}_2}{\ve^{-N}}$ is replaced by $\ |\xi|^{\max(p,N)}
\leq {\mathcal{k}_2}{\ve^{-N}}$. Then if $f$ satisfies such a  modified \eqref{eqHZconv}, the same condition holds for $b(t)f(x,t,\xi)$ and Theorem~\ref{th-main-conv} can be applied to the latter.
\end{remark}

\section{Preliminaries}\label{sec:prelim}
\subsection{Notation} \label{ssec:notation}

In the sequel $\Omega\subset\rn$ is a fixed bounded open set with Lipschitz boundary. Moreover,  \(\Lambda\) denotes a bounded open set which  contains the closure of $\Omega$. We also formulate some arguments with an arbitrary  open set \(\Lambda'\Subset \Lambda\) (the latter meaning that \(\overline{\Lambda'}\subset \Lambda\)). Throughout the paper we assume that $f:\Omega\times\R\times\rn\to\rp$ or $f:\Lambda\times\R\times\rn\to\rp$ is a Carath\'eodory function, i.e. it is measurable with respect to the first variable and continuous with respect to the second and the third variables. Moreover, $f$ is always required to be convex with respect to the last variable. For a function $u$ defined on \(\Lambda\) which shall be clear from the context, we will introduce two Borel subsets \(\Lambda_+\) and \(\Lambda_0\) of \(\Lambda\) such that \(\Lambda=\Lambda_+\cup \Lambda_0\) and
\begin{equation}
    \label{def-Lambda+}
 |\nabla u|>0 \textrm{ a.e. on } \Lambda_+ \textrm{ and } |\nabla u|=0 \textrm{ a.e. on } \Lambda_0
\,.\end{equation}

For a set $U\subset\R^d$, we denote by
{$C^{0}_c(U)$ (resp. $C_c^\infty(U)$) the set of continuous (resp. smooth) functions with compact support in $U$, while  $W^{1,p}_\vp (\Omega)$, $p\geq 1$ stands for the set of weakly differentiable functions in $\Omega$ with $p$-integrable weak gradients and trace $\vp$ on $\partial\Omega$. Finally,  $W^{1,\infty}_\vp(\Omega)$  is the set of Lipschitz functions in $\Omega$ agreeing with $\vp$ on $\partial\Omega$.
}

 For every \(r\in \R\), we denote by \(r_+\) the positive part of \(r\), namely \(r_+=\max(r,0)\). 

%
\noindent The partial derivative of a Sobolev function \(w:\R^N\to \R\) with respect to a unit vector \(e\in \R^N\) is denoted by \(\partial_e w\).  For a convex function $w: [0, \infty) \to [0, \infty)$, we denote by $D^+w(s)$, $s \geq 0$, the right-hand side derivative of $w$ in point~$s$.

\noindent For any measurable map \(w:U\to \R\), $U\subset\R^d$, we define its graph 
\[
\Gr_w\coloneqq\{(x,t)\in U\times\R:\ t=w(x)\}\,.
\]
For a given Borel function \(w: U\times \R\to \R\), which is non-increasing and left-continuous with respect to the second variable, we define the {\it generalized inverse} with respect to the second variable by 
\begin{equation}\label{def-gen-inv}
w^{-1}(x,s)\coloneqq \inf \{t\in \R :\ w(x,t)\leq s\}\in [-\infty,+\infty] \qquad\text{for every }\ x\in U \,.    
\end{equation}
In particular, if for a given \(x\in U\), there is no \(t\in \R\) such that \(w(x,t)\leq s\), then \(w^{-1}(x,s)=+\infty\). If instead, \(w(x,t)\leq s\) for every \(t\in \R\), then \(w(x,t)=-\infty\).

\noindent For a function $w:\R^d\to\R$ bounded from below by an affine function,  we define its {\it greatest convex minorant} $w^{**}:\R^d\to\R$ as the supremum of all convex  functions which are not larger  than $w$ in the whole $\R^d$. It can  also  be obtained by applying the Young conjugation operation $*$ twice, see e.g.~\cite[Corollary~2.1.42]{C-b}. Note however that this latter fact will not be used in the sequel.
\noindent We say that a function \(w:\rn \to \R\)  is {\it superlinear} if
\[
\lim_{|\xi|\to \infty}\frac{w(\xi)}{|\xi|}=\infty\,.
\]
We say that \(f:\Omega\times \R\times \R^N\to [0,\infty)\) is superlinear if there exists a function \(w:\R^N\to \R\) as above such that for a.e. \(x\in \Omega\), for every \((t,\xi)\in \R\times \R^N\),
\begin{equation}\label{eq-superlinearity-sv}
f(x,t,\xi)\geq w(\xi).
\end{equation}
We say that a function $w:[0,\infty)\to[0,\infty)$ is an {\it $N$--function} if it is convex, continuous,  and such that $w(0)=0$, $\lim_{t \to 0}{w(t)}/{t}=0$ and $\lim_{t \to \infty}{w(t)}/{t}=\infty$. \\
 We say that a Carath\'{e}odory function $w:\Omega\times\R\times\rn\to\rp$ satisfies the $\Delta_2$ condition (denoted $w(x,t,\cdot)\in\Delta_2$) if there exists a constant $c>0$ independent of $x$ and $t$ such that it holds 
\begin{equation*}
 w(x,t,2\xi)\leq c(w(x,t,\xi)+1)\qquad\text{for a.e. $x\in\Omega$ and all } (t,\xi)\in\R\times\rn\,.
\end{equation*}
\noindent For a standard regularizing kernel $\vr\in C_c^\infty(B(0,1))$ and $\ve>0$ we denote \begin{equation}\label{def-vr-ve}
\vr_{\ve}(x)\coloneqq\ve^{-N}\vr(x/\ve)\,.
\end{equation}
Note that \(\vr_{\ve}\in C^{\infty}_c(B(0,\ve))\) for every \(\ve>0\). Recall that given a Borel function \(v:\rn\times \R \to \R\), the convolution with respect to \(x\) is defined as follows: for every \((x,t)\in \rn\times \R\), we set
\[
v*_x\vr_{\ve}(x,t)\coloneqq\int_{\rn}v(x-y,t)\vr_{\ve}(y)\dy \,.
\]
For an \(\R^{N+1}\)-valued Borel measure \(\mv\) on \(\Lambda\times \R\), we define the measure \(\mv*_x\vr_{\ve}\) by setting for every Borel set \(A' \subset \rn\times \R\):
\begin{equation}\label{eq-def-conv-partial-bis}
\mv*_x\vr_{\ve}(A')\coloneqq\int_{\Lambda\times \R}\left(\int_{\rn}\chi_{A'}(y,t)\vr_{\ve}(y-x)\dy  \right)\d\mv(x,t)\,,
\end{equation}
where \(\chi_{A'}\) denotes the indicator function of \(A'\).
Hence, for every bounded Borel function \(h:\rn\times \R \to \R\), it holds
\begin{equation}\label{eq-def-conv-partial}
\int_{\rn\times \R} h(x,t)\d (\mv*_x\vr_{\ve})(x,t)=\int_{\Lambda\times \R}\left(\int_{\rn}h(y,t)\vr_{\ve}(y-x)\dy  \right)\d \mv(x,t)\,.
\end{equation}

Another important  tool is the disintegration of finite measures defined on \(\Omega\times \R\). To detail this technique, we closely follow \cite[Section 2.5]{AmFuPa}. Let \(\mu\) be a positive Radon measure on \(\R\) and  \((\nu_t)_{t\in \R}\) a family of \(\R^{N+1}\)-valued  measures  on \(\Omega\) such that the function \(t\mapsto \nu_t(B)\) is \(\mu\)-measurable for any Borel set \(B\subset \Omega\). 
We further assume that 
\begin{equation}
t\mapsto |\nu_t|(\Omega) \textrm{ belongs to  } L^{1}(\R,\mu)\,.
\end{equation}
We  denote by \(\nu_t\otimes \mu\) the \(\R^{N+1}\)-valued  measure on \(\Omega\times \R\) defined for every Borel set  \(B\subset \Omega\times \R\) by
\begin{equation}
(\nu_t\otimes \mu)(B):=\int_{\R}\left(\int_{\Omega}\chi_B(x,t)\d\nu_t(x) \right)\d\mu(t)\,.
\end{equation}
It then follows that 
\begin{equation}\label{eq578}
\int_{\Omega\times \R} g(x,t)\d(\nu_t\otimes \mu)(x,t)=\int_{\R}\left( \int_{\Omega}g(x,t)\d\nu_t(x)\right)\d\mu(t)\,,
\end{equation}
 for every bounded Borel map \(g:\Omega\times \R\to \rp\).
The above definitions also make sense when \(\nu_t\) is a positive finite measure. Then \eqref{eq578} holds true for  any non-negative Borel function \(g\). 
Conversely, given a finite \(\R^{N+1}\)-valued  measure  on \(\Omega\times \R\), it can be written as a product of the form \(\nu_t\otimes \mu\) as above, where 
\(\mu\) is finite and \(|\nu_t|(\Omega)=1\) for \(\mu\) a.e. \(t\in \R\), see \cite[Theorem 2.28]{AmFuPa}.

We will make use of a non-decreasing sequence of continuous decreasing functions \(\theta_k:\R\to [0,1]\) for \(k\geq 1\) such that \(\theta_k\) is supported in \((-\infty,-\frac{1}{k})\) and for $t\in\R$ it holds
\begin{equation}
    \label{theta_k}   \lim_{k\to \infty}\theta_k(t)= \chi_{(-\infty,0)}(t)\,.
\end{equation}
\subsection{Precise representatives }
\label{sec:GMT}
In this paragraph, we closely follow \cite{MSZ}.
Given  \(q\in \N\), a set $E\subset\R^N$ is said to be  \emph{countably \(\cH^q\)-rectifiable} if there exist a family \((E_k)_{k\in \N}\) of subsets in  \(\R^q\) and for every \(k\in \N\), a Lipschitz map \(f_k:E_k\to \R^N\) such that $\cH^q\big( E\backslash \bigcup\limits_{k\in\N} f_k(E_k)\big)=0$.

If \(\Omega\subset \R^N\) is an open set and \(u\in L^{1}_{\textrm{loc}}(\Omega)\), then a representative \(\widetilde{u}\)  of \(u\) is said to be a \emph{precise representative} if 
\[
\widetilde{u}(x)\coloneqq\lim_{r\to 0}\frac{1}{|B_r|}\int_{B(x,r)} u(y)\dy
\]
at all \(x\) where the limit exists. For every \(u\in W^{1,1}(\Omega)\), a precise representative is unique up to a \(\cH^{N-1}\)-negligible set.

\begin{proposition}\cite[Theorems~1.1 and~1.2]{MSZ}
Let \(u\in W^{1,1}(\Omega)\) be precisely represented. Then,  the level set \(u^{-1}(t)\)  is countably \(\cH^{N-1}\)-rectifiable for a.e. \(t\in\R\), the graph \(\Gu=\{(x,u(x)):x\in \Omega\}\) of \(u\) is countably \(\cH^N\)-rectifiable and moreover, for every measurable set \(E\subset \Omega\), it holds 
\begin{equation}\label{eq-coarea-Sobolev}
    \int_{E}|\nabla u(x)|\dx = \int_{\R}\cH^{N-1}(E\cap u^{-1}(t))\dt\,,
\end{equation}
\begin{equation}\label{eq-area-Sobolev}
\int_{E}\sqrt{1+|\nabla u(x)|^2}\dx=\cH^N(\Gu\cap (E\times \R))\,. 
\end{equation}
\end{proposition}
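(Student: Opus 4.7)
The plan is to reduce all four claims to the classical Federer coarea and area formulas for Lipschitz maps via a Lusin-type approximation of $W^{1,1}$ functions by Lipschitz truncations. The key tool is that for every $\lambda>0$ there exists a Lipschitz function $v_\lambda:\rn\to\R$ with $\Lip(v_\lambda)\le C\lambda$ such that, on the set $A_\lambda\coloneqq \{x\in\Omega:\ u(x)=v_\lambda(x),\ \nabla u(x)=\nabla v_\lambda(x)\}$, one has $|\Omega\setminus A_\lambda|\to 0$ as $\lambda\to\infty$. Fixing a sequence $\lambda_k\to\infty$, write $A\coloneqq\bigcup_k A_{\lambda_k}$, so that $|\Omega\setminus A|=0$. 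Moreover, the precise representative of $u$ coincides with $v_{\lambda_k}$ at all but a $\cH^{N-1}$-null subset of $A_{\lambda_k}$, a property which will be used repeatedly to replace $u$ by the Lipschitz approximation on the level sets.

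The first step is rectifiability. For Lipschitz $v_{\lambda_k}$, the level set $v_{\lambda_k}^{-1}(t)$ is countably $\cH^{N-1}$-rectifiable for a.e. $t\in\R$ by the implicit function theorem on the non-critical part, combined with the Lipschitz coarea formula to show that the critical values carry negligible slice measure. Since $u^{-1}(t)\cap A_{\lambda_k}\subset v_{\lambda_k}^{-1}(t)$, taking countable unions covers $u^{-1}(t)\cap A$ by countably many Lipschitz images. The residual set $u^{-1}(t)\cap(\Omega\setminus A)$ sits above a Lebesgue-null set, and by the weak coarea inequality $\int_{\R}\cH^{N-1}(F\cap u^{-1}(t))\dt \le \int_F |\nabla u|\dx$ (valid for every Borel $F\subset\Omega$ and any precisely represented $u\in W^{1,1}$) applied with $F=\Omega\setminus A$, its $\cH^{N-1}$-measure vanishes for a.e. $t$. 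The rectifiability of $\Gu$ is handled analogously: it is covered by the Lipschitz graphs $\Gr_{v_{\lambda_k}}\cap (A_{\lambda_k}\times\R)$ up to a residual piece lying above $\Omega\setminus A$, whose $\cH^N$-measure is bounded by $\int_{\Omega\setminus A}\sqrt{1+|\nabla u|^2}\dx=0$.

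Next I would derive \eqref{eq-coarea-Sobolev}. Federer's coarea formula for $v_{\lambda_k}$ yields
\begin{equation*}
\int_{E\cap A_{\lambda_k}}|\nabla v_{\lambda_k}|\dx = \int_{\R}\cH^{N-1}(E\cap A_{\lambda_k}\cap v_{\lambda_k}^{-1}(t))\dt\,.
\end{equation*}
Since the precise representatives and gradients of $u$ and $v_{\lambda_k}$ agree on $A_{\lambda_k}$ up to a $\cH^{N-1}$-negligible subset, this identity also holds with $u$ replacing $v_{\lambda_k}$ throughout. Sending $k\to\infty$, monotone convergence gives $\int_{E\cap A}|\nabla u|\dx$ on the left, and $\int_\R \cH^{N-1}(E\cap A\cap u^{-1}(t))\dt$ on the right. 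The contributions coming from $\Omega\setminus A$ vanish on both sides by the weak coarea inequality applied with $F=E\setminus A$, which proves \eqref{eq-coarea-Sobolev}. The area formula \eqref{eq-area-Sobolev} follows along the same scheme, with Federer's area formula applied to the Lipschitz parametrization $x\mapsto(x,v_{\lambda_k}(x))$ whose Jacobian equals $\sqrt{1+|\nabla v_{\lambda_k}|^2}$.

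The main obstacle is establishing the weak coarea inequality for $W^{1,1}$ precise representatives, on which the rest hinges. I would prove it by smoothing: for $u_\ve\coloneqq u*\vr_\ve$ one has the classical equality $\int_F|\nabla u_\ve|\dx=\int_\R \cH^{N-1}(F\cap u_\ve^{-1}(t))\dt$; passing to the limit requires Fatou on the right together with the fact that $\cH^{N-1}$-a.e. convergence of precise representatives (on compact subsets of $F$) can be extracted along a subsequence. This measure-theoretic control of $\cH^{N-1}$ slices under limits is the delicate point, and once secured, the remainder of the argument is a bookkeeping of convergences already built into Federer's Lipschitz theorems and the Lusin approximation.
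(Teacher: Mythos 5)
The paper does not give its own proof of this proposition; it is quoted verbatim from \cite{MSZ}, so there is no internal argument to compare against. That said, your outline is worth examining on its own merits.

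Your high-level architecture is the right one, and it is in fact the one \cite{MSZ} follows: cover the level set and the graph by the Lipschitz truncations $v_{\lambda_k}$ on the good sets $A_{\lambda_k}$, invoke Federer's coarea and area formulas for Lipschitz maps there, and then reduce the problem to showing that the residual set $\Omega\setminus A$ (which is Lebesgue-negligible) contributes nothing to the $t$-slices for a.e.\ $t$. This last point is precisely the ``weak coarea'' or Eilenberg-type inequality you single out, and you correctly recognize it as the crux. The issue is that the scheme you propose to prove it does not close.

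Concretely, the intended chain
\[
\int_F |\nabla u|\dx = \lim_{\ve\to 0}\int_F |\nabla u_\ve|\dx
= \lim_{\ve\to 0}\int_\R \cH^{N-1}\bigl(F\cap u_\ve^{-1}(t)\bigr)\dt
\;\ge\; \int_\R \cH^{N-1}\bigl(F\cap u^{-1}(t)\bigr)\dt
\]
requires the lower-semicontinuity statement $\liminf_\ve \cH^{N-1}(F\cap u_\ve^{-1}(t)) \ge \cH^{N-1}(F\cap u^{-1}(t))$ for a.e.\ $t$, and this fails. Pointwise (even $\cH^{N-1}$-a.e.) convergence $u_\ve\to u$ gives no containment of level sets: if $u(x)=t$ and $u_\ve(x)\to t$, nothing forces $u_\ve(x)=t$ for any $\ve$, so typically $u^{-1}(t)\cap\liminf_\ve u_\ve^{-1}(t)=\emptyset$. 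The $(N-1)$-dimensional Hausdorff measure of slices is not lower semicontinuous along pointwise limits of the defining functions, and Fatou's lemma (which in any case goes in the wrong direction here, giving an upper bound on $\int\liminf$ rather than a lower bound) cannot rescue this. In \cite{MSZ} the Eilenberg-type bound for $W^{1,1}$ precise representatives is not derived by smoothing at all; it rests on the $\cH^{N-1}$-a.e.\ approximate differentiability of Sobolev functions and a Besicovitch covering argument, which is the genuinely hard content of that paper (and the reason the $W^{1,1}$ coarea formula was open until \cite{MSZ}, whereas the $W^{1,p}$, $p>1$, case had been known earlier). So your reduction to the Lipschitz case is correct, but the pivotal estimate on the residual set needs these finer fine-properties inputs, and as written the proof has a real gap there.
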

From the \emph{coarea formula} \eqref{eq-coarea-Sobolev}, we deduce by a standard argument that
for every non-negative Borel measurable function $g:\Omega\times \R\to \rp$, we have
\begin{equation}\label{eq:hdH-hdx-coarea}
\int_\Omega g(x,u(x))|\nabla u(x)|\dx=\int_\R\int_{u^{-1}(t)} g(y,t)\d\cH^{N-1}(y)\dt.
\end{equation}
Similarly, from the \emph{area formula} \eqref{eq-area-Sobolev}, we have
\begin{equation}\label{eq:hdH-hdx-area}
\int_\Omega g(x,u(x))\sqrt{1+|\nabla u(x)|^2}\dx=\int_{\Omega\times \R} g(x,t)\d\cH^{N}\mres \Gu.
\end{equation}


%

%
{For every $u \in W^{1, 1}(\Omega)$, it follows from the area formula~\eqref{eq-area-Sobolev} that if \(u\) is precisely represented, its graph map satisfies the Lusin condition, i.e., for every Lebesgue measurable set $E \subset \Omega$,
\begin{equation}\label{eq-Lusin}
    |E|=0\quad \Longrightarrow\quad \cH^{N}\left( \{(x, u(x)) : x \in E\} \right)=0\,.
\end{equation}
}
%
For every \(t\in \R\) such that \(u^{-1}(t)\) is countably \(\cH^{N-1}\)-rectifiable, for \(\cH^{N-1}\) a.e. \(x\in u^{-1}(t)\), there exists an approximate tangent space \(T_xu^{-1}(t)\), which means that
\begin{equation}\label{eq-approximate-tangent-space}
\lim_{r\to 0}\frac{1}{r^{N-1}} \int_{u^{-1}(t)}\phi\left(\frac{y-x}{r}\right) \d\cH^{N-1}(y) =\int_{T_xu^{-1}(t)} \phi(y)\d\cH^{N-1}(y)\,,  \end{equation}
for every \(\phi\in C^{0}_c(\R^N)\). For a proof of this result, holding for any countably rectifiable set, see e.g. \cite[Theorem~11.6]{Simon}.

\section{Between balance conditions \texorpdfstring{\eqref{eqHZiso} and \eqref{eqHZconv}}{}}
\label{sec:twostars} 

This section is devoted to the proof of Theorem~\ref{theo:final}
. The reasoning is based on several geometrical observations of the properties of convex minorants, which we now present.

\begin{lemma}\label{lem:at}
    Let ${w} : [0, \infty) \to [0, \infty)$ be a continuous and non-decreasing function. For every $t \geq 0$, there exists $a_t \in [0, t]$ such that     ${w}^{**}$  is affine on $[a_t, t]$  and ${w}(a_t) = {w}^{**}(a_t)$.
\end{lemma}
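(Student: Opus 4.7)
Two preliminary facts will be used throughout: (i) $w^{**}(0) = w(0)$, because the constant function $w(0)$ is convex and lies below the non-decreasing $w$, so $w^{**}(0) \geq w(0)$, while $w^{**} \leq w$ gives the reverse; (ii) $w^{**}$ is continuous on $[0, \infty)$, by convexity on $(0,\infty)$ and by comparison with $w(0)$ at the endpoint. In the trivial case $w(t) = w^{**}(t)$ I set $a_t = t$ and there is nothing more to prove, so I assume throughout that $w^{**}(t) < w(t)$.

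The main step is to decompose $(t, w^{**}(t))$ as a non-trivial convex combination of two points of $\mathrm{epi}(w)$:
\[
\bigl(t, w^{**}(t)\bigr) = \lambda\bigl(a, w(a)\bigr) + (1-\lambda)\bigl(b, w(b)\bigr), \qquad a, b \in [0,\infty),\ \lambda \in (0,1),\ a < t < b.
\]
This is a Caratheodory-type statement in $\R^2$ applied to $(t, w^{**}(t))$, which lies on the lower boundary of the closed convex set $\mathrm{epi}(w^{**}) = \overline{\mathrm{co}}(\mathrm{epi}(w))$ and is not realized by the degenerate choice $a = b = t$ since $w^{**}(t) < w(t)$. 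The only subtle point is to rule out that the decomposition is only achieved by approximating sequences with $b_n \to \infty$: in that case the constraint $t = \lambda_n a_n + (1-\lambda_n) b_n$ would force $\lambda_n \to 1$ and $a_n \to t$, after which $w \geq 0$ together with continuity of $w$ would yield $w^{**}(t) \geq \lim \lambda_n w(a_n) = w(t)$, contradicting the standing assumption.

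Once a bounded decomposition is secured, the convexity of $w^{**}$ collapses the chain
\[
w^{**}(t) = \lambda w(a) + (1-\lambda) w(b) \geq \lambda w^{**}(a) + (1-\lambda) w^{**}(b) \geq w^{**}\bigl(\lambda a + (1-\lambda) b\bigr) = w^{**}(t)
\]
to a sequence of equalities. Using $w \geq w^{**}$, the first identity yields $w(a) = w^{**}(a)$ and $w(b) = w^{**}(b)$, while the equality case of Jensen's inequality for the convex function $w^{**}$ at the interior point $t$ forces $w^{**}$ to coincide with the chord through $(a, w^{**}(a))$ and $(b, w^{**}(b))$ on the whole interval $[a, b]$. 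Setting $a_t := a \in [0, t)$ then gives $w(a_t) = w^{**}(a_t)$, while $w^{**}$ is affine on $[a, b] \supset [a_t, t]$, which completes the proof. The principal technical obstacle is the attainment step (the finiteness of $b$), which is precisely where the hypothesis $w \geq 0$ enters decisively.
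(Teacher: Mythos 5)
Your proof has a genuine gap at the attainment step, which you yourself flag as ``the principal technical obstacle.'' Two things go wrong: the bounded two-point decomposition of $(t, w^{**}(t))$ need not exist, and the argument you give for excluding the unbounded case fails. Consider $w(s) = s^2$ for $s \in [0,1]$ and $w(s) = s$ for $s > 1$; it is continuous, non-decreasing, and non-negative, with a concave kink at $s=1$. One computes $w^{**}(s) = s^2$ for $s \leq \tfrac{1}{2}$ and $w^{**}(s) = s - \tfrac{1}{4}$ for $s \geq \tfrac{1}{2}$, so that $w^{**}(s) < w(s)$ for \emph{every} $s > \tfrac{1}{2}$. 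Take $t=1$: the lemma's conclusion holds with $a_t = \tfrac{1}{2}$, but there is \emph{no} $b>t$ with $w(b) = w^{**}(b)$, and indeed $(1, \tfrac{3}{4}) \in \overline{\mathrm{co}}(\mathrm{epi}(w)) \setminus \mathrm{co}(\mathrm{epi}(w))$: the supporting line $y = s - \tfrac14$ at $(1, \tfrac34)$ meets $\mathrm{epi}(w)$ only at $(\tfrac12, \tfrac14)$, so no finite convex combination of epigraph points produces $(1, \tfrac34)$. The minimizing chords run from $(\tfrac12, \tfrac14)$ to $(b_n, b_n)$ with $b_n \to \infty$; along them $\lambda_n = (b_n-1)/(b_n - \tfrac12) \to 1$, yet $a_n \equiv \tfrac12 \not\to t$. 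The deduction in your sketch --- that $\lambda_n \to 1$ together with $t = \lambda_n a_n + (1-\lambda_n)b_n$ forces $a_n \to t$ --- requires $(1-\lambda_n)b_n \to 0$, but here $(1-\lambda_n)b_n \to \tfrac12$. So the conclusion you want ($w^{**}(t) \geq w(t)$) does not follow, and the whole bounded-decomposition strategy, which would also manufacture a spurious right-hand touching point $b$, is off.

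The paper's proof avoids any two-sided attainment issue. It defines $a_t$ directly as the minimal $a \leq t$ with $w^{**}$ affine on $[a,t]$ (a closed non-empty subset of $[0,t]$, so the minimum exists), dispenses with $a_t = 0$ via $w^{**}(0)=w(0)$, and derives a contradiction from $a_t > 0$ and $w^{**}(a_t) < w(a_t)$. Continuity of $w$ gives $s < a_t$ with $w(s) > w^{**}(a_t)$, unboundedness of $w^{**}$ gives $s_2 > a_t$ with $w^{**}(s_2) = w(s)$; replacing $w^{**}$ on $[s,s_2]$ by its chord and using the monotonicity of $w$ (the key inequality $w(s) \leq w(\tau)$ for $\tau \in [s,s_2]$) shows the result is still a convex minorant of $w$, so by maximality $w^{**}$ is already affine on $[s,s_2] \supset [s,t]$, contradicting the minimality of $a_t$. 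Note this argument leans on monotonicity, not on non-negativity, and requires no control on what happens to the right of $t$. Your Caratheodory framing could perhaps be repaired by extracting a limiting supporting line from the left endpoints $a_n$ alone, but as written the key step does not hold, and the cleanest route remains the paper's one-sided minimality argument.
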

\begin{proof}     The result is clear if $w^{**}$ is a constant function, as then $w^{**}$ is constantly equal to $w(0)$. Note also that as $w$ is non-decreasing, also $w^{**}$ is non-decreasing. Additionally, by monotonicity of the derivative, if $w^{**}$ is not constant, then it is unbounded. The result is also true if $w^{**}(t) = w(t)$, as then one can take $a_t = t$.

    Let us then assume that $w^{**}(t) < w(t)$ and that $w^{**}$ is unbounded. We take $a_t \leq t$ to be the minimal number such that $w^{**}$ is affine on $[a_t, t]$. It suffices to show that $w^{**}(a_t) = w(a_t)$. The result holds if $a_t = 0$. Hence, let us assume that $a_t > 0$.
    
    Suppose by contradiction that $w^{**}(a_t) < w(a_t)$. Let us take any $s < a_t$ such that {$w^{**}(a_t) < w(s)$}. As $w^{**}$ is unbounded, there exists $s_2 > a_t$ such that $w^{**}(s_2) = w(s)$. Let us define
    
    \begin{equation*}       \wt{w}(\tau) = \begin{cases}            w^{**}(\tau) &\text{ for $\tau \not\in [s, s_2]$,}\\
            w^{**}(s) + (\tau - s)\frac{w^{**}(s_2) - w^{**}(s)}{s_2 - s} &\text{ for $\tau \in [s, s_2]$.}
        \end{cases}
    \end{equation*}
    
    Note that $w^{**} \leq \wt{w}$ and $\wt w$ is convex as the maximum of $w^{**}$ and an affine function. It is also true that $\wt w$ is a minorant of $w$. Indeed, if $\tau \not\in [s, s_2]$, then $\wt w(\tau) = w^{**}(\tau) \leq w(\tau)$. On the other hand, if $\tau \in [s, s_2]$, then
    
    \begin{equation*}      \wt w(\tau) \leq \wt w(s_2) = w^{**}(s_2) = w(s) \leq w(\tau)\,.    \end{equation*}
    
    As $w^{**} \leq \wt w$, and $w^{**}$ is the greatest convex minorant of $w$, we have $w^{**} = \wt w$. Therefore, $w^{**}$ is affine on $[s, s_2]$. This however means that $w^{**}$ is affine on $[s, t]$, where $s < a_t$, which contradicts the definition of $a_t$. Therefore, we have that $w^{**}(a_t) = w(a_t)$.
\end{proof}
The following lemma  gives a lower bound on the derivative of ${w}^{**}$ whenever ${w}$ is the essential infimum of convex functions.
\begin{lemma}\label{lem:est-der}
    Let $B$ be a non-empty Borel subset of $\R^N$ and  $\big\{{w}_y\big\}_{y \in B}$ be a family of non-negative and non-decreasing convex functions on $\rp$. Let us denote ${w} \coloneqq \essinf_{y\in B} {w}_y$. Then for all $s \geq 0$, it holds that 
    $$D^+{w}^{**}(s) \geq \essinf_{y \in B} D^+{w}_y(s).$$
\end{lemma}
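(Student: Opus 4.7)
\medskip

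\noindent Before giving the proof, let us outline the argument. The plan is to establish the equivalent inequality
\[
w^{**}(t) \geq w^{**}(s) + m(t-s) \qquad \text{for every } t \geq s,
\]
where $m := \essinf_{y \in B} D^+ w_y(s) \in [0, +\infty]$; letting $t \to s^+$ then gives $D^+ w^{**}(s) \geq m$.

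\medskip

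The first step is to transfer the lower bound on the derivatives $D^+ w_y(s)$ into a pointwise inequality for $w$ itself. By the definition of the essential infimum, $D^+ w_y(s) \geq m$ for a.e.\ $y \in B$. The convexity of $w_y$ then yields, for each such $y$ and every $t \geq s$,
\[
w_y(t) \geq w_y(s) + D^+ w_y(s)\,(t-s) \geq w_y(s) + m(t-s).
\]
Taking the essential infimum in $y$ on both sides, and using that $m(t-s)$ does not depend on $y$, we obtain
\[
w(t) \geq w(s) + m(t-s) \qquad \text{for every } t \geq s.
\]

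\medskip

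The heart of the argument is to lift this pointwise inequality for $w$ to the corresponding inequality for $w^{**}$. Setting $L(t) := w^{**}(s) + m(t-s)$, introduce the competitor
\[
\tilde w(t) := \begin{cases} w^{**}(t) & \text{if } 0 \leq t \leq s,\\[2pt] \max\bigl(w^{**}(t),\, L(t)\bigr) & \text{if } t > s. \end{cases}
\]
Then $\tilde w$ is continuous at $s$ (since $L(s)=w^{**}(s)$) and convex on each of the two pieces. At the junction $t = s$, the left derivative equals $D^- w^{**}(s)$ while the right derivative equals $\max(D^+ w^{**}(s),\, m)$, which is at least $D^- w^{**}(s)$ by convexity of $w^{**}$; hence $\tilde w$ is convex on $[0, \infty)$. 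Moreover $\tilde w \leq w$: for $t \leq s$ this is the inclusion $w^{**} \leq w$; for $t > s$ we have $w^{**}(t) \leq w(t)$, and also $L(t) = w^{**}(s) + m(t-s) \leq w(s) + m(t-s) \leq w(t)$ by the pointwise bound just derived together with $w^{**}(s) \leq w(s)$. Since $w^{**}$ is the greatest convex minorant of $w$ and $\tilde w \geq w^{**}$ is a convex minorant of $w$, we must have $\tilde w = w^{**}$. In particular $L(t) \leq w^{**}(t)$ for every $t > s$, which is the desired inequality.

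\medskip

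The main obstacle is precisely this lifting step. A naive attempt that would compare the line $L$ with $w$ globally on $[0, \infty)$ fails, because $L$ need not minorise $w$ to the left of $s$: one can easily have $L(t) > w(t)$ for $t < s$. Modifying $w^{**}$ only to the right of $s$ rescues the argument, with convexity at the junction guaranteed by the general inequality $D^- w^{**}(s) \leq D^+ w^{**}(s)$. The remaining verifications are routine.
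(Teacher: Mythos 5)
Your proof is correct and follows essentially the same strategy as the paper's: construct a competitor that agrees with $w^{**}$ on $[0,s]$ and, past $s$, incorporates a line of slope $m=\essinf_{y}D^{+}w_y(s)$ through $w^{**}(s)$, then verify it is a convex minorant of $w$. The paper defines the competitor past $s$ to be the line itself and argues by contradiction (the competitor would exceed $w^{**}$ just to the right of $s$), whereas you take the pointwise maximum of $w^{**}$ and the line, which guarantees $\tilde w \geq w^{**}$ by construction and lets you conclude $\tilde w = w^{**}$ directly rather than by contradiction. Your intermediate step, passing the derivative bound to the pointwise inequality $w(t)\geq w(s)+m(t-s)$, is a mild reorganization of the paper's inline estimate $\tilde w(\tau)\leq w_y(s)+D^{+}w_y(s)(\tau-s)\leq w_y(\tau)$; both amount to the same computation. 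These differences are cosmetic; the core idea and the use of convexity at the junction (via $D^{-}w^{**}(s)\leq D^{+}w^{**}(s)$) are identical.
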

\begin{proof}
    Suppose the contrary, i.e. that $D^+{w}^{**}(s) < \essinf_{y \in B} D^+{w}_y(s) \eqqcolon \mathcal{d}_s$ for some $s$. Let us define  
    \begin{equation*}
        \wt{w}(\tau) \coloneqq \begin{cases}
            {w}^{**}(\tau) &\text{ for $\tau \in [0, s]\,$,}\\
            {w}^{**}(s) + \mathcal{d}_s(\tau - s) &\text{ for $\tau > s\,$.} 
        \end{cases}
    \end{equation*}
    Observe that $\wt{w}$ is convex by the monotonicity of its derivative. It is also clear that $\wt{w}(\tau) \leq {w}(\tau)$ for $\tau \leq s$. Moreover, for $\tau > s$ and a.e.  $y \in B$, we have
    \begin{equation*}
        \wt{w}(\tau) = {w}^{**}(s) + \mathcal{d}_s(\tau - s) \leq {w}_y(s) + D^+{w}_y(s)(\tau - s) \leq {w}_y(\tau)\,.
    \end{equation*}
    As the last inequality is true for a.e. $y \in B$, we get $\wt{w} \leq {w}$. Therefore, $\wt{w}$ is a convex minorant of ${w}$. However, as $\mathcal{d}_s > D^+{w}^{**}(s)$, we have ${w}^{**}(\tau) < \wt{w}(\tau)$ for some $\tau > s$, which is a contradiction.
\end{proof}

We are now in a position to prove Theorem~\ref{theo:final}.
\begin{proof}[Proof of Theorem~\ref{theo:final}]
    We  start with proving assertion $(i)$. Observe that we can assume that $f(x, t, 0) = 0$ for a.e. \(x\in \Omega\) and every \(t\in \R\). Indeed, if it is not true, we can replace $f$ by the function $g(x, t, \xi) = (f(x, t, \xi) - f(x, t, 0))_+$. By Lemma~\ref{lm-reduction-2-0}, the function $g$ satisfies~\eqref{eqHZconv} if and only $f$ does, and the same is true for~\eqref{eqHZiso}. Therefore, proving that $g$ satisfies~\eqref{eqHZconv} yields the desired result. 

In the statement of  \eqref{eqHZiso}, there is no loss of generality in replacing  \(\mathcal{k}= (\mathcal{k}_1, \mathcal{k}_2)\in \rp^2\) by \(\mathcal{k}= (\mathcal{k}_1, \mathcal{k}_2)\in (\N_*)^2\). Using that a countable union of negligible sets is negligible, we infer that there exists a negligible set in \(\Omega\) such that \eqref{eqHZiso} holds for every \(x\) in its complement and for every \(\mathcal{k}\in \rp^2\).    
Let us fix such an $x \in \Omega$ and  $\mathcal{k} = (\mathcal{k}_1, \mathcal{k}_2)$, $\ve > 0$, $t \in [-\mathcal{k}_1, \mathcal{k}_1]$. For a.e.  $y \in B(x, \ve)\cap \Omega$, let us denote
    \begin{equation*}
        {w}_y(s) \coloneqq f(y, t, s)\,, \quad {w}(s) \coloneqq f^-_{B(x, \ve)}(t, s)\,.
    \end{equation*}
For a.e.  $y$,  since   ${w}_y$ is non-negative and vanishes at \(0\), it achieves its global minimum in $0$. As ${w}_y$ is convex, this implies that it is non-decreasing. Therefore, ${w}$ is also non-decreasing as the essential infimum of non-decreasing functions. Consequently, ${w}^{**}$ is non-decreasing as well. Let us also notice that for a.e.  $y$ and any $s \geq 0$, by convexity of ${w}_y$, it holds
    \begin{equation*}
        D^+{w}_y(s) \leq {w}_y(s+1) - {w}_y(s) \leq {w}_y(s+1)\,,
    \end{equation*}
    which means that the family $\{{w}_y\}_{y \in B(x, \ve)\cap \Omega}$ is uniformly locally Lipschitz, as by Lemma~\ref{lem-bdd-f}, the function $f$ is bounded on bounded sets. Therefore, ${w}$ is continuous.
    
    Let us now take $s\in \rp$ such that ${w}^{**}(s) + s^{\max(p,N)} \leq \mathcal{k}_2\ve^{-N}$. As ${w}$ is non-decreasing and continuous, by Lemma~\ref{lem:at}, there exists $a_s \in [0, s]$ such that ${w}^{**}$  is affine on $[a_s, s]$ and $w(a_s) = {w}^{**}(a_s)$.
    For every  \(y\in B(x,\ve)\), we have \(B(x,\ve)\subset B(y,2\ve)\), and thus 
    \begin{equation}\label{eq951}
        f^-_{B(y, 2\ve)}(t, a_s)\leq f_{B(x,\ve)}^{-}(t,a_s)=w(a_s)=w^{**}(a_s).
    \end{equation}
    Since \(w^{**}\) is non-decreasing, this implies that
    \[
    f^-_{B(y, 2\ve)}(t, a_s)+a_{s}^{\max(p,N)} \leq w^{**}(s)+s^{\max(p,N)}\leq \mathcal{k}_2\ve^{-N}=(2^N\mathcal{k}_2)(2\ve)^{-N}\,.
    \]
    By~\eqref{eqHZiso}, there exists \(\cC_{\widehat{\mathcal{k}}}\geq 1\), with \(\widehat{\mathcal{k}}\coloneqq (\mathcal{k}_1,2^N\mathcal{k}_2)\), such that for a.e. \(y\in B(x,\ve)\cap \Omega\), it holds
    \begin{equation}\label{eq:seqas}
        {w}_y(a_s) \leq \cC_{\widehat{\mathcal{k}}}\left((f^-_{B(y, 2\ve)}(t, \cdot))^{**}(a_s)+1 \right)\leq \cC_{\widehat{\mathcal{k}}}\left({w}^{**}(a_s) + 1\right)\,,
    \end{equation}
    where the last inequality follows from~\eqref{eq951}. In particular, the above inequality holds when \(y=x\).
Assume that $a_s < s$ and let \(s'\in (a_s,s)\). Using that ${w}^{**}$ is affine on $[a_s, s]$ together with ~\eqref{eq:seqas}, we get
\[
    {w}^{**}(s') = {w}^{**}(a_s) + D^+{w}^{**}\left(s'\right)(s' - a_s) {\geq} \tfrac{1}{\cC_{\widehat{\mathcal{k}}}}{w}_y(a_s) - 1 + D^+{w}^{**}\left(s'\right)(s' - a_s)\,.
\]    
Given \(\delta>0\), it follows from Lemma~\ref{lem:est-der} that for every \(y\) in a non-negligible subset of \(B(x,\ve)\cap \Omega\), it holds
    \begin{equation}\label{eq:dev-inq}
         D^+{w}^{**}\left(s'\right) \geq D^+{w}_y(s')-\delta\,.
    \end{equation}
      Using additionally that $\cC_{\widehat{\mathcal{k}}} \geq 1$, we obtain
    \begin{align*}
        {w}^{**}(s') &\geq \tfrac{1}{\cC_{\widehat{\mathcal{k}}}}{w}_{y}(a_s) - 1 + D^+{w}_y(s')(s' - a_s) - \delta(s' - a_s)\\
        &\geq \tfrac{1}{\cC_{\widehat{\mathcal{k}}}}\left({w}_{y}(a_s) + D^+{w}_y(s')(s' - a_s)\right) - 1 - \delta(s' - a_s)\,.
    \end{align*}
    Then, by the monotonicity of $D^+{w}_y$ and the fact that ${w}_y \geq {w}$ for a.e. \(y\), we have
    \begin{align*}
        {w}^{**}(s') {\geq}&   \tfrac{1}{\cC_{\widehat{\mathcal{k}}}}\left({w}_{y}(a_s) + \int_{a_s}^{s'} D^+{w}_y(\tau) \d \tau\right) - 1 - \delta(s' - a_s)\\
        &= \tfrac{1}{\cC_{\widehat{\mathcal{k}}}}{w}_{y}(s') - 1 - \delta(s' - a_s) \overset{}{\geq} \tfrac{1}{\cC_{\widehat{\mathcal{k}}}}{w}(s') - 1 - \delta(s' - a_s)\,.
    \end{align*}
    Letting $\delta$ to $0$ and then \(s'\) to \(s\), we get ${w}^{**}(s) \geq \tfrac{1}{\cC_{\widehat{\mathcal{k}}}}{w}(s) - 1$, or equivalently, 
    \begin{equation}\label{eq:help2}
        {w}(s) \leq \cC_{\widehat{\mathcal{k}}}\left({w}^{**}(s) + 1\right)\,.
    \end{equation}
    In particular, ${w}(s) + s^{\max(p, N)} \leq \cC_{\widehat{\mathcal{k}}}\mathcal{k}_2\ve^{-N} + \cC_{\widehat{\mathcal{k}}}$. Without loss of generality, we may assume that $\ve \leq \text{diam}(\Omega)$ and that $\mathcal{k}_2$ is sufficiently large to ensure that $\mathcal{k}_2\text{diam}(\Omega)^{-N} \geq 1$. Therefore, we have ${w}(s) + s^{\max(p, N)} \leq 2\cC_{\widehat{\mathcal{k}}}\mathcal{k}_2\ve^{-N}$. Denoting $\wt{\mathcal{k}} =(\mathcal{k}_1, 2\cC_{\widehat{\mathcal{k}}} \mathcal{k}_2)$, we get from~\eqref{eqHZiso} that ${w}_x(s) \leq \cC_{\wt{\mathcal{k}}}\left({w}(s) + 1\right)$, which by~\eqref{eq:help2} yields
    \begin{equation}\label{eq:fin}
        {w}_x(s) \leq \cC_{\wt{\mathcal{k}}}\left(\cC_{\widehat{\mathcal{k}}}\left({w}^{**}(s) + 1\right) + 1\right) \leq \left(\cC_{\wt{\mathcal{k}}} \cC_{\widehat{\mathcal{k}}} + \cC_{\wt{\mathcal{k}}} \right)\left({w}^{**}(s) + 1\right)\,.
    \end{equation}
    Using~\eqref{eq:seqas} in case of $a_s = s$, and~\eqref{eq:fin} otherwise, we get
    \begin{equation*}
        {w}^{**}(s) + s^{\max(p, N)} \leq \mathcal{k}_2\ve^{-N} \Rightarrow {w}_x(s) \leq \left(\cC_{\wt{\mathcal{k}}} \cC_{\widehat{\mathcal{k}}} + \cC_{\wt{\mathcal{k}}} + \cC_{\widehat{\mathcal{k}}} \right)\left({w}^{**}(s) + 1\right)\,,
    \end{equation*}
    which     is~\eqref{eqHZconv} for $f$ with  $\wt \cC_\mathcal{k} = \left(\cC_{\wt{\mathcal{k}}} \cC_{\widehat{\mathcal{k}}} + \cC_{\wt{\mathcal{k}}} + \cC_{\widehat{\mathcal{k}}} \right)$. \newline
    
    Let us now prove assertion $(ii)$. Let us fix $i \in \{1, 2, \dots, N\}$. As $f_i$ satisfies~\eqref{eqHZiso}, the function $(x, t, \xi) \mapsto f_i(x, t, |\xi|)$ satisfies~\eqref{eqHZconv}. It easily follows  that the  function $(x, t, \xi) \mapsto f_i(x, t, |\xi_i|)$ also satisfies~\eqref{eqHZconv}. Hence, $f$ is a sum of functions satisfying~\eqref{eqHZconv}, which implies that $f$ satisfies~\eqref{eqHZconv}.

\end{proof}

\section{Preliminaries to the proof of Theorem~\ref{th-main-conv}}\label{sec:prelim-to-main-proof}

\subsection{\texorpdfstring{The function $1_u$}{The hypograph function}}
In the following, we systematically choose precise representatives  in \(L^{1}(\Omega)\). For such a representative \(u:\Omega\to \R\), we define the map $v: \Omega\times \R \to[0,1]$ by the following formula
\begin{equation}\label{1u}
    v(x,t)\coloneqq{1}_u(x,t)=\begin{cases}
        1\,, & \textrm{ if } t\leq  u(x)\,,\\
        0\,, & \text{ otherwise}\,.
    \end{cases}
\end{equation}

In order to calculate the distributional derivative of \(v\), we first establish the following technical fact.
\begin{lemma}
\label{lemma-alt-Stokes-Sobolev}
Let \(u\in W^{1,1}(\Omega)\),  \(\Phi\in C^{\infty}_c(\Omega\times \R)\), \(\theta\in L^{\infty}(\R)\), and  \(e\in \mathbb{S}^{N-1}\). Then
\[
\int_{\Omega}\theta\circ u(x)\Phi(x,u(x))\partial_e u(x)\dx=-\int_{\R}\theta(t)\left(\int_{[u\geq t]}\partial_e\Phi(x,t)\dx\right)\dt.
\]
\end{lemma}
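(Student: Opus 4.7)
The plan is to reduce the identity to the Sobolev chain rule applied to a suitable partial antiderivative of $\theta\Phi$ in the $t$-variable, and then to remove the merely $L^\infty$-regularity of $\theta$ by approximation. The delicate point will be the passage from smooth $\theta$ to $\theta\in L^\infty(\R)$, which rests on the coarea formula~\eqref{eq-coarea-Sobolev}.

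Since $\Phi$ has compact support in $\Omega\times\R$, fix $c\in\R$ with $\Phi(x,t)=0$ whenever $t\le c$, and define
\[
G(x,s):=\int_c^s\theta(\tau)\,\Phi(x,\tau)\,\d\tau,\qquad (x,s)\in\Omega\times\R.
\]
Then $G$ vanishes for $s\le c$ and its $x$-support is contained in a fixed compact set $K\subset\Omega$ uniformly in $s$; moreover $\partial_sG(x,s)=\theta(s)\Phi(x,s)$, and differentiation under the integral sign (using smoothness of $\Phi$ and boundedness of $\theta$) gives $\partial_eG(x,s)=\int_c^s\theta(\tau)\partial_e\Phi(x,\tau)\,\d\tau$. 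Assume first that $\theta\in C^\infty(\R)\cap L^\infty(\R)$, so that $G$ is of class $C^1$ on $\Omega\times\R$. Then $x\mapsto G(x,u(x))$ lies in $W^{1,1}(\Omega)$ with compact support in $\Omega$, and the Sobolev chain rule yields
\[
\partial_e\big[G(x,u(x))\big]=(\partial_eG)(x,u(x))+\theta(u(x))\Phi(x,u(x))\,\partial_eu(x)\quad\text{a.e.\ in }\Omega.
\]
Integrating over $\Omega$, the left-hand side vanishes (distributional derivative of a compactly supported $W^{1,1}$ function), and Fubini, combined with $\partial_e\Phi(\cdot,\tau)\equiv 0$ for $\tau\le c$, recasts the remaining term as the right-hand side of the claimed identity.

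For general $\theta\in L^\infty(\R)$, take $\theta_n:=\theta*\rho_n\in C^\infty(\R)$ with $\|\theta_n\|_\infty\le\|\theta\|_\infty$ and $\theta_n\to\theta$ a.e.\ in $\R$, say outside a Lebesgue-null set $E\subset\R$. The right-hand side of the identity passes to the limit by dominated convergence, since the inner integral in $\tau$ is bounded and the integrand is supported in the compact $t$-projection of $\supp\Phi$. For the left-hand side, the coarea formula~\eqref{eq-coarea-Sobolev} yields
\[
\int_{u^{-1}(E)}|\nabla u|\,\dx=\int_E\cH^{N-1}(u^{-1}(\tau))\,\d\tau=0,
\]
so that $\partial_eu=0$ a.e.\ on $u^{-1}(E)$; consequently $\theta_n(u)\Phi(x,u)\partial_eu\to\theta(u)\Phi(x,u)\partial_eu$ a.e.\ in $\Omega$, uniformly dominated by $\|\theta\|_\infty\|\Phi\|_\infty|\partial_eu|\in L^1(\Omega)$, and dominated convergence concludes.

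\textbf{Main obstacle.} The sole genuine difficulty is that for $\theta\in L^\infty$ the partial $\partial_sG(x,s)=\theta(s)\Phi(x,s)$ fails to be continuous in $s$, preventing a direct application of the chain rule to $G(\cdot,u(\cdot))$. The approximation argument sidesteps this, with the coarea formula supplying the crucial quantitative input that makes the composition $\theta_n\circ u$ converge on the only part of $\Omega$ where $\partial_eu$ is nonzero.
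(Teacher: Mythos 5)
Your proof is correct, and it takes a somewhat different route from the paper's. The paper introduces $g(x,s)=\int_{-\infty}^s\Phi(x,\tau)\theta(\tau)\,\d\tau$ (Lipschitz in $s$, smooth in $x$) and invokes the chain rule for $x\mapsto g(x,u(x))$ directly, with $\theta$ merely in $L^\infty$; the resulting formula $\partial_eG=\partial_eg(\cdot,u)+\partial_sg(\cdot,u)\,\partial_eu$ then contains the discontinuous factor $\partial_sg(x,s)=\Phi(x,s)\theta(s)$, and the validity of the chain rule in this generality rests implicitly on the Stampacchia-type phenomenon that $\nabla u=0$ a.e.\ on preimages of Lebesgue-null sets. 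You instead prove the clean $C^1$ case first and then upgrade to $\theta\in L^\infty$ by mollification, extracting explicitly, via the coarea formula~\eqref{eq-coarea-Sobolev}, precisely the fact the paper leaves implicit — that $\partial_eu$ vanishes a.e.\ on $u^{-1}(E)$ for $|E|=0$, so that $\theta_n(u)\to\theta(u)$ wherever it matters. Both arguments are sound; the paper's is shorter but leans on a less elementary version of the chain rule, while yours is more self-contained and makes the mechanism visible. One small remark: in the first step you should note that the Fubini rearrangement uses $u\in L^1(\Omega)$ together with the boundedness of the $t$-support of $\Phi$ to justify interchanging the $x$- and $\tau$-integrals in $\int_\Omega\int_c^{u(x)}\theta(\tau)\partial_e\Phi(x,\tau)\,\d\tau\,\dx$, which is routine but worth recording.
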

\noindent Here, \(\partial_e u\) is the directional derivative of \(u\) in the direction \(e\), namely \(\partial_e u=\langle \nabla u, e\rangle\).
\begin{proof}
 Let us define the Lipschitz continuous function $g: \Omega \times \R \to\R$ via
 \[
 g(x,s)\coloneqq \int_{-\infty}^{s}\Phi(x,t)\theta(t)\dt\,.
 \]
Since \(\Phi\) is compactly supported in \(\Omega\times \R\), there exists a compact subset \(K\Subset \Omega\) such that \(g(x,s)=0\) for every \((x,s)\in (\Omega\setminus K)\times \R\). By the chain rule, the function \(G:x\mapsto g(x,u(x))\) belongs to \(W^{1,1}_0(\Omega)\) and for a.e. \(x\in \Omega\), 
\[
\partial_e G(x)=\partial_e g(x,u(x))+\partial_s g(x,u(x))\partial_e u(x)
=\int_{-\infty}^{u(x)}\partial_e\Phi(x,t)\theta(t)\dt + 
\Phi(x,u(x))\theta(u(x))\partial_e u(x)\,.
\]
Integrating the above identity on \(\Omega\), one gets
\[
\int_{\Omega}\partial_e G(x)\dx =\int_{\Omega}\left(\int_{-\infty}^{u(x)}\partial_e\Phi(x,t)\theta(t)\dt\right)\dx + 
\int_{\Omega}\Phi(x,u(x))\theta(u(x))\partial_e u(x)\dx\,.
\]
The left-hand side vanishes by the Stokes formula. In the right-hand side, we use the Fubini theorem to write the first integral as 
\[
\int_{\R}\theta(t)\left(\int_{[u\geq t]} \partial_e\Phi(x,t)\dx\right)\dt\,.
\]
This completes the proof. 
\end{proof}

Using the above lemma, we can determine the distributional derivative of the function \(v=1_u\).
\begin{lemma}
\label{lm-deriv-v}
Let \(u\in W^{1,1}(\Omega)\) and \(v=1_u\).
The distributional derivative $Dv$ of $v$ is the $\R^{N+1}$-valued measure given by
\begin{equation}
    \label{def-Dv}
    Dv=\frac{(\nabla u,-1)}{\sqrt{1+|\nabla u|^2}} \cH^N\mres \Gu\,.
\end{equation}
Moreover, for every Borel set  \(A\subset \Omega\) such that \(|\nabla u(x)|>0\) for a.e. \(x\in A\), one has:
\begin{equation}
\label{Dv-other-formulation}
Dv\llcorner(A\times \R) = \left(\chi_{A}\frac{(\nabla u,-1)}{|\nabla u|}\cH^{N-1}\llcorner u^{-1}(t)\right)\otimes \cH^1\,.
\end{equation}
\end{lemma}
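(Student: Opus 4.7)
The approach is to test the distributional derivative $Dv$ against scalar smooth test functions $\Phi\in C^\infty_c(\Omega\times\R)$, translate the result into an integral over the graph $\Gu$ via the area formula (for the first identity) and into an integral over the level sets $u^{-1}(t)$ via the coarea formula (for the second identity). For a unit direction $e\in\mathbb{S}^{N-1}$, Fubini applied to the explicit description $v(x,t)=\chi_{\{t\le u(x)\}}$ yields
\[
\langle \partial_e v,\Phi\rangle \;=\; -\int_{\Omega\times\R} v\,\partial_e\Phi \dx\dt \;=\; -\int_\R \int_{[u\geq t]}\partial_e\Phi(x,t)\dx\dt,
\]
which is precisely the right-hand side in Lemma~\ref{lemma-alt-Stokes-Sobolev} with $\theta\equiv 1$. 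Hence $\langle \partial_e v,\Phi\rangle = \int_\Omega \Phi(x,u(x))\,\partial_e u(x)\dx$. For the $t$-derivative, the fundamental theorem of calculus gives directly $\langle \partial_t v,\Phi\rangle = -\int_\Omega \Phi(x,u(x))\dx$.

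For the first identity \eqref{def-Dv}, I would apply the area formula \eqref{eq:hdH-hdx-area} twice, with the bounded Borel integrands $g(x,t)\coloneqq \Phi(x,t)\,\partial_e u(x)/\sqrt{1+|\nabla u(x)|^2}$ and $g(x,t)\coloneqq -\Phi(x,t)/\sqrt{1+|\nabla u(x)|^2}$, respectively. The first choice turns $\int_\Omega \Phi(x,u(x))\partial_e u(x)\dx$ into $\int_{\Gu}\Phi\,\partial_e u/\sqrt{1+|\nabla u|^2}\,\d\cH^N$, and the second gives the analogous identity for $\partial_t v$. Since $\cH^N(\Gu)=\int_\Omega\sqrt{1+|\nabla u|^2}\dx<\infty$, the area formula extends from non-negative to signed bounded Borel integrands by linearity. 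Assembling the $N+1$ components yields exactly \eqref{def-Dv}.

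For \eqref{Dv-other-formulation}, I would repeat the same scheme on $A$ but with the coarea formula \eqref{eq:hdH-hdx-coarea} in place of the area formula, which is legitimate precisely because $|\nabla u|>0$ almost everywhere on $A$. Writing $\partial_e u = (\partial_e u/|\nabla u|)\cdot|\nabla u|$ and $1 = (1/|\nabla u|)\cdot|\nabla u|$, and applying \eqref{eq:hdH-hdx-coarea} to $g(x,t)\coloneqq \chi_A(x)\Phi(x,t)\,\partial_e u(x)/|\nabla u(x)|$ and its $t$-component analogue, one obtains
\[
\langle \partial_e v,\chi_{A\times\R}\Phi\rangle \;=\; \int_\R\int_{u^{-1}(t)\cap A}\Phi(x,t)\,\frac{\partial_e u(x)}{|\nabla u(x)|}\,\d\cH^{N-1}(x)\dt,
\]
together with the corresponding formula for $\partial_t v$. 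Recognising $\dt = \d\cH^1(t)$ and invoking the disintegration notation $\nu_t\otimes\mu$ from Section~\ref{ssec:notation} with $\mu=\cH^1$ and $\nu_t=\chi_A\,(\nabla u,-1)/|\nabla u|\,\cH^{N-1}\mres u^{-1}(t)$ gives the claim.

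The only real technical point to watch is that the integrands $g$ used above depend on $x$ through $\nabla u$, hence are only Borel and not continuous; this is innocuous, however, since they are bounded on the effective support of $\Phi$ and the underlying measures $\cH^N\mres\Gu$ and $\cH^{N-1}\mres u^{-1}(t)$ have finite total mass there by the area and coarea formulas themselves. I do not anticipate any deeper obstacle.
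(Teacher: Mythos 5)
Your proposal is correct and follows essentially the same route as the paper: both rest on Lemma~\ref{lemma-alt-Stokes-Sobolev} (with $\theta\equiv 1$) to handle the $N$ spatial derivatives, Fubini for the $t$-derivative, and then the area and coarea formulas to pass between the Lebesgue, $\cH^N\mres\Gu$, and $\cH^{N-1}\mres u^{-1}(t)$ representations. The only organisational difference is that the paper applies the area formula first to rewrite the target identity \eqref{def-Dv} in $\dx$-form and then proves it, while you compute $\langle Dv,\Phi\rangle$ in $\dx$-form first and apply the area formula afterwards — logically the same argument in the opposite order — and your closing remark about Borel (rather than continuous) integrands and the Lusin property \eqref{eq-Lusin} correctly addresses the one technical point that needs care.
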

It follows from \eqref{lm-deriv-v} that the total variation of $D1_u$ is the measure $|D1_u|=\cH^N\mres \Gu$, the Radon--Nikod\'ym derivative of $D1_u$ with respect to its total variation is $\frac{(\nabla u,-1)}{\sqrt{1+|\nabla u|^2}}$.
 The above statement is well-known, in particular in the setting of \(BV\) functions, see \cite[Theorem 4.1.5.2]{GMS-1}. For the convenience of the reader, we provide an elementary proof which does not rely on the theory of BV functions.

\begin{proof}[Proof of Lemma 5.2]
We have first to prove that for every \(\phi=(\phi^1, \dots, \phi^{N+1})\in C^{\infty}_c(\Omega\times \R;\R^{N+1})\) it holds
\begin{equation*}
\int_{\Omega\times \R}1_u(x,t)\,\textrm{div}\,\phi (x,t)\dx\dt = -\int_{\Gu}\left(\sum_{i=1}^{N} \phi^i(x,t) \partial_i u(x) - \phi^{N+1}(x,t)\right)\frac{1}{\sqrt{1+|\nabla u(x)|^2}}\d\cH^{N}(x,t)\,.
\end{equation*}
By applying the formula \eqref{eq:hdH-hdx-area} to the right-hand side, this is equivalent to
\begin{equation}\label{eq588}
\int_{\Omega\times \R}1_u(x,t)\,\textrm{div}\,\phi (x,t)\dx\dt = -\int_{\Omega}\left(\sum_{i=1}^{N} \phi^i(x,u(x)) \partial_i u(x) -  \phi^{N+1}(x,u(x))\right)\dx\,.
\end{equation}

For every \(1\leq i \leq N\), we apply Lemma~\ref{lemma-alt-Stokes-Sobolev} with \(\Phi\coloneqq\phi^i\), \(e\) being the \(i^{\mathrm{th}}\) vector of the canonical basis of \(\R^N\), and \(\theta\equiv 1\). This gives
\[
\int_{\R}\int_{[u\geq t]}\partial_i\phi^i(x,t)\dx\dt=-\int_{\Omega}\phi^i(x,u(x))\partial_i u(x)\dx\,.
\]
Summing over \(i=1,\dots,N\), one gets
\begin{equation}\label{eq600}
\int_{\Omega\times \R}1_u(x,t) \sum_{i=1}^{N} \partial_i\phi^i(x,t)\dx\dt  = -\int_{\Omega} \sum_{i=1}^{N} \phi^i(x,u(x))\partial_i u(x)\dx\,.
\end{equation}
For the last partial derivative \(\partial_t\phi^{N+1}\), we simply use the Fubini theorem to write
\begin{equation}\label{eq604}
\int_{\Omega\times \R}1_u(x,t)  \partial_{t}\phi^{N+1}(x,t)\dx\dt
=\int_{\Omega} \left(\int_{-\infty}^{u(x)}\partial_{t}\phi^{N+1}(x,t)\dt\right)\dx = \int_{\Omega}\phi^{N+1}(x,u(x))\dx\,.
\end{equation}
Adding \eqref{eq600} and \eqref{eq604}, we get the identity \eqref{eq588}.

We proceed with the proof of \eqref{Dv-other-formulation}. Let \(A\subset \Omega\) as in the statement and \(g:A\times \R\to \R\) a bounded Borel map.
By \eqref{def-Dv} and the area formula, it holds
\begin{flalign*}
\int_{A\times \R} g(x,t)\d Dv(x,t)
&=\int_{\Gu \cap (A\times \R)} g(x,t)\frac{(\nabla u,-1)}{\sqrt{1+|\nabla u|^2}}\d\cH^N\\
&=\int_{A}g(x,u(x))(\nabla u(x), -1)\dx.
\end{flalign*}
Applying next the coarea formula, this yields
\begin{flalign*}
\int_{A\times \R} g(x,t)\d Dv(x,t)&= \int_{\R}\int_{A\cap u^{-1}(t)} g(z,t)\frac{(\nabla u(z),-1)}{|\nabla u(z)|}\d\cH^{N-1}(z)\dt \,,
\end{flalign*}
from which \eqref{Dv-other-formulation} follows.
\end{proof}

\subsection{The convex extension \texorpdfstring{$\wh E$ of the energy $E$}{of the energy}}
Given a Carathéodory function $f:\Omega\times\R\times\rn\to \rp$ which is convex with respect to the last variable,
the energy \(E_{\Omega}\) (see \eqref{def-E-of-u}) does not need to be convex on \(W^{1,1}(\Omega)\). Therefore, we associate to \(E_{\Omega}\) a
 new energy $\wh E_{\Omega\times\R}$ defined on the set of those  functions \(v\in L^{\infty}_{loc}(\Omega\times \R)\) such that the distributional derivative \(Dv\) is equal to a finite \(\R^{N+1}\)-valued measure. 
The functional $\wh E_{\Omega\times\R}$ is constructed in such a way that for every \(u\in W^{1,1}(\Omega)\),
\(
E_{\Omega}(u)=\wh E_{\Omega\times\R}(1_u)
\).
As we shall see, $\wh E_{\Omega\times\R}$ is convex, in contrast to \(E_{\Omega}\).

In order to define $\wh E_{\Omega\times\R}$, we  need to introduce the  $\wh{\ }$-operation, which goes back  to \cite{Dal-Maso} in the context of integral representation of {$\Gamma$}-limits of variational integrals. 
For any non-negative convex function $h : \R^{N} \to \rp$, we define its recession function \(h^{\infty}:\rn \to [0,\infty]\) as
\[ 
h^{\infty}(\xi)\coloneqq\lim_{\lambda\to \infty }\tfrac{h( \lambda\xi)}{\lambda}\,. 
\]
Remember that \(h^{\infty}\) is a convex positively one-homogeneous function. 
If  one further assumes that \(h\) is superlinear, then \(h^\infty(\xi)=\infty\) except  when \(\xi=0\) for which  \(h^\infty(0)=0\). 

Using the notation \(q=(q^x,q^t)\) for every \(q\in \R^{N+1}=\R^{N}\times \R\), we denote by $\wh{h}:\R^{N}\times \R \to[0,\infty]$ the function
\begin{equation}\label{def-hat}
    \wh{h}(q)=\wh{h}(q^x,q^t)\coloneqq 
    \begin{cases} -q^t h(-\tfrac{q^x}{q^t})\,, &\text{if $q^t < 0$}\,,\\
    h^{\infty}(q^x)\,, &\text{if $q^t = 0$}\,,\\
    +\infty\,, &\text{if $q^t > 0$}\,.
    \end{cases}
\end{equation}
It follows from the convexity of \(h\) that \(\wh{h}\) is convex on \(\R^{N+1}\) and positively homogeneous of degree \(1\), see e.g. \cite[Lemma~8.1]{Bousquet-Pisa}. In particular, for $h\equiv{1}$, we get for every \(q=(q^x,q^t)\in \R^N\times \R\) with \(q^t\leq 0\),
\begin{equation}\label{eq:def-1-hat}
\wh{1}(q^x, q^t)=|q^t|\,.
\end{equation}

Applying the operation $\ \wh{ }\ $ to \(h=f(x,t,\cdot)\) for a.e. \(x\in \Omega\) and every \(t\in \R\), we obtain a map \(q\mapsto \wh{f(x,t,\cdot)}(q)\). In order to simplify the notation, we write
\[
\wh f(x,t,q)\coloneqq\wh{f(x,t,\cdot)}(q)\qquad\text{for } \quad (x,t,q)\in \Omega\times \R\times \R^{N+1}\,.
\]
Hence, if  \(f(x,t,\cdot)\) is superlinear for a.e. \(x\in \Omega\)  and every \(t\in \R\), then the resulting map  $\wh{f}: \Omega\times \R\times \rn \times \R \to [0,\infty]$ is 
\begin{equation}\label{eq776}
\wh{f}(x,t,q^x,q^t)=
\begin{cases}
-q^t f(x,t,\frac{q^x}{-q^t})\,, & \textrm{ if } q^t<0\,,\\
\infty\,, \quad &\textrm{ if  $q^t> 0$ or $q^t=0$ and $q^x\not=0$}\,,\\
0\,, \quad &\textrm{ if  $(q^x,q^t)=(0,0)$}\,.
\end{cases}
\end{equation}
For every \(v\in L^{\infty}_{loc}(\Omega\times \R)\) such that the distributional derivative \(Dv\) is a finite measure on \(\Omega\times \R\), we define the measure
\[
\wh{f}(x,t,Dv)\coloneqq\wh{f}\left(x,t,\frac{Dv}{|Dv|}\right)|Dv|\,,
\]
where $|Dv|$ is the total variation of $Dv$ and $\frac{Dv}{|Dv|}$ denotes the Radon--Nikod\'ym derivative of $Dv$ with respect to $|Dv|$. For the definition of convex functions of measures, see e.g. \cite[Section 2.6]{AmFuPa}. We then consider the auxiliary energy
\begin{equation}\label{def-hat-E}
    \wh E_{\Omega \times \R} (v) \coloneqq \int_{\Omega \times \R} \dhf(x, t, Dv) 
    = \int_{\Omega \times \R} \wh f\left(x, t, \frac{Dv}{|Dv|}(x, t)\right)\d |Dv|(x, t)\,.
\end{equation} 

Given \(u\in W^{1,1}(\Omega)\) and setting \(v=1_u\), it follows from \eqref{def-Dv} and the homogeneity of \(\wh{f}\) with respect to the last variable  that for every Borel function $h : \Omega \times \R \to \rp$,  
\[
\int_{\Omega \times \R} h(x, t) \dhf(x, t, Dv)
=\int_{\Omega \times \R} h(x, t) \wh{f}(x, t, (\nabla u,-1))\frac{1}{\sqrt{1+|\nabla u|^2}}\d\cH^{N}\mres \Gu\,.
\]
The definition of \(\wh{f}\) given in \eqref{eq776} and the area formula \eqref{eq:hdH-hdx-area} then yield
\begin{equation}\label{eq:f-f-hat-general}
    \int_{\Omega \times \R} h(x, t) \dhf(x, t, Dv) = \int_{\Omega} h(x, u(x))f(x, u(x), \nabla u(x))\dx\,.
\end{equation}
In particular\,, taking \(h\equiv 1\), we obtain that $E_\Omega(u) = \wh E_{\Omega \times \R} (1_u)$ for every $u \in W^{1,1}(\Omega)$, where $\wh E_{\Omega \times \R} $ is given by~\eqref{def-hat-E}.

\subsection{Construction of the inner approximating sequence  in a reduced setting}\label{sec:approx}
In this section, we work under seemingly more restrictive assumptions than in the main results.  By now we consider an open set \(\Lambda\) in \(\rn\) such that \(\Omega\Subset \Lambda\), while the Lagrangian satisfies the following, technical version of our main structural assumption.
\begin{description}
\item[\namedlabel{f-red}{($f^{\rm red}$)}]  Assume  that $f:\Lambda\times\R\times\rn\to\rp$ is a Carath\'eodory function which is convex and superlinear with respect to the last variable and $f(x,t,0)=0$ for a.e. $x\in\Lambda$ and all $t\in\R$.\end{description}
We explain in Section~\ref{sec:proofs} how the extra assumptions can be removed. Our goal in the reduced setting reads as follows.

\begin{proposition}[Absence of the Lavrentiev phenomenon in the reduced setting] \label{prop-main-reduced}
Let $\Lambda$ be an open set on $\rn$, let  \(\Omega\Subset\Lambda\) be a bounded Lipschitz open set, \(\vp:\rn\to \R\) be Lipschitz continuous,  \(p\geq 1\), and \(f:\Lambda \times \R\times \rn\to \rp\) satisfy Assumptions \ref{f-red} and \eqref{eqHZconv}. Consider the functional $E_\Omega$ be defined as in~\eqref{def-E-of-u}. Then, for every \(u\in \cEp(\Omega)\cap W^{1,p}(\Omega)\cap L^\infty(\Omega)\) there exists a sequence \((\wt u_{n})_{n\in\N}\subset W^{1,\infty}_\vp(\Omega)\) such that $\wt u_n \to u$   as $n \to \infty$ in $L^{1}(\Omega)$, $\sup_n\|\wt u_n\|_{L^\infty}<\infty$ and
\begin{equation*} 
\lim_{n\to \infty}E_\Omega(\wt u_n)=E_\Omega(u) \,.
\end{equation*}
\end{proposition}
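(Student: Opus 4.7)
\medskip
\noindent\textbf{Plan of proof.} The overall strategy is to transfer the problem to the convexified energy $\wh E_{\Omega\times\R}$ introduced in \eqref{def-hat-E}, since $\wh f$ is convex in its last argument even though $f$ is only convex in $\xi$. Given $u\in \cEp(\Omega)\cap W^{1,p}(\Omega)\cap L^\infty(\Omega)$, I first extend $u$ to a function (still called $u$) on the larger set $\Lambda$ in such a way that $u=\vp$ outside $\Omega$, and set $v\coloneqq 1_u$. The first step is to build an initial approximating family
\[
v_{\ve,\delta}(x,t)\coloneqq v*_x\vr_\ve(x,t)+\delta\,\alpha(t),
\]
where $\vr_\ve$ is the standard mollifier \eqref{def-vr-ve} and $\alpha:\R\to\R$ is a fixed smooth strictly decreasing function that is constant outside a bounded interval. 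The role of the $\delta\,\alpha$ term is to force $\partial_tv_{\ve,\delta}\le -c\,\delta<0$, which is what guarantees the cone condition \eqref{eq-Lip-vn} (and eventually the Lipschitz continuity of the recovered functions $u_n$).

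\medskip
\noindent\textbf{Energy convergence for $v_{\ve,\delta}$.} For any null sequences $(\ve_n,\delta_n)\to(0,0)$, one has $v_n\coloneqq v_{\ve_n,\delta_n}\to v$ a.e., so the liminf inequality
\[
\liminf_{n\to\infty}\wh E_{\Omega\times\R}(v_n)\ge \wh E_{\Omega\times\R}(v)
\]
follows from the Reshetnyak lower semicontinuity theorem for functionals of measures, using the convexity and one-homogeneity of $\wh f$. The crucial and delicate step is the limsup inequality
\[
\limsup_{n\to\infty}\wh E_{\Omega\times\R}(v_n)\le \wh E_{\Omega\times\R}(v).
\]
To prove it, I would first transfer the anti-jump condition \eqref{eqHZconv} on $f$ into a usable control on $\wh f(x,t,Dv_{\ve,\delta})$, yielding a local domination of the form
\[
\int_{A'}\d\wh f(x,t,Dv_{\ve,\delta})\le C\int_{A'}\d\bigl(\wh f(x,t,Dv)*_x\vr_\ve\bigr)+\text{(small in $\delta$)},
\]
as in \eqref{eq561}. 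Because the constant $C$ is generally larger than $1$, this alone is insufficient. I would therefore introduce, for each $k\ge 1$, the splitting
\[
\int_{\Omega\times\R}\!\d\wh f(x,t,Dv_{\ve,\delta})
= \int_{\{q^t_{\ve,\delta}\le -1/k\}}\!\d\wh f(x,t,Dv_{\ve,\delta})
+ \int_{\{q^t_{\ve,\delta}> -1/k\}}\!\d\wh f(x,t,Dv_{\ve,\delta}).
\]
On the "good" set $\{q^t_{\ve,\delta}\le-1/k\}$, $\wh f$ is bounded in the relevant regime, so the Reshetnyak continuity theorem yields convergence to the corresponding piece of $\wh E_{\Omega\times\R}(v)$. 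On the "bad" set $\{q^t_{\ve,\delta}>-1/k\}$, I apply the transferred anti-jump bound and then analyze the limit via the coarea and area formulas together with the approximate tangent spaces to the level sets of $u$, showing that the bad set has vanishing measure as $k\to\infty$. Sending $\ve,\delta\to 0$ first and then $k\to\infty$ kills the nasty multiplicative constants and yields the desired limsup bound, hence $\lim_n\wh E_{\Omega\times\R}(v_n)=\wh E_{\Omega\times\R}(v)=E_\Omega(u)$.

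\medskip
\noindent\textbf{From $v_n$ to Lipschitz approximations of $u$.} Once the convergence $\wh E_{\Omega\times\R}(v_n)\to\wh E_{\Omega\times\R}(v)$ is established, I fix some $s\in(0,1)$ (a generic value, chosen by Fubini/Sard so that the level sets behave well) and define $u_n:\Omega\to\R$ by declaring $1_{u_n}$ to be the indicator of the super-level set $\{v_n>s\}$, i.e., $u_n(x)=\sup\{t\in\R:v_n(x,t)>s\}$, using the generalized inverse \eqref{def-gen-inv}. The strict monotonicity of $t\mapsto v_n(x,t)$ granted by the $\delta\alpha(t)$ term, together with the cone condition \eqref{eq-Lip-vn}, makes each $u_n$ Lipschitz. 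The a.e. convergence $v_n\to 1_u$ and a coarea argument give $u_n\to u$ in $L^1(\Omega)$ and $E_\Omega(u_n)=\wh E_{\Omega\times\R}(1_{u_n})\to E_\Omega(u)$, with $\sup_n\|u_n\|_{L^\infty}<\infty$ inherited from the boundedness of $u$ and $\vp$.

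\medskip
\noindent\textbf{Restoring the boundary condition.} The construction above does not a priori match $\vp$ on $\partial\Omega$, so a final modification is required to produce $\wt u_n$. Here I would localize near $\partial\Omega$ using a finite cover by Lipschitz charts reducing to the model case of an epigraph of a Lipschitz function, and replace the symmetric mollifier $\vr_\ve$ by a \emph{decentered} kernel $\vr_\ve(\cdot-\tau_\ve)$ shifted in the outward normal direction with translation $\tau_\ve$ chosen much larger than $\ve$. Because $\vp$ is Lipschitz, this decentered construction produces, on a neighbourhood of $\partial\Omega$, an approximating function that is uniformly close to $\vp$; a cut-off argument gluing this boundary approximation with the interior one from the previous step, followed by a truncation to enforce exact agreement with $\vp$ on $\partial\Omega$, yields $\wt u_n\in W^{1,\infty}_\vp(\Omega)$. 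The energy convergence is preserved because the glueing region shrinks and the contribution of $E_\Omega$ on this region vanishes by absolute continuity of the integral (applied to the dominating function from the previous step) combined with the $L^\infty$ bound on the $u_n$'s and the growth control coming from \eqref{eqHZconv}.

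\medskip
\noindent\textbf{Main difficulty.} The hardest part is by far the limsup inequality. The delicate point is that neither the naive mollification estimate (which produces the unwanted multiplicative constant) nor the Reshetnyak continuity theorem (which fails because of the superlinear growth making $\wh f(x,t,\cdot)$ blow up as $q^t\to 0^-$) works on its own; the whole game is to intertwine the two strategies using the good/bad-region decomposition and to extract the correct behaviour of $q^t_{\ve,\delta}$ as $\ve\to 0$ via the geometric measure theory tools from Section~\ref{sec:GMT}. The boundary step, while technically involved due to the lack of any a priori $\xi$-growth estimate on $f$, is a more or less standard decentered convolution argument once the interior approximation is in hand.
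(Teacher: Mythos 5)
Your outline of the interior approximation is essentially the paper's: construction of $v_{\ve,\delta}=v*_x\vr_\ve+\delta\alpha(t)$, the liminf inequality via Reshetnyak lower semicontinuity, the limsup inequality via the good/bad decomposition $\{q^t_{\ve,\delta}\le -1/k\}$ versus $\{q^t_{\ve,\delta}>-1/k\}$ (combining a transferred anti-jump bound \`a la~\eqref{eq561} with Reshetnyak continuity and the geometric measure theory analysis of $q^t_{\ve,\delta}$), and then recovering Lipschitz maps via the generalized inverse $u^s_{\ve,\delta}=v_{\ve,\delta}^{-1}(\cdot,s)$. The single-chart boundary correction by a decentered mollifier supported in $B'(0,\tfrac{1}{8L_\psi})\times(\tfrac14,\tfrac34)$ (so the shift is of order $\ve$, not much larger than $\ve$ as you write) followed by a truncation $\phi_n\bun+(1-\phi_n)\vp$ is also the paper's Lemma~\ref{lm-approx-epigraph}.

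The genuine gap is in the global gluing of the chart-wise approximations. You write that the cut-off contribution is harmless ``by absolute continuity of the integral \dots combined with the $L^\infty$ bound on the $u_n$'s and the growth control coming from~\eqref{eqHZconv}.'' This reasoning fails: \eqref{eqHZconv} is an anti-jump condition in the spatial variable and gives no bound of $f(x,t,\xi+\eta)$ in terms of $f(x,t,\xi)$, because no $\Delta_2$-type condition in $\xi$ is assumed. If you glue $\sum_j\Psi_j\,\bunpj$ directly, the gradient contains $\sum_j\nabla\Psi_j\,\bunpj$ and the map $\xi\mapsto f(x,t,\xi)$ offers no way to absorb this term: the different $\bunpj$ have Lipschitz constants blowing up as $n\to\infty$ on the fixed overlap regions of the partition of unity, so the ``glueing region shrinks'' claim does not apply either. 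The paper's fix is to perform the gluing a second time at the level of the convexified functional: set $\Phi_{n,\overline\delta}=\sum_j\Psi_j 1_{\bunpj}+\overline\delta\,\overline\alpha(t)$, decompose $D\Phi_{n,\overline\delta}=\lambda_n+h_{n,\overline\delta}\,\cH^{N+1}$, use convexity of $\wh f$ in the last argument to bound $\wh f(x,t,\lambda_n)\le\sum_j\Psi_j\wh f(x,t,D1_{\bunpj})$, and handle the cut-off remainder $h_{n,\overline\delta}$ by noting that $\sum_j\nabla\Psi_j 1_{\bunpj}\to\sum_j\nabla\Psi_j 1_u=0$ in $L^1$ together with the $\overline\delta$-rescaling (so that $\mathcal K^\Omega_{2,n}\to 0$ by dominated convergence); only then does one take a generalized inverse to recover Lipschitz maps agreeing with $\vp$ on $\partial\Omega$. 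This second pass through $\wh E$ is precisely what replaces the missing $\Delta_2$ condition and cannot be dispensed with.
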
 
We emphasize that in the above statement, the assumption \eqref{eqHZconv} is required to hold on \(\Lambda\) and not just on \(\Omega\).

Let us present the construction of the initial approximation and establish its basic properties. We start from a map \(u\in\cE(\Lambda)\cap W^{1,p}(\Lambda)\cap L^{\infty}(\Lambda)\) for some \(p\geq 1\). 
 Let \begin{equation}
     \label{def-M}
 M\coloneqq\|u\|_{L^{\infty}(\Lambda)}\,.
 \end{equation}
We introduce an absolutely continuous positive function \begin{equation}
    \label{def:alpha}
\alpha:\R\to (0,\infty)
\end{equation} such that \(\lim_{t\to +\infty}\alpha(t)=0\) and for every \(\mathcal{k}_1>0\), there exists \(c_{\mathcal{k}_1}>0\)  such that 
\begin{equation}\label{alpha'prop}
    \alpha'(t)\leq -c_{\mathcal{k}_1}\qquad\text{for a.e. \(t\in (-\mathcal{k}_1,\mathcal{k}_1)\)}\,.
\end{equation}
 Ultimately, \(\alpha\) will be subject to an additional condition that we proceed to formulate. Remember first that \(\Lambda_+\) is a Borel subset of \(\Lambda\) such that \(|\nabla u|>0\) a.e. on \(\Lambda_+\) and \(|\nabla u|=0\) a.e. on \(\Lambda\setminus \Lambda_+\).  We then observe  that for a.e. \(t\in \R\), for \(\cH^{N-1}\) a.e. \(z\in u^{-1}(t)\), we have \(\nabla u(z)\in \Lambda_+\). This is a consequence of the coarea formula, which also implies that
 \[
\int_{\R}\left(\int_{u^{-1}(t)} \frac{f(z,t, \nabla u(z))+|\nabla u(z)|^p}{|\nabla u(z)|}\d\cH^{N-1}(z) \right)\dt =\int_{\Lambda_+}f(x,u(x), \nabla u(x) )+|\nabla u(x)|^p\dx\,. 
 \]  
We deduce therefrom that the inner integral in the left-hand side is a summable function of \(t\in \R\) and we are thus entitled to require that
 for a.e. \(t\in \R\), 
\begin{equation}\label{eq1168}
-\alpha'(t)\geq \int_{u^{-1}(t)} \frac{f(z,u(z), \nabla u(z))+|\nabla u(z)|^p}{|\nabla u(z)|}\d\cH^{N-1}(z)\,.
\end{equation}

In order to construct the approximate sequence, we employ the function \(v={1}_u\) defined in~\eqref{1u} and then, for every \(\delta \geq 0\), we set \(v_{0,\delta}:\Lambda\times \R\to \R\) by \begin{equation}
    \label{v0delta}
v_{0, \delta}(x,t)\coloneqq v(x,t)+\delta\alpha(t)\,.
\end{equation}
 Recall also $\vr_\ve$ from~\eqref{def-vr-ve} and note that 
 since $\Omega\Subset\Lambda$, one can find \(\ve_0>0\) such that \(\Omega+B{(0,\ve_0)}\subset \Lambda\). For every \(\ve \in (0,\ve_0)\),   we  consider the map \(v_{\ve,\delta}:\Omega\times \R\to \R\) defined as
\begin{equation}
    \label{def-v-eps-delt}
v_{\ve,\delta}(x,t)\coloneqq(v_{0,\delta}*_x\vr_\ve)(x,t)=(v*_x\vr_\ve)(x,t) + \delta \alpha(t)\,.
\end{equation} 
Note that if \(\delta\in (0,\infty)\) and $\ve\in [0, \ve_0)$, then $v_{\ve, \delta}$ is decreasing with respect to the second variable. We shall denote by $q_{\ve, \delta}$ the Radon--Nikod\'ym derivative of $Dv_{\ve, \delta}$ with respect to its total variation, i.e., $q_{\ve, \delta} = (q_{\ve, \delta}^x, q_{\ve, \delta}^t)$ and
\begin{equation}\label{eq:def-q}
q_{\ve, \delta} \coloneqq\frac{Dv_{\ve, \delta}}{|Dv_{\ve, \delta}|} \,.
\end{equation}
By standard properties of convolution, the family \((v_{\varepsilon, \delta})_{\varepsilon}\) converges to \(v_{0,\delta}\) in \(L^{1}_{\rm loc}(\Omega\times \R)\) when \(\ve\to 0\).  Moreover,  \(Dv_{\ve,\delta}=Dv_{0,\delta}*_x\vr_\ve\) for every \(\ve\in (0,\ve_0), \delta\in (0,\infty)\), and the family \((Dv_{\ve,\delta})_\ve\) converges weakly-$*$ to \(Dv_{0,\delta}\)  when \(\ve\to 0\). 
From  \cite[Corollary~1.60]{AmFuPa}, this implies that
\begin{equation}\label{eq1647}
\liminf_{\ve\to 0} |Dv_{\ve, \delta}|(\Omega\times \R)\geq |Dv_{0,\delta}|(\Omega\times \R)\,.
\end{equation}
Actually, the converse inequality also holds.

\begin{lemma}\label{lm-lim-DveOm}
Let $u \in \cE(\Lambda) \cap L^{\infty}(\Lambda)$, $\delta\in [0,\infty)$, \(\ve\in [0,\ve_0)\), and $v_{\ve, \delta}$ be defined in~\eqref{def-v-eps-delt}. For every Borel set \(A\subset \Omega\), 
\begin{equation}\label{eq-estim-sup-dved}
|Dv_{\ve,\delta}|(A\times \R)\leq 
\int_{\Lambda\times \R}\left( \int_{A}\vr_{\varepsilon}(y-x)\dy\right) \d|Dv_{0,\delta}|(x,t)\,,
\end{equation}
where the inner integral in the right-hand side reduces to \(\chi_A(x)\) when \(\ve=0\). Moreover,
\begin{equation}\label{eq-cvgce-dved}
\limsup_{\ve\to 0} |Dv_{\ve, \delta}|(A\times \R)\leq |Dv_{0,\delta}|(A\times \R)=\int_{A}\sqrt{1+|\nabla u|^{2}}\dx + \delta {|A|}\int_{\R}|\alpha'|\dt\,.    
\end{equation}
\end{lemma}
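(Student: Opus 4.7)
The strategy is to first establish the pointwise bound \eqref{eq-estim-sup-dved} by viewing $Dv_{\ve,\delta}$ as the partial convolution of $Dv_{0,\delta}$ with $\vr_\ve$, and then to deduce the limsup estimate in \eqref{eq-cvgce-dved} via dominated convergence after identifying the projection of $|Dv_{0,\delta}|$ onto $\Lambda$ as an absolutely continuous measure. The starting point is the identity of $\R^{N+1}$-valued measures
\[
Dv_{\ve,\delta} = Dv_{0,\delta}*_x \vr_\ve \qquad\text{on } \Omega\times\R,
\]
which follows from the commutation of distributional differentiation with convolution and from the fact that the $t$-only term $\delta\alpha(t) = \delta\alpha(t)\int\vr_\ve(y)\dy$ is left unchanged by partial convolution in $x$. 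Using the polar decomposition $Dv_{0,\delta}=\sigma\,\d|Dv_{0,\delta}|$ with $|\sigma|=1$, and testing against a Borel $h:\Omega\times\R\to\R^{N+1}$ with $|h|\leq 1$ supported in $A\times\R$, applying \eqref{eq-def-conv-partial} componentwise gives
\[
\int h\cdot \d Dv_{\ve,\delta} = \int_{\Lambda\times\R}\left(\int_A h(y,t)\cdot \sigma(x,t)\vr_\ve(y-x)\dy\right)\d|Dv_{0,\delta}|(x,t).
\]
Bringing the absolute value inside and using $|h|\leq 1$, $|\sigma|=1$, then taking the supremum over such $h$, yields \eqref{eq-estim-sup-dved}. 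The case $\ve=0$ is the tautology $v_{0,\delta}=v_{0,\delta}$, which accounts for the inner integral collapsing to $\chi_A(x)$.

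For the equality in \eqref{eq-cvgce-dved}, decompose $Dv_{0,\delta}=Dv+(0,\delta\alpha'(t))\mathcal{L}^{N+1}$. By Lemma~\ref{lm-deriv-v}, $Dv$ is concentrated on the $\cH^N$-rectifiable graph $\Gu$, which is $\mathcal{L}^{N+1}$-negligible; hence the two summands are mutually singular and
\[
|Dv_{0,\delta}| = \cH^N\mres \Gu + \delta|\alpha'(t)|\mathcal{L}^{N+1}.
\]
Integrating this identity over $A\times\R$, the area formula \eqref{eq:hdH-hdx-area} applied to $(x,t)\mapsto \chi_A(x)$ together with Fubini produces exactly $\int_A\sqrt{1+|\nabla u|^2}\dx + \delta|A|\int_\R|\alpha'|\dt$.

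The main point is then the limsup inequality. Set $f_\ve(x)\coloneqq \int_A\vr_\ve(y-x)\dy$ and let $\pi(x,t)=x$ be the projection onto $\Lambda$; then \eqref{eq-estim-sup-dved} reads $|Dv_{\ve,\delta}|(A\times\R)\leq \int_\Lambda f_\ve\,\d(\pi_\#|Dv_{0,\delta}|)$. From the decomposition of $|Dv_{0,\delta}|$ obtained in the previous step, the pushforward $\pi_\#|Dv_{0,\delta}|$ has Lebesgue density $\sqrt{1+|\nabla u|^2}+\delta\int_\R|\alpha'|\dt$ on $\Lambda$, and is therefore absolutely continuous with respect to $\mathcal{L}^N$. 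Since $0\leq f_\ve\leq 1$ and $f_\ve(x)\to\chi_A(x)$ at every Lebesgue point of $\chi_A$, hence $\mathcal{L}^N$-a.e., dominated convergence yields
\[
\lim_{\ve\to 0}\int_\Lambda f_\ve\,\d(\pi_\#|Dv_{0,\delta}|) = \pi_\#|Dv_{0,\delta}|(A) = |Dv_{0,\delta}|(A\times\R),
\]
which is the desired limsup bound. The subtle point here is that the a.e.\ convergence $f_\ve\to\chi_A$ (available only with respect to Lebesgue measure) has to be transferred to integration against the possibly singular measure $|Dv_{0,\delta}|$; this works precisely because its $x$-projection is absolutely continuous, which in turn rests on the graph representation of Lemma~\ref{lm-deriv-v} and the rectifiability of $\Gu$ recalled in Section~\ref{sec:GMT}. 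Were $u$ merely BV (so that $Dv$ could have a Cantor part whose projection need not be absolutely continuous), the argument would not apply without further care.
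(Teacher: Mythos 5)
Your proof is correct and follows essentially the same route as the paper: convolution bound $|Dv_{\ve,\delta}|\leq|Dv_{0,\delta}|*_x\vr_\ve$ (which you derive explicitly via polar decomposition, the paper simply invokes it), mutual singularity of the graph measure and $\cH^{N+1}$ plus the area formula for the equality, and dominated convergence using the fact that $\chi_A*\widetilde{\vr}_\ve\to\chi_A$ off a Lebesgue-null set. Your use of the pushforward $\pi_\#|Dv_{0,\delta}|$ is only a re-framing of the paper's observation (that $|A|=0$ implies $|Dv_{0,\delta}|(A\times\R)=0$), made explicit by computing the density; it does not change the substance, though the closing remark about BV functions with a nontrivial Cantor part is a nice observation about where the argument would genuinely need more care.
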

\begin{proof}
The measures \(\cH^{N}\llcorner \Gu\) and \(\cH^{N+1}\) are mutually singular.
By the definition of \(v_{0,\delta}\) and Lemma~\ref{lm-deriv-v}, we thus have
\[
|Dv_{0,\delta}|(A \times \R)=|Dv|(A\times \R)+\delta |\alpha'|{\cH^{N+1}}(A\times \R)=\cH^{N}(\Gu \cap (A\times \R))+\delta\int_{A\times \R}|\alpha'|\dx \dt\,.
\]
Using the area formula for the first term and the fact that \(|\alpha'|\) depends only on \(t\) for the second one, this gives the second equality in \eqref{eq-cvgce-dved}; that is,
\begin{equation}\label{eq1604}
|Dv_{0,\delta}|(A\times \R)=\int_{A}\sqrt{1+|\nabla u|^{2}}\dx + \delta {|A|}\int_{\R}|\alpha'|\dt\,.    
\end{equation}
We deduce therefrom that
\begin{equation}\label{eq1641}
|A|=0\quad\Longrightarrow\quad |Dv_{0,\delta}|(A\times \R)=0.    
\end{equation}

For the first inequality in \eqref{eq-cvgce-dved}, we start from the fact that \(|Dv_{\ve, \delta}|\leq |Dv_{0, \delta}|*_x\varrho_{\varepsilon}\) for every \(\ve\in (0, \ve_0)\).
Together with \eqref{eq-def-conv-partial-bis}, this implies \eqref{eq-estim-sup-dved} from which we deduce that
\begin{equation}\label{eq1652}
\limsup_{\ve\to 0} |Dv_{\ve, \delta}|(A\times \R)
\leq \limsup_{\ve\to 0}
\int_{\Lambda\times \R}\left( \int_{A}\vr_{\varepsilon}(y-x)\dy\right) \d|Dv_{0,\delta}|(x,t)\,.
\end{equation}
The inner integral in the right-hand side is equal to \(\chi_A*\widetilde{\vr}_{\ve}(x)\) where \(\widetilde{\vr}(y)=\vr(-y)\), for every \(y\in \rn\). We observe that   \(\chi_A*\widetilde{\vr}_{\ve}(x)\) is bounded from above by \(1\) and converges to \(\chi_A(x)\) for every Lebesgue point \(x\) of \(\chi_A\). From \eqref{eq1641}, we deduce that for \(|Dv_{0,\delta}|\) a.e. \((x,t)\in \Omega\times \R\), the point \(x\) is a Lebesgue point of \(\chi_A\) and thus
\[
\lim_{\ve\to 0}\chi_A*\widetilde{\vr}_{\ve}(x)=\chi_A(x).
\]
Moreover, applying \eqref{eq1604} with \(A=\Omega\), we get  that \(|Dv_{0,\delta}|(\Omega\times  \R)<\infty\). We can thus apply the dominated convergence theorem in the right-hand side of \eqref{eq1652} to obtain 
the first inequality in \eqref{eq-cvgce-dved}.

\end{proof}
From \eqref{eq1647} and \eqref{eq-cvgce-dved} with \(A=\Omega\), it follows that
\begin{equation}\label{eq16755}
\lim_{\ve\to 0} |Dv_{\ve, \delta}|(\Omega\times \R)= |Dv_{0,\delta}|(\Omega\times \R).   
\end{equation}

As a by-product of Lemma~\ref{lm-lim-DveOm} (see \eqref{eq1641} for \(\ve=0\) and \eqref{eq-estim-sup-dved} for \(\ve>0\)), we also deduce what follows.
\begin{remark}\label{lemma-Negligibility}
For every Borel set \(A\subset \Omega\) such that \({|A|}=0\), one has \(|Dv_{\ve, \delta}|(A\times \R)=0\) for  \(\ve\in [0, \ve_0),\, \delta \in \rp\).    
\end{remark}

The fact that \(f(x,t,0)=0\) in~\ref{f-red} entitles one to restrict the measure \(\widehat{f}(x,t,Dv_{\ve,\delta})\) to suitable subsets of \(\Omega\times \R\):

\begin{lemma}
\label{lm-whf-dv0d-dv}
Suppose  $f$ satisfies~\ref{f-red}.
 Let $u \in \cE(\Lambda) \cap L^{\infty}(\Lambda), v_{\ve, \delta}$ defined as in~\eqref{def-v-eps-delt},  $M$ given by~\eqref{def-M} and let us recall the decomposition $\Lambda = \Lambda_+\cup \Lambda_0$ from~\eqref{def-Lambda+}.   Then, for every \(\delta \in \rp\),
 \begin{equation}\label{eq1717prim}
\widehat{f}(\cdot,t,Dv_{0,\delta})= \widehat{f}(\cdot,t, Dv)=\widehat{f}(\cdot,t,Dv_{0,\delta}\llcorner (\Lambda\times [-M,M])).
 \end{equation}
 Moreover,
 \begin{equation}
 \label{eq-whf-DV-ell}
     \widehat{f}(\cdot,t,Dv)= \left(\frac{1}{|\nabla u|}f(\cdot,t,\nabla u)\left(\cH^{N-1}\llcorner (u^{-1}(t)\cap \Lambda_+)\right)\right)\otimes {\cH^1}.
 \end{equation}
  Finally, for every \(\ve\in (0, \ve_0)\),
  \begin{equation}\label{eq1717}
 \widehat{f}(\cdot,t,Dv_{\ve,\delta}) = \widehat{f}(\cdot,t, Dv_{\ve,\delta}\llcorner (\Omega\times [-M,M]))\,.
 \end{equation}
 
\end{lemma}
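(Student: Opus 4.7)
The plan is to exploit a single unifying observation: the reduced assumption $f(x,t,0)=0$ combined with the definition~\eqref{eq776} gives $\wh f(x,t,(0,q^t))=|q^t|f(x,t,0)=0$ for every $q^t\le 0$. Hence any piece of an admissible measure whose Radon--Nikod\'ym direction (with respect to its total variation) is $(0,-1)$ contributes nothing to $\wh f(\cdot,t,\cdot)$. All three identities reduce to identifying such pieces and discarding them.

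For~\eqref{eq1717prim}, I would first decompose $Dv_{0,\delta}=Dv+(0,\delta\alpha'(t))\,\cH^{N+1}$ on $\Lambda\times\R$. By Lemma~\ref{lm-deriv-v}, $Dv$ is carried by the $\cH^{N}$--rectifiable graph $\Gu$, which is $\cH^{N+1}$--negligible, so the two summands are mutually singular. Consequently $|Dv_{0,\delta}|=|Dv|+\delta|\alpha'(t)|\,\cH^{N+1}$, and the Radon--Nikod\'ym derivative ${Dv_{0,\delta}}/{|Dv_{0,\delta}|}$ equals ${Dv}/{|Dv|}$ on $\Gu$ and equals $(0,-1)$ elsewhere (using $\alpha'<0$). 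The second piece is annihilated by $\wh f$, yielding $\wh f(\cdot,t,Dv_{0,\delta})=\wh f(\cdot,t,Dv)$. Since $\|u\|_{L^{\infty}}\le M$ forces $\Gu\subset\Lambda\times[-M,M]$, the measure $Dv$ already coincides with its restriction to the strip, and replaying the splitting for $Dv_{0,\delta}\llcorner(\Lambda\times[-M,M])$ gives the second equality.

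For~\eqref{eq-whf-DV-ell}, I would split $Dv$ along $\Lambda=\Lambda_+\cup\Lambda_0$. On $\Lambda_0$, $\nabla u=0$ a.e., so~\eqref{def-Dv} gives Radon--Nikod\'ym direction $(0,-1)$, again killed by $\wh f$. On $\Lambda_+$, formula~\eqref{Dv-other-formulation} supplies the explicit disintegration
\[
Dv\llcorner(\Lambda_+\times\R)=\bigl((\nabla u,-1)/|\nabla u|\,\cH^{N-1}\llcorner(u^{-1}(t)\cap\Lambda_+)\bigr)\otimes\cH^1,
\]
whose total variation is obtained by replacing the vector density by its Euclidean norm $\sqrt{1+|\nabla u|^2}/|\nabla u|$, and whose Radon--Nikod\'ym direction is $(\nabla u,-1)/\sqrt{1+|\nabla u|^2}$. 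Plugging these into~\eqref{eq776} and simplifying gives exactly~\eqref{eq-whf-DV-ell}.

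Finally, for~\eqref{eq1717}, I would write $Dv_{\ve,\delta}=Dv*_x\vr_\ve+(0,\delta\alpha'(t))\,\cH^{N+1}$ on $\Omega\times\R$. Since $Dv$ is supported in $\Lambda\times[-M,M]$ and partial convolution in $x$ does not move the $t$-support, $Dv*_x\vr_\ve$ sits in $\Omega\times[-M,M]$; off the strip only the absolutely continuous $(0,\delta\alpha'(t))\,\cH^{N+1}$ piece survives, with direction $(0,-1)$, and is annihilated by $\wh f$. The main (modest) technical point worth flagging throughout is that restriction commutes with the $\wh f$-operation, i.e.\ $\wh f(\cdot,t,\mv)\llcorner A=\wh f(\cdot,t,\mv\llcorner A)$ for any Borel $A$; this is immediate from $\wh f(\cdot,t,\mv)=\wh f(\cdot,t,\mv/|\mv|)\,|\mv|$, since restricting a measure preserves its Radon--Nikod\'ym derivative on the restriction set.
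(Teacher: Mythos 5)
Your proposal is correct and follows essentially the same route as the paper: the paper likewise splits $Dv_{0,\delta}$ into the mutually singular pieces $Dv$ and $(0,\delta\alpha')\cH^{N+1}$, invokes $\wh f(x,t,(0,q^t))=0$ for $q^t\le0$ (coming from $f(x,t,0)=0$), restricts to $\Lambda_+$ via~\eqref{Dv-other-formulation} for~\eqref{eq-whf-DV-ell}, and uses the support of $Dv$ under partial convolution for~\eqref{eq1717}. The only cosmetic difference is that where the paper cites \cite[Proposition~2.37]{AmFuPa} for additivity of $\wh f$ over mutually singular measures, you argue it directly from the Radon--Nikod\'ym structure, and you state explicitly the compatibility of $\wh f$ with restriction, which the paper uses tacitly.
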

\begin{proof}
Since \(Dv_{0,\delta}\) is the sum of the two mutually singular measures \(Dv\) and \((0, \delta \alpha'){\cH^{N+1}}\), one has by \cite[Proposition~2.37]{AmFuPa},
\[
\widehat{f}(\cdot,t,Dv_{0,\delta})=\widehat{f}(\cdot, t, Dv) +\widehat{f}(\cdot, t, (0,\delta \alpha')){\cH^{N+1}}\,.
\]
Using that \(f(x,t,0)=0\) for a.e. \(x\in \Lambda\) and every \(t\in \R\), we get 
\begin{equation}\label{eq1333}
\forall q^t\in (-\infty,0] \qquad \widehat{f}(x, t, (0,q^t)))=0\,.
\end{equation}
In view of the fact that \(\alpha'(t)<0\) for a.e. \(t\in \R\), this implies that \(\widehat{f}(x,t,(0, \delta \alpha'(t)))=0\) for \(\cH^{N+1}\)-a.e. \((x,t)\in \Lambda\times \R\) and the first equality in \eqref{eq1717prim} follows. The second one is a consequence of the fact that \(Dv=Dv\llcorner (\Lambda\times [-M,M])\), which in turn follows from the fact that \(\Gu\subset \Lambda\times [-M,M]\).

We proceed with the proof of ~\eqref{eq-whf-DV-ell}. Using the notation introduced in~\eqref{eq:def-q}, one has for \(|Dv|\)-a.e.  \((x,t)\in \Lambda\times \R\), 
\[q_{0,0}(x,t)=\frac{Dv}{|Dv|}(x,t)=\frac{(\nabla u(x) ,-1)}{\sqrt{1+|\nabla u|^2}}\,.
\]
Since \(\nabla u(x)=0\) for a.e. \(x\in \Lambda_0\), we have \(q_{0,0}^{x}(x,t)=0\) \(|Dv|\)-a.e. on \(\Lambda_0\times \R\) by Remark~\ref{lemma-Negligibility}, which also implies by \eqref{eq1333} that \(\wh{f}(x,t,q_{(0,0)}(x,t))=0\) for \(|Dv|\)-a.e. \((x,t)\in \Lambda_0\times \R\). 
Hence, 
\[
\widehat{f}(\cdot,t,Dv)=\widehat{f}(\cdot, t, Dv\llcorner (\Lambda_+\times \R))\,.
\]
Inserting \eqref{Dv-other-formulation} (with \(\Lambda\) instead of \(\Omega\) and \(A=\Lambda_+\)) in the above identity,  we get
\[
\widehat{f}(\cdot,t,Dv)=\widehat{f}\left(\cdot, t, \chi_{\Lambda_+}\frac{(\nabla u,-1)}{|\nabla u|}\right) \left(\cH^{N-1}\llcorner u^{-1}(t)\right)\otimes {\cH^1}\,,
\]
and \eqref{eq-whf-DV-ell} then follows from the definition of \(\widehat{f}\) in terms of \(f\).

By definition of partial convolution, for every \(\ve\in (0,\ve_0)\) and  every Borel \(A'\subset \Omega\times \R\),
\[
Dv_{\ve,0}(A')=\int_{\Lambda\times \R}\left( \int_{\rn}\chi_{A'}(y,t)\vr_{\ve}(y-x)\dy\right)\d Dv(x,t)=\int_{\Lambda\times [-M,M]}\left( \int_{\rn}\chi_{A'}(y,t)\vr_{\ve}(y-x)\dy\right)\d Dv(x,t)\,.
\]
This implies ~\eqref{eq1717}.

\end{proof}

As a consequence of the above lemma, we can deduce that \(|Dv_{\ve,\delta}|\)-negligible sets are also \(\widehat{f}(\cdot,t,Dv)*_x\vr_{\ve}\)-negligible.

\begin{lemma}
\label{lm-abs-convol-f-ch}
Suppose  $f$ satisfies~\ref{f-red}.
 Let $u \in \cE(\Lambda) \cap L^{\infty}(\Lambda)$ and let us recall $v_{\ve, \delta}$ from~\eqref{def-v-eps-delt}. For every \(\delta\in (0,\infty), \ve \in (0,\ve_0)\), {and every Borel set $A' \subset \Omega \times \R$} it holds that
\begin{equation}\label{whf-convol-other-formulation}
\widehat{f}(\cdot,t,Dv)*_x\vr_{\ve}{(A')}={\int_{A'}}\int_{\Lambda_+\cap u^{-1}(t)}\frac{1}{|\nabla u(x)|}f(x,u(x),\nabla u(x))\vr_\ve(y-x)\d\cH^{N-1}(x) {\d \cH^{N+1}(y, t)}\,.    
\end{equation}
 Moreover, the measure \(\widehat{f}(\cdot,t,Dv)*_x\vr_{\ve}\) is absolutely continuous with respect to \(|Dv_{\ve,\delta}|\).
\end{lemma}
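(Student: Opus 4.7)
The proof naturally splits into two parts: the representation formula \eqref{whf-convol-other-formulation} and the absolute continuity claim. For the first, I would start from the disintegration of \(\widehat{f}(\cdot,t,Dv)\) provided by \eqref{eq-whf-DV-ell} in Lemma~\ref{lm-whf-dv0d-dv}, apply the definition of partial convolution \eqref{eq-def-conv-partial-bis} to expand \(\widehat{f}(\cdot,t,Dv)*_x\vr_\ve(A')\) as a double integral against \(\widehat{f}(\cdot,t,Dv)\), and then use the disintegration identity \eqref{eq578} to realize it as an iterated integral in \(t\in\R\) and in \(x\in u^{-1}(t)\cap\Lambda_+\). Swapping \(x\) and \(y\) by Fubini moves the integration in \((y,t)\in A'\) to the outside. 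Since on the level set \(u^{-1}(t)\cap\Lambda_+\) one has \(u(x)=t\), the value \(f(x,t,\nabla u(x))\) coincides with \(f(x,u(x),\nabla u(x))\), which is exactly \eqref{whf-convol-other-formulation}.

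For the absolute continuity, my plan is to observe from \eqref{whf-convol-other-formulation} that \(\widehat{f}(\cdot,t,Dv)*_x\vr_\ve\ll\cH^{N+1}\), so it is enough to show the implication \(|Dv_{\ve,\delta}|(A')=0\;\Longrightarrow\;\cH^{N+1}(A')=0\). I would exploit the decomposition inherited from \eqref{def-v-eps-delt},
\[
\partial_t v_{\ve,\delta}\;=\;\partial_t(v*_x\vr_\ve)\,+\,\delta\alpha'(t)\,\cH^{N+1},
\]
and note that both summands are non-positive: the first because \(v=1_u\) is non-increasing in \(t\), and the second because \(\alpha'<0\) a.e. by~\eqref{alpha'prop}. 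Since they have the same sign, their total variations add, giving
\[
|\partial_t v_{\ve,\delta}|\;=\;|\partial_t(v*_x\vr_\ve)|\,+\,\delta|\alpha'|\,\cH^{N+1}.
\]
Combining this identity with the componentwise bound \(|Dv_{\ve,\delta}|\geq|\partial_t v_{\ve,\delta}|\) and the hypothesis \(|Dv_{\ve,\delta}|(A')=0\) yields \(\delta\int_{A'}|\alpha'(t)|\,\dy\dt=0\). Since \eqref{alpha'prop} provides a uniform lower bound \(|\alpha'|\geq c_{\mathcal{k}_1}>0\) on each strip \((-\mathcal{k}_1,\mathcal{k}_1)\), this forces \(\cH^{N+1}(A'\cap(\rn\times(-\mathcal{k}_1,\mathcal{k}_1)))=0\) for every \(\mathcal{k}_1>0\), hence \(\cH^{N+1}(A')=0\).

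The delicate point will be that \(|Dv_{\ve,\delta}|\) inherits a non-trivial singular behaviour with respect to \(\cH^{N+1}\) from \(D1_u\), which is concentrated on the graph of \(u\) and is convolved only in the \(x\) variable; in particular, no direct Radon--Nikod\'ym comparison of \(|Dv_{\ve,\delta}|\) with \(\cH^{N+1}\) is available in general. What saves the argument is the auxiliary additive term \(\delta\alpha(t)\) appearing in \eqref{def-v-eps-delt}: though seemingly harmless, it perturbs \(\partial_t v_{\ve,\delta}\) by the strictly negative absolutely continuous contribution \(\delta\alpha'\cH^{N+1}\). Because this contribution has the same sign as \(\partial_t(v*_x\vr_\ve)\), it \emph{adds}, rather than cancels, in the total variation, forcing \(|Dv_{\ve,\delta}|\) to dominate a strictly positive multiple of \(\cH^{N+1}\) on every bounded strip in \(t\). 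This is precisely the quantitative information needed to transfer \(|Dv_{\ve,\delta}|\)-null sets to \(\cH^{N+1}\)-null sets and conclude the absolute continuity.
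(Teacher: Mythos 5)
Your proposal is correct and follows essentially the same route as the paper. Both parts — unrolling the partial convolution of the measure $\widehat{f}(\cdot,t,Dv)$ through \eqref{eq-whf-DV-ell} and Fubini for the representation formula, and transferring $|Dv_{\ve,\delta}|$-nullity of $A'$ to $\cH^{N+1}$-nullity via the sign-consistent decomposition $\partial_t v_{\ve,\delta}=\partial_t(v*_x\vr_\ve)+\delta\alpha'\cH^{N+1}$ — match the paper's argument, the only cosmetic difference being that you invoke the strip-uniform lower bound from~\eqref{alpha'prop} where the paper simply uses $\alpha'<0$ a.e.
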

\begin{proof}
From the definition of partial convolution, 
for every Borel set  \(A'\subset \Omega\times\R\),
\[
\widehat{f}(\cdot,t,Dv)*_x\vr_\ve(A')=\int_{\Lambda\times \R}\left(\int_{\rn}\chi_{A'}(y,t)\vr_{\ve}(y-x)\dy \right)\d \widehat{f}(x,t,Dv).
\]
By \eqref{eq-whf-DV-ell}, this gives
\[
\widehat{f}(\cdot,t,Dv)*_x\vr_\ve(A')= \int_{\R} \int_{\Lambda_+\cap u^{-1}(t)} \frac{1}{|\nabla u(x)|}f(x,u(x), \nabla u(x))\left(\int_{\Omega}\chi_{A'}(y,t)\vr_{\ve}(y-x)\dy\right)\d\cH^{N-1}(x)\dt
\]
and \eqref{whf-convol-other-formulation} then follows from the Fubini theorem. 

To prove the last assertion, let \(A'\subset \Omega\times \R\) be a Borel subset such that \(|Dv_{\ve, \delta}|(A')=0\). Then, one exploits the fact that \(|\partial_t v_{\ve, \delta}|\leq |Dv_{\ve, \delta}|\) to deduce that \(|\partial_t v_{\ve, \delta}|(A')=0\). We then observe that \(\partial_t v_{\ve, \delta}\) is the sum of two non-positive measures:
\[
\partial_tv_{\ve, \delta} = \partial_t v*_x\vr_\ve + \delta \alpha' {\cH^{N+1}}\,.
\]
This implies that 
\[
\int_{A'}\delta \alpha'(t)\dx\dt=0\,.
\]
Since \(\alpha'(t)<0\) for a.e.  \(t\in \R\), it follows that \({|A'|}=0\), {which by~\eqref{whf-convol-other-formulation} implies $\widehat{f}(\cdot,t,Dv)*_x\vr_\ve(A') = 0$, as desired.}
\end{proof}

\section{The liminf estimate}\label{sec:liminf}

The proofs of the liminf and the limsup estimates strongly rely on the Reschetnyak continuity theorem, which requires boundedness and continuity properties of the integrand.  Let us notice that by Lemma~\ref{lem-bdd-f} any function satisfying~\eqref{eqHZconv} is bounded on bounded sets. We exploit this fact in the following lemma. 

\begin{lemma}\label{lm-Reschetnyak}
 Suppose  $f$ satisfies~\ref{f-red}.
 Let $u \in \cE(\Lambda) \cap L^{\infty}(\Lambda)$ and let us recall $v_{\ve, \delta}$ and $q_{\ve, \delta}$ given by~\eqref{def-v-eps-delt} and~\eqref{eq:def-q}. Let \(\theta:\R\to [0,1]\) be a continuous function which vanishes on \([c,\infty)\) for some \(c<0\). Then, for every \(\delta\in (0,\infty)\), it holds
 \[
 \lim_{\ve\to 0}\int_{\Omega\times \R}\theta(q^{t}_{\ve,\delta})\d \widehat{f}(x,t,Dv_{\ve,\delta})= \int_{\Omega\times \R}\theta(q^{t}_{0,\delta})\d \widehat{f}(x,t,Dv_{0,\delta}).
 \]
\end{lemma}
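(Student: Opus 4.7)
The plan is to rewrite both sides as integrals of a single bounded Carath\'eodory integrand against the vector measures $Dv_{\ve,\delta}$, $Dv_{0,\delta}$, and then apply a Reshetnyak continuity theorem combined with a Scorza-Dragoni approximation to handle the mere measurability of $f$ in $x$. Set
\[
G(x,t,q) \coloneqq \theta(q^t)\,\wh f(x,t,q),
\]
with the convention $G \equiv 0$ whenever $q^t \geq c$ (to avoid $0 \cdot \infty$ at $q^t = 0$). Since $\wh f$ is positively $1$-homogeneous in $q$, both sides of the claimed identity take the form $\int_{\Omega \times \R} G(x,t,q_{\ve,\delta})\,d|Dv_{\ve,\delta}|$ (respectively with $\ve = 0$). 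By Lemma~\ref{lm-whf-dv0d-dv} the measures $\wh f(\cdot,t,Dv_{\ve,\delta})$ are concentrated on $\Omega \times [-M,M]$: on the vertical part $(0, \delta \alpha')\cH^{N+1}$ one has $\wh f(x,t,(0,-1)) = f(x,t,0) = 0$ by assumption \ref{f-red}.

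Next I check that $G$ is a bounded Carath\'eodory integrand on $\Omega \times [-M,M] \times \Sphere^N$. On the sphere $\{|q| = 1\}$, the support of $\theta$ forces $q^t \leq c < 0$, so $|{-q^x/q^t}| \leq 1/|c|$; since $t$ ranges over $[-M,M]$ and $f$ is bounded on bounded sets (Lemma~\ref{lem-bdd-f}), the quantity $\wh f(x,t,q) = -q^t f(x,t,-q^x/q^t)$ is bounded by some constant $C = C_{c,M}$ independent of $x$. For a.e.~$x$, the map $(t,q) \mapsto G(x,t,q)$ is jointly continuous on $\R \times \Sphere^N$: on $\{q^t < c\}$ this follows from the Carath\'eodory property of $f$ together with the continuity of $\theta$, and near $q^t = c$ (and for $q^t \geq c$) the continuity is ensured by $\theta$ vanishing continuously there against a bounded $\wh f$.

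Because $Dv_{\ve,\delta} = Dv_{0,\delta} *_x \vr_\ve$, one has $Dv_{\ve,\delta} \rightharpoonup^{*} Dv_{0,\delta}$ weakly-$*$ in $\mathcal{M}(\Omega \times \R;\R^{N+1})$ and, by \eqref{eq16755}, $|Dv_{\ve,\delta}|(\Omega \times \R) \to |Dv_{0,\delta}|(\Omega \times \R)$. Were $G$ jointly continuous in all variables, the classical Reshetnyak continuity theorem would yield the conclusion directly. To bypass the mere measurability of $G$ in $x$, I invoke Scorza-Dragoni: for every $\eta > 0$, there exists a compact set $K_\eta \subset \Omega$ with $|\Omega \setminus K_\eta| < \eta$ on which $G$ is jointly continuous in $(x,t,q)$, and one extends this restriction by Tietze to a continuous function $G_\eta : \Omega \times \R \times \Sphere^N \to \R$ with $\|G_\eta\|_\infty \leq C$. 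Applying Reshetnyak continuity to $G_\eta$ yields
\[
\lim_{\ve \to 0} \int_{\Omega \times \R} G_\eta(x,t,q_{\ve,\delta})\,d|Dv_{\ve,\delta}| = \int_{\Omega \times \R} G_\eta(x,t,q_{0,\delta})\,d|Dv_{0,\delta}|.
\]

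It remains to control the error from replacing $G$ by $G_\eta$. Since $G - G_\eta$ is supported in $(\Omega \setminus K_\eta) \times [-M,M]$ and bounded by $2C$, the error is majorized by $2C\,|Dv_{\ve,\delta}|((\Omega \setminus K_\eta) \times [-M,M])$. By Lemma~\ref{lm-lim-DveOm} and \eqref{eq-estim-sup-dved}, this last quantity satisfies
\[
\limsup_{\ve \to 0} |Dv_{\ve,\delta}|((\Omega \setminus K_\eta) \times [-M,M]) \leq \int_{\Omega \setminus K_\eta} \sqrt{1 + |\nabla u|^2}\,dx + \delta |\Omega \setminus K_\eta| \int_{-M}^{M} |\alpha'|\,dt,
\]
which vanishes as $\eta \to 0$ by absolute continuity of the Lebesgue integral. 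The same bound controls the analogous error for the right-hand side. Letting first $\ve \to 0$ and then $\eta \to 0$ yields the claim. The main technical obstacle I expect is precisely this uniform-in-$\ve$ control of the Scorza-Dragoni error; everything else is a straightforward application of the Reshetnyak-type machinery once $G$ has been identified as the correct bounded Carath\'eodory integrand.
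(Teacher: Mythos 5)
Your strategy mirrors the paper's own proof — Scorza--Dragoni approximation followed by a Tietze extension and the Reshetnyak continuity theorem, with the error controlled via small-measure sets — but there is a gap in the way you handle boundedness in the $t$ variable, and it is not merely cosmetic. You correctly observe that $G(x,t,q)=\theta(q^t)\wh f(x,t,q)$ is bounded on $\Omega\times[-M,M]\times\mathbb S^N$ and that the measures $\wh f(\cdot,t,Dv_{\ve,\delta})$ are concentrated on $\Omega\times[-M,M]$. However, $G$ is \emph{not} bounded on $\Omega\times\R\times\mathbb S^N$ (Lemma~\ref{lem-bdd-f} only bounds $f$ on bounded $(t,\xi)$-sets, and nothing prevents $f(x,t,\xi)$ from blowing up as $|t|\to\infty$), so Scorza--Dragoni on the product $\Omega\times(\R\times\mathbb S^N)$ followed by Tietze does not produce a bounded continuous extension $G_\eta$ in the way you state. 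More to the point, even if you build a bounded continuous $G_\eta$ that agrees with $G$ only on $K_\eta\times[-M,M]\times\mathbb S^N$, your claim that $G-G_\eta$ is ``supported in $(\Omega\setminus K_\eta)\times[-M,M]$'' is false: there is also disagreement on all of $\Omega\times(\R\setminus[-M,M])$. The Reshetnyak limit is computed against $|Dv_{\ve,\delta}|$, which — unlike $\wh f(\cdot,t,Dv_{\ve,\delta})$ — carries the $\delta\,\alpha'\,\cH^{N+1}$ mass outside $[-M,M]$, of a fixed positive size independent of $\ve$ and $\eta$. The contribution of $G_\eta$ over that region is therefore uncontrolled.

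The fix, which is precisely what the paper does, is to pre-multiply the integrand by a continuous compactly supported cut-off $\eta_0(t)\in[0,1]$ with $\eta_0\equiv 1$ on $[-M,M]$, defining $h(x,t,q)=\eta_0(t)\theta(q^t)\wh f(x,t,q)$. This $h$ is globally bounded on $\Omega\times\R\times\mathbb S^N$ and vanishes for $|t|$ large, so Scorza--Dragoni plus a sup-norm-preserving Tietze extension produce a genuine bounded continuous $\overline h$ with $h-\overline h$ supported only over $\Omega\setminus K_\tau$, and your small-measure estimate then closes the argument. The concentration fact from Lemma~\ref{lm-whf-dv0d-dv} is used \emph{after} the Reshetnyak step, to identify $\int h(\cdot,\cdot,q_{\ve,\delta})\,\d|Dv_{\ve,\delta}|$ with $\int\theta(q^t_{\ve,\delta})\,\d\wh f(x,t,Dv_{\ve,\delta})$ since $\eta_0\equiv1$ on the support of $\wh f(\cdot,t,Dv_{\ve,\delta})$ — not, as in your write-up, to reduce the domain of integration of the Reshetnyak integral against $|Dv_{\ve,\delta}|$.
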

\begin{proof}
Given \(\tau >0\), there exists \(\iota_\tau>0\) such that for every Borel set \(A\subset \Lambda\),
\begin{equation}\label{eq-1706}
 {|A|}\leq \iota_\tau \Longrightarrow \int_{A}\sqrt{1+|\nabla u|^2}\dx + \delta  {|A|}\int_{\R}|\alpha'|\dt    \leq \tau.
\end{equation}
Let \(\eta:\R\to [0,1]\) be a continuous compactly supported function such that \(\eta\equiv 1\) on \([-M,M]\). Let us define the function $h:  \Omega\times \R\times \mathbb{S}^N \to \R$ via
\[
h(x,t,(q^x,q^t))\coloneqq \eta(t)\theta(q^t) \widehat{f}(x,t,(q^x,q^t))\,.
\]
Observe that \(h(x,t,(q^x,q^t))=0\) when \(q^t\geq c\) while for \(q^t\leq c\),
\[
h(x,t,(q^x,q^t))=-q^t\eta(t)\theta(q^t)f\left(x,t,\frac{q^x}{-q^t}\right).
\]
We deduce from Lemma~\ref{lem-bdd-f} that \(h\) is bounded on \(\Omega\times \R\times \mathbb{S}^N\). Moreover, for a.e. \(x\in \Omega\), the map \((t,q)\mapsto h(x,t,q)\) is uniformly continuous. By the Scorza--Dragoni theorem, there exists a compact set \(K_\tau \subset \Omega\) such that \({|\Omega\setminus K_\tau|}\leq \iota_\tau\) and \(h|_{K_\tau\times \R\times \mathbb{S}^N}\)  is continuous. By the Tietze theorem, there exists a continuous extension \(\overline{h}:\Omega\times \R\times \mathbb{S}^N\to \rp\) such that \(\|\overline{h}\|_{L^{\infty}}\leq \|h\|_{L^{\infty}}\).

Since \(Dv_{\ve,\delta}\)  weakly-$*$ converges to \(Dv_{0,\delta}\) when \(\ve\to 0\) and using also  \eqref{eq16755}, we can rely on the Reschetnyak continuity theorem (see e.g. \cite[Theorem~2.39]{AmFuPa}) to obtain
\[
\lim_{\ve\to 0}\int_{\Omega\times \R}\overline{h}\left(x,t,\frac{Dv_{\ve,\delta}}{|Dv_{\ve,\delta}|}\right)\d |Dv_{\ve,\delta}|=\int_{\Omega\times \R}\overline{h}\left(x,t,\frac{Dv_{0,\delta}}{|Dv_{0,\delta}|}\right)\d |Dv_{0,\delta}|\,.
\]
Since \(h\) and \(\overline{h}\) only differs on \((\Omega\setminus K_\tau)\times \R\times \mathbb{S}^N\), we obtain that:
\begin{multline*}
\limsup_{\ve\to 0}\left|\int_{\Omega\times \R}h\left(x,t,\frac{Dv_{\ve,\delta}}{|Dv_{\ve,\delta}|}\right)\d |Dv_{\ve,\delta}|-\int_{\Omega\times \R}h\left(x,t,\frac{Dv_{0,\delta}}{|Dv_{0,\delta}|}\right)\d |Dv_{0,\delta}| \right|\\
\leq (\|h\|_{L^{\infty}}+\|\overline{h}\|_{L^{\infty}}) \limsup_{\ve\to 0}\left(|Dv_{\ve,\delta}|+|Dv_{0,\delta}|\right)\left((\Omega\setminus K_\tau)\times \R\right).
\end{multline*}
Using that \(\|\overline{h}\|_{L^{\infty}}\leq \|h\|_{L^{\infty}}\) and also Lemma~\ref{lm-lim-DveOm}, this gives
\begin{align*}
\limsup_{\ve\to 0}\left|\int_{\Omega\times \R}h\left(x,t,\frac{Dv_{\ve,\delta}}{|Dv_{\ve,\delta}|}\right)\d |Dv_{\ve,\delta}|-\right.&\left.\int_{\Omega\times \R}h\left(x,t,\frac{Dv_{0,\delta}}{|Dv_{0,\delta}|}\right)\d |Dv_{0,\delta}| \right|
\leq 4\|h\|_{L^{\infty}}|Dv_{0,\delta}|((\Omega\setminus K_\tau)\times \R)\\
&= 4\|h\|_{L^{\infty}}\left(\int_{\Omega\setminus K_\tau}\sqrt{1+|\nabla u|^2}\dx + \delta {|\Omega\setminus K_\tau|}\int_{\R}|\alpha'|\dt\right)\leq 4\|h\|_{L^{\infty}}\tau\,,
\end{align*}
where the last inequality follows from \eqref{eq-1706} applied to \(A=\Omega\setminus K_\tau\).
In the left-hand side, we rely on the definition of \(h\), the fact that \(\eta\equiv 1\) on \([-M,M]\) as well as Lemma~\ref{lm-whf-dv0d-dv} to obtain:
\[
\limsup_{\ve\to 0}\left|\int_{\Omega\times \R}\theta(q^{t}_{\ve,\delta})\d \widehat{f}(x,t,Dv_{\ve,\delta})-\int_{\Omega\times \R}\theta(q^{t}_{0,\delta})\d \widehat{f}(x,t,Dv_{0,\delta}) \right|
\leq 4\|h\|_{L^{\infty}}\tau.
\]
Since \(\tau\) is arbitrary, this implies the desired result.
\end{proof}

We can easily derive from Lemma~\ref{lm-Reschetnyak} the liminf estimate for \(\widehat{E}_{\Omega\times \R}\).

\begin{proposition}[The liminf estimate]\label{prop-liminf} Suppose  $f$ satisfies~\ref{f-red}. Let  $v_{\ve, \delta}$ be given by~\eqref{def-v-eps-delt} for $u\in \cE(\Lambda)\cap L^\infty(\Lambda)$.Then for every \(\delta>0\), it holds
\begin{equation}\label{eq1362}
\liminf_{\ve\to 0} \int_{{\Omega}\times \R}\dhf(x,t,Dv_{\ve,\delta}(x,t))  \geq \int_{{\Omega}\times \R}\dhf(x,t,Dv(x,t))\, .
\end{equation}
\end{proposition}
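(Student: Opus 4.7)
The plan is to combine the weak-$*$ convergence of the measures $Dv_{\ve,\delta}$ with a Reshetnyak-type lower semicontinuity result applied to the convex, positively $1$-homogeneous integrand $\widehat{f}$. First, I would record two reductions. By standard properties of partial convolution, $v_{\ve,\delta}\to v_{0,\delta}$ in $L^{1}_{\mathrm{loc}}(\Omega\times\R)$ as $\ve\to 0$, and the total variations $|Dv_{\ve,\delta}|(\Omega\times\R)$ are uniformly bounded thanks to Lemma~\ref{lm-lim-DveOm}. Hence $Dv_{\ve,\delta}\overset{*}{\rightharpoonup}Dv_{0,\delta}$ in the sense of $\R^{N+1}$-valued Radon measures on $\Omega\times\R$. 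Moreover, by Lemma~\ref{lm-whf-dv0d-dv}, $\widehat{f}(\cdot,t,Dv_{0,\delta})=\widehat{f}(\cdot,t,Dv)$ as measures, so it suffices to establish
\[
\liminf_{\ve\to 0}\int_{\Omega\times\R}\dhf(x,t,Dv_{\ve,\delta})\geq \int_{\Omega\times\R}\dhf(x,t,Dv_{0,\delta}).
\]

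Next, I would invoke the structural properties of the $\wh{\ }$-transform. Since $f(x,t,\cdot)$ is a non-negative convex (hence lower semicontinuous) function on $\rn$, the map $q\mapsto \widehat{f}(x,t,q)$ defined in~\eqref{def-hat} is proper, convex, lower semicontinuous, and positively $1$-homogeneous on $\R^{N+1}$, for a.e.\ $x\in\Omega$ and every $t\in\R$. Furthermore $\widehat{f}$ is a Carath\'eodory function of $(x,t,q)$: Borel measurable in $x$, continuous in $(t,q)$ on the effective domain. On these ingredients one can apply the Reshetnyak lower semicontinuity theorem for functionals of vector-valued measures with convex, positively $1$-homogeneous integrands (see, e.g., Theorem~2.38 in Ambrosio–Fusco–Pallara), which yields the desired liminf inequality once joint lower semicontinuity of the integrand in $(x,t,q)$ is available.

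The main technical obstacle is precisely that $\widehat{f}$ is not jointly lower semicontinuous in $(x,t,q)$: its dependence on $x$ is only measurable. I would overcome this exactly as in the proof of Lemma~\ref{lm-Reschetnyak}, by a Scorza–Dragoni argument. Given $\tau>0$, one selects a compact set $K_{\tau}\subset\Omega$ with $|\Omega\setminus K_{\tau}|\leq \iota_\tau$ (with $\iota_\tau$ as in~\eqref{eq-1706}) such that $\widehat{f}$ is continuous on $K_\tau\times\R\times\mathbb{S}^{N}$, and then produces, via Tietze and truncation of $\widehat{f}$ by continuous convex, positively $1$-homogeneous lower bounds $h_{n,\tau}(x,t,q)\uparrow \widehat{f}(x,t,q)$ on $K_\tau\times\R\times\R^{N+1}$. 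Applying the classical Reshetnyak lower semicontinuity theorem to each $h_{n,\tau}$, restricted to the set $K_\tau\times\R$, gives
\[
\liminf_{\ve\to 0}\int_{\Omega\times\R}\dhf(x,t,Dv_{\ve,\delta})\geq \liminf_{\ve\to 0}\int_{K_\tau\times\R} h_{n,\tau}(x,t,Dv_{\ve,\delta})\geq \int_{K_\tau\times\R} h_{n,\tau}(x,t,Dv_{0,\delta}),
\]
where non-negativity of $\widehat{f}$ is used to discard the integral over $(\Omega\setminus K_\tau)\times\R$ on the left.

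Finally, I would let $n\to\infty$ and monotone convergence yields $\int_{K_\tau\times\R}\dhf(x,t,Dv_{0,\delta})$ in the right-hand side. Using Lemma~\ref{lm-lim-DveOm} to estimate $|Dv_{0,\delta}|((\Omega\setminus K_\tau)\times\R)\leq C\tau$ (controlling also the $\widehat{f}$-mass via $\widehat{f}(\cdot,t,Dv_{0,\delta})=\widehat{f}(\cdot,t,Dv)$ and the summability $\int_{\Omega}f(x,u,\nabla u)\dx<\infty$), one sends $\tau\to 0$ to recover the full integral over $\Omega\times\R$ on the right, completing the proof of~\eqref{eq1362}.
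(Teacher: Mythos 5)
Your overall idea is sound, and you correctly reduce the task to lower semicontinuity along $Dv_{\ve,\delta}\overset{*}{\rightharpoonup}Dv_{0,\delta}$ using Lemma~\ref{lm-whf-dv0d-dv}; but the route you take is genuinely different from the paper's, and the crucial step is flawed as stated. You claim that a Scorza--Dragoni argument yields a compact $K_\tau$ on which $\widehat{f}$ is \emph{continuous} on $K_\tau\times\R\times\mathbb{S}^N$. This is false: under \ref{f-red} the Lagrangian $f$ is superlinear, so by \eqref{eq776} one has $\widehat{f}(x,t,q)=+\infty$ on the whole closed upper half of $\mathbb{S}^N$ (i.e.\ $q^t\geq 0$) and $\widehat{f}(x,t,\cdot)$ is only lower semicontinuous there, not continuous; in particular $(t,q)\mapsto\widehat{f}(x,t,q)$ is not a Carath\'eodory integrand on $\R\times\mathbb{S}^N$, so the Scorza--Dragoni theorem in its usual form does not apply to $\widehat{f}$. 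You would instead need a Scorza--Dragoni theorem for normal lower semicontinuous integrands, followed by a careful construction of Borel-in-$x$, continuous, convex, positively $1$-homogeneous minorants $h_{n,\tau}\uparrow\widehat{f}$ (e.g.\ Lipschitz inf-convolutions, which preserve $1$-homogeneity but require checking joint measurability), and then a Reshetnyak lower semicontinuity theorem for merely measurable $x$-dependence. None of these steps is trivial, and your sketch leaves all of them implicit.

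The paper avoids this difficulty entirely by choosing a different truncation: it multiplies by $\theta_k(q^t)$, supported in $(-\infty,-1/k)$ (see \eqref{theta_k}). After this truncation the integrand $\theta_k(q^t)\widehat{f}(x,t,q)$ \emph{is} a genuine bounded Carath\'eodory function on $\Omega\times\R\times\mathbb{S}^N$ (since $|{-q^x/q^t}|\leq k$ and $f$ is bounded on bounded sets by Lemma~\ref{lem-bdd-f}), so the Scorza--Dragoni/Tietze argument applies cleanly and the Reshetnyak \emph{continuity} theorem yields an equality — this is exactly Lemma~\ref{lm-Reschetnyak}, already proved. The liminf estimate then follows in two lines: $\theta_k\leq 1$ gives
\[
\liminf_{\ve\to 0}\int_{\Omega\times\R}\dhf(x,t,Dv_{\ve,\delta})\geq \int_{\Omega\times\R}\theta_k(q^{t}_{0,\delta})\dhf(x,t,Dv_{0,\delta}),
\]
and since $q^{t}_{0,\delta}<0$ a.e.\ $|Dv_{0,\delta}|$, dominated (or monotone) convergence in $k$ finishes, after invoking \eqref{eq1717prim}. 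Your spatial cutoff by $K_\tau$ could perhaps be made rigorous with substantial extra work, but it duplicates effort that Lemma~\ref{lm-Reschetnyak} already carries out, and in a way that runs into the continuity obstruction you brush past. You should instead apply Lemma~\ref{lm-Reschetnyak} directly with $\theta=\theta_k$.
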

\begin{proof}
By Lemma~\ref{lm-Reschetnyak} with \(\theta=\theta_k\)  (given by~\eqref{theta_k}) and the fact that \(\theta_k\leq 1\), we have:
\[
\liminf_{\ve\to 0}\int_{\Omega\times \R}\d \widehat{f}(x,t,Dv_{\ve,\delta})\geq  \int_{\Omega\times \R}\theta_k(q^{t}_{0,\delta})\d \widehat{f}(x,t,Dv_{0,\delta})\,.
\]
Using that \(q^{t}_{0,\delta}(x,t)<0\) for \(|Dv_{0,\delta}|\) a.e. \((x,t)\in \Omega\times \R\), we deduce from the dominated  convergence theorem 
that the right-hand side  converges, as $k \to \infty$, to \(\int_{\Omega\times \R}\d \widehat{f}(x,t,Dv_{0,\delta})\), which in turn is equal to the right-hand side of~\eqref{eq1362} by Lemma~\ref{lm-whf-dv0d-dv}.
\end{proof}

The corresponding limsup estimate is much more delicate to prove and is the object of the next section.

\section{The limsup estimate}\label{sec:limsup-est}
We rely on the notation introduced in Section~\ref{sec:approx}.
The goal of this section is the following limsup estimate, which will be proven in Section~\ref{ssec:limsup-proof}. With this aim, Sections~\ref{ssec:Zh-for-hat-f}-\ref{ssec:key-est} are focused on translating  the anti-jump conditions to the  convexified energy.

\begin{proposition}[The limsup estimate]\label{prop-limsup} Suppose  $f$ satisfies \ref{f-red}. Let  $v_{\ve, \delta}$ be given by~\eqref{def-v-eps-delt} for $u \in \cE(\Lambda)\cap L^\infty(\Lambda)$ and $\alpha$ satisfying \eqref{alpha'prop} and \eqref{eq1168}.  Then for every  $\delta>0$, it holds
\[
\limsup_{\ve\to 0} \int_{{\Omega}\times \R}\dhf(x,t, Dv_{\ve,\delta}) \leq \int_{{\Omega}\times \R}\dhf(x,t,Dv)\,.
\]    
\end{proposition}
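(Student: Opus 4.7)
The plan is to split $\Omega\times\R$ into a ``good region'' where $q^{t}_{\ve,\delta}$ stays bounded away from $0$ and a ``bad region'' where it does not, introducing a second parameter $k\geq 1$ that governs this separation. With $(\theta_k)_{k\geq 1}$ the cutoff functions of~\eqref{theta_k}, I decompose
\[
\int_{\Omega\times\R} \dhf(x,t,Dv_{\ve,\delta}) \;=\; I^{\rm good}_{k,\ve} + I^{\rm bad}_{k,\ve},
\]
where $I^{\rm good}_{k,\ve} \coloneqq \int_{\Omega\times\R} \theta_k(q^{t}_{\ve,\delta})\dhf(x,t,Dv_{\ve,\delta})$ and $I^{\rm bad}_{k,\ve}$ is its complementary integral, and I send $\ve\to 0$ first and $k\to\infty$ afterwards. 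The motivation for using two scales is that the Reshetnyak continuity theorem cannot be invoked directly on all of $\Omega\times\R$, since $\widehat{f}(x,t,\cdot)$ explodes as $q^t\to 0^-$ (by superlinearity of $f$), while the anti-jump condition~\eqref{eqHZconv} yields by itself only a bound with a spurious multiplicative constant.

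For $I^{\rm good}_{k,\ve}$, the cutoff confines the integration to $\{q^{t}_{\ve,\delta}\leq -1/k\}$, on which $\widehat{f}$ is locally bounded thanks to Lemma~\ref{lem-bdd-f}; Lemma~\ref{lm-Reschetnyak} therefore applies and gives $\lim_{\ve\to 0} I^{\rm good}_{k,\ve} = \int_{\Omega\times\R} \theta_k(q^{t}_{0,\delta})\dhf(x,t,Dv_{0,\delta})$. The a.e.\ strict negativity of $\alpha'$ coming from~\eqref{alpha'prop} forces $q^{t}_{0,\delta}<0$ at $|Dv_{0,\delta}|$-a.e.\ point, so $\theta_k(q^{t}_{0,\delta})\uparrow 1$ as $k\to\infty$; monotone convergence together with Lemma~\ref{lm-whf-dv0d-dv} then produces $\lim_{k\to\infty}\lim_{\ve\to 0} I^{\rm good}_{k,\ve} = \int_{\Omega\times\R}\dhf(x,t,Dv)$, matching the target right-hand side.

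The difficult point is to show that $\lim_{k\to\infty}\limsup_{\ve\to 0} I^{\rm bad}_{k,\ve} = 0$. To this end I will transfer~\eqref{eqHZconv} from $f$ to $\widehat{f}$ along the lines sketched in the Introduction (Lemma~\ref{lm-disintegration-nonautonomous}): using the representation of $v_{\ve,\delta}$ as a partial convolution of $v$, applying the Jensen inequality to the greatest convex minorant $(f^{-}_{B(x,\ve)}(t,\cdot))^{**}$ evaluated at $-q^{x}_{\ve,\delta}/q^{t}_{\ve,\delta}$, and verifying that the triggering condition of~\eqref{eqHZconv} (the bound by $\mathcal{k}_2\ve^{-N}$) is satisfied $|Dv_{\ve,\delta}|$-a.e., I expect an inequality of the shape
\[
\int_{A'}\dhf(x,t,Dv_{\ve,\delta}) \;\leq\; C\int_{A'}\d\bigl(\widehat{f}(x,t,Dv)*_x\vr_\ve\bigr) + R_{\ve,\delta}(A'),
\]
valid for every Borel $A'\subset\Omega\times\R$, with an additive residual $R_{\ve,\delta}$ controlled by $\cH^{N+1}(A')$ and by $\delta$ through~\eqref{eq1168}.

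The crux is to show that the right-hand side of this inequality, evaluated on $A'=\{q^{t}_{\ve,\delta}> -1/k\}$, vanishes as $k\to\infty$ uniformly in $\ve$ despite the multiplicative constant $C$. The geometric key is that on this bad set one has $|\partial_t v_{\ve,\delta}|\leq \tfrac{1}{k}|Dv_{\ve,\delta}|$, so the $t$-derivative is much smaller than the full gradient; via Lemma~\ref{lm-deriv-v}, the coarea and area formulas~\eqref{eq:hdH-hdx-coarea}--\eqref{eq:hdH-hdx-area} and the approximate tangent identity~\eqref{eq-approximate-tangent-space} on the level sets $u^{-1}(t)$, this translates into a pointwise lower bound forcing $|\nabla u|$ to be large, at scale $\ve$, at the preimages of points in the bad set. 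The defining bound~\eqref{eq1168} on $-\alpha'$ has been arranged precisely so that the integrals of $(f(z,t,\nabla u)+|\nabla u|^p)/|\nabla u|$ over level-set regions where $|\nabla u|$ is large are summable with vanishing tails, so after invoking Lemma~\ref{lm-abs-convol-f-ch} for the absolute continuity of $\widehat{f}(\cdot,t,Dv)*_x\vr_\ve$ with respect to $|Dv_{\ve,\delta}|$ and letting $\ve\to 0$, the bad term is bounded by a quantity that tends to $0$ as $k\to\infty$, absorbing the constant $C$ in the limit. Combined with the good-region analysis, this yields the desired limsup estimate.
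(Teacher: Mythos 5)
Your proposal follows essentially the same route as the paper: split via $\theta_k$ into the region where $q^t_{\ve,\delta}\le -1/k$ (treated by the Reshetnyak continuity theorem, here Lemma~\ref{lm-Reschetnyak}) and its complement (treated by the anti-jump transfer of Lemma~\ref{lm-disintegration-nonautonomous} together with the approximate-tangent-space/Lebesgue-point argument and a final dominated convergence in $k$), then send $\ve\to 0$ first and $k\to\infty$ second. Two small remarks: the paper bounds the good term above by the target already for \emph{every fixed} $k$ using $\theta_k\le 1$ and \eqref{eq1717prim}, which avoids your monotone-convergence step in $k$; and the vanishing of the bad term as $k\to\infty$ is not uniform in $\ve$ --- one first takes $\limsup_{\ve\to 0}$ for fixed $k$ and lands on the quantity $\mathrm{R}_k$ of Proposition~\ref{prop-limsup-hard}, whose convergence to $0$ rests on the summability of $f(\cdot,u,\nabla u)$ (from $u\in\cE(\Lambda)$) and the finiteness of $\widehat{1}(Dv_{0,\delta})$, not on~\eqref{eq1168}, which instead serves to verify the triggering bound in~\eqref{eqHZ-hat} via Lemmas~\ref{lm-estim_Fe} and~\ref{lm-estim-rat}.
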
 

\subsection{\texorpdfstring{Consequences for \(\wh{f}\) of~\eqref{eqHZconv}   imposed on $f$}{Consequences of the balance condition for the extension}}\label{ssec:Zh-for-hat-f}

In this section, we formulate the natural implication of the anti-jump condition \eqref{eqHZconv} for functions  governing $\wh E$. We denote by \(F^\ve: \Lambda\times \R\times \R^{N+1}\to [0,\infty]\) the map
\begin{equation}\label{eq:def-Feps}
F^\ve(x,t,q)\coloneqq \widehat{\bigg(\big(f^{-}_{B(x,\ve)}(t,\cdot)\big)^{**}\bigg)}(q)\,.
\end{equation}
Since \(f(x,t,0)=0\) for a.e. \(x\in \Lambda\) and every \(t\in \R\), one has \(f_{B(x,\ve)}^-(t,0)=0\) for every \((x,t)\in \Lambda\times \R\). Hence, using that  \(0\leq (f_{B(x,\ve)}^{-}(t,\cdot))^{**}\leq f_{B(x,\ve)}^-\), we deduce that \( (f_{B(x,\ve)}^{-}(t,\cdot))^{**}(0)=0\) and thus,
\begin{equation}\label{eq1404}
\forall (x,t,q^t)\in \Lambda\times \R\times (-\infty,0]\quad \text{it holds}  \quad F^{\ve}(x,t,(0,q^t))=0\,.
\end{equation}
Since \(f\) is superlinear in the sense of \eqref{eq-superlinearity-sv}, the map \((x,t,\xi)\mapsto f_{B(x, \ve)}^{-}(t,\xi)\) is superlinear as well, and so is the map \((x,t,\xi)\mapsto  (f^{-}_{B(x,\ve)}(t,\cdot))^{**}(\xi)\). We deduce therefrom that
\begin{equation}\label{eq1438}
\forall (q^x,q^t)\in (\R^N\times [0,\infty))\setminus \{(0,0)\}\quad \text{it holds}  \quad  F^{\ve}(x,t,(q^x,q^t))=+\infty\,.
\end{equation}

We will show that if $f$ satisfies condition \eqref{eqHZconv}, then $F^\ve$ satisfies that for every \(\mathcal{k}=(\mathcal{k}_1, \mathcal{k}_2)\in (0,\infty)^2\), for a.e. \(x\in \Lambda\), for every \(t\in [-\mathcal{k}_1, \mathcal{k}_1]\), for every \(q\in \mathbb{S}^N\) with \(q^t<0\),
\begin{equation}\label{eqHZ-hat}
\left(\frac{|q^x|}{|q^t|}\right)^{\max(p,N)} + \frac{1}{|q^t|}F^{\ve}(x,t,q)   \leq \frac{\mathcal{k}_2}{\ve^{N}} \Longrightarrow \wh{f}(x,t,q)\leq \cC_{\mathcal{k}}(F^\ve(x,t,q)+ |q^t|)\,, 
\end{equation}
where  \(\cC_{\mathcal{k}}>0\) is the constant in \eqref{eqHZconv}, which depends only on $\mathcal{k}$.

\begin{lemma}
    \label{lm-from-H_L-to-hat}If $f$ satisfies Assumptions~\ref{f-red} and \eqref{eqHZconv} on $\Lambda$, and  $F^\ve$ is given by~\eqref{eq:def-Feps}, then  \eqref{eqHZ-hat} holds.
\end{lemma}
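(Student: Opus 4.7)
The plan is essentially a direct translation through the $\wh{\ }$-operation: the hypothesis and conclusion of \eqref{eqHZ-hat} are literally the hypothesis and conclusion of \eqref{eqHZconv} after dividing by $|q^t|$ and setting $\xi = -q^x/q^t$. There is no genuine estimation to do; the only work is bookkeeping.

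\medskip

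More precisely, fix $\mathcal{k} = (\mathcal{k}_1, \mathcal{k}_2) \in (0,\infty)^2$, let $x \in \Lambda$ be a point where \eqref{eqHZconv} holds, let $t \in [-\mathcal{k}_1, \mathcal{k}_1]$, and let $q = (q^x, q^t) \in \mathbb{S}^N$ with $q^t < 0$. Set
\[
\xi \coloneqq -\frac{q^x}{q^t} = \frac{q^x}{|q^t|} \in \rn.
\]
Since $q^t < 0$, by the definition \eqref{def-hat} of the $\wh{\ }$-operation applied to the (non-negative, convex) function $\bigl(f^{-}_{B(x,\ve)}(t,\cdot)\bigr)^{**}$, we have
\[
F^\ve(x,t,q) = |q^t|\,\bigl(f^{-}_{B(x,\ve)}(t,\cdot)\bigr)^{**}(\xi),
\qquad
\wh{f}(x,t,q) = |q^t|\, f(x,t,\xi),
\]
and moreover $|\xi| = |q^x|/|q^t|$, so $|\xi|^{\max(p,N)} = (|q^x|/|q^t|)^{\max(p,N)}$.

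\medskip

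Suppose now that the left-hand side of \eqref{eqHZ-hat} holds. Dividing each of the three identities above by $|q^t|$ where appropriate, this hypothesis reads exactly
\[
\bigl(f^{-}_{B(x,\ve)}(t,\cdot)\bigr)^{**}(\xi) + |\xi|^{\max(p,N)} \leq \frac{\mathcal{k}_2}{\ve^N},
\]
which is the premise of \eqref{eqHZconv} at the point $(x,t,\xi)$. The conclusion of \eqref{eqHZconv} then yields
\[
f(x,t,\xi) \leq \wt\cC_\mathcal{k} \bigl[\bigl(f^{-}_{B(x,\ve)}(t,\cdot)\bigr)^{**}(\xi) + 1\bigr].
\]
Multiplying both sides by $|q^t| > 0$ and using the two identities above gives
\[
\wh{f}(x,t,q) = |q^t|\, f(x,t,\xi) \leq \wt\cC_\mathcal{k}\bigl[F^\ve(x,t,q) + |q^t|\bigr],
\]
which is \eqref{eqHZ-hat} with $\cC_\mathcal{k} = \wt\cC_\mathcal{k}$.

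\medskip

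The only (minor) subtlety is the requirement $q^t < 0$, which is precisely the case in which $\wh{f}$ and $F^\ve$ have non-trivial (finite, non-recession) values via the rule $\wh h(q^x,q^t) = -q^t h(-q^x/q^t)$; the cases $q^t = 0$ and $q^t > 0$ are excluded from the statement. No use of the unit-sphere constraint $|q^x|^2 + |q^t|^2 = 1$ is needed beyond $q^t \neq 0$. This concludes the sketch.
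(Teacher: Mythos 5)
Your proof is correct and takes essentially the same approach as the paper: substitute $\xi = q^x/|q^t|$ into \eqref{eqHZconv} and exploit the positive one-homogeneity of the $\wh{\ }$-operation to pass between the scaled and unscaled formulations. The paper's proof additionally remarks (gratuitously, since \eqref{eqHZ-hat} is stated only for $q^t < 0$) that the cases $q^t>0$ and $q^t=0$ are trivial because the right-hand side is $+\infty$ or both sides vanish, but the substantive case is exactly the one you treat.
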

\begin{proof}
Due to  \eqref{eqHZconv}  with \(\xi=\frac{q^x}{|q^t|}\), for a.e. \(x\in \Lambda\), for every \(t\in (-\mathcal{k}_1, \mathcal{k}_1)\), for every \(q\in \mathbb{S}^N\) with \(q^t\not=0\), 
it holds 
\begin{equation*}
\left(\frac{|q^x|}{|q^t|}\right)^{\max(p,N)} + {\left(f^{-}_{B(x,\ve)}(t,\cdot)\right)^{**}\left(\tfrac{q^x}{|q^t|}\right)
\leq \frac{\mathcal{k}_2}{\ve^{N}}}\qquad \Longrightarrow\qquad |q^t|f\left(x,t, \tfrac{q^x}{|q^t|}\right)\leq
\cC_{\mathcal{k}} |q^t|\left[\left(f^{-}_{B(x,\ve)}(t,\cdot)\right)^{**}\left(\tfrac{q^x}{|q^t|}\right)+1\right]\,,
\end{equation*} 
which implies the desired result when \(q^t<0\). For \(q^t>0\) or \(q^t=0\) and \(q^x\not=0\), the right-hand side of~\eqref{eqHZ-hat} is \(+\infty\). Finally, when \((q^x,q^t)=(0,0)\), both sides of~\eqref{eqHZ-hat} vanish. 
\end{proof}

A substantial part of the proof of Proposition~\ref{prop-limsup} amounts to proving that the map \(q_{\ve,\delta}=(q_{\ve,\delta}^x, q_{\ve,\delta}^t)\) defined in \eqref{eq:def-q} satisfies the estimate required in the left-hand side of \eqref{eqHZ-hat}. A key ingredient is the Jensen inequality, that we apply here to functionals defined on subsets of measures.

\begin{lemma}\label{lm-estimge} Let  $f$ satisfy~\ref{f-red}. Let $F^\ve$ be given by~\eqref{eq:def-Feps} and let $v_{\ve, \delta}$ be given by~\eqref{def-v-eps-delt} with $u \in \cE(\Lambda)$. Then for  every \(\delta\in (0,\infty)\), and \(\ve\in (0, \ve_0)\), for every \(|Dv_{\ve,\delta}|\)-measurable set \(A'\subset \Omega\times \R\),
\[
\int_{A'}\d F^{\ve}(x,t,Dv_{\ve, \delta})\leq \int_{A'}\d 
\left(\wh{f}(\cdot, t, Dv)*_x\vr_{\ve}\right)\,.
\]
\end{lemma}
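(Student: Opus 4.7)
\textbf{Proof plan for Lemma~\ref{lm-estimge}.} My approach combines a pointwise comparison of convex integrands with a Jensen-type inequality tailored to the partial convolution structure $Dv_{\ve,\delta} = Dv_{0,\delta} *_x \vr_\ve$.

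\emph{Step 1: pointwise inequality.} By the definition of the essential infimum, $f^-_{B(y,\ve)}(t,\xi) \leq f(x,t,\xi)$ for a.e.\ $x \in B(y,\ve)\cap\Lambda$, and a fortiori $(f^-_{B(y,\ve)}(t,\cdot))^{**}(\xi) \leq f(x,t,\xi)$. Since the $\widehat{\,\cdot\,}$-operation is monotone on nonnegative convex functions (immediate from \eqref{def-hat}), we deduce
\[
F^\ve(y,t,q) \leq \widehat{f}(x,t,q) \qquad \forall\, q \in \R^{N+1},
\]
for a.e.\ $x \in B(y,\ve)\cap\Lambda$. A standard countable intersection argument (exploiting continuity of $f(x,t,\cdot)$ for a.e.\ $x$ via the Carath\'eodory property) handles the continuous parameter $\xi$.

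\emph{Step 2: Jensen via duality.} Since $F^\ve(y,t,\cdot)$ is a nonnegative, convex, positively $1$-homogeneous, lower semicontinuous function on $\R^{N+1}$, by the Reshetnyak-type dual representation of convex functionals of measures one has
\[
\int_{A'} \d F^\ve(y,t, Dv_{\ve,\delta}) = \sup_\phi \int_{A'} \phi(y,t) \cdot \d Dv_{\ve,\delta}(y,t),
\]
where the supremum ranges over Borel maps $\phi: A' \to \R^{N+1}$ satisfying $\langle \phi(y,t), q\rangle \leq F^\ve(y,t, q)$ for every $(y,t)$ and every $q$. For any admissible $\phi$, by Fubini and the definition of partial convolution \eqref{eq-def-conv-partial}:
\[
\int_{A'} \phi \cdot \d Dv_{\ve,\delta} = \int_\Omega \int_{\Lambda \times \R} \chi_{A'}(y,t)\, \phi(y,t) \cdot q_{0,\delta}(x,t)\, \vr_\ve(y-x)\, \d|Dv_{0,\delta}|(x,t)\, \dy.
\]
For each fixed $y$, since $\vr_\ve(y-x) \neq 0$ forces $x \in B(y,\ve)$, Step~1 yields $\phi(y,t) \cdot q_{0,\delta}(x,t) \leq F^\ve(y,t, q_{0,\delta}(x,t)) \leq \widehat{f}(x,t, q_{0,\delta}(x,t))$ for a.e.\ $x \in B(y,\ve)\cap\Lambda$. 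Substituting and reversing the Fubini order leads to
\[
\int_{A'} \phi \cdot \d Dv_{\ve,\delta} \leq \int_{A'} \d\bigl( \widehat{f}(\cdot,t, Dv_{0,\delta}) *_x \vr_\ve \bigr).
\]
Taking the supremum over $\phi$ and invoking Lemma~\ref{lm-whf-dv0d-dv} to replace $\widehat{f}(\cdot,t, Dv_{0,\delta})$ by $\widehat{f}(\cdot,t, Dv)$ completes the proof.

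\emph{Main obstacle.} The subtlety is that the bound $F^\ve(y,t,\cdot) \leq \widehat{f}(x,t,\cdot)$ is a priori Lebesgue-a.e.\ in $x$, whereas $|Dv_{0,\delta}|$ contains the singular component $\cH^N\mres \Gu$ concentrated on the graph of $u$. This is overcome by noting, via the area formula, that the pushforward of $|Dv_{0,\delta}|$ onto $\Lambda$ under the projection $(x,t) \mapsto x$ has density $\sqrt{1+|\nabla u|^2}$ plus a constant coming from the $\delta|\alpha'|\cH^{N+1}$ part, hence is absolutely continuous with respect to Lebesgue measure. Consequently, Lebesgue-null sets in $\Lambda$ pull back to $|Dv_{0,\delta}|$-null sets in $\Lambda \times \R$, so the Step~1 inequality is effectively valid $|Dv_{0,\delta}|$-a.e., which legitimizes the Fubini-based integrand comparison in Step~2. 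Crucially, the argument runs pointwise in $y$ rather than jointly in $(x,y)$, sidestepping any need for continuity of $y \mapsto F^\ve(y,t,\cdot)$.
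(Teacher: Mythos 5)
Your argument is correct in substance but follows a genuinely different route from the paper's. Where the paper disintegrates $Dv = \mv_t \otimes \mw$ and applies Jensen's inequality for convex, positively $1$-homogeneous functions of measures to $F^\ve(x,t,\cdot)$ applied to $\mv_t * \vr_\ve(x)$, you instead linearize via the dual (Reshetnyak-type) representation $\int_{A'}\d F^\ve(y,t,Dv_{\ve,\delta}) = \sup_\phi \int_{A'}\phi \cdot \d Dv_{\ve,\delta}$, which turns the problem into a straightforward Fubini exchange. The two are morally equivalent — duality is essentially how one proves the Jensen inequality for measures — but yours trades the disintegration machinery for a dual-side argument. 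Your resolution of the ``main obstacle'' (that the pointwise comparison $F^\ve(y,t,\cdot) \leq \widehat{f}(x,t,\cdot)$ holds only for Lebesgue-a.e.\ $x$, while $|Dv_{0,\delta}|$ has a singular component) is exactly what the paper records in Remark~\ref{lemma-Negligibility}: the $x$-marginal of $|Dv_{0,\delta}|$ is absolutely continuous with respect to Lebesgue measure by the area formula. Two points worth flagging, though neither is a gap in principle: (i)~the dual representation you invoke is stated for a non-autonomous integrand $F^\ve(y,t,q)$, so it requires $F^\ve$ to be a normal integrand and a measurable selection of near-optimal $\phi$'s — a step that needs more than the autonomous Goffman--Serrin/Reshetnyak statement and is where the paper's disintegration approach is cleaner, since it applies Jensen pointwise in $x$ without any selection; (ii)~your admissible $\phi$'s lie in the (unbounded) set $\partial F^\ve(y,t,\cdot)(0)$, so the supremum should be restricted to bounded Borel $\phi$'s to keep the integrals well-defined, with the full supremum recovered by truncation — worth stating explicitly since $f$ is superlinear.
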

\begin{proof}
Observe that the measure \(\wh{f}(\cdot, t, Dv)*_x\vr_{\ve}\), which is absolutely continuous with respect to \(|Dv_{\ve,\delta}|\) by Lemma~\ref{lm-abs-convol-f-ch}, has a natural extension to the \(\sigma\)-algebra of \(|Dv_{\ve,\delta}|\)-measurable sets, so the above inequality makes sense. 

We first disintegrate the measure $Dv$ by writing 
$Dv = \mv_t \otimes \mw$, where \(\mw\) is a positive finite measure on \(\R\) and \(\mv_t\) is an \(\R^{N+1}\)-valued  measure on \(\Lambda\) with \(|\mv_t(\Lambda)|=1\).   Then, 
$$
Dv_{\ve, \delta} = \left((\mv_t * \vr_{\ve})\cH^N) \otimes \mw+(0, \delta \alpha'(t)\right)\dxodt\,.
$$
By subadditivity and homogeneity of $F^{\ve}$, we have
\begin{equation}\label{eq1446}
F^{\ve}(\cdot,t,Dv_{\ve, \delta}) \leq 
(F^{\ve}\left(\cdot,t,\mv_t * \vr_{\ve})\cH^N\right)\otimes \mw + \delta F^{\ve}\left(\cdot,t,(0, \alpha'(t))\right)\dxodt =\left(F^{\ve}(\cdot,t,\mv_t * \vr_{\ve})\cH^N\right)\otimes \mw\,,
\end{equation}
where the last equality relies on \eqref{eq1404}.
Using the convexity and the homogeneity of \(F^{\ve}(x,t,\cdot)\),  the Jensen inequality implies that for every \(x\in \Omega\),
\[
F^{\ve}(x,t, \mv_t * \vr_{\ve}(x)) \leq \int_{\Lambda} \vr_{\ve}(x - y)\d F^{\ve}\left(x,t, \mv_t(y)\right).
\]
For a.e. \(y\in B(x,\ve)\) and for every  \((t,q)\in \R\times \mathbb{S}^N\), it holds that \(F^{\ve}(x,t,q)\leq \widehat{f}(y,t,q)\). This inequality remains true for \(|\mv_t|\otimes \mw\) a.e. \((y,t)\in B(x,\ve)\times \R\) and every \(q\in \mathbb{S}^N\), in view of the fact that   \(|\mv_t|\otimes \mw (A\times \R)=|Dv|(A\times \R)=0\)  for every Borel set \(A\subset \Lambda\) such that \({|A|}=0\), see  Remark~\ref{lemma-Negligibility}. It follows that for \(\mw\) a.e. \(t\in \R\) and every \(x\in \Omega\),
\[
    F^{\ve}(x,t, \mv_t * \vr_{\ve}(x)) \leq \int_{\Lambda} \vr_{\ve}(x - y)\d\wh{f}\left(y, t, \mv_t(y)\right) = \wh{f}(\cdot, t, \mv_t) * \vr_{\ve}(x)\,.
\]
Inserting this estimate in \eqref{eq1446}, one gets
\begin{align*}
F^{\ve}(\cdot,t,Dv_{\ve, \delta}) &\leq  \left(\wh{f}(\cdot, t, \mv_t) * \vr_{\ve}\cH^N\right)\otimes \mw = \left(\wh{f}(\cdot, t, \mv_t )\otimes \mw\right)*_x\vr_{\ve}\\
&= \wh{f}(\cdot,t,\mv_t\otimes \mw)*_x\vr_{\ve}= \wh{f}(\cdot, t, Dv)*_x\vr_{\ve}\,.
\end{align*}
\end{proof}

\begin{remark}\rm
\label{remark-Jensen-measure-autonomous}
When \(f(x,t,\xi)=g(t,\xi)\) for some continuous function \(g:\R\times \R^N\to \rp\) which is convex with respect to the second variable and satisfies \(g(t,0)=0\) for every \(t\in \R\), the corresponding function \(F^\ve\) is simply \(\wh{g}\). The conclusion of Lemma~\ref{lm-estimge} then reads
\[
\widehat{g}(t,Dv_{\ve, \delta})\leq \widehat{g}(t,Dv)*_x\vr_\ve.
\]
We thus recover the Jensen inequality for measures formulated in  \cite[Proposition~3.11]{Bousquet-Pisa}.
\end{remark}

\begin{lemma}\label{lm-estim_Fe}
Let $f$ satisfy~\ref{f-red}  and let $u \in \cE(\Lambda) \cap L^{\infty}(\Lambda)$. For every \(\ve\in (0,\ve_0)\) and \(\delta \in (0,1)\), let $F^\ve$ be given by~\eqref{eq:def-Feps} and let $v_{\ve, \delta}$ be given by~\eqref{def-v-eps-delt} with an absolutely continuous bounded function $\alpha:\R\to(0,\infty)$ satisfying \eqref{alpha'prop} and \eqref{eq1168}. Then there exists \(\mathsf{c}>0\) which depends only on \(\|\vr\|_{L^\infty}\), such that for \(|Dv_{\ve, \delta}|\)-a.e. \((x,t)\in \Omega\times \R\) we have
\[
F^{\ve}(x,t,q_{\ve, \delta})\leq \frac{\mathsf{c}}{\delta \ve^N}(-q^{t}_{\ve, \delta})\,.
\] 
\end{lemma}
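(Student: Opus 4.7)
The plan is to prove the pointwise estimate by establishing the equivalent measure-theoretic inequality
\[
F^{\ve}(\cdot,t,Dv_{\ve,\delta}) \leq \frac{\mathsf{c}}{\delta \ve^N}(-\partial_t v_{\ve,\delta})
\]
on $\Omega\times\R$, and then dividing both sides by $|Dv_{\ve,\delta}|$ to obtain the Radon--Nikod\'ym derivative inequality. The main steps are as follows.

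First, I would reduce to a convolution estimate. By subadditivity and positive $1$-homogeneity of $F^{\ve}(x,t,\cdot)$, together with the decomposition $Dv_{\ve,\delta}=Dv*_x\vr_\ve+(0,\delta\alpha')\dxodt$, the relation \eqref{eq1404} kills the second contribution, so
\[
F^{\ve}(\cdot,t,Dv_{\ve,\delta})\leq F^{\ve}(\cdot,t,Dv*_x\vr_\ve).
\]
Next, I would disintegrate $Dv$ via the coarea formula (cf.\ \eqref{eq-coarea-Sobolev} and Lemma~\ref{lm-deriv-v}) as $Dv=\tilde\mv_t\otimes\cH^1$ with
\[
\tilde\mv_t=\frac{(\nabla u,-1)}{|\nabla u|}\,\cH^{N-1}\mres u^{-1}(t),
\]
which is supported in $\Lambda_+$ for a.e.\ $t$. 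Then $Dv*_x\vr_\ve=(\tilde\mv_t*\vr_\ve)\cH^N\otimes\cH^1$, so that pointwise in $(x,t)$,
\[
F^{\ve}(x,t,Dv*_x\vr_\ve)=F^{\ve}\bigl(x,t,\tilde\mv_t*\vr_\ve(x)\bigr)\,\cH^N\otimes\cH^1,
\]
where $\tilde\mv_t*\vr_\ve(x)$ now has strictly negative $t$-component $-\int_{u^{-1}(t)}\tfrac{\vr_\ve(x-z)}{|\nabla u(z)|}\d\cH^{N-1}(z)$.

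The core of the argument is then a Jensen inequality applied to the convex function $(f^-_{B(x,\ve)}(t,\cdot))^{**}$. Writing $F^{\ve}(x,t,\tilde\mv_t*\vr_\ve(x))$ via the definition \eqref{eq:def-Feps}, the argument of the convex envelope is the average of $\nabla u(z)$ over $u^{-1}(t)$ against the probability measure with density $\vr_\ve(x-z)/|\nabla u(z)|$ (up to normalization by the $t$-component). Jensen's inequality, combined with the pointwise bound $(f^-_{B(x,\ve)}(t,\cdot))^{**}(\nabla u(z))\leq f^-_{B(x,\ve)}(t,\nabla u(z))\leq f(z,u(z),\nabla u(z))$ valid for $z\in B(x,\ve)$, yields after multiplication by $-(\tilde\mv_t*\vr_\ve(x))^t$ the clean estimate
\[
F^{\ve}\bigl(x,t,\tilde\mv_t*\vr_\ve(x)\bigr)\leq \int_{u^{-1}(t)} f(z,u(z),\nabla u(z))\,\frac{\vr_\ve(x-z)}{|\nabla u(z)|}\,\d\cH^{N-1}(z).
\]

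To conclude, I would use $\|\vr_\ve\|_{L^\infty}\leq \|\vr\|_{L^\infty}\ve^{-N}$ to pull the kernel out, and then invoke precisely the design condition \eqref{eq1168} on $\alpha$, which gives
\[
\int_{u^{-1}(t)}\frac{f(z,u(z),\nabla u(z))}{|\nabla u(z)|}\d\cH^{N-1}(z)\leq |\alpha'(t)|.
\]
Thus $F^{\ve}(\cdot,t,Dv_{\ve,\delta})\leq \|\vr\|_{L^\infty}\ve^{-N}|\alpha'(t)|\dxodt$. Since moreover $-\partial_tv_{\ve,\delta}\geq \delta|\alpha'(t)|\dxodt$ (the convolution part $-\partial_tv*_x\vr_\ve$ is non-negative), comparing the two measures gives
\[
F^{\ve}(\cdot,t,Dv_{\ve,\delta})\leq \frac{\|\vr\|_{L^\infty}}{\delta\ve^N}(-\partial_tv_{\ve,\delta}),
\]
with $\mathsf{c}\coloneqq \|\vr\|_{L^\infty}$. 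Taking Radon--Nikod\'ym derivatives with respect to $|Dv_{\ve,\delta}|$ yields the claim $|Dv_{\ve,\delta}|$-a.e. The main delicate point is the Jensen step: one must first verify that the vector $\tilde\mv_t*\vr_\ve(x)$ really lies in the regime $q^t<0$ where $F^{\ve}$ is finite and given by the explicit formula, and justify the normalization as a probability average concentrated on $B(x,\ve)\cap u^{-1}(t)$ so that the bound $(f^-_{B(x,\ve)}(t,\cdot))^{**}\leq f^-_{B(x,\ve)}(t,\cdot)\leq f(z,\cdot,\cdot)$ can be used at each $z$ in the support of the kernel.
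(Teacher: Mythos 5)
Your plan is correct and essentially reproduces the paper's argument, which chains Lemma~\ref{lm-estimge} (the Jensen estimate for $F^\ve$ against $\wh{f}(\cdot,t,Dv)*_x\vr_\ve$) with the explicit convolved density from Lemma~\ref{lm-abs-convol-f-ch}, then invokes \eqref{eq1168} and the inequality $\delta\alpha'\cH^{N+1}\geq \partial_t v_{\ve,\delta}$ from \eqref{eq1890}; you simply re-derive these ingredients inline rather than citing them. Two precisions are worth noting: the identity $Dv=\tilde{\mv}_t\otimes\cH^1$ holds only after restriction to $\Lambda_+\times\R$, but the $\Lambda_0$-contribution to $Dv*_x\vr_\ve$ has vanishing $x$-component and is killed by $F^\ve$ via \eqref{eq1404} after one more subadditivity step; and the delicate point you flag at the end is exactly what Remark~\ref{lemma-Negligibility} settles --- Lebesgue-negligible subsets of $\Lambda$ are $|Dv|$-negligible, so the bound $(f^-_{B(x,\ve)}(t,\cdot))^{**}(\nabla u(z))\leq f(z,t,\nabla u(z))$, a priori valid only for Lebesgue-a.e.\ $z\in B(x,\ve)$, propagates to $\cH^{N-1}$-a.e.\ $z\in u^{-1}(t)$ for a.e.\ $t$.
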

\begin{proof}
Let \(A\) be a Borel set in \(\Omega\times \R\). 
By Lemma \ref{lm-abs-convol-f-ch}, the estimate  \(\|\vr_{\ve}\|_{L^\infty}\leq {\|\vr\|_{L^\infty}}{\ve^{-N}}\) and~\eqref{eq1168}, one gets
\[
\int_{A}\dhf(\cdot,t,Dv)*_x\vr_{\ve}\leq \frac{\|\vr\|_{L^\infty}}{\ve^N} \int_{\rn} \int_{\R} 
\chi_A(y,t)
\int_{\Lambda_+\cap u^{-1}(t)}\frac{f(z,t,\nabla u(z))}{|\nabla u(z)|}\d\cH^{N-1}(z)\dt\dy\leq \frac{\|\vr\|_{L^\infty}}{\ve^N} \int_{A}-\alpha'(t)\dt\dy\,,
\]
where  \(\Lambda_+\) is defined in~\eqref{def-Lambda+}.
Then we use that
\begin{equation}\label{eq1890}
\delta\alpha'{\cH^{N+1}} = \partial_t v_{\ve,\delta} -\partial_t v*_x \vr_{\ve} \geq \partial_t v_{\ve,\delta}.
\end{equation}
This gives
\begin{equation}\label{eq-1492}
\int_{A}\dhf(\cdot,t,Dv)*_x\vr_{\ve}\leq \frac{\|\vr\|_{L^\infty}}{\delta\ve^N} \int_{A}-\d\partial_tv_{\ve, \delta}=\frac{\|\vr\|_{L^\infty}}{\delta \ve^N}
\int_{A}(-q^{t}_{\ve, \delta})\d|Dv_{\ve,\delta}|\,.
\end{equation}
In view of Lemma \ref{lm-estimge}, it holds that
\[
\int_{A} F^{\ve}(x,t,q_{\ve, \delta})\d|Dv_{\ve,\delta}|=\int_{A}\d F^{\ve}(x,t,Dv_{\ve, \delta})\leq \frac{\|\vr\|_{L^\infty}}{\delta \ve^N}
\int_{A}(-q^{t}_{\ve, \delta})\d|Dv_{\ve,\delta}|\,.
\]
Since \(A\) is arbitrary, this proves the desired result.
\end{proof}

Lemma~\ref{lm-estim_Fe} provides a suitable estimate for the second term in the left-hand side of \eqref{eqHZ-hat}. The estimate of the first term is given in the following statement. 

\begin{lemma}\label{lm-estim-rat}  
Let $u \in \cE(\Lambda)\cap L^\infty(\Lambda)$. For every \(\ve\in (0,\ve_0)\) and \(\delta \in (0,1)\), let $v_{\ve, \delta}$ be given by~\eqref{def-v-eps-delt} with an absolutely continuous bounded function $\alpha:\R\to(0,\infty)$ satisfying \eqref{alpha'prop} and \eqref{eq1168}.  Then there exists \(\mathsf{c}'>0\) which depends only on \(\|\vr\|_{L^\infty}\), \(\|\nabla\vr\|_{L^1}\), and  $c_M$ from \eqref{alpha'prop} with \(M=\|u\|_{L^{\infty}(\Lambda)}\), such that for \(|Dv_{\ve, \delta}|\)-a.e. \((x,t)\in \Omega\times \R\) we have
\[ 
\left(\frac{|q^{x}_{\ve, \delta}(x,t) |}{|q^{t}_{\ve, \delta}(x,t) |}\right)^{\max(p,N)} \leq \frac{\mathsf{c}'}{\delta^N\ve^{N}}\,.
\] 
\end{lemma}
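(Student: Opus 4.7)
The plan is to compute the Radon--Nikodym densities of the spatial and temporal components of $Dv_{\ve,\delta}$ with respect to $\cH^{N+1}$ separately, and then to bound their ratio on the absolutely continuous part.

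First, since $v=1_u$ is bounded by $1$, the partial convolution $v*_x\vr_\ve$ is smooth in $x$ for each fixed $t$, and pointwise
\[
|\nabla_x v_{\ve,\delta}(x,t)|=|(v*_x\nabla\vr_\ve)(x,t)|\leq \|v\|_{L^\infty}\,\|\nabla\vr_\ve\|_{L^1}=\frac{\|\nabla\vr\|_{L^1}}{\ve}\,.
\]
Hence the $x$-component of $Dv_{\ve,\delta}$ is absolutely continuous with respect to $\cH^{N+1}$. For the $t$-component, Lemma~\ref{lm-deriv-v} shows that $\partial_tv$ is a non-positive measure supported on $\Gu\subset\Omega\times[-M,M]$, so that $\partial_tv*_x\vr_\ve$ is non-positive and supported in $\Omega\times[-M,M]$. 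Writing its Lebesgue decomposition $\partial_tv*_x\vr_\ve = g_1\,\cH^{N+1}+\nu_1^{\mathrm{sing}}$ with both $g_1\leq 0$ and $\nu_1^{\mathrm{sing}}\leq 0$, one has
\[
Dv_{\ve,\delta}=\bigl(\nabla_xv_{\ve,\delta},\,g_1+\delta\alpha'\bigr)\,\cH^{N+1}+\bigl(0,\nu_1^{\mathrm{sing}}\bigr)\,.
\]

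On the singular part the $x$-component of $Dv_{\ve,\delta}$ vanishes, hence $q^x_{\ve,\delta}=0$; similarly, on $\Omega\times(\R\setminus[-M,M])$ both $g_1$ and $\nabla_xv_{\ve,\delta}$ vanish and again $q^x_{\ve,\delta}=0$. On the absolutely continuous part inside $\Omega\times(-M,M)$, the two non-positive densities $g_1$ and $\delta\alpha'$ add in absolute value, so that $|g_1+\delta\alpha'|\geq \delta|\alpha'(t)|\geq \delta c_M$ by \eqref{alpha'prop}. This yields
\[
\frac{|q^x_{\ve,\delta}(x,t)|}{|q^t_{\ve,\delta}(x,t)|}\;=\;\frac{|\nabla_xv_{\ve,\delta}(x,t)|}{|g_1+\delta\alpha'|(x,t)}\;\leq\;\frac{\|\nabla\vr\|_{L^1}}{c_M\,\delta\,\ve}\,.
\]
Raising both sides to the power $\max(p,N)$ and absorbing the harmless powers of $\delta,\ve\in(0,1]$ into the constant $\mathsf{c}'$ gives the claim.

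The main technical point lies in the Lebesgue decomposition of $\partial_tv*_x\vr_\ve$ and the observation that $\nabla_xv_{\ve,\delta}$ is carried by the absolutely continuous part only, so that the density lower bound $\delta c_M$ applies precisely where the numerator is nonzero; the a priori bound $\|u\|_{L^\infty(\Lambda)}=M<\infty$ is used here to localize the support of $\partial_tv*_x\vr_\ve$ in the $t$-variable. The dependence of $\mathsf{c}'$ on $\|\vr\|_{L^\infty}$ arises if one instead estimates $|\nabla_xv_{\ve,\delta}|$ through the level-set representation $|\nabla_xv_{\ve,\delta}(x,t)|\leq \|\vr_\ve\|_{L^\infty}\,\cH^{N-1}(u^{-1}(t)\cap B(x,\ve))$ coming from Lemma~\ref{lm-deriv-v} via the divergence theorem, an alternative form that is useful elsewhere in the argument.
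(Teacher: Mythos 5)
Your argument establishes the linear estimate
\[
\frac{|q^x_{\ve,\delta}|}{|q^t_{\ve,\delta}|}\le \frac{\|\nabla\vr\|_{L^1}}{c_M\,\delta\,\ve}
\]
by an elegant direct route: bound $|\nabla_x v_{\ve,\delta}|\le \|\nabla\vr\|_{L^1}/\ve$ from the $L^\infty$--$L^1$ convolution estimate, and bound $|\partial_t v_{\ve,\delta}|\ge \delta c_M$ on the absolutely continuous part where the numerator lives. The decomposition into absolutely continuous and singular parts, and the observation that $q^x_{\ve,\delta}$ vanishes on the singular part and outside $\Omega\times[-M,M]$, are correct and in fact agree with the second half of the paper's proof (which obtains the same linear bound, albeit via the alternative Stokes formula and a duality argument with unit vectors~$e$).

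The gap is in the final sentence. Raising the linear estimate to the power $\max(p,N)$ yields
\[
\left(\frac{|q^x_{\ve,\delta}|}{|q^t_{\ve,\delta}|}\right)^{\max(p,N)}\le \frac{C}{\delta^{\max(p,N)}\,\ve^{\max(p,N)}}\,.
\]
When $p>N$, the exponents $\max(p,N)=p$ on $\delta$ and $\ve$ are \emph{larger} than $N$, and since $\delta,\ve\to 0$ in the subsequent analysis, the factor $\delta^{-(p-N)}\ve^{-(p-N)}$ is unbounded. It therefore \emph{cannot} be absorbed into a constant; the inequality $\tfrac{C}{\delta^p\ve^p}\le \tfrac{\mathsf c'}{\delta^N\ve^N}$ would require $\mathsf c'\ge C\,\delta^{-(p-N)}\ve^{-(p-N)}$, which blows up. Your approach only proves the lemma when $p\le N$.

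For $p>N$, a genuinely different ingredient is needed, and this is where the paper uses the second term $|\nabla u|^p$ in the definition~\eqref{eq1168} of $\alpha$: applying the Jensen-type bound of Remark~\ref{remark-Jensen-measure-autonomous} to $g(\xi)=|\xi|^p$ gives $\wh g(Dv_{\ve,\delta})\le \wh g(Dv)*_x\vr_\ve$, and the choice of $\alpha'$ in~\eqref{eq1168} (which, unlike~\eqref{alpha'prop}, you never invoke) dominates the density of $\wh g(Dv)*_x\vr_\ve$ by $-\alpha'(t)\|\vr\|_{L^\infty}\ve^{-N}$, yielding
\[
\left(\frac{|q^x_{\ve,\delta}|}{|q^t_{\ve,\delta}|}\right)^p\le \frac{\|\vr\|_{L^\infty}}{\delta\,\ve^N}\,,
\]
with exponent $1$ on $\delta$ and exactly $N$ on $\ve$. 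Since $\delta<1$ implies $\delta^{-1}\le\delta^{-N}$, this is sharper than what the linear estimate can give for $p>N$, and (this is also where the $\|\vr\|_{L^\infty}$ dependence enters, which your constant does not actually exhibit). Combining the two estimates --- the Jensen one for $p>N$, and the linear one raised to the $N$-th power for $p\le N$ --- closes the lemma.
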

\begin{proof}
We first prove that for \(|Dv_{\ve, \delta}|\)-a.e. \((x,t)\in \Omega\times \R\), one has \begin{equation}\label{(qx/qt)p-est}\left(\frac{|q^{x}_{\ve, \delta}|}{|q^{t}_{\ve, \delta}|}\right)^p(x,t)\leq \frac{C}{\delta \ve^N}\qquad\text{for some }\ C>0\,.\end{equation} 
Let us introduce the map \(g(\xi)\coloneqq|\xi|^p\). 
Since \(q^t_{\ve,\delta}(x,t)<0\) for \(|Dv_{\ve, \delta}|\)-a.e. \((x,t)\), this is equivalent to
the fact that for every Borel set \(A'\subset \Omega\times \R\), it holds
\[ \int_{A'} -q^t_{\ve,\delta}g\left(\frac{q^x_{\ve,\delta}}{-q^t_{\ve,\delta}}\right)\d |Dv_{\ve, \delta}|\leq \frac{C}{\delta \ve^N}\int_{A'}-q^t_{\ve,\delta}\d |Dv_{\ve,\delta}|\,.
\]
In turn, by the very definition of $\wh g$ in~\eqref{def-hat}, it suffices to show that
\begin{equation}\label{eq1009}
\int_{A'}\d\wh{g}(Dv_{\ve, \delta})
\leq \frac{C}{\delta\ve^N}\int_{A'}-q^t_{\ve,\delta}\d |Dv_{\ve,\delta}|\,.
\end{equation}
By Remark~\ref{remark-Jensen-measure-autonomous}, the left-hand side above satisfies
\[
\int_{A'}\d \wh{g}(Dv_{\ve, \delta})\leq \int_{A'}\d \wh{g}(Dv)*_x \vr_{\ve}.
\]
Since the function \((x,t,\xi)\mapsto g(\xi)\) is a Carathéodory  non-negative  function which is convex with respect to the last variable and~\eqref{eq1168} is satisfied, we obtain similarly to \eqref{eq-1492} the following countepart for \(g\):
\[
\int_{A'}\d \wh{g}(Dv)*_x \vr_{\ve}\leq \frac{\|\vr\|_{L^\infty}}{\delta \ve^N}
\int_{A'}(-q^{t}_{\ve, \delta})\d|Dv_{\ve,\delta}|\,.
\]
Hence, \eqref{eq1009} is a consequence of the last two displays and \eqref{(qx/qt)p-est} is justified.\newline

Let us now prove that for \(|Dv_{\ve, \delta}|\)-a.e. \((x,t)\in \Omega\times \R\), one has \begin{equation}\label{(qx/qt)N-est}
    \left(\frac{|q^{x}_{\ve, \delta}|}{|q^{t}_{\ve, \delta}|}\right)(x,t)\leq \frac{C}{\delta \ve}\qquad\text{for some }\ C>0\,.
\end{equation}
For any \(e\in \Sphere^{N-1}\), we introduce the function $h_e:\Omega\times\R\times\rn\to\R$ given by $h_e(x,t,\xi)= \langle \xi, e \rangle$. Note that $h$ is linear with respect to \(\xi\) and satisfies \(h_e(x,t,0)=0\). We then rely on  Lemma~\ref{lm-abs-convol-f-ch} applied to \(h_e\) instead of \(f\) and Remark~\ref{remark-Jensen-measure-autonomous} to get for every Borel set \(A'\subset \Omega\times \R\):
\begin{equation}
\int_{A'}\d \wh{h}_e(Dv_{\ve, \delta})
\leq \int_{\Omega}\int_{\R}  \chi_{A'}(y,t) \int_{\Lambda_+\cap u^{-1}(t)}\vr_{\ve}(y-z)\frac{\partial_e u(z)}{|\nabla u(z)|}\d\cH^{N-1}(z)\dt \dy\label{eq1056}\,.
\end{equation}
Fix \(y\in \Omega\). Then, the function \(\vr_{\varepsilon}(y-\cdot)\) is compactly supported in \(\Lambda\).
Applying Lemma~\ref{lemma-alt-Stokes-Sobolev} on \(\Lambda\) instead of \(\Omega\) with the functions 
\[\Phi(x,t)\coloneqq \vr_{\varepsilon}(y-x)\eta(t)\quad\text{and} \quad  \theta (t)\coloneqq  \chi_{A'\cap (\Lambda\times [-M,M])}(y, t)\,, 
\]  
where \(\eta\) is any function in \(C^{\infty}_c(\R)\) such that \(\eta\equiv 1\) on \([-M,M]\) (recall that \(M=\|u\|_{L^{\infty}}\)), 
one gets
\[
\int_{\Lambda}\chi_{A'}(y,u(x))\vr_\varepsilon(y-x)\partial_e u(x)\dx=\int_{-M}^{M}\chi_{A'}(y,t)\left(\int_{[u\geq t]}\partial_e\vr_\varepsilon(y-x)\dx\right)\dt\,.
\]
In the left-hand side, one can restrict the domain of integration to \(\Lambda_+\) instead of \(\Lambda\), and then get by the coarea formula, 
\[
\int_{\R}\chi_{A'}(y,t)\left(\int_{u^{-1}(t)\cap \Lambda_+}\vr_\varepsilon(y-z)\frac{\partial_e u(z)}{|\nabla u(z)|}\d\cH^{N-1}(z)\right)\dt=\int_{-M}^{M}\chi_{A'}(y,t)\left(\int_{[u\geq t]}\partial_e\vr_\varepsilon(y-x)\dx\right)\dt\,.
\]

Integrating with respect to \(y\in \Omega\) and inserting the resulting estimate into \eqref{eq1056}, one gets
\begin{align}
\int_{A'}\d \wh{h}_e(Dv_{\ve, \delta})
&\leq \int_{\Omega}\int_{-M}^{M}\chi_{A'}(y,t)\left(\int_{\R^N}|\partial_e\vr_\varepsilon(y-x)|\dx\right)\dt\dy\nonumber\\
&\leq \tfrac{1}{\ve}\|\nabla \vr\|_{L^{1}(\rn)} |A'\cap (\Omega\times (-M,M))|\,.\label{eq1071}
\end{align}
By  \eqref{alpha'prop}, we know that \(\essinf_{t\in [-M,M]}- \alpha'(t)\geq c_M>0\). Taking into account that $\delta\alpha'{\cH^{N+1}} \geq \partial_t v_{\ve,\delta}$ in the sense of non-positive measures, we find that
\begin{align*}
\int_{A'}\d \wh{h}_e(Dv_{\ve, \delta})
&\leq \frac{\|\nabla \vr\|_{L^{1}(\rn)}}{c_M\delta\ve}\int_{A'\cap (\Omega \times (-M,M))}-\d \partial_t v_{\ve, \delta}\leq \frac{\|\nabla \vr\|_{L^{1}(\rn)}}{c_M\delta\ve}\int_{A'}-\d \partial_t v_{\ve, \delta}\,.
\end{align*}
This is equivalent to the fact that for \(|Dv_{\ve, \delta}|\) a.e. \((x,t)\in \Omega \times \R\), it holds
\[
\langle q_{\ve,\delta}^x , e \rangle \leq \frac{\|\nabla \vr\|_{L^{1}(\rn)}}{c_M {\delta}\ve} |q_{\ve,\delta}^t|\,.
\]
Since \(e\) is arbitrary, this proves~\eqref{(qx/qt)N-est}. We then have both~\eqref{(qx/qt)p-est} and~\eqref{(qx/qt)N-est}, which completes the proof.
\end{proof}

We are now in position to apply \eqref{eqHZ-hat} and derive its consequences to estimate the measure \(\wh{f}(x,t,Dv_{\ve, \delta})\) from above. 
\begin{lemma}\label{lm-disintegration-nonautonomous}
Suppose  $f$ satisfies Assumptions~\ref{f-red} and \eqref{eqHZconv}. For every \(\ve\in (0,\ve_0)\) and \(\delta \in (0,1)\), let $v_{\ve, \delta}$ be given by~\eqref{def-v-eps-delt} with $u\in\cE(\Lambda) \cap L^{\infty}(\Lambda)$ and $\alpha$ satisfying \eqref{alpha'prop} and \eqref{eq1168}. 
Let  $\mathsf{c}$ and $\mathsf{c}'$ be given by Lemma~\ref{lm-estim_Fe} and Lemma~\ref{lm-estim-rat} respectively and let \(\cC_{\mathcal{k_\delta}}>0\) be given by \eqref{eqHZ-hat} for $\mathcal{k}_{\delta}=(M,(\mathsf{c}+\mathsf{c}')/{\delta^{N}})$ with \(M=\|u\|_{L^{\infty}(\Lambda)}\). Then for every  \(|Dv_{\ve, \delta}|\)-measurable set \(A'\subset \Omega\times \R\), it holds that
\begin{equation}\label{eq1500}
\int_{A'}\d \wh{f}(x,t,Dv_{\ve, \delta}) \leq \cC_{\mathcal{k}_{\delta}}\int_{A'}\d \left(\wh{f}(x,t,Dv)*_x \vr_{\ve}+\wh{1}(Dv_{\ve, \delta})\right)\,.
\end{equation}
\end{lemma}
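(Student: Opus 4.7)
The plan is to derive \eqref{eq1500} by applying the pointwise anti-jump condition~\eqref{eqHZ-hat} (which Lemma~\ref{lm-from-H_L-to-hat} extracts from \eqref{eqHZconv}) to the Radon--Nikod\'ym derivative $q_{\ve,\delta}$. The two hypotheses on the left-hand side of the implication in~\eqref{eqHZ-hat} are precisely what Lemmas~\ref{lm-estim_Fe} and~\ref{lm-estim-rat} provide, for a suitable choice of parameter $\mathcal{k}$. Once the pointwise bound is in hand, integration against $|Dv_{\ve,\delta}|$ together with Lemma~\ref{lm-estimge} (for the $F^{\ve}$ term) and the identity \eqref{eq:def-1-hat} (for the $|q^{t}_{\ve,\delta}|$ term) yield the claimed estimate.

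More concretely, I would first invoke \eqref{eq1717} from Lemma~\ref{lm-whf-dv0d-dv} to replace $A'$ by $A' \cap (\Omega \times [-M,M])$ with no loss on the left-hand side, so that $|t| \leq M$ throughout the argument. On this set and for $|Dv_{\ve,\delta}|$-a.e. $(x,t)$, I would combine Lemma~\ref{lm-estim-rat} and Lemma~\ref{lm-estim_Fe} as
\[
\left(\frac{|q^{x}_{\ve,\delta}|}{|q^{t}_{\ve,\delta}|}\right)^{\max(p,N)} + \frac{1}{|q^{t}_{\ve,\delta}|} F^{\ve}(x,t,q_{\ve,\delta}) \leq \frac{\mathsf{c}'}{\delta^{N}\ve^{N}} + \frac{\mathsf{c}}{\delta\ve^{N}} \leq \frac{\mathsf{c}+\mathsf{c}'}{\delta^{N}\ve^{N}},
\]
where the last bound uses $N\geq 1$ and $\delta\in(0,1)$ to absorb $\delta^{-1}$ into $\delta^{-N}$. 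This is exactly the hypothesis in \eqref{eqHZ-hat} with $\mathcal{k}=\mathcal{k}_{\delta}=(M,(\mathsf{c}+\mathsf{c}')/\delta^{N})$, so Lemma~\ref{lm-from-H_L-to-hat} yields the pointwise inequality
\[
\wh{f}(x,t,q_{\ve,\delta}) \leq \cC_{\mathcal{k}_{\delta}}\bigl(F^{\ve}(x,t,q_{\ve,\delta}) + |q^{t}_{\ve,\delta}|\bigr)
\]
$|Dv_{\ve,\delta}|$-a.e. on $\Omega\times[-M,M]$.

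Multiplying by $|Dv_{\ve,\delta}|$ and integrating over $A'$ gives a sum of two terms. The first is estimated by Lemma~\ref{lm-estimge}, which provides
$\int_{A'} dF^{\ve}(x,t,Dv_{\ve,\delta}) \leq \int_{A'} d(\wh{f}(\cdot,t,Dv)*_x\vr_{\ve})$. For the second, I note that $q^{t}_{\ve,\delta}\leq 0$ (since $v_{\ve,\delta}$ is decreasing in $t$), so \eqref{eq:def-1-hat} gives $\wh{1}(q_{\ve,\delta})=|q^{t}_{\ve,\delta}|$ and hence $\int_{A'} |q^{t}_{\ve,\delta}|\,d|Dv_{\ve,\delta}| = \int_{A'} d\wh{1}(Dv_{\ve,\delta})$. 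Summing the two bounds produces \eqref{eq1500}.

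The main technical point is ensuring that~\eqref{eqHZ-hat} can legitimately be applied at $q=q_{\ve,\delta}(x,t)$ for $|Dv_{\ve,\delta}|$-a.e.\ $(x,t)$: one must verify that $q^{t}_{\ve,\delta}<0$ on the relevant set (so that the implication is non-vacuous), and that the measurable selections of the pointwise estimates from Lemmas~\ref{lm-estim_Fe} and~\ref{lm-estim-rat} interact properly with the essinf definition of the ``a.e.\ $x$'' in \eqref{eqHZconv}. The first point follows from~\eqref{eq1890} combined with~\eqref{alpha'prop}, which forces $\partial_t v_{\ve,\delta}$ to dominate a strictly negative absolutely continuous measure on $\Omega\times[-M,M]$, so the $t$-component cannot vanish on a $|Dv_{\ve,\delta}|$-non-negligible set where a Dirac-in-$t$ atom could sit. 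The second is routine given that we have countably many choices of $\mathcal{k}$ and the exceptional null set in \eqref{eqHZconv} is independent of $t$ and $\xi$.
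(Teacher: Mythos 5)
Your proposal is correct and follows essentially the same route as the paper: combine Lemma~\ref{lm-estim_Fe} and Lemma~\ref{lm-estim-rat} to verify the hypothesis of~\eqref{eqHZ-hat} with $\mathcal{k}_\delta$, apply the resulting pointwise bound, then integrate using Lemma~\ref{lm-estimge} and the identity $|q^t_{\ve,\delta}||Dv_{\ve,\delta}|=\wh{1}(Dv_{\ve,\delta})$. The only cosmetic difference is in treating the region $|t|>M$: you restrict the left-hand side via~\eqref{eq1717}, whereas the paper observes directly that $q^x_{\ve,\delta}=0$ there and $\wh{f}(x,t,(0,q^t))=0$, so the pointwise inequality holds trivially; both routes work. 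Your explicit note that $\delta^{-1}\leq\delta^{-N}$ for $\delta\in(0,1)$, $N\geq1$ makes transparent a step the paper leaves implicit, and your appeal to the Lebesgue-null exceptional set being transported to a $|Dv_{\ve,\delta}|$-null set is precisely the role of Remark~\ref{lemma-Negligibility}, which the paper cites at that point.
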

\begin{proof}
Recall the definition of $F^\ve$ from~\eqref{eq:def-Feps}\,, and the definition of $q_{\ve, \delta}$ from~\eqref{eq:def-q}. By Lemma \ref{lm-estim_Fe} and Lemma~\ref{lm-estim-rat}, for \(|Dv_{\ve, \delta}|\)-a.e. \((x,t)\in \Omega\times \R\), we have
\[
\left(\frac{|q^{x}_{\ve,\delta}|}{|q^{t}_{\ve, \delta}|}\right)^{\max(p,N)}+\frac{1}{|q^{t}_{\ve, \delta}|}F^{\ve}(x,t,q_{\ve, \delta})\leq \frac{\mathsf{c}+\mathsf{c}'}{{\delta^N} \ve^N}\,.
\]
%
In view of \eqref{eqHZ-hat} with \(\mathcal{k}_\delta=(M,(\mathsf{c}+\mathsf{c}')/{\delta^N})\) and also Remark~\ref{lemma-Negligibility}, this implies that there exists \(\cC_{\mathcal{k}_\delta}>0\) such that for \(|Dv_{\ve,\delta}|\) a.e. \((x,t)\in \Omega\times [-M,M]\),
\[
\wh{f}(x,t,q_{\ve, \delta})\leq \cC_{\mathcal{k}_{\delta}}\left( F^\ve(x,t,q_{\ve, \delta})+ |q^{t}_{\ve, \delta}|\right)\,.
\]
Since \(q^{x}_{\ve, \delta}=0\) outside \(\Omega\times [-M,M]\) and \(\wh{f}(x,t,(0,q^t))=0\) for every \(q^t\leq 0\), the above inequality is actually true \(|Dv_{\ve,\delta}|\) a.e. \((x,t)\in \Omega\times \R\). 
In terms of measures, this yields
\[
\wh{f}(x,t,Dv_{\ve, \delta})\leq \cC_{\mathcal{k}_{\delta}}\left( F^\ve(x,t,Dv_{\ve, \delta})+ |q^{t}_{\ve, \delta}||Dv_{\ve, \delta}|\right)\,.
\]
By Lemma \ref{lm-estimge},
\begin{equation*}
\wh{f}(x,t,Dv_{\ve, \delta})\leq \cC_{\mathcal{k}_{\delta}} \left(\wh{f}(\cdot, t, Dv)*_x\vr_{\ve}+ |q^{t}_{\ve, \delta}||Dv_{\ve, \delta}|\right)\,.
\end{equation*}
Since \(q^{t}_{\ve, \delta}\leq 0\), the definition of \(\wh{1}\) implies that  \( |q^{t}_{\ve, \delta}||Dv_{\ve, \delta}|=\wh{1}(Dv_{\ve, \delta})\), which completes the proof of \eqref{eq1500}. 
\end{proof}

\subsection{A key estimate}\label{ssec:key-est}
 
To prove Proposition~\ref{prop-limsup}, we split the domain \(\Omega\) into two regions. In the good one, corresponding to the set where \(\nabla u\) is small, we rely on the Reschetnyak continuity theorem, or more specifically, on its formulation given in  Lemma~\ref{lm-Reschetnyak}. In the bad region where \(\nabla u\) is large, we use the following estimate, which is based on the consequences of the Jensen inequality that we have derived in the previous section. The decomposition of \(\Omega\) in two parts is conveyed through a function  \(\beta\) which is subsequently chosen to vanish in the good region (see the proof of Proposition~\ref{prop-limsup} in Section~\ref{ssec:limsup-proof}).

\begin{proposition}\label{prop-limsup-hard} 
Suppose that $f$ satisfies Assumptions~\ref{f-red} and \eqref{eqHZconv}.
Let \(\beta:\R\to \rp\) be a~continuous, bounded, and non-decreasing function. Let $v_{\ve, \delta}$ and $q_{\ve, \delta}=(q_{\ve, \delta}^x,q_{\ve, \delta}^t)$ be given by~\eqref{def-v-eps-delt} and~\eqref{eq:def-q}, respectively, with $u \in \cE(\Lambda)\cap L^\infty(\Lambda)$ and $\alpha$ satisfying \eqref{alpha'prop} and \eqref{eq1168}.  Then for every  \(\delta>0\), there exists \(\wcC>0\) such that
\begin{multline}\label{373}
\limsup_{\ve\to 0} \int_{\Omega\times \R} \beta(q^{t}_{\ve,\delta}(x,t))\wh{f}(x,t, q_{\ve,\delta}(x,t))\d |Dv_{\ve,\delta}|\\
   \leq   \wcC\left[\int_{\Omega}\beta\left(\frac{-1}{\sqrt{|\nabla u(x)|^2+1}}\right)f(x,u(x), \nabla u(x))\dx 
+\int_{\Omega\times \R}\beta(q^{t}_{0, \delta}(x,t))\dho (Dv_{0, \delta})\right]. 
\end{multline}      
\end{proposition}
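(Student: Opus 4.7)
The plan is to start from the pointwise measure bound of Lemma~\ref{lm-disintegration-nonautonomous}, namely
\[
\widehat{f}(x,t,Dv_{\ve,\delta})\leq \cC_{\mathcal{k}_\delta}\bigl[\widehat{f}(\cdot,t,Dv)*_x\vr_\ve+\widehat{1}(Dv_{\ve,\delta})\bigr]\quad\text{as measures on }\Omega\times\R,
\]
and to integrate it against the non-negative weight $\beta(q^{t}_{\ve,\delta})$. This splits the left-hand side of \eqref{373} as $\cC_{\mathcal{k}_\delta}(I_1(\ve)+I_2(\ve))$, with
\[
I_1(\ve)\coloneqq \int_{\Omega\times\R}\beta(q^{t}_{\ve,\delta})\,\d(\widehat{f}(\cdot,t,Dv)*_x\vr_\ve),\qquad I_2(\ve)\coloneqq\int_{\Omega\times\R}\beta(q^{t}_{\ve,\delta})\,\d\widehat{1}(Dv_{\ve,\delta}).
\]
I will then match $I_1$ and $I_2$ with the two summands on the right-hand side of \eqref{373}, with the final constant taken to be $\wcC\coloneqq\cC_{\mathcal{k}_\delta}$.

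The term $I_2(\ve)$ is the easy one. Since $\widehat{1}(q)=|q^t|$ on $\{q^t\leq 0\}$, the integrand $q\mapsto \beta(q^t)|q^t|$ is continuous and bounded on $\mathbb{S}^{N}$. Combined with $Dv_{\ve,\delta}\stackrel{\ast}\rightharpoonup Dv_{0,\delta}$ and the convergence of total variations~\eqref{eq16755}, the Reshetnyak continuity theorem (\cite[Theorem~2.39]{AmFuPa}) applies directly to yield $\lim_{\ve\to 0}I_2(\ve)=\int_{\Omega\times\R}\beta(q^{t}_{0,\delta})\,\d\widehat{1}(Dv_{0,\delta})$.

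The delicate term is $I_1(\ve)$. Using the explicit formula~\eqref{whf-convol-other-formulation} from Lemma~\ref{lm-abs-convol-f-ch} followed by the Fubini theorem, one rewrites
\[
I_1(\ve)=\int_{\R}\int_{u^{-1}(t)\cap \Lambda_+}\frac{f(x,t,\nabla u(x))}{|\nabla u(x)|}\,\Theta_\ve(x,t)\,\d\cH^{N-1}(x)\dt,\quad \Theta_\ve(x,t)\coloneqq \int_{\Omega}\beta(q^{t}_{\ve,\delta}(y,t))\vr_\ve(y-x)\dy.
\]
The core technical step is a blow-up argument showing that for a.e.\ $t\in\R$ and $\cH^{N-1}$-a.e.\ $x\in u^{-1}(t)\cap \Lambda_+$,
\[
\lim_{\ve\to 0}\Theta_\ve(x,t)=\beta\!\left(\frac{-1}{\sqrt{1+|\nabla u(x)|^2}}\right).
\]
Starting from the explicit representations
\[
\partial_t v_{\ve,0}(y,t)=-\!\!\int_{u^{-1}(t)\cap \Lambda_+}\!\!\frac{\vr_\ve(y-z)}{|\nabla u(z)|}\,\d\cH^{N-1}(z),\ \nabla_y v_{\ve,0}(y,t)=\!\!\int_{u^{-1}(t)\cap \Lambda_+}\!\!\vr_\ve(y-z)\frac{\nabla u(z)}{|\nabla u(z)|}\,\d\cH^{N-1}(z)
\]
(the second one obtained from $\nabla_y\vr_\ve(y-\cdot)=-\nabla_z\vr_\ve(y-\cdot)$ and the Stokes formula on the super-level set $\{u\geq t\}$, whose outward unit normal on $u^{-1}(t)$ is $-\nabla u/|\nabla u|$), the substitutions $y=x+\ve w$ and $z=x+\ve\zeta$ combined with the approximate tangent space property~\eqref{eq-approximate-tangent-space} at $x$ yield the joint convergence
\[
\ve\,\partial_tv_{\ve,\delta}(x+\ve w,t)\to -\Psi(w),\qquad \ve\,\nabla_yv_{\ve,\delta}(x+\ve w,t)\to \Psi(w)\,\nabla u(x),
\]
where $\Psi(w)\coloneqq |\nabla u(x)|^{-1}\int_{T_xu^{-1}(t)}\vr(w-\zeta)\,\d\cH^{N-1}(\zeta)$; the singular contribution $\ve\,\delta\alpha'(t)$ tends to $0$ and thus does not affect the limit. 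Taking the ratio gives $q^{t}_{\ve,\delta}(x+\ve w,t)\to -1/\sqrt{1+|\nabla u(x)|^2}$, and by continuity and boundedness of $\beta$ combined with dominated convergence (in $w$, against $\vr$), the stated pointwise limit of $\Theta_\ve$ follows. A further dominated convergence in the outer integrals, with the integrable majorant $\|\beta\|_\infty f(x,t,\nabla u(x))/|\nabla u(x)|$ whose total integral equals $\|\beta\|_\infty E_\Omega(u)<\infty$ by the coarea formula, followed by~\eqref{eq:hdH-hdx-coarea} and the identity $f(x,t,0)=0$ (used to pass from $\Lambda_+\cap\Omega$ to $\Omega$), then delivers $\lim_{\ve\to 0}I_1(\ve)=\int_\Omega \beta(-1/\sqrt{1+|\nabla u(x)|^2})f(x,u(x),\nabla u(x))\dx$.

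The hardest part of the argument is the blow-up analysis of $q^{t}_{\ve,\delta}$: one must simultaneously control the asymptotics of $\partial_t v_{\ve,\delta}$ and $\nabla_y v_{\ve,\delta}$ near the graph $\Gu$, verify that the additive $\delta\alpha'(t)$-perturbation is killed by the $\ve$-factor without altering the limiting ratio, and show that the points at which either the approximate tangent space of $u^{-1}(t)$ fails to exist or $u$ is not approximately differentiable form a $\cH^{N-1}$-negligible set for a.e.\ $t$ — a fact that rests on the fine properties of Sobolev functions recalled in Section~\ref{sec:GMT}.
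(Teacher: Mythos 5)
There is a genuine gap in the treatment of the term $I_1(\ve)$. Your starting identity
\[
\partial_t v_{\ve,0}(y,t)=-\int_{u^{-1}(t)\cap \Lambda_+}\frac{\vr_\ve(y-z)}{|\nabla u(z)|}\,\d\cH^{N-1}(z)
\]
is not correct when the set $\Lambda_0=\{\nabla u=0\}$ has positive measure: the $t$-component of $Dv=D1_u$ is $-\cH^N\mres\Gu/\sqrt{1+|\nabla u|^2}$, and its restriction to $\Lambda_0\times\R$ is the push-forward of $\cH^N\mres\Lambda_0$ under $x\mapsto(x,u(x))$, which is generically non-trivial (for $W^{1,1}$ functions Morse--Sard fails, so $|\Lambda_0|>0$ together with $|u(\Lambda_0)|>0$ is possible). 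Integrating your formula in $t$ and applying the coarea formula gives $-\int_{\Lambda_+}\vr_\ve(y-z)\dz$ rather than $-1$, which makes the omission visible. In the paper this extra contribution is isolated in the decomposition $Dv\mres(\Lambda_0\times \R)=-(0,h(t)\gamma_t)\otimes\cH^1-(0,\gamma_t)\otimes\mu_0$; after convolving, it produces a strictly negative contribution $-h(t)\,\gamma_t*\vr_\ve(x)$ to $\partial_t v_{\ve,\delta}$ that your blow-up analysis never sees. As a consequence, the claimed pointwise limit $q^t_{\ve,\delta}(x+\ve w,t)\to -1/\sqrt{1+|\nabla u(x)|^2}$ is not established: to prove it, you would additionally need to show that $\ve\,\gamma_t*\vr_\ve(x+\ve w)\to 0$, i.e.\ that $\gamma_t(B(x,r))=o(r^{N-1})$ for $\cH^{N-1}$-a.e.\ $x\in u^{-1}(t)\cap\Lambda_+$ and a.e.\ $t$ -- a Besicovitch-type density argument exploiting the mutual singularity of $\gamma_t$ (concentrated on $\Lambda_0$) and $\cH^{N-1}\mres(u^{-1}(t)\cap\Lambda_+)$. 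This extra step is missing from your proposal.

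The paper takes a slightly different and more economical route precisely to avoid that density argument, and it is here that the monotonicity of $\beta$ becomes essential: since the omitted $\Lambda_0$-contribution (and the $\delta\alpha'(t)$ term) are nonpositive, one has the one-sided inequality $q^t_{\ve,\delta}(x,t)\le \zeta^t_\ve(x,t)/|\zeta_\ve(x,t)|$ where $\zeta_\ve$ is the convolution of the $\Lambda_+$-part alone, and $\beta$ non-decreasing then gives $\beta(q^t_{\ve,\delta})\le\beta(\zeta^t_\ve/|\zeta_\ve|)$. The blow-up argument (your $\Theta_\ve$ computation) is then performed only on $\zeta_\ve$ via the approximate tangent space property -- exactly as you did, but now applied to a quantity for which the claimed limit actually holds without needing to control $\gamma_t$. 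In short: your blow-up strategy for $I_1$ mirrors the paper's, and your handling of $I_2$ via Reshetnyak is identical, but you are proving the wrong pointwise statement; the correct route is a one-sided comparison, which is why $\beta$ is assumed non-decreasing in the hypothesis. As written, the proof is incomplete.
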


\begin{proof} We proceed step by step. \newline
     
\noindent{\bf Step 1. Decomposition of $Dv$ and $Dv_{\ve,\delta}$.} 
 Let us recall the decomposition \(\Lambda=\Lambda_+\cup \Lambda_0\), with  \(|\nabla u|>0\) a.e. on \(\Lambda_+\) and \(\nabla u=0\) a.e. on \(\Lambda_0\).
  We define $\ell: \Lambda\to \R^{N+1}$ by the following formula
\begin{equation}
    \label{def:ell}
\ell (x)\coloneqq \chi_{\Lambda_+}(x)\frac{(\nabla u(x),-1)}{|\nabla u(x)|}\,.
\end{equation}
We will show  that there exist  non-negative finite measures \(\gamma_t\) on \(\Lambda\), a non-negative function \(h\in L^{1}(\R)\), and a~non-negative finite  measure \(\mu_0\) on \(\R\) such that \(\dt\) and \(\mu_0\) are mutually singular,   and
\begin{equation}
    \label{Dv-decomposition}
Dv(x,t)= (\ell(x)\cH^{N-1}\mres  u^{-1}(t)){\otimes \cH^1} -(0, h(t)\gamma_t){\otimes \cH^1}  -(0,\gamma_t)\otimes \mu_0\,.
\end{equation}
By \eqref{Dv-other-formulation} applied with \(\Lambda\) instead of \(\Omega\) and \(A=\Lambda_+\), one has:
\begin{equation}\label{eq1561}
Dv\mres  (\Lambda_+\times \R) = (\ell\cH^{N-1}\mres  u^{-1}(t)){\otimes \cH^1}\,.
\end{equation}

On the other hand,   for every bounded Borel map \(g:\Lambda_0\times \R\to \R\), we have by \eqref{def-Dv} and the area formula, 
\[
\int_{\Lambda_0\times \R}g(x,t)\d Dv(x,t)=\int_{\Lambda_0}(0,-g(x,u(x)))\dx\, .
\]
This proves that \((Dv\mres  (\Lambda_0\times \R))^x =0\) and \((Dv\mres  (\Lambda_0\times \R))^t\) is a non-positive finite measure. By disintegration,  there exist  non-negative probability measures  \(\gamma_t\) on \(\Lambda\) and a non-negative finite measure \(\overline{\mu}_0\) on \(\R\) such that
\[
Dv\mres  (\Lambda_0\times \R) = -(0,\gamma_t)\otimes \overline{\mu}_0\,.
\]
We decompose \(\overline{\mu}_0=h(t)\dt +\mu_0\) where \(h\in L^{1}(\R)\) is non-negative, while \(\mu_{0}\) and \(\dt\) are mutually singular.  
Hence,
\begin{align*}
Dv&=Dv\mres  (\Lambda_+\times \R)+ Dv\mres  (\Lambda_0\times \R) =(\ell\cH^{N-1}\mres  u^{-1}(t)){\otimes \cH^1} - (0, h(t)\gamma_t){\otimes \cH^1}  -(0,\gamma_t)\otimes \mu_{0}\,.
\end{align*}
This completes the proof of \eqref{Dv-decomposition}.
It follows from the latter that for every \(\delta\in (0,1)\),
\begin{equation}\label{eq-decomp-Dv0d}
    Dv_{0,\delta}=\Big[\left(\ell\cH^{N-1}\mres  u^{-1}(t)\right)-(0, h(t)\gamma_t) +(0,\delta\alpha'(t)){\cH^N} \Big]{\otimes \cH^1} -(0, \gamma_t)\otimes \mu_0\,.
\end{equation}
Then, by partial convolution with \(\vr_\ve\), one obtains that
\begin{equation}\label{eq-decomp-Dved}
    Dv_{\ve,\delta}(x,t)=\big[\zeta_\ve(x,t)-(0, h(t)\gamma_t*\vr_{\ve}(x)) +(0,\delta\alpha'(t)) \big]{\cH^{N+1}} -(0, \gamma_t*\vr_\ve(x)){\cH^N}\otimes \mu_0\,,
\end{equation}
where \(\zeta_\ve:\R^N\times \R \to \R^{N+1}\) is a Borel function  such that  for a.e. \(t\in \R\) and  for every \(x\in \R^N\), it holds
\begin{equation}
    \label{def:zeta-eps}
\zeta_\ve(x,t)=
\left(\ell\cH^{N-1}\mres  u^{-1}(t)\right)*\vr_{\ve}(x)=\int_{u^{-1}(t)}\vr_\ve(x-z)\ell(z)\d\cH^{N-1}(z)\,.
\end{equation}

\noindent{\bf Step 2. Continuity of $\ell$ given by~\eqref{def:ell}. } 
By the coarea formula, the map \(\ell\) is summable on \(u^{-1}(t)\) for a.e. \(t\in \R\). We proceed to show that for  \(|Dv|\)-a.e. \((x,t)\in \Lambda_+\times \R\)
and for every continuous function \(\phi\in C^{0}_c(\rn)\), it holds
\begin{equation}
    \label{eqE}
\lim_{\ve\to 0} \frac{1}{\ve^{N-1}}\int_{u^{-1}(t)} \ell(z)\,\phi\left(\frac{z-x}{\ve}\right)\d\cH^{N-1}(z) = \ell(x) \int_{T_x u^{-1}(t)}\phi(y)\d\cH^{N-1}(y)\,.
\end{equation}
Remember that the approximate tangent space \(T_x u^{-1}(t)\) to the level set \(u^{-1}(t)\) exists at \(\cH^{N-1}\)-a.e. \(x\in u^{-1}(t)\), see Section~\ref{sec:GMT}. This means that for every \(\phi\in C^{0}_c(\rn)\), it holds
\begin{equation}\label{eq898}
\lim_{\ve\to 0}\frac{1}{\ve^{N-1}} \int_{\Lambda}\phi\left(\frac{y-x}{\ve}\right) \d \left(\cH^{N-1}\mres  u^{-1}(t)\right)(y) - \int_{T_x u^{-1}(t)} \phi(y)\d\cH^{N-1}(y)=0\,.
\end{equation}
We fix \(t\in \R\) such that \(u^{-1}(t)\) is a countably \(\cH^{N-1}\) rectifiable set, \(\cH^{N-1}(u^{-1}(t))<\infty\), and \(\ell\) is summable on \(u^{-1}(t)\).

The measure \(\cH^{N-1}\mres u^{-1}(t)\) is finite and Borel regular (see e.g.~\cite[Theorem~1 of Section~2.1 and Theorem~3 of Section~1.1]{Evans-Gariepy}). Since  \(\ell|_{u^{-1}(t)}\)  is summable   with respect to this measure, then \(\cH^{N-1}\)-a.e. \(x\in u^{-1}(t)\) is a Lebesgue point of \(\ell|_{u^{-1}(t)}\) (see e.g.~\cite[Corollary 1 of Section 1.7]{Evans-Gariepy}):
\begin{equation}\label{eq1830}
\lim_{\ve\to 0}\frac{1}{\cH^{N-1}(u^{-1}(t)\cap B(x,\ve))}\int_{u^{-1}(t)\cap B(x,\ve)}|\ell-\ell(x)|\d\cH^{N-1}=0\,.
\end{equation}
Using that \(u^{-1}(t)\) is \(\cH^{N-1}\) measurable, one has (see \cite[Section 2.3]{Evans-Gariepy}),
\[
\limsup_{\ve\to 0} \frac{\cH^{N-1}(u^{-1}(t)\cap B(x,\ve))}{\ve^{N-1}}<\infty\,,
\]
for \(\cH^{N-1}\)-a.e. \(x\in \R^N\). Together with \eqref{eq1830}, this implies \begin{equation}\label{eq-Lebesgue-point}
\lim_{\ve\to 0}\frac{1}{\ve^{N-1}}\int_{u^{-1}(t)\cap B(x,\ve)}|\ell-\ell(x)|\d\cH^{N-1}=0\,.
\end{equation}
For every  \(x\in u^{-1}(t)\) such that the above identity holds and \(T_xu^{-1}(t)\) exists, we make the following estimate:
\begin{align*}
\left|\frac{1}{\ve^{N-1}}\right. \int_{u^{-1}(t)}  \ell(z)\phi\left(\frac{z-x}{\ve}\right)&\d\cH^{N-1}(z)
\left.- \ell(x) \int_{T_{x}u^{-1}(t)}\phi(z)\d\cH^{N-1}(z)\right|\\
&\leq \frac{1}{\ve^{N-1}}\int_{u^{-1}(t)} |\ell(z)-\ell(x)|\phi\left(\frac{z-x}{\ve}\right)\d\cH^{N-1}(z)
\\
&\quad +|\ell(x)|\left|\frac{1}{\ve^{N-1}}\int_{u^{-1}(t)} \phi\left(\frac{z-x}{\ve}\right)\d\cH^{N-1}(z)-\int_{T_{x}u^{-1}(t)}\phi(z)\d\cH^{N-1}\right|\eqqcolon \textrm{I}_\ve+\textrm{II}_\ve\,.
\end{align*}
Note that 
\[
\mathrm{I}_\ve\leq\frac{\|\phi\|_{L^{\infty}}}{\ve^{N-1}}\int_{B(x,{\ve R})\cap u^{-1}(t)} |\ell(z)-\ell(x)|\d\cH^{N-1}(z)\,,
\]
where we choose \(R>0\) such that \(\supp \phi \subset B(0,R)\), so $\mathrm{I}_\ve\to 0$ as $\ve\to 0$ by using \eqref{eq-Lebesgue-point}. Moreover, \(\textrm{II}_\ve\to 0\) when \(\ve\to 0\) by \eqref{eq898}.  It follows that \eqref{eqE} holds for a.e. \(t\in \R\) and for \({\cH}^{N-1}\)-a.e. \(x\in u^{-1}(t)\). 
Since by \eqref{eq1561} one has \(|Dv|=(|\ell|\cH^{N-1}\llcorner u^{-1}(t))\otimes {\cH^1}\) on \(\Lambda_+\times \R\),  the identity \eqref{eqE} also holds for \(|Dv|\)-a.e. \((x,t)\in \Lambda_+\times \R\). 
\newline

\noindent{\bf Step 3. Completion of the proof of \eqref{373} } 
Fix \(\delta\in (0,\infty)\) and \(\ve\in (0, \ve_0)\). The map \((x,t)\in \Omega\times \R\mapsto \beta(q_{\ve,\delta}^t(x,t))\) is \(|Dv_{\ve,\delta}|\)-measurable and bounded. 
From Lemma \ref{lm-disintegration-nonautonomous}, we deduce that
\begin{equation}
\label{eq1737}
\frac{1}{ \cC_{\mathcal{k}_\delta}}\int_{{\Omega}\times \R}\beta(q^{t}_{\ve, \delta}(x,t))\wh{f}(x,t, q_{\ve, \delta}(x,t))\d |Dv_{\ve, \delta}|   
\leq \int_{{\Omega}\times \R}\beta(q^{t}_{\ve, \delta}(x,t))\d \widehat{f}(x,t,Dv)*_x\vr_{\varepsilon}+ \int_{{\Omega}\times \R}\beta(q^{t}_{\ve, \delta}(x,t))\d \widehat{1}(Dv_{\ve, \delta})\,.    
\end{equation}
For the last term,
we claim that
\begin{equation}
\label{eq-addendum-adelta-2}
\lim_{\ve\to 0}\int_{\Omega\times \R} \beta(q^{t}_{\ve, \delta}(x,t)) \dho (Dv_{\ve, \delta})
= \int_{\Omega \times \R}\beta(q^{t}_{0, \delta})\dho (Dv_{0,\delta})\,.
\end{equation}
Indeed, since \(q^{t}_{\ve,\delta}<0\) a.e. for every \(\ve, \delta\geq 0\), one has \(\widehat{1}(Dv_{\ve,\delta})=|\partial_t v_{\ve,\delta}|\).  Moreover, we know by \eqref{eq16755}, that \(|Dv_{\ve, \delta}|(\Omega\times \R)\) converges to \(|Dv_{0,\delta}|(\Omega\times \R)\). Hence, \eqref{eq-addendum-adelta-2} follows from the Reshetnyak Continuity Theorem \cite[Theorem 2.39]{AmFuPa} applied to the continuous bounded map \(q\in \mathbb{S}^N\mapsto \beta(q^t)|q^t|\).

By the decomposition of \(Dv\) described in Step 1, the measures \(\mu_0\) and {$\cH^1$} are mutually singular on \(\R\). Hence, there exists a Borel subset \(I\subset \R\) such that \(\mu_0(I)=0\) and \( {|\R\setminus I|}=0\).  
For \(|Dv_{\ve, \delta}|\) a.e. \((x,t)\in \Omega\times I\),  the decomposition \eqref{eq-decomp-Dved} implies that
\[
q_{\ve, \delta}(x,t)= \frac{Dv_{\ve,\delta}(x,t)}{|Dv_{\ve,\delta}(x,t)|} = \frac{\zeta_\ve(x,t)+(0, -h(t)\gamma_t*\vr_{\ve}(x)+\delta \alpha'(t))}{|\zeta_\ve(x,t)+(0, -h(t)\gamma_t*\vr_{\ve}(x)+\delta \alpha'(t))|}\,.
\]
A simple calculation shows that for every \(\xi\in\rn\) and $a,b<0$, one has
\begin{equation*}
\frac{a+b}{|(\xi,a+b)|}\leq \frac{a}{|(\xi,a)|}\,.
\end{equation*}
We apply this remark with \((\xi,a)=\zeta_\ve(x,t)\) and \(b=-h(t)\gamma_t*\vr_{\ve}(x)+\delta \alpha'(t)\) to get
\begin{equation*}
    \label{estim-qt}
q_{\ve, \delta}^t(x,t)\leq \frac{\zeta_\ve^t(x,t)}{|\zeta_\ve(x,t)|}\,,
\end{equation*}
where the right-hand side has to be understood as \(0\) when \(\zeta_\ve(x,t)=0\).
Since \(\beta\) is non-decreasing, this gives for \(|Dv_{\ve, \delta}|\) a.e. \((x,t)\in \Omega\times I\),
\begin{equation}\label{eq-1749}
\beta\left(q_{\ve, \delta}^t(x,t)\right)\leq \beta\left(\frac{\zeta_\ve^t(x,t)}{|\zeta_\ve(x,t)|}\right)\,.
\end{equation}
As already observed, \(\widehat{f}(x,t,Dv)*_x\vr_{\varepsilon}\) is absolutely continuous with respect to \(\cH^{N+1}\). In particular, \(\widehat{f}(x,t,Dv)*_x\vr_{\varepsilon}(\Omega\times (\R\setminus I))=0\) and thus
\begin{align*}
\int_{{\Omega}\times \R}\beta(q^{t}_{\ve, \delta}(x,t))\d \widehat{f}(x,t,Dv)*_x\vr_{\varepsilon}
&=\int_{{\Omega}\times I}\beta(q^{t}_{\ve, \delta}(x,t))\d \widehat{f}(x,t,Dv)*_x\vr_{\varepsilon}
&\leq \int_{{\Omega}\times I}\beta\left(\frac{\zeta_\ve^t(x,t)}{|\zeta_\ve(x,t)|}\right)\d \widehat{f}(x,t,Dv)*_x\vr_{\varepsilon}\,,
\end{align*}
where the last inequality follows from \eqref{eq-1749}. Since the integrand in the right-hand side is non-negative, one can replace \(I\) by \(\R\). 
Together with \eqref{eq-def-conv-partial}, this gives
\begin{equation}\label{eq1814}
\int_{{\Omega}\times \R}\beta(q^{t}_{\ve, \delta}(x,t))\d \widehat{f}(x,t,Dv)*_x\vr_{\varepsilon}
\leq \int_{\Lambda\times \R}\left(\int_{\Omega}\beta\left(\frac{\zeta_\ve^t}{|\zeta_\ve|}(y,t)\right)\vr_\ve(y-x)\dy\right)\d \widehat{f}(x,t,Dv)
\eqqcolon \mathrm{J}_\ve+\mathrm{J}_{\ve}'\,,
\end{equation}
 with
\[
\mathrm{J}_\ve\coloneqq\int_{\Lambda\times \R}\left(\int_{\Omega}\beta\left(\frac{\ell^t}{|\ell|}(x)\right)\vr_\ve(y-x)\dy\right)\d \widehat{f}(x,t,Dv)\,,
\]
\[
\mathrm{J}_{\ve}'\coloneqq\int_{\Lambda\times \R}\left(\int_{\Omega}\left(\beta\left(\frac{\zeta_\ve^t}{|\zeta_\ve|}(y,t)\right)-\beta\left(\frac{\ell^t}{|\ell|}(x)\right)\right)\vr_\ve(y-x)\dy\right)\d \widehat{f}(x,t,Dv)\,.
\]
We observe that
\[
\mathrm{J}_\ve=\int_{\Lambda\times \R} \beta\left(\frac{\ell^t}{|\ell|}(x)\right) \chi_{\Omega}* \widetilde{\vr}_\ve(x)\d\,\widehat{f}(x,t,Dv)\,,
\]
where \(\chi_{\Omega}\) is the indicator function of \(\Omega\) and \(\widetilde{\vr}(x)\coloneqq\vr(-x)\) for every \(x\in \R^N\). Using that \(\chi_{\Omega}*\widetilde{\vr}_{\ve}(x)\) converges to \(\chi_{\Omega}(x)\) \(\cH^{N}\) for a.e. \(x\in \Omega\) together with Remark~\ref{lemma-Negligibility}, we get that \(\beta\left(\frac{\ell^t}{|\ell|}(x)\right) \chi_{\Omega}* \widetilde{\vr}_{\ve}(x)\) converges outside a \(|Dv|\)-negligible set, and thus for  \(\wh{f}(x,t,Dv)\) a.e. \((x,t)\in \Lambda\times \R\). Since the integrand of \({\rm J}_\varepsilon\) is uniformly bounded by \(\|\beta\|_{L^{\infty}(\R)}\) on \(\Lambda\times \R\) and the measure \(\wh{f}(x,t,Dv)\) is finite, the dominated convergence theorem implies that
\[
\lim_{\ve\to 0}\mathrm{J}_{\ve}= \int_{\Omega\times \R} \beta\left(\frac{\ell^t}{|\ell|}(x)\right) \d\widehat{f}(x,t,Dv)\,.
\]
By the definition \eqref{def:ell} of \(\ell\), the area formula and the fact that \(f(x,u(x),\nabla u (x))=0\) on \(\Lambda_0\), this can be written as:
\begin{equation}\label{eq1831}
\lim_{\ve\to 0}\mathrm{J}_\ve = \int_{\Omega\cap \Lambda_+} \beta\left(\frac{-1}{\sqrt{1+|\nabla u(x)|^2}}\right)f(x,u(x),\nabla u(x))\dx\,.
\end{equation}
As for \(\mathrm{J}_{\ve}'\), using that \(\widehat{f}(x,t,Dv)(\Lambda_0\times \R)=0\) we have the following estimate:
\begin{flalign}
\mathrm{J}_{\ve}'
&=\int_{\Lambda\times \R}\left(\int_{\frac{\Omega-x}{\ve}}\left(\beta\left(\frac{\zeta_\ve^t}{|\zeta_\ve|}(x+\ve y',t)\right)-\beta\left(\frac{\ell^t}{|\ell|}(x)\right)\right)\vr(y')\dy'\right)\d\,\widehat{f}(x,t,Dv) \nonumber\\
&\leq \int_{\Lambda_+\times \R}\left(\int_{[\varrho>0]}\left|\beta\left(\frac{\zeta_\ve^t}{|\zeta_\ve|}(x+\ve y',t)\right)-\beta\left(\frac{\ell^t}{|\ell|}(x)\right)\right|\vr(y')\dy'\right)\d\,\widehat{f}(x,t,Dv)\, \label{eq1837}.
\end{flalign}
We next observe that by \eqref{def:zeta-eps}, \eqref{eqE},  and~\eqref{def-vr-ve}, for \(|Dv|\)-a.e.  \((x,t)\in \Lambda_+\times \R\) and for every \(y'\in  [\vr>0]\), we have
\[
\ve \zeta_\ve(x+\ve y',t)=\ve^{1-N} \int_{u^{-1}(t)}\vr\left(\frac{x-z}\ve+y'\right)\ell (z)\d\cH^{N-1}(z)\xrightarrow[]{\ve\to 0}\ell(x)\int_{T_{x}u^{-1}(t)}\vr(y'-z)\d\cH^{N-1}(z)\,. 
\]
 The last integral is strictly positive since \(\vr\) is strictly positive on a neighbourhood of \(y'\). 
 Thus,
\begin{equation}\label{eq825}
\lim_{\ve\to 0}\tfrac{\zeta_\ve^t}{|\zeta_\ve|}(x+\ve y',t)= \tfrac{\ell^t}{|\ell|}(x)\,.
\end{equation}
By the dominated convergence theorem applied in \eqref{eq1837}, this gives
\(\lim_{\ve\to 0}\mathrm{J}_{\ve}'=0\). In view of \eqref{eq1814} and \eqref{eq1831}, we have thus proved that
\[
\limsup_{\ve\to 0}\int_{{\Omega}\times \R}\beta(q^{t}_{\ve, \delta}(x,t))\d \widehat{f}(x,t,Dv)*_x\vr_{\varepsilon}\leq \int_{\Omega\cap \Lambda_+} \beta\left(\frac{-1}{\sqrt{1+|\nabla u(x)|^2}}\right)f(x,u(x),\nabla u(x))\dx\,.
\]
Inserting this estimate in \eqref{eq1737} and taking into account \eqref{eq-addendum-adelta-2}, the conclusion of Proposition~\ref{prop-limsup-hard} follows.
\end{proof}

\subsection{Completion of the  proof of the limsup estimate}\label{ssec:limsup-proof}
\begin{proof}[Proof of Proposition~\ref{prop-limsup}]
As in the proof of the liminf estimate, we make use of a  sequence of continuous decreasing functions \(\theta_k\) from \eqref{theta_k}. We write
\[
\limsup_{\ve\to 0}\int_{\Omega\times \R}\d \widehat{f}(x,t,Dv_{\ve,\delta})
\leq \limsup_{\ve\to 0}\int_{\Omega\times \R}\theta_k(q^{t}_{\ve,\delta})\d \widehat{f}(x,t,Dv_{\ve,\delta})
+\limsup_{\ve\to 0}\int_{\Omega\times \R}(1-\theta_k(q^{t}_{\ve,\delta}))\d \widehat{f}(x,t,Dv_{\ve,\delta}).
\]
By Lemma~\ref{lm-Reschetnyak} with \(\theta=\theta_k\) and the fact that \(\theta_k\leq 1\) and then~\eqref{eq1717prim}, we can estimate the first term as follows 
\begin{equation}\label{eq-limsup-easy}
 \limsup_{\ve\to 0}\int_{\Omega\times \R}\theta_k(q^{t}_{\ve,\delta})\d \widehat{f}(x,t,Dv_{\ve,\delta})\leq \int_{\Omega\times \R}\d \widehat{f}(x,t,Dv_{0,\delta}) = \int_{\Omega\times \R}\d \widehat{f}(x,t,Dv)\,.    
\end{equation}
For the second term, we rely on Proposition~\ref{prop-limsup-hard} with \(\beta=1-\theta_k\) on \(\Omega\) to get
\begin{multline*}
\limsup_{\ve\to 0} \int_{\Omega\times \R} (1-\theta_k(q^{t}_{\ve,\delta}))\wh{f}(x,t, q_{\ve,\delta})\d |Dv_{\ve,\delta}|\\
   \leq    \wcC\int_{\Omega}\left[1-\theta_k\left(\frac{-1}{\sqrt{|\nabla u(x)|^2+1}}\right)\right]f(x,u(x), \nabla u(x))\dx + \wcC\int_{\Omega\times \R} (1-\theta_k(q^{t}_{0, \delta}(x,t)))\dho (Dv_{0, \delta})\eqqcolon \mathrm{R}_k\,.
\end{multline*}
Let us show that $\mathrm{R}_k\to 0$ for $k\to\infty$. This can be proven by the dominated convergence theorem since \(\theta_k\) converges pointwise to \(\chi_{(-\infty,0)}\), $(1-\theta_k)\leq 1$, $u\in\cE(\Omega)$,
\[
\int_{\Omega\times \R} \dho (Dv_{0, \delta})= \int_{\Omega\times \R}\d\wh{1}(Dv)+\int_{\Omega\times \R}\wh{1}(0,-\delta \alpha')\dx\dt =  {|\Omega|}-\delta \int_{\Omega\times \R}\alpha'(t)\dx \dt = (1+\delta \|\alpha\|_{L^{\infty}(\R)}) {|\Omega|} <\infty\,,
\]
and also the fact that  \(q^{t}_{0,\delta}(x,t)<0\)  for \(|Dv_{0,\delta}|\)-a.e. \((x,t)\). In turn, by the above observation  together with \eqref{eq-limsup-easy},  we get the desired result.

\end{proof}

\section{Approximation on subdomains}\label{sec:localization}

\subsection{The inner approximation}\label{ssec:us-to-u}

In the three previous sections, starting from a map \(u\) defined on \(\Lambda\), we have constructed a family of maps \(v_{\ve,\delta}\) defined on a smaller set \(\Omega\Subset \Lambda\). We observe however that the definition of the maps \(v_{\ve,\delta}\) does not depend on the particular choice of \(\Omega\) (provided that \(\Omega+B(0,\ve)\subset \Lambda\)). Hence, we can deduce from Propositions~\ref{prop-liminf} and~\ref{prop-limsup} the following consequence:  
\begin{corollary} \label{coro-lim-hat-E} Suppose  $f$ satisfies Assumptions~\ref{f-red} and \eqref{eqHZconv}. Let  $v_{\ve, \delta}$ be given by~\eqref{def-v-eps-delt} with $\alpha$ satisfying~\eqref{alpha'prop} and~\eqref{eq1168}, for $u \in \cE(\Lambda)\cap L^\infty(\Lambda)$. Then for any open $\Lambda'\Subset\Lambda$  and any $\delta>0$, it holds
\[
\lim_{\ve\to 0} \int_{\Lambda'\times \R}\dhf(x,t, Dv_{\ve,\delta}) = \int_{\Lambda'\times \R}\dhf(x,t,Dv)\,.
\]    
\end{corollary}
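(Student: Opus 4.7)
The plan is to apply the liminf estimate of Proposition~\ref{prop-liminf} and the limsup estimate of Proposition~\ref{prop-limsup} with $\Omega$ replaced by an arbitrary bounded open set $\Lambda' \Subset \Lambda$. First I would fix such a $\Lambda'$ and $\delta > 0$, and pick $\ve_0 > 0$ small enough that $\Lambda' + B(0, \ve_0) \subset \Lambda$; for every $\ve \in (0, \ve_0)$ this guarantees that the formula~\eqref{def-v-eps-delt} unambiguously defines $v_{\ve,\delta}$ on $\Lambda' \times \R$, independently of the specific choice of the auxiliary set in the original statements.

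The key observation is that none of the auxiliary results feeding into the proofs of Propositions~\ref{prop-liminf} and~\ref{prop-limsup} relies on Lipschitz regularity of $\partial \Omega$. The tools invoked---the disintegration of $Dv$ and $Dv_{\ve, \delta}$, the convolution estimates of Lemma~\ref{lm-lim-DveOm}, the Jensen-type inequality for measures in Lemma~\ref{lm-estimge}, the transfer of the anti-jump condition to $\wh f$ via Lemma~\ref{lm-disintegration-nonautonomous}, and the Reshetnyak continuity/semicontinuity theorems leveraged in Lemma~\ref{lm-Reschetnyak}---are purely local in the spatial variable and operate on arbitrary bounded open subsets of $\Lambda$. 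Tracking $\Lambda'$ throughout in place of $\Omega$ therefore produces valid statements verbatim, and both inequalities
\[
\liminf_{\ve\to 0}\int_{\Lambda'\times \R}\dhf(x,t, Dv_{\ve,\delta}) \geq \int_{\Lambda'\times \R}\dhf(x,t,Dv) \quad\text{and}\quad \limsup_{\ve\to 0}\int_{\Lambda'\times \R}\dhf(x,t, Dv_{\ve,\delta}) \leq \int_{\Lambda'\times \R}\dhf(x,t,Dv)
\]
follow. Chaining them yields the claimed limit identity.

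The only ``obstacle'' is this verification step, which is bookkeeping rather than a substantive difficulty: one has to run through Sections~\ref{sec:prelim-to-main-proof}--\ref{sec:limsup-est} and check, clause by clause, that each argument depends solely on the fact that the integration domain is a bounded open set at positive distance from $\partial \Lambda$. Since $u$, $Dv$, $\wh f(\cdot, t, Dv)$, and the mollifier $\vr_\ve$ are all defined globally on $\Lambda$ (with the superlinearity and anti-jump condition on $f$ available on the entire $\Lambda$), and since the constructions of $v_{\ve, \delta}$ and the estimates on $q_{\ve, \delta}$ from Section~\ref{ssec:Zh-for-hat-f} localize perfectly, this pass through the arguments goes through without modification.
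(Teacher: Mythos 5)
Your proposal is correct and follows the same route as the paper: the paper's own proof of Corollary~\ref{coro-lim-hat-E} consists precisely of the remark that the definition of $v_{\ve,\delta}$ does not depend on the set $\Omega$ (only on $u$, $\alpha$, and $\vr_\ve$, provided $\Omega + B(0,\ve)\subset\Lambda$), so Propositions~\ref{prop-liminf} and~\ref{prop-limsup} apply verbatim with $\Lambda'$ in place of $\Omega$. Your additional observation that none of the lemmas feeding into those two propositions invokes Lipschitz regularity of $\partial\Omega$ is accurate and makes explicit what the paper leaves implicit.
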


The ultimate goal of this section is to derive from Corollary~\ref{coro-lim-hat-E} a suitable approximation of \(u\), see Proposition \ref{prop-inner-approximation} below.
To this aim, we rely on some technical lemmas from \cite{Bousquet-Pisa}. Let \(\Lambda'\)  be an open subset of \(\Lambda\) and \(w:\Lambda'\times \R\to \R\) be a bounded Borel map such that \(Dw\) is a finite \(\R^{N+1}\)-valued measure and for every \(x\in \Lambda'\), $t\mapsto w(x,t)$ is non-increasing and left-continuous.
We further assume that for every \(s\in (0,1)\), there exists \(M_s>0\) such that for all $t\geq M_s$ it holds
\begin{equation}\label{eq-Ms}
\esssup_{x\in\Lambda'}w(x, t)<s<\essinf_{x\in\Lambda'}w(x,-t).
\end{equation}
Recall the definition of the generalized inverse with respect to the second variable $w^{-1}(\cdot,s)$ introduced in \eqref{def-gen-inv}. For a.e. \(s\in (0,1)\), the function $w^{-1}(\cdot,s)$ belongs to \(BV(\Lambda')\cap L^{\infty}(\Lambda')\) and satisfies \(\|w^{-1}(\cdot,s)\|_{L^{\infty}(\Lambda')}\leq M_s\) while the indicator function \(\chi_{[w>s]}\) of the set \([w>s]=\{(x,t)\in \Lambda'\times \R : w(x,t)>s\}\) agrees with \(1_{w^{-1}(\cdot,s)}\) a.e. on \(\Lambda'\times \R\), see~\cite[Lemmas~2.4 and~2.5]{Bousquet-Pisa}. Let \(f:\Lambda'\times \R\times \R^N \to \rp\) satisfy \ref{f-red} on \(\Lambda'\). If \(\wh{E}_{\Lambda' \times \R} (w)<\infty\), then for a.e. \(s\in (0,1)\), the map \(w^{-1}(\cdot,s)\) belongs to \(W^{1,1}(\Lambda')\) while \(s\mapsto \wh{E}_{\Lambda'\times \R}(\chi_{[w>s]})\) is measurable, with
\begin{equation}
    \label{hat-E-no-hat-E}
\wh{E}_{\Lambda' \times \R} (w)=\int_{\R} \wh{E}_{\Lambda'\times \R}(\chi_{[w>s]})\d s\geq \int_{0}^{1} \wh{E}_{\Lambda'\times \R}(1_{w^{-1}(\cdot,s)})\d s= \int_{0}^{1}E_{\Lambda'}(w^{-1}(\cdot,s))\d s.
\end{equation}
The last assertion follows from \cite[Lemmas~2.6 and~8.5]{Bousquet-Pisa}. In spite of the fact that in the latter reference, those results are stated for a Lagrangian which does not depend on \(x\), the extension to the \(x\) dependent case is straightforward.\newline

The following abstract fact on the convergence follows from \cite[Lemmas~2.9 and~2.10]{Bousquet-Pisa}.
\begin{lemma}\label{lem:cvgce-usi}
Let $u\in\cE(\Lambda')\cap L^\infty(\Lambda')$. 
For every \(n\in\N\) we consider a bounded Borel map \(v_n:\Lambda'\times \R \to \R\) which is non-increasing and left-continuous with respect to the second variable and satisfies \eqref{eq-Ms} on \(\Lambda'\) with constants \(M_s\) which do not depend on \(n\). Assume further that
\(Dv_n\) is a finite \(\R^{N+1}\)-valued measure and that
\begin{equation*}
    v_n\to 1_u\qquad\text{a.e. in }\ \Lambda'\times\R\,.
\end{equation*}
Then, setting for every \(s\in (0,1)\)
\begin{equation}\label{def-usi}
  \bun(x)\coloneqq v_n^{-1}(x,s)\,   , 
\end{equation}
it holds for a.e. \(s\in (0,1)\)
\begin{equation*}
    \bun\to u\qquad\text{a.e. in }\ \Lambda'\,.
\end{equation*}
If \(\lim_{n\to\infty}\wh{E}_{\Lambda'\times\R}(v_n)=\wh{E}_{\Lambda'\times\R}(1_u)\), then for almost every  \(s\in (0,1)\) up to a subsequence (which may depend on \(s\)),  \((\bun)_{n\in\N}\subset W^{1,1}(\Lambda')\) converges to \(u\) in \(L^{1}(\Lambda')\) and satisfies
\[
\lim_{n\to \infty}E_{\Lambda'}(\bun)=E_{\Lambda'}(u)\,.
\]
\end{lemma}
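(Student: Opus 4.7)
The plan is to imitate \cite[Lemmas~2.9 and~2.10]{Bousquet-Pisa}, adapting it to the $x$-dependent setting by splitting the argument into three tasks: the a.e. convergence of $\bun$, the matching energy convergence, and the passage to $L^1$ convergence. For the first task, the key structural observation is that $t\mapsto v_n(x,t)$ is non-increasing and left-continuous for every $x$, while the candidate limit $t\mapsto 1_u(x,t)$ has a single jump at $t=u(x)$ with $1_u^{-1}(x,s)=u(x)$ for every $s\in(0,1)$. By Fubini applied to the hypothesis $v_n\to 1_u$ a.e., for a.e. $x\in\Lambda'$ one has $v_n(x,\cdot)\to 1_u(x,\cdot)$ at every continuity point of $1_u(x,\cdot)$. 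The elementary fact that pointwise convergence of non-increasing functions at continuity points of the limit implies convergence of the generalized inverses at continuity points of the inverse limit then yields $v_n^{-1}(x,s)\to u(x)$ for every $s\in(0,1)$, since $s\mapsto u(x)$ is constant on $(0,1)$. A second Fubini delivers the stated conclusion for (a.e., in fact every) $s\in(0,1)$.

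For the energy convergence, I would invoke the slicing identity \eqref{hat-E-no-hat-E}. Since $\chi_{[1_u>s]}=1_u$ for every $s\in(0,1)$ (and the corresponding measures are trivial otherwise), it yields $\wh E_{\Lambda'\times\R}(1_u)=E_{\Lambda'}(u)$, while for every $n$ one has $\wh E_{\Lambda'\times\R}(v_n)\geq \int_0^1 E_{\Lambda'}(\bun)\,ds$. The goal is then to establish the matching inequality $E_{\Lambda'}(u)\leq\liminf_n E_{\Lambda'}(\bun)$ for a.e. $s$ and to squeeze with Fatou. The convergence $\wh E(v_n)\to E_{\Lambda'}(u)$ provides a uniform bound on $|Dv_n|(\Lambda'\times\R)$, hence (through the slicing formula for $|Dv_n|$) on $\int_0^1\int_{\Lambda'}|\nabla\bun|\,dx\,ds$; Chebyshev then yields, for a.e. $s\in(0,1)$, a $W^{1,1}(\Lambda')$ bound along a subsequence. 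Combined with the a.e. convergence from the first step, this produces $\bun\rightharpoonup u$ weakly in $W^{1,1}(\Lambda')$, so that Ioffe's classical lower semicontinuity theorem for Carathéodory integrands convex in the last variable gives $E_{\Lambda'}(u)\leq\liminf_n E_{\Lambda'}(\bun)$. Combining this with Fatou's lemma,
\[
E_{\Lambda'}(u)=\int_0^1 E_{\Lambda'}(u)\,ds\leq\int_0^1\liminf_n E_{\Lambda'}(\bun)\,ds\leq\liminf_n\int_0^1 E_{\Lambda'}(\bun)\,ds\leq\liminf_n\wh E(v_n)=E_{\Lambda'}(u),
\]
forcing $\liminf_n E_{\Lambda'}(\bun)=E_{\Lambda'}(u)$ for a.e. $s\in(0,1)$. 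A diagonal extraction (depending on $s$) then produces a subsequence along which the full limit $\lim_n E_{\Lambda'}(\bun)=E_{\Lambda'}(u)$ holds. The $L^1(\Lambda')$ convergence $\bun\to u$ follows from the a.e. convergence combined with the uniform $L^\infty$ bound $\|\bun\|_\infty\leq M_s$ coming from \eqref{eq-Ms}, via dominated convergence.

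The hard part is the application of the lower semicontinuity theorem to $E_{\Lambda'}$ \emph{without} any a priori growth assumption on $f$ in the last variable: the required $W^{1,1}$ compactness of the slices cannot be extracted from the energy itself but must be produced from the measure-theoretic control on $Dv_n$, via the slicing identity \eqref{hat-E-no-hat-E}. Once such compactness is available for a.e. $s$ along a subsequence, the rest is a clean Fatou-type squeeze and a diagonal extraction.
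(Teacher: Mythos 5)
Your overall architecture (slicing via \eqref{hat-E-no-hat-E}, Fatou squeeze, lower semicontinuity of $E_{\Lambda'}$ along weakly convergent slices) is the right one, and the first part of the argument — that $v_n^{-1}(x,s)\to u(x)$ for a.e.\ $x$ and every $s\in(0,1)$ once one pairs Fubini with the elementary fact about generalized inverses of non-increasing functions — is sound. However, the compactness step has a genuine gap. You claim that $\wh E_{\Lambda'\times\R}(v_n)\to E_{\Lambda'}(u)$ provides a uniform bound on $|Dv_n|(\Lambda'\times\R)$. This is false: since $f(x,t,0)=0$ under \ref{f-red}, one has $\wh f(x,t,(0,q^t))=0$ for $q^t\leq 0$, so the vertical component $|\partial_t v_n|$ is entirely invisible to $\wh E$, and $\wh E(v_n)$ can stay bounded while $|Dv_n|$ blows up. Moreover, even if a $W^{1,1}$ bound on the slices were available, it would not by itself give the weak $W^{1,1}$ convergence $\bun\rightharpoonup u$ that Ioffe's theorem needs — $L^1$ bounds on gradients admit concentration, and what is required is equi-integrability. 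Your closing remark that the argument must avoid ``any a priori growth assumption on $f$'' misreads the standing hypotheses: in Section~\ref{sec:approx} and in particular for this lemma, $f$ satisfies \ref{f-red}, which includes superlinearity in the last variable, and this is precisely what rescues the compactness.

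The clean fix bypasses $|Dv_n|$ entirely. The slicing inequality \eqref{hat-E-no-hat-E} gives $\int_0^1 E_{\Lambda'}(\bun)\,\mathrm{d}s\leq\wh E_{\Lambda'\times\R}(v_n)$ directly, so the uniform bound and the Fatou step should be run on the slice \emph{energies}, not on the slice $W^{1,1}$-norms. Fatou then yields $\liminf_n E_{\Lambda'}(\bun)<\infty$ for a.e.\ $s$; fixing such an $s$ and extracting a subsequence $(n_k)$ realizing the $\liminf$, the superlinearity of $f$ (and $f\geq 0$) gives $\sup_k\int_{\Lambda'} w(\nabla\bun_{n_k})\,\mathrm{d}x<\infty$ for the superlinear minorant $w$ in \eqref{eq-superlinearity-sv}, hence equi-integrability of $(\nabla\bun_{n_k})_k$ by de la Vall\'ee Poussin, hence via Dunford--Pettis and the a.e.\ convergence (plus the uniform $L^\infty$ bound $\|\bun\|_{L^\infty}\leq M_s$) that $\bun_{n_k}\rightharpoonup u$ in $W^{1,1}(\Lambda')$. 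Then the sequential weak lower semicontinuity of $E_{\Lambda'}$ gives $E_{\Lambda'}(u)\leq\liminf_n E_{\Lambda'}(\bun)$ for a.e.\ $s$, and your Fatou chain closes as you wrote it, forcing $\liminf_n E_{\Lambda'}(\bun)=E_{\Lambda'}(u)$ a.e.\ in $s$ and hence the stated subsequential convergence. (A small additional remark: the identity $\wh E_{\Lambda'\times\R}(1_u)=E_{\Lambda'}(u)$ is not a consequence of the slicing formula, which is circular for $w=1_u$, but of \eqref{eq:f-f-hat-general}; you invoke the wrong reference there, though the claim is correct.)
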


For the rest of  this subsection, we require that  \(\Lambda'\Subset \Lambda\) and we fix \(\ve_0\) such that \(\Lambda'+B{(0, \ve_0)}\subset \Lambda\). We consider a map \(u\in \cE(\Lambda)\cap L^\infty(\Lambda)\).
Recall that $v_{\ve,\delta}$ is defined for every \(\ve\in (0,\ve_0)\) by~\eqref{def-v-eps-delt} and denote its generalized inverses on \(\Lambda'\) as
\begin{equation}\label{def-u-ve-del}
    u_{\ve,\delta}^s(\cdot)\coloneqq v_{\ve,\delta}^{-1}(\cdot,s)\,.
\end{equation}

\begin{lemma}
\label{lm-ved-Ms}
For every \(0<\ve<\ve_{0}\) and every \(0<\delta<1\), the  map \(v_{\ve,\delta}\) from~\eqref{def-v-eps-delt} satisfies \eqref{eq-Ms}. Moreover, \begin{equation}
\label{eq-binfty-uved}
 \|u_{\ve,\delta}^s\|_{L^{\infty}(\Lambda')}\leq M_s\,,
\end{equation}
where \(M_s\) does not depend on \(\ve\in (0,\ve_0), \delta\in (0,1)\).
\end{lemma}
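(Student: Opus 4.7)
\bigskip
\noindent\textbf{Proof plan for Lemma~\ref{lm-ved-Ms}.}

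The key observation is that the convolution \(v *_x \vr_\ve\) inherits the bulk behaviour of \(1_u\) outside the strip \(\Lambda \times [-M, M]\), where \(M = \|u\|_{L^\infty(\Lambda)}\). I would first record the easy facts: for a.e.\ \(x \in \Lambda\), the precise representative satisfies \(|u(x)| \leq M\), hence \(1_u(x,t) = 0\) for every \(t > M\) and \(1_u(x,t) = 1\) for every \(t < -M\). Since \(\ve < \ve_0\) and \(\Lambda' + B(0,\ve_0) \subset \Lambda\), for every \(x \in \Lambda'\) the kernel \(\vr_\ve(x - \cdot)\) is supported in \(\Lambda\), so the convolution yields \(v *_x \vr_\ve(x,t) = 0\) for \(t > M\) and \(v *_x \vr_\ve(x,t) = 1\) for \(t < -M\). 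Consequently, by the very definition \eqref{def-v-eps-delt} of \(v_{\ve,\delta}\),
\begin{equation*}
v_{\ve,\delta}(x,t) = \delta \alpha(t) \text{ for every } x \in \Lambda',\ t > M,\qquad  v_{\ve,\delta}(x,t) = 1 + \delta \alpha(t) \text{ for every } x \in \Lambda',\ t < -M.
\end{equation*}

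Next, given \(s \in (0,1)\), I would use the two properties of \(\alpha\) stated in \eqref{def:alpha}, namely its positivity and the fact that \(\alpha(t) \to 0\) as \(t \to +\infty\), to pick some \(M_s \geq M\) such that \(\alpha(t) < s\) for every \(t \geq M_s\). Since \(\delta \in (0,1)\), this choice immediately yields \(\esssup_{x \in \Lambda'} v_{\ve,\delta}(x,t) = \delta \alpha(t) \leq \alpha(t) < s\) for every \(t \geq M_s\). On the other hand, for \(t \geq M_s \geq M\), positivity of \(\alpha\) gives \(\essinf_{x \in \Lambda'} v_{\ve,\delta}(x,-t) = 1 + \delta \alpha(-t) > 1 > s\). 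This establishes \eqref{eq-Ms} with \(M_s\) that depends only on \(s\), \(M\), and \(\alpha\), and not on \(\ve \in (0, \ve_0)\) or \(\delta \in (0,1)\).

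Finally, for the \(L^\infty\) bound on \(u_{\ve,\delta}^s\), I would argue directly from the definition \eqref{def-gen-inv} of the generalized inverse. For every \(x \in \Lambda'\), the previous step shows \(v_{\ve,\delta}(x, M_s) < s\), so \(M_s\) belongs to the set \(\{t : v_{\ve,\delta}(x,t) \leq s\}\), giving \(u_{\ve,\delta}^s(x) \leq M_s\). Since \(t \mapsto v_{\ve,\delta}(x,t)\) is non-increasing (as \(\alpha\) is decreasing on the set where \(\alpha'< 0\), but more simply: \(v(x,\cdot)\) is non-increasing and \(\alpha\) decreases by \eqref{alpha'prop}), and since \(v_{\ve,\delta}(x,-M_s) > s\), monotonicity gives \(v_{\ve,\delta}(x,t) \geq v_{\ve,\delta}(x,-M_s) > s\) for every \(t \leq -M_s\); hence no such \(t\) lies in \(\{v_{\ve,\delta}(x,\cdot) \leq s\}\), so \(u_{\ve,\delta}^s(x) \geq -M_s\). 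Combining both bounds yields \(\|u_{\ve,\delta}^s\|_{L^\infty(\Lambda')} \leq M_s\).

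The argument is essentially bookkeeping, with no substantial obstacle. The only minor point worth checking is the uniformity in \((\ve,\delta)\), which is ensured by the strict inequality \(\delta < 1\) combined with \(\alpha(t) \to 0\); this is precisely why the bound on \(M_s\) can be made independent of the parameters.
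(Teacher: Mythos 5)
Your proof is correct and follows essentially the same route as the paper's: show that $v_{\ve,\delta}$ equals $\delta\alpha(t)$ above $\Lambda'\times\{M\}$ and $1+\delta\alpha(t)$ below $\Lambda'\times\{-M\}$ by the support condition on $\vr_\ve$, then choose $M_s\geq M$ so that $\alpha(t)<s$ for $t\geq M_s$ using $\lim_{t\to\infty}\alpha(t)=0$. The only difference is that you spell out the deduction of $\|u_{\ve,\delta}^s\|_{L^\infty(\Lambda')}\leq M_s$ from the definition of the generalized inverse, which the paper leaves as an immediate consequence.
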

\begin{proof}
Observe that \(\alpha\) is bounded as any absolutely continuous function,  and remember that  \(\lim_{t\to+\infty}\alpha(t)=0\). 
  Fix \(s\in (0,1)\). There exists \(M_s\geq M=\|u\|_{L^{\infty}(\Lambda)}\) such that for every \(t\geq M_s\), one has \(\alpha(t)<s\). Since \(1_u(x,t)=1\) for every \(t\leq -M\) and a.e. \(x\in \Lambda\), it follows that 
 $ v_{\ve, \delta}(x,t)=1 +\delta\alpha(t)>1$ for all $(x,t)\in \Lambda'\times (-\infty,-M]$. 
Using next that \(1_u(x,t)=0\) for every \(t\geq M\), we get $v_{\ve, \delta}(x,t)=\delta\alpha(t)\leq \alpha(t)<s $ for all $(x,t)\in \Lambda'\times [M_s,\infty)$. This proves that \(v_{\ve,\delta}\) satisfies \eqref{eq-Ms} with this value of \(M_s\), from which~\eqref{eq-binfty-uved} follows. 
\end{proof}

To prove  that $u_{\ve,\delta}^s$ is Lipschitz, we need the following fact which readily follows from \cite[Lemma~2.8]{Bousquet-Pisa}.

\begin{lemma}\rm \label{rk-lips-us} Suppose that $U$ is a bounded open set, $w:U\times\R\to\R$ is a bounded Borel map, which is non-increasing and left-continuous with respect to the second variable and satisfies~\eqref{eq-Ms} for $t\geq M_s$. Assume further that there exists \(\wt C>0\) such that for all $x,y\in U$ 
and all $t\in \R$, it holds
\begin{equation}\label{eq2012} w(x,t+\wt C|x-y|)\leq w(y,t)\,.
\end{equation}
Then \(w^{-1}(\cdot,s)\) is Lipschitz continuous on \(U\) and its Lipschitz rank is not larger than $\wt C$.
\end{lemma}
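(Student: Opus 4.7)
The plan is to verify the Lipschitz estimate by direct manipulation of the definition~\eqref{def-gen-inv} of $w^{-1}(\cdot,s)$, using the cone condition~\eqref{eq2012} to transport any inequality $w(y,t) \leq s$ from $y$ to a neighbouring point $x$ at a controlled cost in the second variable. Condition~\eqref{eq-Ms} enters only to guarantee that the infimum in~\eqref{def-gen-inv} is a finite real number at every $x\in U$, not $\pm \infty$.

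For the finiteness step, I would first apply~\eqref{eq-Ms} at $t=M_s$: the inequality $\esssup_{x\in U} w(x,M_s) < s$ yields a point $x_0 \in U$ with $w(x_0, M_s) < s$. The cone condition~\eqref{eq2012}, read with the roles of $x$ and $y$ reversed, then gives
\[
w(y, M_s + \wt C |x_0 - y|) \leq w(x_0, M_s) < s \qquad \text{for every } y \in U\,,
\]
and since $U$ is bounded, $w^{-1}(y,s) \leq M_s + \wt C \diam(U) < \infty$. A symmetric application at $t=-M_s$, combined with the monotonicity of $w(y,\cdot)$, shows that any $t'$ in the set $\{t : w(y,t) \leq s\}$ must satisfy $t' > -M_s - \wt C\diam(U)$, so $w^{-1}(y,s) > -\infty$ as well.

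For the Lipschitz estimate, I would fix $x,y \in U$ and $\eta > 0$ and pick $t \in [w^{-1}(y,s),\, w^{-1}(y,s)+\eta]$ with $w(y,t) \leq s$ (such a $t$ exists by the definition of infimum together with finiteness). The cone condition then yields
\[
w(x, t + \wt C |x-y|) \leq w(y,t) \leq s\,,
\]
so that $t + \wt C |x-y|$ lies in the set defining $w^{-1}(x,s)$, hence
\[
w^{-1}(x,s) \leq t + \wt C|x-y| \leq w^{-1}(y,s) + \eta + \wt C|x-y|\,.
\]
Sending $\eta \to 0$ and swapping $x$ and $y$ yields $|w^{-1}(x,s) - w^{-1}(y,s)| \leq \wt C|x-y|$. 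The only subtle point is the pointwise-versus-almost-everywhere passage in the finiteness step, which is where the cone condition plays its second role by upgrading an $\esssup/\essinf$ bound at a single well-chosen seed point into a pointwise bound on all of $U$; once this is secured, the Lipschitz inequality follows from a single application of~\eqref{eq2012}, and in particular the left-continuity hypothesis on $w(x,\cdot)$ is not needed for this lemma.
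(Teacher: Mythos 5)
Your proof is correct. The paper does not write out an argument for this lemma but delegates it to \cite[Lemma~2.8]{Bousquet-Pisa}; what you give is the natural direct proof that the citation stands in for, and the structure matches what one would expect: first use \eqref{eq-Ms} together with the cone condition \eqref{eq2012} to upgrade the essential bounds at a single good seed point into a pointwise two-sided bound $-M_s-\wt C\diam U\leq w^{-1}(\cdot,s)\leq M_s+\wt C\diam U$, then run the cone condition once more on an approximate minimizer of the infimum in \eqref{def-gen-inv} to get $w^{-1}(x,s)\leq w^{-1}(y,s)+\wt C|x-y|$, and symmetrize. Your observation that left-continuity of $w(x,\cdot)$ is not actually used here is accurate: it enters elsewhere (for instance to ensure the infimum in \eqref{def-gen-inv} is attained, as needed for the level-set identity $\chi_{[w>s]}=1_{w^{-1}(\cdot,s)}$), but the Lipschitz bound alone does not require it. Your remark about the pointwise-versus-a.e.\ passage is also the right thing to flag, and you handle it correctly by propagating from a single seed point.
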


Let us prove that $v_{\ve, \delta}$ satisfies \eqref{eq2012}.

\begin{lemma}\label{lm-estimate-ved} Let $\alpha$ from \eqref{def:alpha} satisfy \eqref{alpha'prop}. Let $v_{\ve, \delta}$ be given by~\eqref{def-v-eps-delt} with  $u \in W^{1,1}(\Lambda)$. For every \(\delta>0\), there exists \(\overline{C}_\delta>0\) such that for every \(\ve>0\) and every  $x, y \in \Lambda'$, $t \in \R$, it holds that
\begin{equation}\label{eq-lm-estimate-ved}
    v_{\ve, \delta}\left(x, t + \tfrac{\overline{C}_\delta}{\ve}|x-y|\right) - v_{\ve, \delta}(y, t) \leq 0\,.
\end{equation} 
\end{lemma}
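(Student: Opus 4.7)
Set $h \coloneqq \overline{C}_\delta |x-y|/\ve$ and rewrite the desired inequality as
\[
[(v *_x \vr_\ve)(x, t+h) - (v *_x \vr_\ve)(y, t)] + \delta[\alpha(t+h) - \alpha(t)] \leq 0 \,.
\]
The second bracket is always non-positive because $\alpha$ is globally non-increasing (this follows from \eqref{alpha'prop} by integration: for any $t_1 < t_2$, picking $\mathcal{k}_1$ large enough so $[t_1,t_2] \subset (-\mathcal{k}_1,\mathcal{k}_1)$ gives $\alpha(t_2)-\alpha(t_1) \leq -c_{\mathcal{k}_1}(t_2-t_1) < 0$). For the first bracket, combining the monotonicity of $s \mapsto (v *_x \vr_\ve)(x,s)$ with the elementary spatial Lipschitz bound
\[
|(v *_x \vr_\ve)(x, s) - (v *_x \vr_\ve)(y, s)| \leq \|\vr_\ve\|_{W^{1,1}} \|v(\cdot,s)\|_{L^\infty} |x-y| \leq \frac{\|\nabla \vr\|_{L^1}}{\ve}|x-y|\,,
\]
(valid since $|v|\leq 1$ and $\|\nabla \vr_\ve\|_{L^1}= \ve^{-1}\|\nabla \vr\|_{L^1}$), I obtain
\[
(v *_x \vr_\ve)(x, t+h) - (v *_x \vr_\ve)(y, t) \leq (v *_x \vr_\ve)(x, t) - (v *_x \vr_\ve)(y, t) \leq \frac{\|\nabla \vr\|_{L^1}}{\ve}|x-y|\,.
\]

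\textbf{Case analysis.} I split according to where $t$ and $t+h$ lie relative to $M\coloneqq\|u\|_{L^\infty(\Lambda)}$ (this implicit boundedness is available in the context where the lemma is applied). If $t+h \geq M$, then $v(z,t+h)=0$ for a.e.\ $z$, so $(v *_x \vr_\ve)(x,t+h)=0$ and the first bracket is non-positive; combined with the $\alpha$-monotonicity, the inequality holds. Symmetrically, if $t \leq -M$, then $(v *_x \vr_\ve)(y,t)=1 \geq (v *_x \vr_\ve)(x,t+h)$, again making the first bracket non-positive. This leaves the main case $-M \leq t$ and $t+h \leq M$, in which $[t,t+h] \subset [-M,M]$.

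\textbf{Main case.} Here I apply \eqref{alpha'prop} with $\mathcal{k}_1 = M$ to obtain
\[
\alpha(t) - \alpha(t+h) = \int_t^{t+h}(-\alpha'(s))\d s \geq c_M h = c_M\,\overline{C}_\delta\,\frac{|x-y|}{\ve}\,.
\]
Hence the desired inequality \eqref{eq-lm-estimate-ved} is reduced to
\[
\frac{\|\nabla \vr\|_{L^1}}{\ve}|x-y| \leq \delta c_M\,\overline{C}_\delta\,\frac{|x-y|}{\ve}\,,
\]
which holds as soon as
\[
\overline{C}_\delta \coloneqq \frac{\|\nabla \vr\|_{L^1}}{\delta c_M}\,.
\]

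\textbf{Main obstacle.} The reasoning is essentially a case split, and the only subtlety is that the pointwise decay rate of $\alpha$ supplied by \eqref{alpha'prop} is not uniform in $t$ (the constant $c_{\mathcal{k}_1}$ may degenerate as $\mathcal{k}_1 \to \infty$). The resolution is precisely the first step of the case analysis: one uses the boundedness $\|u\|_{L^\infty(\Lambda)} = M$ to confine the nontrivial regime to $|t|,|t+h|\leq M$, where a uniform constant $c_M$ is available, while outside this range the monotonicity of $v$ (and of the convolution in $t$) trivializes the first bracket.
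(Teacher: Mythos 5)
Your proof is correct and follows essentially the same route as the paper: the same case split on whether $[t,\,t+\overline{C}_\delta|x-y|/\ve]$ escapes $[-M,M]$, the same spatial Lipschitz bound $\|\nabla\vr_\ve\|_{L^1}\|v\|_{L^\infty}\leq \ve^{-1}\|\nabla\vr\|_{L^1}$ for $x\mapsto (v*_x\vr_\ve)(x,s)$, and the same use of the locally uniform decay rate $c_M$ of $\alpha$ to absorb that term, leading to the identical constant $\overline{C}_\delta=\|\nabla\vr\|_{L^1}/(\delta c_M)$. Two cosmetic corrections: the intermediate factor should be $\|\nabla\vr_\ve\|_{L^1}$ rather than $\|\vr_\ve\|_{W^{1,1}}$ (the latter is $1+\ve^{-1}\|\nabla\vr\|_{L^1}$ and does not satisfy your stated inequality), and the first branch of the case analysis should use the strict inequality $t+h>M$ so that $\{u\geq t+h\}$ is genuinely null; with $t+h=M$ and $\{u=M\}$ of positive measure the claim $(v*_x\vr_\ve)(x,t+h)=0$ can fail, but the main-case argument already covers $t+h=M$ since $(t,t+h)\subset(-M,M)$ still holds.
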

\begin{proof}
Remember that \(M=\|u\|_{L^{\infty}(\Lambda)}\). Let \(c_M>0\) be given by \eqref{alpha'prop} with \(\mathcal{k}_1=M\). Set
\[
\overline{C}_\delta\coloneqq\|\nabla \vr\|_{L^{1}(\R^N)}(\delta c_M)^{-1}.
\]
By definition of \(v_{\ve, \delta}\) we have
\begin{equation}\label{eq260}
v_{\ve, \delta}\left(x, t + \tfrac{\overline{C}_\delta}{\ve}|x-y|\right) - v_{\ve, \delta}(y, t)= \left(v_{\ve,0}\left(x, t + \tfrac{\overline{C}_\delta}{\ve}|x-y|\right) - v_{\ve,0}(y, t)\right) + \delta \left( \alpha\left(t + \tfrac{\overline{C}_\delta}{\ve}|x-y|\right) - \alpha(t) \right)\,.
\end{equation}
 If \(t+\frac{\overline{C}_\delta}{\ve}|x-y| > M\), then
\[
v_{\ve,0}\left(x,t+\tfrac{\overline{C}_\delta}{\ve} |x-y|\right)=\int_{\{u\geq t+\frac{\overline{C}_\delta}{\ve}|x-y|\}} \vr_{\ve}(x-z)\d z =0\,.
\]
Since \(v_{\ve,0}(y, t) \geq 0\) and \(\alpha\) is decreasing, this implies \eqref{eq-lm-estimate-ved} in this case. Similarly, if \(t<-M\), then 
\[
v_{\ve,0}(y,t)= \int_{\rn}\vr_{\ve}(y-z)\d z =1 \geq v_{\ve,0}\left(x, t + \tfrac{\overline{C}_\delta}{\ve}|x-y|\right)\,,
\]
and \eqref{eq-lm-estimate-ved} holds in this case as well. In the rest of the proof, we thus assume that
\begin{equation}\label{eq273}
-M\leq t \leq t+\tfrac{\overline{C}_\delta}{\ve}|x-y|\leq M\,.
\end{equation}
Since \(v_{\ve,0}(x,\cdot)\) is non-increasing, we infer that
\begin{flalign}
  \nonumber  v_{\ve,0}\left(x, t + \tfrac{\overline{C}_\delta}{\ve}|x-y|\right) - v_{\ve,0}(y, t)
&\leq v_{\ve,0}(x, t) - v_{\ve,0}(y, t)=\int_{\{u\geq t\}}(\vr_{\ve}(x-z)-\vr_{\ve}(y-z))\d z\\
\nonumber &\leq \left\langle \int_{\{u\geq t\}}\left(\int_{0}^{1} \nabla \vr_{\ve}(sx+(1-s)y-z) \d s \right)\d z , x-y\right\rangle\\
\label{eq278}
&\leq |x-y|\int_{0}^{1}\left(\int_{\{u\geq t\}}|\nabla \vr_{\ve}(sx+(1-s)y-z)|\d z \right)\d s\,,\end{flalign}
where the last line holds due to the Fubini theorem and the Cauchy--Schwarz inequality. We then estimate the innermost integral as follows
\begin{align*}
\int_{\{u\geq t\}}|\nabla \vr_{\ve}(sx+(1-s)y-z)|\d z &\leq \int_{\rn}|\nabla \vr_{\ve}(sx+(1-s)y-z)|\d z
 = \int_{\rn}|\nabla \vr_{\ve}(z)|\d z  =\tfrac{1}{\ve}\|\nabla \vr\|_{L^{1}(\rn)}\,.\end{align*}
Inserting the above inequality into \eqref{eq278}, we get
\begin{equation}\label{eq287}
v_{\ve,0}\left(x, t + \tfrac{\overline{C}_\delta}{\ve}|x-y|\right) - v_{\ve,0}(y, t)
\leq \tfrac{1}{\ve}\|\nabla \vr\|_{L^{1}(\rn)}|x-y|\,.
\end{equation}
As for the second term in \eqref{eq260}, taking into account \eqref{eq273}, we simply write
\begin{equation}\label{eq292}
\delta \left( \alpha\left(t + \tfrac{\overline{C}_\delta}{\ve}|x-y|\right) - \alpha(t) \right)\leq \tfrac{\overline{C}_\delta}{\ve}\delta |x-y|(-c_M)=-\tfrac{1}{\ve}\|\nabla \vr\|_{L^{1}(\R^N)}|x-y|\,.
\end{equation}
By adding \eqref{eq287} and \eqref{eq292}, we can conclude that \eqref{eq-lm-estimate-ved} holds true.
\end{proof}
The above lemma, together with Lemma~\ref{rk-lips-us}, implies that \(u_{\ve,\delta}^s\) is Lipschitz continuous of rank \(\tfrac{\overline{C}_\delta}{\ve}\).

\begin{proposition}\label{prop-inner-approximation} Suppose  $f$ satisfies Assumptions~\ref{f-red} and \eqref{eqHZconv} on \(\Lambda\). Let $u \in \cE(\Lambda)\cap L^\infty(\Lambda),$ \(v_{\ve, \delta} \) be given by~\eqref{def-v-eps-delt} with $\alpha$ from \eqref{def:alpha} satisfying~\eqref{alpha'prop} and ~\eqref{eq1168}. Then, for every open set \(\Lambda'\Subset \Lambda\),  for almost every $s\in (0,1)$, there exist sequences $(\ve_n,\delta_n)_{n\in\N}$ converging to $(0,0)$ and \((u^s_{\ve_n,\delta_n})_{n\in\N}\subset W^{1,\infty}({\Lambda'})\) as in~\eqref{def-u-ve-del},
which is bounded in \(L^{\infty}({\Lambda'})\) and such that  $u^s_{\ve_n,\delta_n} \to u$ in $L^{1}({\Lambda'})$ as $n \to \infty$ and
\begin{equation}\label{eq-main-inner}
\lim_{n\to \infty}E_{{\Lambda'}}(u^s_{\ve_n,\delta_n})=E_{{\Lambda'}}(u) \,.
\end{equation} 
\end{proposition}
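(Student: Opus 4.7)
The plan is to combine Corollary~\ref{coro-lim-hat-E} with Lemma~\ref{lem:cvgce-usi} through a diagonal extraction. For each fixed \(\delta>0\), Corollary~\ref{coro-lim-hat-E} gives \(\lim_{\ve\to 0}\wh E_{\Lambda'\times \R}(v_{\ve,\delta})=\wh E_{\Lambda'\times \R}(v_{0,\delta})\), while Lemma~\ref{lm-whf-dv0d-dv} identifies the right-hand side with \(\wh E_{\Lambda'\times \R}(1_u)=E_{\Lambda'}(u)\) (the contribution of \((0,\delta\alpha')\cH^{N+1}\) vanishes because \(f(x,t,0)=0\) by Assumption~\ref{f-red}). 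In parallel, since \(v_{\ve,\delta}=1_u*_x\vr_\ve+\delta\alpha\), one has \(v_{\ve,\delta}-1_u=(1_u*_x\vr_\ve-1_u)+\delta\alpha\), so \(v_{\ve,\delta}\to 1_u\) in \(L^{1}_{\rm loc}(\Lambda'\times\R)\) as \((\ve,\delta)\to(0,0)\).

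I would then extract a diagonal sequence \((\ve_n,\delta_n)\to(0,0)\) by picking, say, \(\delta_n=1/n\) and choosing \(\ve_n\) small enough that \(|\wh E_{\Lambda'\times \R}(v_{\ve_n,\delta_n})-E_{\Lambda'}(u)|<1/n\) and \(\|v_{\ve_n,\delta_n}-1_u\|_{L^1(\Lambda'\times[-M_0,M_0])}<1/n\), where \(M_0>\|u\|_{L^\infty(\Lambda)}\). After a further subsequence (which I keep denoting the same way), \(v_n:=v_{\ve_n,\delta_n}\to 1_u\) almost everywhere on \(\Lambda'\times[-M_0,M_0]\); outside this slab the pointwise convergence is immediate because \(|v_{\ve,\delta}(x,t)-1_u(x,t)|\leq\delta|\alpha(t)|\), and \(\delta_n\to 0\).

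The remaining step is to invoke Lemma~\ref{lem:cvgce-usi} on \((v_n)_n\). Its hypotheses are easy to verify: each \(v_n\) is a bounded Borel map on \(\Lambda'\times\R\) with \(Dv_n\) a finite \(\R^{N+1}\)-valued measure; the strict decrease property~\eqref{alpha'prop} of \(\alpha\) together with the non-increasing, left-continuous character of \(t\mapsto 1_u(x,t)\) makes \(t\mapsto v_{0,\delta_n}(x,t)\) non-increasing and left-continuous, and these features are preserved by partial convolution against the non-negative kernel \(\vr_{\ve_n}\); finally,~\eqref{eq-Ms} holds with constants \(M_s\) independent of \(n\) by Lemma~\ref{lm-ved-Ms}. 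The conclusion of Lemma~\ref{lem:cvgce-usi} then produces, for a.e. \(s\in(0,1)\) and up to a further subsequence depending on \(s\), maps \(u^s_{\ve_n,\delta_n}=v_n^{-1}(\cdot,s)\) converging to \(u\) in \(L^{1}(\Lambda')\) and satisfying \(E_{\Lambda'}(u^s_{\ve_n,\delta_n})\to E_{\Lambda'}(u)\), which is precisely~\eqref{eq-main-inner}.

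The Lipschitz regularity \(u^s_{\ve_n,\delta_n}\in W^{1,\infty}(\Lambda')\) then follows by combining Lemma~\ref{lm-estimate-ved}---which says that \(v_n\) satisfies the cone bound~\eqref{eq2012} with constant \(\overline{C}_{\delta_n}/\ve_n\)---with Lemma~\ref{rk-lips-us}; note that this Lipschitz rank may blow up with \(n\) but the proposition does not forbid it. The uniform bound \(\|u^s_{\ve_n,\delta_n}\|_{L^\infty(\Lambda')}\leq M_s\) is exactly~\eqref{eq-binfty-uved}. The only genuine delicacy is the diagonal extraction itself: the energy convergence \(\wh E_{\Lambda'\times\R}(v_{\ve,\delta})\to E_{\Lambda'}(u)\) is produced by Corollary~\ref{coro-lim-hat-E} only for each fixed \(\delta>0\) as \(\ve\to 0\), so I must couple the two parameters so that the convexified energies converge and the a.e. convergence required to trigger Lemma~\ref{lem:cvgce-usi} are secured simultaneously; once this is done the rest is bookkeeping.
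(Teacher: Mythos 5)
Your proposal is correct and takes essentially the same route as the paper's proof: both pass to a diagonal sequence $(\ve_n,\delta_n)\to(0,0)$ along which $\wh E_{\Lambda'\times\R}(v_{\ve_n,\delta_n})\to\wh E_{\Lambda'\times\R}(1_u)$ (from Corollary~\ref{coro-lim-hat-E}) and $v_{\ve_n,\delta_n}\to 1_u$ a.e., then invoke Lemma~\ref{lem:cvgce-usi} for the $L^1$ and energy convergence of $u^s_{\ve_n,\delta_n}$, and finish with Lemma~\ref{lm-ved-Ms} for the uniform $L^\infty$ bound and Lemmas~\ref{lm-estimate-ved} and~\ref{rk-lips-us} for the Lipschitz regularity. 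The only cosmetic difference is that Corollary~\ref{coro-lim-hat-E} already has $\int_{\Lambda'\times\R}\dhf(\cdot,t,Dv)=E_{\Lambda'}(u)$ as its right-hand side, so the extra detour through Lemma~\ref{lm-whf-dv0d-dv} to re-identify it is harmless redundancy rather than a new step.
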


\begin{proof} 
By Corollary~\ref{coro-lim-hat-E}, we have
\(\lim_{\ve\to 0}\wh{E}_{{\Lambda'}\times\R}(v_{\ve, \delta})=\wh{E}_{{\Lambda'}\times\R}(1_u)\), and thus
\[
\lim_{\delta\to 0}\lim_{\ve\to 0}\wh{E}_{{\Lambda'}\times\R}(v_{\ve, \delta})=\wh{E}_{{\Lambda'}\times\R}(1_u)\,.
\]
Given any \((\delta_n)_{n\in\N}\) decreasing to \(0\), one can find a sequence  \((\ve_n)_{n\in\N}\) also decreasing to \(0\) and such that
\[
\lim_{n\to \infty}\wh{E}_{{\Lambda'}\times\R}(v_{\ve_n, \delta_n})= \wh{E}_{{\Lambda'}\times\R}(1_u)\,.
\]
We also have \(v_{\ve_n, \delta_n}\to 1_u\) in \(L^{1}_{loc}({\Lambda'}\times \R)\) and thus, up to a subsequence, also a.e. in \({\Lambda'}\times \R\). By Lemma~\ref{lem:cvgce-usi} for a.e. $s\in(0,1)$ there exists a subsequence of $((\ve_n,\delta_n))_{n\in\N}$, such that  $u^s_{\ve_n,\delta_n} \to u$ in $L^{1}({\Lambda'})$ as $n \to \infty$ and~\eqref{eq-main-inner} holds true.
 It follows from \eqref{eq-binfty-uved} that the sequence \((u^s_{\ve_n,\delta_n})_{n\in \N}\) is bounded in \(L^{\infty}(\Lambda')\). 
 By Lemma~\ref{lm-estimate-ved} and Lemma~\ref{rk-lips-us}, each  \(u^s_{\ve_n,\delta_n}\) is Lipschitz continuous on \({\Lambda'}\). 
\end{proof}

The above proposition provides an \emph{inner} approximation, in the sense that the sequence \((u^s_{\ve_n,\delta_n})_{n\in \N}\) which converges to \(u\) on \(\Lambda'\) does not necessarily
agree with \(u\) on \(\partial \Omega\). In the next section, we modify this approximating sequence to comply with this additional boundary constraint, but only for special domains \(\Omega\) and just on one part of the boundary. We completely solve this issue in Section~\ref{ssec:proof-reduced}.

\subsection{The boundary approximation on a special open set}\label{ssec:boundary-approx}
In this paragraph, we assume that \(N\geq 2\) and we explain in Remark~\ref{rk-approx-bd-N=1} below how to adapt the arguments to the  (simpler) case \(N=1\).  Denote by \(B'(x',r)\) the ball in $\R^{N-1}$, of centre \(x'\in \R^{N-1}\) and radius \(r>0\).  
Let \(r,b>0\) and \(\psi:\R^{N-1}\to (-b/2, b/2)\) a Lipschitz function. 
We define the set $\Sigma\subset\rn$ by the following formula
\[ 
\Sigma\coloneqq\{(x',x_N)\in B'(0,r) \times (-b,b) :\ \psi(x')<x_N\}\,.
\]
We also define the lower boundary of \(\Sigma\) as: 
\[
\Gamma\coloneqq\{(x',\psi(x')) :\ x'\in B'(0,r) \}\,.
\]

\begin{lemma}\label{lm-approx-epigraph} Given   a bounded open set  \(\Lambda\Supset \Sigma\),  let \(f:\Lambda\times \R\times \rn \to \rp\) satisfy Assumptions~\ref{f-red} and \eqref{eqHZconv} on $\Lambda$,  \(\vp:\Lambda\to \R\) a Lipschitz continuous function  and $u\in \cE(\Lambda)\cap L^\infty(\Lambda)$. We assume that \(u=\varphi\) on \(\{(x',x_N)\in \Lambda:x_N\leq \psi(x')\}\). 
Then for almost every $s\in(\frac{1}{4},1)$ there exists  \((\bunp) _{n\in\N}\subset W^{1,\infty}(\Sigma)\) which is bounded in \(L^{\infty}(\Sigma)\),  converges to \(u\) in \(L^{1}(\Sigma)\), coincides with \(\vp\) on \(\Gamma\), and such that
\[
\lim_{n\to \infty}\int_{\Sigma}f(x,\bunp,\nabla \bunp)\dx=  \int_{\Sigma}f(x,u,\nabla u)\dx\,.
\]
\end{lemma}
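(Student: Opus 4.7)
The plan is to adapt the inner approximation of Proposition~\ref{prop-inner-approximation} by introducing a \emph{decentered} mollifier, and then to enforce the boundary condition on $\Gamma$ by a cut-off argument. Denoting by $L_\psi$ the Lipschitz constant of $\psi$ and writing $e_N$ for the last standard basis vector of $\rn$, I would fix $\mu > L_\psi + 2$ and replace the kernel $\vr_\ve$ in~\eqref{def-v-eps-delt} by the translated kernel $\vr_\ve^{\mathrm{dec}}(z) \coloneqq \vr_\ve(z - \mu \ve e_N)$, supported in $B(\mu \ve e_N, \ve)$. The key geometric fact is that whenever $x \in \Lambda$ satisfies $x_N \leq \psi(x') + \ve$, the evaluation set $B(x - \mu \ve e_N, \ve)$ is contained in $\{y : y_N < \psi(y')\}$, where $u \equiv \varphi$ by hypothesis. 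All arguments of Sections~\ref{sec:approx}--\ref{ssec:us-to-u} then transfer verbatim with $\vr_\ve^{\mathrm{dec}}$ replacing $\vr_\ve$, since the pointwise estimates of Lemmas~\ref{lm-estim_Fe}--\ref{lm-estimate-ved} and the Jensen-type inequality of Lemma~\ref{lm-estimge} depend only on $\|\vr\|_\infty$ and $\|\nabla \vr\|_1$, invariant under translation, and the Reshetnyak argument of Lemma~\ref{lm-Reschetnyak} is unaffected. This would yield, for a.e. $s \in (\tfrac14, 1)$, a Lipschitz sequence $(\bun)_{n \in \N} \subset W^{1, \infty}(\Sigma)$ converging to $u$ in $L^1(\Sigma)$, uniformly bounded in $L^{\infty}(\Sigma)$, and satisfying $E_\Sigma(\bun) \to E_\Sigma(u)$.

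Next, the decentering would force $\bun$ to be uniformly close to $\varphi$ in a shrinking neighborhood of $\Gamma$. Indeed, on the strip $V_{\ve_n} \coloneqq \{x \in \Sigma : x_N - \psi(x') \leq \ve_n\}$ the identity $(1_u *_x \vr_{\ve_n}^{\mathrm{dec}})(x, t) = (1_\varphi *_x \vr_{\ve_n}^{\mathrm{dec}})(x, t)$ holds for every $t$, and the Lipschitz regularity of $\varphi$ forces this smoothed indicator to equal $1$ for $t \leq \varphi(x) - C\ve_n$ and $0$ for $t \geq \varphi(x) + C\ve_n$, with $C = C(\mu, L_\varphi)$. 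Coupled with $\delta_n \to 0$, taking the generalized inverse at level $s \in (\tfrac14, 1)$ gives $\eta_n \coloneqq \|\bun - \varphi\|_{L^\infty(V_{\ve_n})} \to 0$. I would then choose $h_n \in (0, \ve_n)$ with $\eta_n / h_n \to 0$, together with a smooth cut-off $\chi_n : \Sigma \to [0, 1]$ equal to $0$ on $\{\mathrm{dist}(\cdot, \Gamma) \leq h_n/2\}$ and to $1$ on $\{\mathrm{dist}(\cdot, \Gamma) \geq h_n\}$, satisfying $|\nabla \chi_n| \leq 4/h_n$, and set
\[
\bunp \coloneqq \varphi + \chi_n (\bun - \varphi).
\]
By construction $\bunp \in W^{1,\infty}(\Sigma)$, $\bunp = \varphi$ on $\Gamma$, $\sup_n \|\bunp\|_\infty < \infty$, and $\bunp \to u$ in $L^1(\Sigma)$.

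The main obstacle will be establishing the energy convergence $E_\Sigma(\bunp) \to E_\Sigma(u)$. I would partition $\Sigma$ into the inner strip $I_n \coloneqq \{\mathrm{dist}(\cdot, \Gamma) < h_n/2\}$ where $\bunp = \varphi$, the transition layer $T_n \coloneqq \{h_n/2 \leq \mathrm{dist}(\cdot, \Gamma) < h_n\}$, and the outer region $O_n \coloneqq \{\mathrm{dist}(\cdot, \Gamma) \geq h_n\}$ where $\bunp = \bun$. The integral over $I_n$ is $O(|I_n|) \to 0$ by the Lipschitz regularity of $\varphi$ together with Lemma~\ref{lem-bdd-f}, and the integral over $O_n$ converges to $E_\Sigma(u)$ via the equi-integrability of $f(\cdot, \bun, \nabla \bun)$ extracted from $E_\Sigma(\bun) \to E_\Sigma(u)$. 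The transition layer is the crux: writing $\nabla \bunp = \chi_n \nabla \bun + (1 - \chi_n) \nabla \varphi + E_n$ with $E_n \coloneqq (\bun - \varphi) \nabla \chi_n$ and $\|E_n\|_{L^\infty(T_n)} \leq 4 \eta_n / h_n \to 0$, convexity of $f(x, t, \cdot)$ yields
\[
f(x, \bunp, \nabla \bunp) \leq \chi_n f(x, \bunp, \nabla \bun + E_n) + (1 - \chi_n) f(x, \bunp, \nabla \varphi + E_n).
\]
The second summand integrates to $O(|T_n|) \to 0$ by Lemma~\ref{lem-bdd-f}. The hard part will be controlling the first summand without invoking any $\Delta_2$ condition: I would split $T_n$ at a threshold $M$ for $|\nabla \bun|$; on $\{|\nabla \bun| \leq M\}$, uniform continuity of $f$ in $(t, \xi)$ combined with $\|\bunp - \bun\|_\infty \leq \eta_n \to 0$, $\|E_n\|_\infty \to 0$, and $|T_n| \to 0$ produces a vanishing contribution; on the complementary set, equi-integrability of $f(\cdot, \bun, \nabla \bun)$ yields a small contribution uniform in $n$ as $M \to \infty$, a diagonal extraction $M = M_n \to \infty$ then concluding.
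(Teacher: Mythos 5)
Your overall architecture—decentered convolution to make the approximant agree with $\varphi$ near the boundary, followed by a cut-off—is the same as the paper's. The decisive ingredient you are missing, however, is a uniform \emph{gradient} bound on $\bun$ near $\Gamma$, not merely the sup-norm estimate $\|\bun-\varphi\|_{L^\infty(V_{\ve_n})}\to 0$. The decentering forces $v_{\ve_n,\delta_n}(x,t)={1}_\varphi*_x\vr_{\ve_n}(x,t)+\delta_n\alpha(t)$ on the boundary strip, and the Lipschitz continuity of $\varphi$ then gives, for $x,\tilde x$ in the strip and every $t$, the cone condition $v_{\ve_n,\delta_n}(x,t+L_\varphi|x-\tilde x|)\leq v_{\ve_n,\delta_n}(\tilde x,t)$; by Lemma~\ref{rk-lips-us} this yields $\|\nabla\bun\|_{L^\infty(\Sigma_{\vk_n})}\leq L_\varphi$. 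With this in hand, the truncation step is elementary: on the strip where $\phi_n$ is nontrivial, $\bunp$, $\nabla\bun$, $\nabla\varphi$, and the corrector $(\bun-\varphi)\nabla\phi_n$ are all uniformly bounded, so $f$ is bounded there by Lemma~\ref{lem-bdd-f}, and the error in energy is $O(|\Sigma_{\vk_n}|)\to 0$—no threshold splitting, no equi-integrability.

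Without the gradient bound, your transition-layer argument does not close. On $T_n\cap\{|\nabla\bun|>M\}$ you want to bound $\int f(x,\bunp,\nabla\bun+E_n)$ via equi-integrability of $f(\cdot,\bun,\nabla\bun)$, but these families differ by perturbations of both the $t$- and $\xi$-argument, and $f$ is only continuous in $(t,\xi)$—not uniformly in $\xi$. For a superlinear integrand such as $f(\xi)=\exp(|\xi|^2)$, an additive shift $E_n\to 0$ can blow up the ratio $f(\xi+E_n)/f(\xi)$ when $|\xi|$ is large, and similarly a small change in the $t$-slot gives no control. Transferring equi-integrability to the perturbed family would require a $\Delta_2$-type comparison, which the paper explicitly avoids. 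The decentering buys you precisely the Lipschitz estimate that removes the large-$|\xi|$ regime near $\Gamma$; deriving only the $L^\infty$ closeness discards the cheapest and only $\Delta_2$-free route to the conclusion.
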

\begin{proof} 
We start with defining for every \(\vk \in (0, b/2)\), the subset
\[
\Sigma_\vk: = \{(x',x_N)\in B'(0,r)  \times (-b,b) : \ \psi(x')<x_N<\psi(x')+\vk\}\,.
\]
Let \(\alpha\in W^{1,1}(\R)\) be a bounded positive function as in~\eqref{def:alpha} which satisfies  ~\eqref{alpha'prop} and ~\eqref{eq1168}. 
We apply Proposition~\ref{prop-inner-approximation} with \(\Sigma\) playing the role of \(\Lambda'\) (here, we need that \(f\) and \(u\) be defined on the  larger set \(\Lambda\)). For a.e. \(s\in (0,1)\), we get sequences \(((\ve_n, \delta_n))_{n\in \N}\) converging to \((0,0)\) which leads to define
\[
\bun=u^s_{\ve_n,\delta_n}(\cdot)=v^{-1}_{\ve_n,\delta_n}(\cdot,s).
\]
Note that each $\bun$  is Lipschitz continuous on \(\Sigma\), $\bun \to u$ in $L^{1}(\Sigma)$ as $n \to \infty$, $\sup_{n \in \N} \|\bun\|_{L^{\infty}(\Sigma)} < \infty$, and  
\begin{equation}
\label{eq2330}
\lim_{n\to \infty}\int_{\Sigma}f(x,\bun, \nabla \bun)\dx =\int_{\Sigma}f(x,u, \nabla u)\dx \,.
\end{equation}

\noindent {\bf{Step 1. De-centered convolution. }}
Our aim is to show that by using the trick of de-centered convolution, one can ensure the smallness  condition: $ \|\bun-\vp\|_{L^{\infty}(\Sigma_{\vk_n})}\leq c\vk_n$ for every $n\in \N$ sufficiently large, where \(\vk_n\coloneqq\ve_n/8\).

Actually, when defining \(v_{\ve_n,\delta_n}\), we choose the regularization kernel with the additional requirement that
\[
\supp  \vr \Subset B'(0,\tfrac{1}{8L_{\psi}})\times (\tfrac{1}{4}, \tfrac{3}{4})\subset\rn\,,
\] 
where \(L_{\psi}\geq 1\) is any fixed number not lower than the Lipschitz rank of \(\psi\). There exists \(n_0\in \N\) such that for every \(n\geq n_0\), the set \(\Sigma_{\vk_n}+(B'(0,\frac{\ve_n}{8L_{\psi}})\times (-\frac{3\ve_n}{4}, -\frac{\ve_n}{4}))\) is contained in \(\Lambda\). Let \(n\geq n_0\) and  \(x=(x',x_N)\in \Sigma_{\vk_n}\).
 Then for every \(y=(y',y_N)\in  B'(0,\frac{\ve_n}{8L_{\psi}})\times (\frac{\ve_n}{4}, \frac{3\ve_n}{4})\), one has \(x-y\in \Lambda\) and 
\[
\psi(x'-y')\geq \psi(x')-L_{\psi}\tfrac{\ve_n}{8L_{\psi}}{\geq} \psi(x')- \tfrac{\ve_n}{8} \geq  
x_N-y_N \,,  
\]
where the last inequality relies on the fact that \(x_N\leq \psi(x')+\vk_n\) and \(y_N\geq \ve_n/4\). This implies that  $x-y$ belongs to the intersection of \(\Lambda\) with the hypograph of \(\psi\) where \(u\) coincides with \(\varphi\). This yields for every \(t\in \R\),
\begin{equation}\label{eq3029}
{1}_u*_x \vr_{\ve_n}(x,t)=\int_{B'(0,{\frac{\ve_n}{8L_{\psi}}})\times (\frac{\ve_n}{4}, \frac{3\ve_n}{4})} {1}_u(x-y, t)\vr_{\ve_n}(y)\dy = \int_{\R^{N}} {1}_\vp(x-y,t)\vr_{\ve_n}(y)\dy  =  {1}_\vp*_x \vr_{\ve_n}(x,t)\,.
\end{equation}
In particular,  denoting by \(L_{\vp}\) the Lipschitz constant of \(\vp\) on \(\Lambda\), one has for every \(x, \wt{x}\in \Sigma_{\vk_n}\),
\begin{align*}
v_{\ve_n, \delta_n}(x,t+L_\vp|x-\wt{x}|)&=
 {1}_u*_x \vr_{\ve_n}(x,t+L_\vp|x-\wt{x}|)+\delta_n\alpha(t+L_\vp|x-\wt{x}|)\\
 &= {1}_\vp*_x \vr_{\ve_n} (x,t+L_\vp|x-\wt{x}|)+\delta_n\alpha(t+L_\vp|x-\wt{x}|)\,.
\end{align*}
Using that \(L_{\vp}\) is the Lipschitz constant of \(\vp\),  for every \(y \in B'(0,\frac{\ve_n}{8L_{\psi}})\times (\frac{\ve_n}{4}, \frac{3\ve_n}{4})\) and \(t\in \R\) it holds ${1}_\vp (x-y,t+L_\vp|x-\wt{x}|)\leq {1}_\vp (x-y,t)$. Together with the fact that \(\alpha\) is non-increasing, one obtains
\[
v_{\ve_n, \delta_n}(x,t+L_\vp|x-\wt{x}|)\leq 
{1}_\vp*_x \vr_{\ve_n} (\wt{x},t)+\delta_n\alpha(t)
= v_{\ve_n, \delta_n}(\wt{x},t)\,.
\]

In  view of Lemma~\ref{rk-lips-us}, this  ensures that 
\begin{equation}\label{grad-bun-Lip}
    \|\nabla \bun\|_{L^{\infty}(\Sigma_{\vk_n})}\leq L_\vp\,.
\end{equation}
Finally, using \eqref{eq3029} again, for every \(x\in \Sigma_{\vk_n}\), we have
\[ {1}_u *_x\vr_{\ve_n}(x, \vp(x)+L_\vp\ve_n)=  {1}_\vp *_x\vr_{\ve_n}(x, \vp(x)+L_\vp\ve_n)
=0\ \,,
\]
where the last equality holds true by the very definition of $1_\vp$ and since \(\vp(x)+L_\vp\ve_n > \vp(x-y)\) for every \(y\in B'(0,\frac{\ve_n}{8L_{\psi}})\times (\frac{\ve_n}{4}, \frac{3\ve_n}{4})\).
It follows that 
\[
v_{\ve_n, \delta_n} (x, \vp(x)+L_\vp\ve_n) =\delta_n\alpha(\varphi(x)+L_\varphi \varepsilon_n).
\]
In particular, for every  \(s\in (\frac 14, 1)\) and every \(n\) so large that \(\delta_n\leq \frac {1}{4\|\alpha\|_{L^{\infty}}}\), the right-hand side of the above equality is not larger than \(s\) and this implies that 
\(
\bun(x)\leq \vp(x)+L_\vp\ve_n\,.
\) Similarly we can prove that
\(
\bun(x)\geq \vp(x)-L_\vp\ve_n\,.
\)
We can conclude that 
\begin{equation}
\label{eq2045}
 \|\bun-\vp\|_{L^{\infty}(\Sigma_{\vk_n})}\leq L_\vp\ve_n=8 L_\vp \vk_n\,.   
\end{equation}

\noindent {\bf{Step 2. Truncation argument. }} Let \(\phi_n \in C^{\infty}(\overline{\Sigma}, [0,1])\) be  
converging pointwise to \(\chi_{\overline{\Sigma}\setminus \overline{\Gamma}}\) when \(n\to \infty\). We further require that 
\begin{equation}\label{eq1125}
\phi_n\equiv 1 \textrm{ on } \overline{\Sigma}\setminus \overline{\Sigma_{\vk_n}}, \qquad \phi_n\equiv 0 \textrm{ on } \Gamma \qquad\text{and} \qquad \|\nabla \phi_n\|_{L^{\infty}(\Sigma)}\leq \tfrac{C}{\vk_n}\,,
\end{equation}
for some universal constant \(C>0\).
We then define
\[
\bunp\coloneqq\phi_n \bun + (1-\phi_n)\vp\,.
\]
By construction \(\bunp \in W^{1,\infty}(\Sigma)\)  and \(\bunp=\vp\) on \(\Gamma\).
Observe also that the sequence \((\bunp)_n\) is bounded in \(L^{\infty}(\Sigma)\).
Moreover, \(\bunp-\bun=(1-\phi_n)(\vp-\bun)\), so that
\[
\|\bunp-\bun\|_{L^{1}(\Sigma)}\leq  \|\vp-\bun\|_{L^{\infty}(\Sigma_{\vk_n})}|\Sigma_{\vk_n}| \,,
\]
and by \eqref{eq2045}, the left-hand side converges to \(0\). Since \((\bun)_{n\in\N}\) converges to \(u\) in \(L^{1}(\Sigma)\), this implies that
\[
\lim_{n\to +\infty}\|\bunp-u\|_{L^{1}(\Sigma)}=0.
\]
Let us show that
\begin{equation}
    \label{conv-of-Phi}
\lim_{n\to \infty}\int_{\Sigma}f(x, \bunp, \nabla \bunp)\dx =\int_{\Sigma}f(x,u,\nabla u)\dx \,.
\end{equation}
  
We notice that
\begin{multline*}
\int_{\Sigma}|f(x, \bunp, \nabla \bunp)-f(x,\bun,\nabla \bun)|\dx  
\\=\int_{\Sigma_{\vk_n}}\left|f\Big(x, \phi_n \bun + (1-\phi_n)\vp, \phi_n \nabla \bun+ (1-\phi_n)\nabla \vp + (\bun-\vp)\nabla \phi_n\Big)-f(x,\bun,\nabla \bun)\right|\dx \,.
\end{multline*}
By \eqref{grad-bun-Lip} we have \(\|\nabla \bun\|_{L^{\infty}(\Sigma_{\vk_n})}\leq L_\vp\) and \eqref{eq2045} together with \eqref{eq1125} ensure that \(\|(\bun-\vp)\nabla \phi_n\|_{L^{\infty}(\Sigma_{\vk_n})}\leq c\vk_n \frac{C}{\vk_n}\leq cC\). Since \(f\) is bounded on bounded sets (see Lemma \ref{lem-bdd-f}) and \(\lim_{n\to +\infty}|\Sigma_{\vk_n}|=0\), one gets
\[
\lim_{n\to \infty}\int_{\Sigma}|f(x, \bunp, \nabla \bunp)-f(x,\bun,\nabla \bun)|\dx  =0\,,
\]
which, together with \eqref{eq2330}, implies \eqref{conv-of-Phi}.
\end{proof}

\begin{remark}[Boundary approximation in the one dimensional case]\rm\label{rk-approx-bd-N=1}
In the case $N=1$, a special open set is simply an interval \((a,b)\) and a neighbourhood of the boundary can be chosen as a (small) interval near the extremities \(a\) or \(b\). Let us focus for instance on the approximation near \(a\). We replace the set \(\Sigma\) by \((a,b)\). In analogy to the  higher dimensional case considered above, this amounts to taking \(\psi\) equal to the constant \(a\) and \(\Gamma=\{a\}\). We also replace \(\R^{N-1}\times \R\) by \(\R\). Accordingly, in Step 1, we remove all the \(N-1\) dimensional balls.  The rest of the proof is essentially the same, so we omit the details.
\end{remark}

\subsection{Gluing the local approximations}\label{ssec:proof-reduced}

\begin{proof}[Proof of Proposition~\ref{prop-main-reduced}] 
We proceed step by step.\newline
    
\noindent{\bf Step 1. Covering and partition of unity. } We consider a finite family of open cubes $\{U_j\}_{0\leq j< J}$ such that 
\[
U_0\Subset \Omega\Subset  \bigcup_{j=0}^JU_j\Subset\Lambda, \qquad \partial\Omega \subset \bigcup_{j=1}^JU_j \,,
\] and  for every \(1\leq j\leq J\), the set \(U_j\cap \Omega\) is isometric to the epigraph of a Lipschitz function. Namely, we assume that there exist an affine isometry \(\sigma_j:\rn\to \rn\), \(r_j, b_j>0\) and a Lipschitz map \(\psi_j:\ \R^{N-1}\to (-b_j/2, b_j/2)\) such that
\(
\sigma_j(U_j)= B(0,{r_j})\times (-b_j, b_j), 
\)
and 
\[
\sigma_j(\Omega) \cap  \left( B(0,2{r_j})\times (-2b_j, 2b_j)\right) = \{(y,t) :\  t>\psi_j(y)\}\cap \left(  B(0,2{r_j})\times (-2b_j, 2b_j)\right)\,,
\]
\[
\sigma_j(\partial\Omega) \cap   \left(B(0,2{r_j})\times (-2b_j, 2b_j)\right) = \{(y,t) : t=\psi_j(y)\}\cap  \left( B(0,2{r_j})\times (-2b_j, 2b_j)\right)\,.
\]
 We also introduce a partition of unity  
 \(\{\Psi_j\}_{0\leq j \leq J}\) on \(\Lambda\) subordinate to the covering \((U_j)_{0\leq j \leq J}\), that is, \[\Psi_j\in C^{\infty}_c(U_j,[0,1])\qquad\text{and}\qquad \sum_{j=0}^{J}\Psi_j\equiv 1\  \text{ on }\   \overline{\Omega}\,.\]

\noindent{\bf Step 2. Penultimate approximation. } 
We extend \(u\in\cEp(\Omega)\cap L^{\infty}(\Omega)\) by \(\vp\) on \(\rn\), recall that \(M=\|u\|_{L^{\infty}(\rn)}\), and denote \[\Sigma_j\coloneqq\Omega\cap U_j\,.\] 
Note that \(f\) satisfies Assumptions~\ref{f-red} and \eqref{eqHZconv} on \(\Lambda\times \R\times \R^N\) and that \(u\in \cE(\Lambda)\cap L^\infty(\Lambda)\).
Moreover, for every \(1\leq j\leq J\), \(u=\varphi\) on the set \(\Lambda_j\setminus \Omega\) where
\[
\Lambda_j\coloneqq \Lambda\cap \sigma_{j}^{-1}\left(  B(0,2{r_j})\times (-2b_j, 2b_j)\right)\,.
\]
Observe that \(\Sigma_j\Subset \Lambda_j\) and \(\Lambda_j\setminus \Omega\) is the intersection of \(\Lambda_j\) with the hypograph of \(\psi_j\).
Hence, we can rely on Proposition~\ref{prop-inner-approximation}  on \(U_0\) and Lemma~\ref{lm-approx-epigraph} on each \(\Sigma_j\), for \(1\leq j \leq J\) (applied with \(\Lambda_j\) instead of \(\Lambda\)) to deduce that there exist \(M'>0\) and maps  \(\bunpj\in W^{1,\infty}(\Sigma_j)\) for \(0\leq j \leq J\)  such that
\begin{equation}\label{eq1900}
 \|\bunpj\|_{L^{\infty}(\Sigma_j)}\leq M'\,,\quad \bunpj\xrightarrow[n\to\infty]{} u\ \text{ in }\ L^{1}(\Sigma_j)\,,\quad \text{ and }\quad \lim_{n\to\infty} E_{\Sigma_j}(\bunpj)=E_{\Sigma_j}(u)\,,
\end{equation}
and for \(1\leq j\leq J\), each \(\bunpj\) agrees with \(\vp\) on \(U_j\cap \partial \Omega\). We are thus entitled to extend each \(\bunpj\) by \(\vp\) on \(U_j\setminus \Omega\), so that each property in \eqref{eq1900} holds true on \(U_j\) instead of \(\Sigma_j\).

Define $\overline{\alpha}:\R\to (0,\infty)$ via
\begin{equation}
    \label{def-o-alpha}
\overline{\alpha}(t)\coloneqq
\begin{cases}
    1 & \textrm{ if } t\leq -M'\,,\\
    \frac{M'-t}{2M'} & \textrm{ if } -M'\leq t\leq M'\,,\\
    0 & \textrm{ if } t\geq M'\,.
\end{cases}
\end{equation}
Finally, for every \(n\in\N,\ \odel>0\), one defines for every \((x,t)\in \rn\times \R\),
\[
\Pnd(x,t)\coloneqq \sum_{j=0}^{J}\Psi_j(x)1_{\bunpj}(x,t) + \odel \overline{\alpha}(t)\,.
\]
Note that even if \(\bunpj\) is not defined on the whole \(\rn\), the product \(\Psi_j(x)1_{\bunpj}(x,t)\) is well-defined on \(\rn\), since \(\Psi_j\) is compactly supported in \(U_j\).
For every \(\odel>0\), we also define the map
\[
\Phi_{\infty,\odel}(x,t)\coloneqq\sum_{j=0}^{J}\Psi_j(x) {1}_{u}(x,t)+\odel \overline{\alpha}(t)\,.
\]
On \(\overline{\Omega}\times \R\), one has \(\Phi_{{\infty},\odel}(x,t)=1_u(x,t)+\overline{\delta}\overline{\alpha}(t)\).
 For every \(x\in \rn\), the map  \(t\mapsto \Pnd(x,t)\) is non-increasing and left-continuous. Moreover, \(\Pnd\in L^{\infty}(\rn\times \R)\) and since \(\max_{0\leq j \leq J}\|\bunpj\|_{L^{\infty}(U_j)}\leq M'\), for every \(x\in \overline{\Omega}\) and every \(t\geq M'\), we have
\begin{equation}\label{eq1904}
\Pnd(x,-t)=1+\odel \qquad\text{and}\qquad \Pnd(x,t)=0\,. 
\end{equation}

The same arguments leading to \cite[Lemma 6.3]{Bousquet-Pisa} imply that  \(D\Pnd = \lambda_n+h_{n, \odel}\dxodt \), where
\begin{flalign*}
\lambda_n(x,t) &\coloneqq  \sum_{j=0}^{J}\Psi_j(x)D1_{\bunpj}(x,t)\qquad\text{and}\qquad
h_{n,\odel}(x,t)\coloneqq\left( \sum_{j=0}^{J}\nabla \Psi_j(x) 1_{\bunpj}(x,t),\odel \overline{\alpha}'(t)\right)\,
\end{flalign*}
and $h_{n,\odel}(x,t)=(0,0)$ for every $x\in\Omega$ and $|t|>M'$. Furthermore, for every \(\odel_0>0\) it holds
\begin{equation}\label{eq2645}
\sup_{n\in\N}|\lambda_n|(\rn\times \R)<\infty \qquad\text{and}\qquad
\sup_{n\in\N,\, \odel<\odel_0} \|h_{n, \odel}\|_{L^{1}(\Omega \times \R)}<\infty\,.
\end{equation}

Let us establish some additional properties of $\Pnd$.
\begin{enumerate}[{\it ($v$-a)}]
\item {\it (Limits of \(\Pnd\)).} We observe that for every \(w_1, w_2 \in L^{1}(\Omega)\),
\[
\int_{\Omega\times \R}|1_{w_1}-1_{w_2}|\dx\dt = \int_{\Omega}|w_1-w_2|\dx.
\]
Using that \(\lim_{n\to \infty}\|\bunpj-u\|_{L^{1}(U_j)}=0\),  for every \(\overline{\delta}>0\), we have  that
\(\lim_{n \to \infty}\|\Pnd - \Phi_{ {\infty},\odel}\|_{L^{1}(\Omega\times \R)} = 0\). 
%
%
%
%
\item {\it (Limit of $\wh{E}_{\Omega\times\R}(\Phi_{n, \overline{\delta}})$).}  We show that 
for every \(\overline{\delta}>0\), $
\lim_{n\to \infty}\wh{E}_{\Omega\times\R}(\Phi_{n, \overline{\delta}})
=E_{\Omega}(u)\, $.

By using that the measures \(\lambda_n\) and \(h_{n, \odel}\dxodt \) are mutually singular, \eqref{def-o-alpha}, and the fact  that \(h_{n,\odel}\)   vanishes on \(\Omega\times (\R\setminus (-M',M'))\), one gets
\begin{align}
    \int_{\Omega\times \R}&\dhf(x,t,D\Pnd)=\int_{\Omega\times \R}\d  \wh{f}(x,t,\lambda_n)+\int_{\Omega\times \R}\dhf(x,t,h_{n,\odel}\dxodt )\nonumber\\
    =&\int_{\Omega\times \R}\dhf\left(x,t, \sum_{j=0}^{J}\Psi_j D1_{\bunpj}\right)+\frac{\odel}{2M'}\int_{-M'}^{M'}\int_{U_j}f\left(x,t,\frac{2M'}{\odel}\left[ \sum_{j=0}^{J}\nabla \Psi_j 1_{\bunpj}\right]\right)\dx \dt \nonumber\\
    \leq &\sum_{j=0}^{J}\int_{\Omega}\Psi_j f (x,\bunpj, \nabla \bunpj)\dx  +\frac{\odel}{2M'}\int_{-M'}^{M'}\int_{\Omega}f\left(x,t,\frac{2M'}{\odel}\left[ \sum_{j=0}^{J}\nabla \Psi_j 1_{\bunpj}\right]\right)\dx \dt
    \eqqcolon \mathcal{K}_{1,n}^\Omega+\mathcal{K}_{2,n}^\Omega\, ,\label{eq1925}
\end{align}
where the last inequality relies on the convexity of \(\wh{f}\) with respect to the last variable and~\eqref{eq:f-f-hat-general}.

Since \(\lim_{n\to \infty}\|\bunpj-u\|_{L^{1}(U_j)}=0\),  we have  
\[ 
\lim_{n\to \infty}\left\|\sum_{j=0}^{J}\nabla \Psi_j 1_{\bunpj} -\sum_{j=0}^{J}\nabla \Psi_j 1_{u}\right\|_{L^{1}(\Omega)}=0.
\]
Since \(\sum_{j=0}^{J}\nabla \Psi_j 1_{u}=1_{u}\nabla \left(\sum_{j=0}^{J}\Psi_j\right)=0\) on \(\Omega\times \R\), it follows that $\lim_{n\to \infty}\left\|\sum_{j=0}^{J}\nabla \Psi_j 1_{\bunpj}\right\|_{L^{1}(\Omega)}=0$. By the dominated convergence theorem, we get
\begin{equation}\label{eq:limK2n}
\mathcal{K}_{2,n}^\Omega\xrightarrow[n\to \infty]{}\frac{\odel}{2M'}\int_{-M'}^{M'}\int_{\Omega}f(x,t,0)\dx \dt =0\,.
\end{equation}
On the other hand, to pass to the limit in \(\mathcal{K}_{1,n}^\Omega\), we start from the observation that for every \(0\leq j \leq J\), we have
\(\lim_{n\to +\infty}E_{U_j}(\bunpj)=E_{U_j}(u)\) and  \(\lim_{n\to \infty}\|\bunpj-u\|_{L^{1}(U_j)}=0\). Using that \(f\) is superlinear with respect to the last variable, we deduce from the Dunford-Pettis theorem that 
\((\bunpj)_{n\in \N}\) weakly converges to \(u\) in \(W^{1,1}(U_j)\). By weak lower semicontinuity, it follows that for every open subset \(A\subset U_j\), one has
\begin{equation}\label{eq3234}
\liminf_{n\to \infty} \int_A f(x,\bunpj, \nabla \bunpj)\dx \geq \int_A f(x,u, \nabla u)\dx. 
\end{equation}
Applying \eqref{eq3234} with \(A=U_j\setminus \overline{\Omega}\), we get
\[
\limsup_{n\to \infty}E_{U_j\cap \Omega}(\bunpj)=\lim_{n\to \infty}E_{U_j}(\bunpj)-\liminf_{n\to \infty}E_{U_j\setminus \overline{\Omega}}(\bunpj)\leq E_{U_j}(u)-E_{U_j\setminus \overline{\Omega}}(u)=E_{U_j\cap \Omega}(u).
\]
Relying on \eqref{eq3234}  with \(A=U_j\cap \overline{\Omega}\), this implies that \(\lim_{n\to \infty}E_{U_j\cap \Omega}(\bunpj)=E_{U_j\cap \Omega}(u)\). 

Hence, one can apply \cite[Proposition 1.80]{AmFuPa} to the sequence of finite Borel measures on \(U_j\cap \Omega\) defined by
\(
\mu_n(A)=\int_{A}f(x,\bunpj, \nabla \bunpj)\dx\),
to conclude that
\[
\lim_{n\to \infty}\int_{U_j\cap \Omega}\Psi_j(x)f(x, \bunpj(x), \nabla \bunpj(x))\dx = \int_{U_j\cap \Omega}\Psi_j(x)f(x, u(x), \nabla u(x))\dx. 
\]
Thus, using~\eqref{eq1925} and~\eqref{eq:limK2n}, we get
\begin{equation*}
    \limsup_{n \to \infty} \wh{E}_{\Omega\times\R}(\Pnd) \leq  \sum_{j=0}^{J} \int_{U_j\cap \Omega}\Psi_j(x)f(x, u(x), \nabla u(x))\dx = E_{\Omega}(u)\,,
\end{equation*}
where in the last equality we used that $\sum_{j=0}^J\Psi_j(x)=1$ and that $\Psi_j$ equals $0$ outside $U_j$.
%
On the other hand, using that \(\Pnd\) converges to \(\Phi_{ {\infty},\odel}\) in \(L^{1}(\Omega\times \R)\) and the fact that \(\sup_n|D\Pnd|(\Omega\times \R)<\infty\) by \eqref{eq2645}, we deduce that \(D\Pnd\) weakly-$*$ converges to \(D\Phi_{ {\infty},\odel}\). The Reschetnyak
lower semicontinuity theorem thus implies that
\[
\liminf_{n\to \infty}\wh{E}_{\Omega\times\R}( \Pnd)\geq \wh{E}_{\Omega\times\R}(\Phi_{ {\infty},\odel})=E_{\Omega}(u). 
\]
It follows that \(\lim_{n\to \infty}\wh{E}_{\Omega\times\R}( \Pnd)=E_{\Omega}(u)\).

\end{enumerate}

\noindent{\bf Step 3. Construction of the final approximation. }  For every \(\overline{\delta}\in (0,\infty)\),  property {\it ($v$-b)} implies that \(\lim_{n\to \infty}\wh{E}_{\Omega\times\R}(\Pnd)=E_{\Omega}(u)\) while  by property {\it ($v$-a)}, \((\Pnd)_{n\in\N}\) converges to \(1_u+\overline{\delta}\overline{\alpha}\) in \(L^{1}(\Omega\times (-M',M'))\). Hence, 
\[
\lim_{\overline{\delta}\to 0}\lim_{n\to \infty}\wh{E}_{\Omega\times\R}(\Pnd)=E_{\Omega}(u) \textrm{ and } \lim_{\overline{\delta}\to 0}\lim_{n\to \infty} \|\Pnd-(1_u+\overline{\delta}\overline{\alpha})\|_{L^{1}(\Omega\times (-M',M'))}=0.
\]
Given any sequence \((\overline{\delta}_i)_{i\in \N}\)  {tending} to \(0\), we can thus find a subsequence \((n_i)_{i\in \N}\) such that
\begin{equation}\label{eq3262}
 {\sup_{i \in \mathbb{N}}\wh{E}_{\Omega\times\R}(\Phi_{n_i,\overline{\delta}_i}) < \infty},\quad \lim_{i\to +\infty} \wh{E}_{\Omega\times\R}(\Phi_{n_i,\overline{\delta}_i})=E_{\Omega}(u)\,,\ \textrm{ and }\ \lim_{i\to +\infty}\|\Phi_{n_i,\overline{\delta}_i}- 1_u\|_{L^{1}(\Omega\times (-M',M'))}=0 \, .
\end{equation}
Up to an extraction, one can assume that \(\lim_{i\to +\infty}\Phi_{n_i,\overline{\delta}_i}(x,t)=1_u(x,t)\) for a.e.  \((x,t)\in \Omega\times (-M',M')\). 
For every \(s\in (0,1)\), we introduce the maps $\unp$ with the use of the generalized inverse with respect to the second variable of $\Phi_{n_i, \overline{\delta}_i}$ constructed in Step~2, i.e., 
\[
\unp(x)\coloneqq\Phi_{n_i, \overline{\delta}_i}^{-1}(x,s) \,.
\]
We establish some properties of $\unp$ following the same steps as in Section~\ref{ssec:us-to-u}. 
\begin{enumerate}[{\it ($u$-a)}]
\item {\it (Boundedness of (\(\unp\)))}.
By \eqref{eq1904} and~\eqref{def-gen-inv} we infer that \(\unp\) belongs to \(L^{\infty}(\Omega)\), with \(\|\unp\|_{L^{\infty}(\Omega)}\leq M'\).
By the fact that \(\wh{E}_{\Omega\times\R}(\Phi_{n_i, \overline{\delta}_i})<\infty\) and~\eqref{hat-E-no-hat-E}, one gets that for a.e. $s$ it holds \(\unp\in W^{1,1}(\Omega)\) and \(E_\Omega(\unp)<\infty\).

\item {\it (Limits of (\(\unp\)))}. From Lemma~\ref{lem:cvgce-usi} with \(\Lambda'=\Omega\) and \eqref{eq3262}, we deduce that for a.e. \(s\in (0,1)\), there exists a subsequence (we do not relabel) such that \((\unp)_{i}\to u\) in \(L^{1}(\Omega)\) and  \(E_\Omega(\unp)\to E_\Omega(u)\) as  $ i\to \infty$.

\item {\it (\(\unp\in{\rm Lip}(\Omega)\))}.  
We only need to prove that for every \(n\in\N,\ \odel \in (0,1)\), there exists \(C>0\) (which may depend on \(n, \odel\)) such that for every \(t\in \R\) and every \(x, y\in \Omega\), it holds
\begin{equation}
    \label{for-Lip}
\Pnd(x, t+C|x-y|) \leq\Pnd(y,t)\,,
\end{equation}
which in  view of Lemma~\ref{rk-lips-us} ensures that each $(\Phi_{n_i,\overline{\delta}_i})^{-1}(\cdot,s)=\unp$ is Lipschitz continuous.

We start from the fact that for all \(j=0, \dots, J\) functions  \(\bunpj\) are Lipschitz continuous on    \(U_j\)  with constant \(C_n\). Then for every \(x,y\in U_j\), every \(t\in \R\), and $c\geq C_n$ we have
\(
1_{\bunpj}(x, t+c|x-y|)\leq 1_{\bunpj}(y,t)\,
\)
so that 
\(\Psi_j(x)1_{\bunpj}(x, t+c|x-y|)\leq \Psi_j(x)1_{\bunpj}(y,t)\)
and thus
\begin{equation}\label{eq3295}
\Psi_j(x)1_{\bunpj}(x, t+c|x-y|)\leq 
\Psi_j(y)1_{\bunpj}(y,t)+\|\nabla \Psi_j\|_{L^{\infty}}|x-y|\,.
\end{equation}
Inequality \eqref{eq3295} remains true when \(x\not\in U_j\) (because in that case, the left-hand side vanishes) and when \(y\not\in U_j\) (because in that case, it follows from the inequality \(\Psi_j(x)\leq \Psi_j(y)+\|\nabla \Psi_j\|_{L^{\infty}}|x-y|=\|\nabla \Psi_j\|_{L^{\infty}}|x-y|\)).

Let \(C\coloneqq C_n+\frac{2M'}{\odel}\sum_{j=0}^{J}\|\nabla \Psi_j\|_{L^{\infty}}\).
Then for every \(x,y\in \Omega\),
\[
\sum_{j=0}^{J}\Psi_j(x)1_{\bunpj}(x, t+C|x-y|)\leq  \sum_{j=0}^{J}\left(\Psi_j(y)1_{\bunpj}(y, t)+\|\nabla \Psi_j\|_{L^{\infty}}|x-y|\right)\,.
\]
Hence, using the definition of \(\Pnd\),
\begin{equation}\label{eq2031}
\Pnd(x,t+C|x-y|)\leq\Pnd(y,t) + \sum_{j=0}^{J}\|\nabla \Psi_j\|_{L^{\infty}}|x-y|+\odel (\overline{\alpha} (t+C|x-y|)-\overline{\alpha}(t))\,. 
\end{equation}
We are now in position to prove \eqref{for-Lip} for we are in one of the following three cases. If \(t+C|x-y|\geq M'\), then by \eqref{eq1904}, $\Pnd(x,t+C|x-y|)=0\leq \Pnd(y,t)\,.$
If \(t\leq -M'\), 
then by \eqref{eq1904} again, $\Pnd(x,t+C|x-y|)\leq 1+\odel= \Pnd(y,t)\,.$
Otherwise \(-M'\leq t \leq t+C|x-y|\leq M'\) and in that case $\overline{\alpha} (t+C|x-y|)-\overline{\alpha}(t)=-\frac{C|x-y|}{2M'}. $
In view of \eqref{eq2031} and the definition of \(C\), this yields~\eqref{for-Lip}, which implies that $\unp$ is Lipschitz continuous on~$\Omega$.

    \item {\it (\(\unp=\vp\) on \(\partial \Omega\))}.  
For every \(n\in\N\) and every \(1\leq j\leq J\),  one has \(\bunpj=\vp\) on \(U_j\cap \partial \Omega\). Hence, $\Pnd= {1}_{\vp}+\odel\overline{\alpha} \text{ on } \partial \Omega \times \R\,$. Thus, for every \(i\in \N\) such that \(s >\overline{\delta}_i\), it holds \(\unp=\vp\) on \(\partial \Omega\).

\end{enumerate}

\noindent{\bf Step 4. Conclusion. } 
For a.e. \(s\in (0,1)\), we  have constructed a sequence \((u^{s,\varphi}_i)_{i\in \N}\subset W^{1,\infty}(\Omega)\), which is bounded in \(L^{\infty}(\Omega)\), converges to \(u\) in \(L^{1}(\Omega)\) and satisfies \(\lim_{i\to +\infty}E_{\Omega}(u^{s,\varphi}_i)=E_{\Omega}(u)\). Moreover, for every \(i\) sufficiently large, one has \(\overline{\delta}_i<s\), which implies that \(u^{s,\varphi}_i\) coincides with \(\varphi\) on \(\partial\Omega\). 

\end{proof}

 



\section{Proofs of the main results}\label{sec:proofs}

\subsection{Proof of Theorem~\ref{th-main-conv}}
\label{ssec:proof_conv}
Before giving the proof of Theorem~\ref{th-main-conv}, we present a couple of auxiliary facts. In the next lemma, we extend the integrand \(f\) to a slightly larger domain.

\begin{lemma}\label{lem:reduction-extension}
Let $\Omega \subseteq \rn$ be bounded and Lipschitz, and let $f:\Omega\times \R\times \rn \to \rp$ be a Carathéodory function convex with respect to the last variable which satisfies ~\eqref{eqHZconv} on \(\Omega\). Then there exists an open set \(\Lambda\Supset \Omega\) and a Carath\'{e}odory function \(g:\Lambda\times \R\times \rn\to \R\) convex with respect to the last variable which satisfies~\eqref{eqHZconv} on $\Lambda$ and  coincides with $f$ on $\Omega\times \R \times \R^N$. 
\end{lemma}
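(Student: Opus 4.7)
The plan is to extend $f$ to a small neighborhood of $\overline{\Omega}$ by composing it with a Lipschitz retraction $\sigma : \Lambda \to \overline{\Omega}$. Specifically, I will construct $\sigma$ so that $\sigma|_{\overline{\Omega}} = \mathrm{id}$, its restriction to $\Lambda \setminus \overline{\Omega}$ is a bi-Lipschitz homeomorphism onto its image in $\Omega$, and $|\sigma(x) - x| \leq L \dist(x, \overline{\Omega})$ on $\Lambda$ for some constant $L \geq 1$. Such a $\sigma$ is built from the Lipschitz regularity of $\partial \Omega$: covering $\partial \Omega$ by finitely many bi-Lipschitz boundary charts in which $\Omega$ is locally the epigraph of a Lipschitz function $\psi_j$, one defines locally the reflection $(x', x_N) \mapsto (x', 2 \psi_j(x') - x_N)$ across the graph of $\psi_j$, and patches these reflections on a sufficiently thin tubular neighborhood $\Lambda$ of $\overline{\Omega}$ into a single globally Lipschitz map. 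I then set $g(x, t, \xi) \coloneqq f(\sigma(x), t, \xi)$ on $\Lambda \times \R \times \R^N$. Nonnegativity, convexity in $\xi$, and the identity $g = f$ on $\Omega \times \R \times \R^N$ are immediate from the construction. The Carathéodory property of $g$ reduces to showing that $\sigma^{-1}(N)$ is Lebesgue-negligible in $\Lambda$ whenever $N \subset \Omega$ is Lebesgue-negligible, which follows from $\sigma$ being the identity on $\Omega$, bi-Lipschitz on $\Lambda \setminus \overline{\Omega}$, and $\partial \Omega$ being itself null.

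The heart of the proof is the verification of \eqref{eqHZconv} for $g$ on $\Lambda$, which rests on the geometric inclusion
\[
\sigma\bigl( B(x, \ve) \cap \Lambda \bigr) \subseteq B(\sigma(x), L' \ve) \cap \overline{\Omega} \qquad \text{for a.e. } x \in \Lambda \text{ and every } \ve > 0,
\]
with the universal constant $L' = 1 + L$. Indeed, on $B(x, \ve) \cap \Omega$ (a piece which is empty as soon as $\dist(x, \overline{\Omega}) > \ve$) the map $\sigma$ is the identity and $|y - \sigma(x)| \leq |y - x| + |x - \sigma(x)| \leq \ve + L \dist(x, \overline{\Omega}) \leq (1 + L) \ve$; on $B(x, \ve) \cap (\Lambda \setminus \overline{\Omega})$ the Lipschitz bound gives $|\sigma(y) - \sigma(x)| \leq L \ve$ directly. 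Combined with the null-preservation property of $\sigma^{-1}$ recalled above, this inclusion yields the pointwise estimate
\[
f^{-}_{B(\sigma(x), L' \ve)}(t, \xi) \leq g^{-}_{B(x, \ve)}(t, \xi),
\]
which is inherited by greatest convex minorants. Consequently, the hypothesis of \eqref{eqHZconv} for $g$ at $(x, \ve)$ with parameter $\mathcal{k}$ implies the hypothesis of \eqref{eqHZconv} for $f$ at $(\sigma(x), L' \ve)$ with parameter $(\mathcal{k}_1, \mathcal{k}_2 (L')^N)$ (since $\mathcal{k}_2 \ve^{-N} = \mathcal{k}_2 (L')^N (L' \ve)^{-N}$); applying \eqref{eqHZconv} for $f$ and using once more $(f^{-})^{**} \leq (g^{-})^{**}$ yields
\[
g(x, t, \xi) = f(\sigma(x), t, \xi) \leq \wt \cC_{(\mathcal{k}_1, \mathcal{k}_2 (L')^N)} \Bigl[ \bigl( g^{-}_{B(x, \ve)}(t, \cdot) \bigr)^{**}(\xi) + 1 \Bigr],
\]
which is precisely \eqref{eqHZconv} for $g$.

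The main technical difficulty is the construction of the global map $\sigma$ with all the prescribed properties: the gluing of the local reflections across chart overlaps must be arranged so that the resulting map remains globally single-valued, Lipschitz on $\Lambda$, equal to the identity on $\overline{\Omega}$, and bi-Lipschitz on $\Lambda \setminus \overline{\Omega}$ with constants depending only on the boundary charts. Once such a $\sigma$ is in hand, the remaining steps (Carathéodory, convexity, and the verification of \eqref{eqHZconv}) follow cleanly via the inclusion displayed above.
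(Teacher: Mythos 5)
The proposal is correct and follows essentially the same route as the paper: both construct a Lipschitz "folding" map of a tubular neighborhood $\Lambda$ onto $\overline\Omega$ (the identity on $\overline\Omega$, bi-Lipschitz from $\Lambda\setminus\overline\Omega$ into $\Omega$), define $g$ by pulling $f$ back through this map, and transfer \eqref{eqHZconv} via the inclusion of $\sigma(B(x,\ve)\cap\Lambda)$ into a ball $B(\sigma(x),L'\ve)\cap\overline\Omega$ together with the null-preservation property of the bi-Lipschitz reflection. The only cosmetic difference is that the paper delegates the existence of the bi-Lipschitz reflection to a cited result of Conti--Focardi--Iurlano, whereas you sketch the local chart-by-chart reflection $(x',x_N)\mapsto(x',2\psi_j(x')-x_N)$ and acknowledge, rather than carry out, the gluing; also, once $\sigma$ is globally Lipschitz with constant $L$, the ball inclusion already follows with $L'=L$ and your two-case estimate is slightly redundant (though harmless).
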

\begin{proof}
We first extend \(f\) on \(\overline{\Omega}\times \R\times \R^N\) by setting \(f(x,t,\xi)=0\) for every \(x\in \partial \Omega\) and every \((t,\xi)\in \R\times \R^N\).
Since \(\Omega\) is bounded and Lipschitz, there exists an open set \(\Omega_0\subset \R^N\) such that \(\partial \Omega \subset \Omega_0\) and a biLipschitz map \(\Pi_0:\Omega_0\to \Omega_0\) which agrees with the identity on \(\partial\Omega\), and such that \(\Pi_0(\Omega_0\setminus \overline{\Omega}) = \Omega_0\cap \Omega \), see e.g. \cite[Theorem 3.1.]{Conti-Focardi-Iurlano}.
We then set \(\Lambda\coloneqq \Omega\cup \Omega_0\) and 
\[
\Pi:x\in \Lambda \mapsto 
\begin{cases}
x & \textrm{ if } x\in \overline{\Omega},\\
\Pi_0(x) & \textrm{ if } x\in \Omega_0\setminus \overline{\Omega}.
\end{cases}
\]
Since \(\Pi_0\) is Lipschitz continuous on \(\Omega_0\setminus \Omega\) and coincides with the identity on \(\partial\Omega\), we deduce that \(\Pi\) is Lispchitz continuous on \(\Lambda\). 
We also observe that \(\Pi\) and \((\Pi|_{\Lambda\setminus \overline{\Omega}})^{-1}\) map negligible sets on negligible sets (here, we use that \(\Pi_0\) is a biLipschitz homeomorphism).

We then define \(g(x,t,\xi)=f(\Pi(x),t, \xi)\). This is a Carath\'{e}odory function on \(\Lambda\times \R\times \R^N\) which is convex with respect to the last variable.
Moreover, one has  for every \(x\in \Lambda\) and every \(\ve>0\),
\begin{equation}\label{eq1054}
g_{B(x,\ve)}^{-}(t,\xi)= \essinf_{y\in B(x,\ve)\cap \Lambda} f(\Pi(y),t,\xi)= \essinf_{z\in \Pi(B(x,\ve)\cap \Lambda)} f(z,t,\xi)\,.   
\end{equation}

Denoting by \(\ca\) the Lipschitz constant of \(\Pi\),  one has $\Pi(B(x,\ve)\cap \Lambda)\subset B(\Pi(x), \ca\ve)\cap \overline{\Omega}\,.$ Since \(\partial \Omega\) is negligible, this implies that
\[
g_{B(x,\ve)}^{-}(t,\xi)\geq \essinf_{z\in B(\Pi(x), \ca\ve)\cap \Omega} f(z,t,\xi)=f_{B(\Pi(x),\ca\ve)}^{-}(t,\xi)
\]
and thus
\[
\left(f_{B(\Pi(x),\ca\ve)}^{-}(t,\cdot)\right)^{**}(\xi)\leq \left(g_{B(x,\ve)}^{-}(t,\cdot)\right)^{**}(\xi)\,.
\]
Let \(\mathcal{k}=(\mathcal{k}_1,\mathcal{k}_2)\in (0,\infty)^2\). If \(t\in (-\mathcal{k}_1, \mathcal{k}_1)\) and \((g_{B(x,\ve)}^{-}(t,\cdot))^{**}(\xi) {+ |\xi|^{\max(p, N)}}\leq \mathcal{k}_2\ve^{-N} \), then
\[
\left(f_{B(\Pi(x),\ca\ve)}^{-}(t,\cdot)\right)^{**}(\xi) {+ |\xi|^{\max(p, N)}}\leq {\mathcal{k}_2}{\ve^{-N}}=\frac{\mathcal{k}_2\ca^N}{(\ca\ve)^N}\,.
\]
Since \(f\) satisfies \eqref{eqHZconv}, there exists \(C_{\widetilde{\mathcal{k}}}>0\) with \(\widetilde{\mathcal{k}}=(\mathcal{k}_1, \mathcal{k}_2\ca^N)\) such that for a.e. \(x\in \Pi^{-1}(\Omega)=\Lambda \setminus \partial \Omega\), it holds
\begin{equation*}
f(\Pi(x),t,\xi) \leq C_{\widetilde{\mathcal{k}}}\left(\left(f_{B(\Pi(x),\ca\ve)}^{-}(t,\cdot)\right)^{**}(\xi)+1\right) \leq C_{\widetilde{\mathcal{k}}}\left(\left(g_{B(x,\ve)}^{-}(t,\cdot)\right)^{**}(\xi)+1\right)\,.
\end{equation*}
Since \({|\partial \Omega|}=0\), this proves that for a.e. \(x\in \Lambda\), 
\[
g(x,t,\xi)\leq  C_{\widetilde{\mathcal{k}}}\left(\left(g_{B(x,\ve)}^{-}(t,\cdot)\right)^{**}(\xi)+1\right)\,.
\]
We can conclude that \(g\) satisfies \eqref{eqHZconv} on \(\Lambda\times \R\times \rn\). 
\end{proof}

In the view of the next lemma, we can assume without loss of generality that the function \(u\in W^{1,p}_{\varphi}(\Omega)\) to be approximated, is bounded on \(\Omega\). Remember that this automatically holds true when \(N=1\), due to the embedding \(W^{1,1}(\Omega)\subset L^{\infty}(\Omega)\) in that case.

\begin{lemma}\label{lem:reduction-bounded}
    Let $\Omega \subseteq \rn$ be a bounded open set, and let $f:\Omega\times \R\times \rn \to \rp$ be a Carath\'{e}odory function which is convex with respect to the last variable. We further assume that there exist \(\vt\in [1,\infty]\), \(a\in L^{\vt}(\Omega)\) and \(t_0>0\) which satisfy~\eqref{eq:f-origin} for some $p \geq 1$. 
 If $u\in \cEp(\Omega)\cap W^{1,p}(\Omega)$,  
     then there exists a sequence $(u_n)_{n\in\N}\subset \cEp(\Omega)\cap W^{1,p}(\Omega)\cap L^\infty(\Omega)$, which converges to $u$ in $W^{1,p}(\Omega)$, and such that $\lim\limits_{n\to \infty}E_\Omega(u_n)=E_\Omega(u)$. 
\end{lemma}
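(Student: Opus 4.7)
The plan is a standard truncation argument: since $\varphi$ is Lipschitz on a bounded set, it is bounded, so setting $M_0 \coloneqq \|\varphi\|_{L^\infty(\partial\Omega)}$, I would define, for every $n \geq M_0$,
\[
u_n(x) \coloneqq \max(-n, \min(u(x), n))\,.
\]
Because $\varphi(x) \in [-n,n]$ on $\partial\Omega$, the truncation preserves the boundary value, so $u_n \in W^{1,p}_\varphi(\Omega)\cap L^\infty(\Omega)$. The chain rule gives $\nabla u_n = \nabla u \cdot \chi_{\{|u|<n\}}$ a.e., so $|u_n-u|\leq |u|+n\chi_{\{|u|>n\}}$ and $|\nabla u_n - \nabla u| \leq |\nabla u|\chi_{\{|u|>n\}}$; both right-hand sides tend to zero a.e.\ and are dominated by $L^p$ functions, whence $u_n \to u$ in $W^{1,p}(\Omega)$ by dominated convergence.

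The real content is the convergence $E_\Omega(u_n) \to E_\Omega(u)$. I would split the domain as $\Omega = \{|u|\leq n\} \cup \{|u|>n\}$. On the first set $u_n = u$ and $\nabla u_n = \nabla u$, so the integrand equals $f(x,u,\nabla u)$; the integral of $f(x,u,\nabla u)$ over the complement $\{|u|>n\}$ tends to zero by the absolute continuity of the Lebesgue integral (here $E_\Omega(u)<\infty$ is used). It remains to show
\[
\int_{\{|u|>n\}} f(x, u_n, \nabla u_n)\, dx \xrightarrow[n\to\infty]{} 0\,.
\]
On this set, $u_n(x) \in \{-n,n\}$ and $\nabla u_n(x)=0$, so the integrand equals $f(x,\pm n,0)$. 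For $n \geq t_0$, the growth condition \eqref{eq:f-origin} yields
\[
\int_{\{u>n\}} f(x,n,0)\,dx \leq n^{p^*/\vt'}\int_{\{u>n\}} a(x)\,dx \leq \int_{\{u>n\}} a(x)|u(x)|^{p^*/\vt'}\,dx\,,
\]
and symmetrically on $\{u<-n\}$. An application of Hölder's inequality with conjugate exponents $\vt,\vt'$ (trivially when $\vt=\infty$, since then $\vt'=1$) bounds the last integral by $\|a\|_{L^\vt(\Omega)}\cdot \|u\|_{L^{p^*}(\{|u|>n\})}^{p^*/\vt'}$.

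The plan closes by invoking the Sobolev embedding. Since $u \in W^{1,p}(\Omega)$, with $p^*$ interpreted as the paper prescribes (namely $Np/(N-p)$ when $p<N$, and any fixed number $\geq N$ otherwise), we have $u\in L^{p^*}(\Omega)$. Because $|\{|u|>n\}| \to 0$, dominated convergence gives $\|u\|_{L^{p^*}(\{|u|>n\})} \to 0$, and the bound above vanishes in the limit. The main obstacle, conceptually rather than technically, is the precise calibration of the exponent $p^*/\vt'$ in \eqref{eq:f-origin}: it is tuned exactly so that the polynomial blow-up $n^{p^*/\vt'}$ coming from the constant value $u_n=\pm n$ on $\{|u|>n\}$ is absorbed by the $L^{p^*}$-decay of $u$ on that set after one application of Hölder against $a$.
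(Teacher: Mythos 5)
Your proof is correct and follows essentially the same route as the paper's: the same truncation $u_n=\min(n,\max(-n,u))$, the same split of $E_\Omega(u_n)$ into the region $\{|u|\le n\}$ and its complement, the same use of \eqref{eq:f-origin} plus H\"older against $a$ and the Sobolev embedding $W^{1,p}\hookrightarrow L^{p^*}$. The only cosmetic difference is that where the paper bounds $n^{p^*/\vt'}|\{u\ge n\}|^{1/\vt'}$ via the Chebyshev inequality, you absorb the factor $n^{p^*/\vt'}$ directly by the pointwise estimate $n\le|u|$ on $\{u>n\}$ before applying H\"older; both variants give the same vanishing bound.
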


\begin{proof}
    We set $u_n\coloneqq\min(n,\max(-n,u)).$
    Then, $\|u_n-u\|_{W^{1,p}(\Omega)}\to 0$ when $n\to\infty$, and for $n\geq\Vert \vp\Vert_{L^\infty}$ we have $u_n\in\Wphi(\Omega)$.
    Moreover,
    \begin{equation}\label{eq:121435}
      E_\Omega(u_n)=  \int_\Omega f(x,u_n,\nabla u_n)\dx=\int_{\{|u| < n\}}f(x,u,\nabla u)\dx + \int_{\{u\geq n\}}f(x,n,0)\dx+\int_{\{u\leq  -n\}}f(x,-n,0)\dx\,.
    \end{equation}
    By the monotone convergence theorem, 
    \begin{equation*}
        \int_{\{|u| < n\}}f(x,u,\nabla u)\dx \xrightarrow[n \to \infty]{} \int_\Omega f(x,u,\nabla u)\dx\,.
    \end{equation*}
Using the assumption \eqref{eq:f-origin} and  the H\"older inequality, for every \(n\geq t_0\),
\[
\int_{\{u\geq  n\}}f(x,n,0)\dx \leq n^{p^*/\vt'}\int_{\{u\geq  n\}}a(x)\dx  \leq n^{p^*/\vt'}|\{u \geq  n\}|^{1/\vt'} \left(\int_{\{u\geq n\}}a(x)^\vt\dx\right)^{1/\vt}\,.
\]
By the Chebyshev inequality  $|\{u \geq n\}|\leq  {n^{-p^*}}\|u\|_{L^{p^*}(\Omega)}^{p^*},$ so the right-hand side is finite since \(u\in W^{1,p}(\Omega)\subset L^{p^*}(\Omega)\).
Hence,
\[
\int_{\{u\geq  n\}}f(x,n,0)\dx \leq \|u\|_{L^{p^*}(\Omega)}^{p^*/\vt'}\left(\int_{\{u\geq  n\}}a(x)^\vt\dx\right)^{1/\vt}\,.
\]
By the monotone convergence theorem again, we get
    \begin{equation*}
        \int_{\{u \geq  n\}}f(x,n,0)\dx \xrightarrow[n \to \infty]{} 0\,.
    \end{equation*}
A similar analysis holds on the sublevel sets \(\{u \leq -n\}\). 
   In conjunction with~\eqref{eq:121435}, this yields  $\lim\limits_{n\to \infty}E_\Omega(u_n)=E_\Omega(u)$.
\end{proof}

The assumption \eqref{eqHZconv} and~\eqref{eqHZiso} enjoy the following stability property.
\begin{lemma}
\label{lm-stab-hzconv}
    Let  \(f:\Omega\times \R\times \R^{N}\to \rp\) be a Carath\'{e}odory function convex with respect to the last variable and \(h:\R^N\to \rp\) {be} convex. If \(f\) satisfies~\eqref{eqHZconv}, then so does \(f+h\). If \(f\) satisfies~\eqref{eqHZiso}, then so does \(f+h\).
\end{lemma}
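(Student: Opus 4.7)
The plan is to reduce the stability of both \eqref{eqHZconv} and \eqref{eqHZiso} under the addition of a convex $h\geq 0$ to a direct bookkeeping of how the operations $f\mapsto f^-_{B(x,\ve)}$ and $g\mapsto g^{**}$ interact with the addition of an $x$-independent convex term. Set $g\coloneqq f+h$. Since $h$ does not depend on $x$, we have for every $x\in\Omega$, $\ve>0$, $t\in\R$, $\xi\in\rn$,
\[
g^-_{B(x,\ve)}(t,\xi)=f^-_{B(x,\ve)}(t,\xi)+h(\xi).
\]
The first step is to observe the sandwich
\[
\bigl(f^-_{B(x,\ve)}(t,\cdot)\bigr)^{**}(\xi)+h(\xi)\ \leq\ \bigl(g^-_{B(x,\ve)}(t,\cdot)\bigr)^{**}(\xi)\ \leq\ g^-_{B(x,\ve)}(t,\xi),
\]
where the left inequality comes from the fact that the left-hand side is a convex minorant of $g^-_{B(x,\ve)}(t,\cdot)$, and the right one is the defining property of the greatest convex minorant.

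For \eqref{eqHZconv}, I would then fix $\mathcal{k}=(\mathcal{k}_1,\mathcal{k}_2)\in(0,\infty)^2$ and suppose that $\bigl(g^-_{B(x,\ve)}(t,\cdot)\bigr)^{**}(\xi)+|\xi|^{\max(p,N)}\leq \mathcal{k}_2\ve^{-N}$. Using $h\geq 0$ and the sandwich above, this forces $\bigl(f^-_{B(x,\ve)}(t,\cdot)\bigr)^{**}(\xi)+|\xi|^{\max(p,N)}\leq \mathcal{k}_2\ve^{-N}$, so \eqref{eqHZconv} applied to $f$ yields, for a.e.~$x\in\Omega$ and every $t\in(-\mathcal{k}_1,\mathcal{k}_1)$,
\[
f(x,t,\xi)\ \leq\ \wt\cC_{\mathcal{k}}\bigl[\bigl(f^-_{B(x,\ve)}(t,\cdot)\bigr)^{**}(\xi)+1\bigr].
\]
Adding $h(\xi)$ to both sides, noting that $\wt\cC_{\mathcal{k}}\geq 1$ can be assumed without loss of generality, and using the left-hand side of the sandwich gives
\[
g(x,t,\xi)\ \leq\ \wt\cC_{\mathcal{k}}\bigl[\bigl(f^-_{B(x,\ve)}(t,\cdot)\bigr)^{**}(\xi)+h(\xi)+1\bigr]\ \leq\ \wt\cC_{\mathcal{k}}\bigl[\bigl(g^-_{B(x,\ve)}(t,\cdot)\bigr)^{**}(\xi)+1\bigr],
\]
which is exactly \eqref{eqHZconv} for $g$ with the same constant $\wt\cC_{\mathcal{k}}$.

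The argument for \eqref{eqHZiso} is the same but simpler, since no convex envelope is involved: one directly uses $f^-_{B(x,\ve)}(t,|\xi|)\leq g^-_{B(x,\ve)}(t,|\xi|)$ to transfer the smallness condition from $g^-$ to $f^-$, invokes \eqref{eqHZiso} for $f$, and then adds $h(\xi)$ back on both sides (for isotropic $h$, so that $g$ is isotropic as well). There is no genuine obstacle in this lemma; the only mildly delicate point is recognizing that the sandwich for the $^{**}$-operation must go through a convex minorant (the sum $(f^-)^{**}+h$) rather than being an equality, and that the constant $\wt\cC_{\mathcal{k}}$ from \eqref{eqHZconv} for $f$ may be reused for $g$ after a harmless normalization $\wt\cC_{\mathcal{k}}\geq 1$.
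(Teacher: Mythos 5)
Your proof is correct and follows essentially the same route as the paper's: bound $(f^-_B)^{**}\leq (g^-_B)^{**}$ to transfer the sublevel condition from $g$ to $f$, apply \eqref{eqHZconv} (or \eqref{eqHZiso}) for $f$, add $h$ back using the normalization $\wt\cC_\mathcal{k}\geq 1$, and close with the observation that $(f^-_B)^{**}+h$ is a convex minorant of $g^-_B$. The paper phrases the first inequality directly from $f\leq f+h$ rather than via your sandwich, but the content is identical, and your explicit treatment of \eqref{eqHZiso} (which the paper omits as "essentially the same") is likewise correct.
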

\begin{proof}
Let \(\mathcal{k}=(\mathcal{k}_1, \mathcal{k}_2)\in (0,\infty)^2\) and \(\ve>0\).
Since \(f\leq f+h\), for every \(x\in \Omega\), \(t\in [-\mathcal{k}_1, \mathcal{k}_1]\) and \(\xi\in \R^N\), 
\begin{equation}\label{eq1123}
\left(f_{B(x,\ve)}^{-}(t,\cdot)\right)^{**}(\xi)\leq \left((f+h)_{B(x,\ve)}^{-}(t,\cdot)\right)^{**}(\xi)\,.
\end{equation}
Hence, if \(\xi\) satisfies
\[
\left((f+h)_{B(x,\ve)}^{-}(t,\cdot)\right)^{**}(\xi) + |\xi|^{\max(p, N)}\leq \frac{\mathcal{k}_2}{\ve^{N}}\,,
\]
then \((f_{B(x,\ve)}^{-}(t,\cdot))^{**}(\xi)+ |\xi|^{\max(p, N)}\leq \mathcal{k}_2\ve^{-N}\). If \(f\) satisfies \eqref{eqHZconv}, this implies that for a.e. \(x\in \Omega\), 
\[
f(x,t,\xi)\leq \cC_{\mathcal{k}}((f_{B(x,\ve)}^{-}(t,\cdot))^{**}(\xi)+1)\,,
\]
for some \( \cC_{\mathcal{k}}\geq 1\).
It follows that
\[
f(x,t,\xi)+h(\xi)\leq \cC_{\mathcal{k}}((f_{B(x,\ve)}^{-}(t,\cdot))^{**}(\xi)+1)+h(\xi)\leq \cC_{\mathcal{k}}((f_{B(x,\ve)}^{-}(t,\cdot))^{**}(\xi)+h(\xi)+1)\,.
\]
We can complete the proof by observing that \((f_{B}^-)^{**}+h\leq f_{B}^{-}+h=(f+h)_{B}^{-}\) and thus
\[
(f_{B(x,\ve)}^{-}(t,\cdot))^{**}(\xi)+h(\xi)\leq ((f+h)_{B(x,\ve)}^{-}(t,\cdot))^{**}(\xi)\,.
\]
The conclusion for condition~\eqref{eqHZiso} can be obtained in  essentially the same way.
\end{proof}

\begin{proof}[Proof of Theorem~\ref{th-main-conv}]
We first consider the case \(p=1\).
Let \(f\) and \(u\) as in the statement. By Lemma~\ref{lem:reduction-bounded} and a diagonal argument, we may assume that \(u\in L^{\infty}(\Omega)\).
Using Lemma~\ref{lem:reduction-extension}, we introduce an extension \(f_1\) of \(f\) on \(\Lambda\times \R\times \R^N\), which  satisfies  \eqref{eqHZconv} on \(\Lambda\times \R\times \R^N\). 
 Note that by~\cite[Theorem~VIII.1.3 and Lemma~VIII.1.2]{Ekeland-Temam} for $\nabla u \in L^1(\Lambda)$ there exists a superlinear convex function  \(h:\rn\to \rp\) such that \(h(\nabla u)\in L^{1}(\Lambda)\) and \(h(0)=0\). Let $g_0$ and  $g$ be the functions defined for a.e. $x\in\Lambda$ and all $(t,\xi)\in\R\times \rn$ by
\[
g_0(x,t,\xi)=(f_1(x,t,\xi)-f_1(x,t,0))_+\,, \quad g(x,t,\xi)\coloneqq g_0(x,t,\xi)+h(\xi)+\sqrt{1+|\xi|^2}-1\,.
\]
Then \(g\) is a Carath\'{e}odory non-negative function which is convex with respect to the last variable. Moreover, \(g(x,t,\xi)\geq h(\xi)\) and \(g(x,t,0)=0\) for every \((x,t,\xi)\in \Lambda\times \R\times \rn\). 
From  Lemma~\ref{lm-reduction-2-0}, we deduce that the function \(g_0\) satisfies \eqref{eqHZconv}, while Lemma~\ref{lm-stab-hzconv} allows to conclude that the same property holds for \(g\). Hence, \(g\) satisfies \ref{f-red} and \eqref{eqHZconv}.
Since \(E_{\Omega}(u) <\infty\), one also has  \(\int_{\Omega}g(x,u,\nabla u)\dx <\infty\). By Proposition~\ref{prop-main-reduced} applied to \(g\), there exists a sequence \((u_n)_{n\in \N}\subset W^{1,\infty}_{\varphi}(\Omega)\)  such that $ u_n \to u$   as $n \to \infty$ in $L^{1}(\Omega)$, $\sup_n\|u_n\|_{L^\infty}<\infty$ and
\begin{equation}\label{eq-main-reduced}
\lim_{n\to \infty}\int_{\Omega} g(x,u_n, \nabla u_n)\dx=\int_{\Omega} g(x,u, \nabla u)\dx \,.
\end{equation} 
Since \(g(x,u_n(x), \nabla u_n(x))\geq h(\nabla u_n(x))\), 
the Dunford--Pettis theorem implies that, up to a subsequence, \((u_n)_{n\in \N}\) weakly converges to \(u\) in \(W^{1,1}(\Omega)\). By the weak lower semicontinuity of the functionals in \(W^{1,1}(\Omega)\), we know that
\[
\liminf_{n\to \infty}\int_{\Omega}g_0(x,u_n,\nabla u_n)\dx  \geq \int_{\Omega}g_0(x,u,\nabla u)\dx \,, \quad \liminf_{n\to \infty}\int_{\Omega}h(\nabla u_n)\dx  \geq \int_{\Omega}h(\nabla u)\dx
\]
and 
\begin{equation}\label{eq2607}
\liminf_{n\to \infty}\int_{\Omega}\sqrt{1+|\nabla u_n|^2}\dx  \geq \int_{\Omega}\sqrt{1+|\nabla u|^2}\dx \,.
\end{equation}
In view of \eqref{eq-main-reduced}, this implies that
\[
\lim_{n\to \infty}\int_{\Omega}g_0(x,u_n,\nabla u_n)\dx  = \int_{\Omega}g_0(x,u,\nabla u)\dx\, , \quad \lim_{n\to \infty}\int_{\Omega}\sqrt{1+|\nabla u_n|^2}\dx  = \int_{\Omega}\sqrt{1+|\nabla u|^2}\dx\, .
\]
The last equality above and the  weak convergence of \((u_n)_{n\in \N}\) in \(W^{1,1}(\Omega)\) then imply the strong convergence in \(W^{1,1}(\Omega)\), see e.g. \cite[Section~1.3.4, Proposition~1]{GMS-2}. Since \(g_0\) is non-negative, it follows from the first equality that \((g_0(\cdot,u_n, \nabla u_n))_{n\in\N}\) converges to \(g_0(\cdot, u, \nabla u)\) in \(L^{1}(\Omega)\).
 We note that
\[
f(x,u_n(x), \nabla u_n(x))\leq g_0(x,u_n(x),\nabla u_n(x))+f(x,u_n(x),0)\leq g_0(x,u_n(x), \nabla u_n(x))+\max_{|t|\leq \sup_n\|u_n\|_{L^\infty}}f(x,t,0)\,, 
\]
and that the last term in the right-hand side is bounded by  Lemma~\ref{lem-bdd-f}. The Vitali convergence theorem thus implies
\[
\lim_{n\to \infty}\int_{\Omega}f(x,u_n(x), \nabla u_n(x))\dx =\int_{\Omega}f(x,u(x), \nabla u(x))\dx \,.
\]
This completes the proof when \(p=1\). 

When \(p>1\), we replace the term \(h(\xi)+\sqrt{1+|\xi|^2}-1\) by \(|\xi|^p\); that is, we introduce the function:
\[
\overline{g}(x,t,\xi)=f(x,t,\xi)+|\xi|^p\,,
\]
and apply the above arguments to \(\overline{g}\) instead of \(g\). Observe in particular that since \(u\in W^{1,p}(\Omega)\), we still have \(\int_{\Omega}\overline{g}(x,u,\nabla u)\dx<\infty\). We thus obtain an approximating sequence \((u_n)_{n\in \N}\) which satisfies \eqref{eq-main-reduced} with \(\overline{g}\) instead of \(g\).  By \(p\)-coercivity of \(\overline{g}\), this implies that  \((u_n)_{n\in\N}\) weakly converges to \(u\) in \(W^{1,p}(\Omega)\). By the  above lower semicontinuity argument which leads to \eqref{eq2607}, we deduce that 
\[
\lim_{n\to +\infty}\int_{\Omega}|\nabla u_n|^p\dx = \int_{\Omega}|\nabla u|^p\dx. 
\] 
By uniform convexity of the \(L^{p}\) norm, it follows that \((u_n)_{n\in\N}\) strongly converges to \(u\) in \(W^{1,p}(\Omega)\).
The end of  the proof can now be repeated without changes, which completes the case \(p>1\).
\end{proof} 

\subsection{Proofs of Theorems~\ref{theo:doubling} and~\ref{theo:iso-ortho}}
\label{ssec:proofs_rest}

Having proven Theorem~\ref{th-main-conv}, we may now infer Theorems~\ref{theo:doubling} and~\ref{theo:iso-ortho}.

\begin{proof}[Proof of Theorem~\ref{theo:doubling}] 
It is enough to show that our regime implies \eqref{eqHZconv}. Indeed, once we show \eqref{eqHZconv} the result holds due to Theorem~\ref{th-main-conv}. 

Fix $L=(L_1,L_2)\in (0,\infty)^2$ and $t\in [-L_1,L_1]$. Without loss of generality, we can assume that the corresponding constant \(C_L\) in \eqref{eqHZD2} is not lower than $1$. Since \(f(x,t,0)=0\), the convexity of $f$ implies that 
    \begin{equation}\label{eq:delta2_conv}f_{B(x,\ve)}^{-}(t,\xi)\leq {L_2}{\ve^{-N}} \implies
f(x,t,\tfrac{1}{C_L}\xi) \leq f_{B(x,\ve)}^{-}(t,\xi)+1\,.\end{equation}
    We apply \cite[Theorem~1.2]{ha-aniso} to get \[(f_{B(x,\ve)}^{-}(t,\cdot))^{**}(\xi)
\leq {L_2}{\ve^{-N}} \implies
f(x,t,\tfrac{1}{\wt C_L}\xi) \leq  (f_{B(x,\ve)}^{-}(t,\cdot))^{**}(\xi)+1\,,\] for some $\wt C_L>1$. Since $f(x,t,\cdot)$ satisfies the $\Delta_2$ condition, there exists $\overline{C}_L>1$ such that
\begin{equation}\label{eq:delta2_delta2}(f_{B(x,\ve)}^{-}(t,\cdot))^{**}(\xi)
\leq {L_2}{\ve^{-N}} \implies
f(x,t,\xi) \leq \overline{C}_L[  (f_{B(x,\ve)}^{-}(t,\cdot))^{**}(\xi)+1]\,,\end{equation} which implies \eqref{eqHZconv}.

\end{proof}

\begin{proof}[Proof of Theorem~\ref{theo:iso-ortho}]
We simply observe that by Theorem~\ref{theo:final}, $f$ satisfies condition~\eqref{eqHZconv}, in the isotropic as well as in the orthotropic case. Therefore, Theorem~\ref{th-main-conv} gives the conclusion.

\end{proof}

\section*{Appendix}
\renewcommand\thesection{\Alph{section}}
\renewcommand{\theHsection}{\Alph{section}}
\setcounter{section}{0}
\setcounter{proposition}{0}

\section{Auxiliaries}\label{sec:ex}
\begin{proposition}\label{prop:comp}
    Let  \(\Omega\) be a bounded Lipschitz open set in \(\rn\). Let $f : \Omega \times \R \times \rn \to [0, \infty)$ be a Carath\'eodory function such that there exists a function $g : \Omega \times \R \times \rn \to [0, \infty)$ satisfying all the assumptions of Theorem~\ref{th-main-conv} with some $p \geq 1$. Assume that there exist a constant $C > 0$ and a non-negative function $\vartheta \in L^1(\Omega;\R)$ such that~\eqref{eq:ass} holds true.
    %
    %
    Let \(\vp:\rn\to \R\) be Lipschitz continuous. Then, for every \(u\in W^{1,p}_{\vp}(\Omega)\) such that $\int_{\Omega} f(x, u, \nabla u)\dx < \infty$, there exists a sequence \((u_{n})_{n\in\N}\subset W^{1,\infty}_\vp(\Omega)\)  such that  $u_n \to u$ in $W^{1, p}(\Omega)$ as $n \to \infty$ and
\begin{equation*}
    \lim_{n\to \infty} \int_{\Omega} f(x, u_n, \nabla u_n)\dx = \int_{\Omega} f(x, u, \nabla u)\dx \,.
\end{equation*}
\end{proposition}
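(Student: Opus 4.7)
The strategy is to transfer the approximating sequence provided by Theorem~\ref{th-main-conv} applied to $g$ over to $f$, and then promote the convergence of the $g$-energies to convergence of the $f$-energies via two applications of Fatou's lemma exploiting the two-sided bound~\eqref{eq:ass}.

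First, observe that from the lower bound $\tfrac{1}{C}g - \vartheta \leq f$ we obtain $g(x,t,\xi) \leq C\bigl(f(x,t,\xi) + \vartheta(x)\bigr)$, so that
\[
\int_{\Omega} g(x, u, \nabla u)\dx \leq C\Bigl(\int_{\Omega} f(x, u, \nabla u)\dx + \|\vartheta\|_{L^{1}(\Omega)}\Bigr) < \infty.
\]
Thus $u$ belongs to the $g$-energy space $\cE^{g}_\vp(\Omega) \cap W^{1,p}(\Omega)$ (with the natural meaning), and Theorem~\ref{th-main-conv} applied to $g$ produces a sequence $(u_n)_{n\in\N} \subset W^{1,\infty}_\vp(\Omega)$ with $u_n \to u$ in $W^{1,p}(\Omega)$ and $\int_\Omega g(x,u_n,\nabla u_n)\dx \to \int_\Omega g(x,u,\nabla u)\dx$. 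These $u_n$ will be our candidate approximating sequence; it remains to show convergence of the $f$-energies.

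Up to extraction of a subsequence (not relabeled), we may assume $u_n \to u$ and $\nabla u_n \to \nabla u$ a.e.\ in $\Omega$. Since $f$ is Carath\'eodory and $g$ likewise, we get $f(x,u_n,\nabla u_n) \to f(x,u,\nabla u)$ and $g(x,u_n,\nabla u_n) \to g(x,u,\nabla u)$ pointwise a.e. Now the key trick is to introduce the two non-negative integrands suggested by~\eqref{eq:ass}, namely
\[
\Psi_n(x) \coloneqq C\, g(x,u_n,\nabla u_n) + \vartheta(x) - f(x,u_n,\nabla u_n) \geq 0,
\qquad
\Phi_n(x) \coloneqq f(x,u_n,\nabla u_n) - \tfrac{1}{C}g(x,u_n,\nabla u_n) + \vartheta(x) \geq 0,
\]
together with their pointwise a.e.\ limits $\Psi$ and $\Phi$ obtained by replacing $u_n$ by $u$. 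Applying Fatou's lemma to $(\Psi_n)$ and using the already known convergence of the $g$-energies yields $\limsup_n \int_\Omega f(x,u_n,\nabla u_n)\dx \leq \int_\Omega f(x,u,\nabla u)\dx$; applying Fatou's lemma to $(\Phi_n)$ yields the symmetric inequality $\int_\Omega f(x,u,\nabla u)\dx \leq \liminf_n \int_\Omega f(x,u_n,\nabla u_n)\dx$. Hence the $f$-energies converge along this subsequence.

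Finally, to upgrade from a subsequence to the full original sequence one invokes the Urysohn subsequence principle: every subsequence of $(u_n)$ still converges to $u$ in $W^{1,p}(\Omega)$ and therefore admits, in turn, an a.e.\ convergent sub-subsequence along which the above Fatou argument applies verbatim and produces the same limit $\int_\Omega f(x,u,\nabla u)\dx$. Consequently the whole sequence $\bigl(\int_\Omega f(x,u_n,\nabla u_n)\dx\bigr)_n$ converges to $\int_\Omega f(x,u,\nabla u)\dx$, which completes the proof. No step is really delicate here; the only point requiring care is the sign bookkeeping in the two Fatou applications, which is why the non-negative decompositions $\Psi_n, \Phi_n$ are introduced explicitly rather than trying to apply a generalized dominated convergence theorem directly.
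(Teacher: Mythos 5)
Your proof is correct, and it reaches the same conclusion by a genuinely different route from the one in the paper. The initial reduction (showing $u$ lies in the $g$-energy space, invoking Theorem~\ref{th-main-conv} for $g$, extracting an a.e.\ convergent subsequence) is the same. Where you diverge is in how you convert convergence of the $g$-energies into convergence of the $f$-energies. The paper's argument goes: a.e.\ convergence $+$ convergence of $\int g$ $\Rightarrow$ $L^1$-convergence of $g(\cdot,u_{n_k},\nabla u_{n_k})$ by Brezis--Lieb; this gives uniform integrability via the Vitali convergence theorem; by the two-sided bound~\eqref{eq:ass} the family $f(\cdot,u_{n_k},\nabla u_{n_k})$ inherits uniform integrability; Vitali again then yields convergence of $\int f$. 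You instead apply Fatou directly to the two non-negative integrands $\Psi_n = Cg_n + \vartheta - f_n$ and $\Phi_n = f_n - \tfrac{1}{C}g_n + \vartheta$ furnished by~\eqref{eq:ass}, which is the classical generalized dominated convergence trick. Your route avoids both Brezis--Lieb and Vitali and is, if anything, slightly more self-contained. (One small point worth noting explicitly in your write-up: each $\int_\Omega f(x,u_n,\nabla u_n)\dx$ is finite — either because $u_n\in W^{1,\infty}(\Omega)$ and $f,g$ are locally bounded, or directly from $f\le Cg+\vartheta$ and $\int g(\cdot,u_n,\nabla u_n)\dx<\infty$ — so the cancellations in the Fatou inequalities are legitimate. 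This holds, but it is worth saying.) Your final Urysohn step upgrading the subsequence result to the full sequence is correct but not strictly necessary, since the proposition only asks for the existence of \emph{some} approximating sequence; the paper simply keeps the subsequence.
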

\begin{proof}
     Let us take any $u \in W^{1, p}_{\vp}(\Omega)$ such that $\int_{\Omega} f(x, u, \nabla u)\dx < \infty$. Then by~\eqref{eq:ass} we know that $\int_{\Omega} g(x, u, \nabla u)\dx \leq  \int_{\Omega} C(f(x, u, \nabla u) + \vartheta(x))\dx < \infty$. As $g$ satisfies assumptions of Theorem~\ref{th-main-conv}, we infer that there exists $(u_n)_n \subset W^{1, \infty}_{\vp}(\Omega)$ converging to $u$ in $W^{1, p}(\Omega)$, and such that
    \begin{equation}\label{eq:1}
         \lim_{n\to \infty} \int_{\Omega} g(x, u_n, \nabla u_n)\dx = \int_{\Omega} g(x, u, \nabla u)\dx \,.
    \end{equation}
    Since $(u_n)_n$ converges in $W^{1, p}(\Omega)$, we pick its subsequence $(u_{n_k})_{k}$ such that $u_{n_k}\to u$ and $\nabla u_{n_k}\to \nabla u$ for a.e. $x \in \Omega$. As $g$ is continuous with respect to its last two variables, we get that $g(x, u_{n_k}(x), \nabla u_{n_k}(x)) \to g(x, u(x), \nabla u(x))$ for a.e. $x \in \Omega$. This with~\eqref{eq:1} and the Brezis-Lieb lemma implies that $(g(\cdot, u_{n_k}, \nabla u_{n_k}))_{k}$ converges in $L^1$ to $g(\cdot, u, \nabla u)$. By the Vitali convergence theorem, $(g(\cdot, u_{n_k}, \nabla u_{n_k}))_k$ is uniformly integrable, and by~\eqref{eq:ass} we also get that $(f(\cdot, u_{n_k}, \nabla u_{n_k}))_k$ is uniformly integrable. As $f$ is continuous with respect to its last two variables, we have that $f(x, u_{n_k}(x), \nabla u_{n_k}(x)) \to f(x, u(x), \nabla u(x))$ for a.e. $x$. We finish the proof by noticing that the Vitali convergence theorem implies
    \begin{equation*}
        \lim_{k \to \infty} \int_{\Omega} f(x, u_{n_k}, \nabla u_{n_k})\dx = \int_{\Omega} f(x, u, \nabla u)\dx\,.
    \end{equation*}
\end{proof}
\begin{lemma}
    \label{lem-bdd-f}
Let $\Omega \subseteq \rn$ be a bounded open set, and let $f:\Omega\times \R\times \rn \to \rp$ be a Carath\'{e}odory function which   satisfies either \eqref{eqHZconv} or \eqref{eqHZiso}. Then
     for every  {$m > 0$}, there exists \(\constK_m>0\) such that for a.e. \(x\in \Omega\),
\begin{equation}\label{eq-lem-bdd-f} 
\max_{|(t,\xi)|\leq m} f(x,t,\xi) \leq \constK_m\,.
\end{equation}
\end{lemma}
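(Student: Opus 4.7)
The plan is to exploit the anti-jump condition at the specific (large) choice $\ve = \diam \Omega$. With this choice, the ball $B(x,\ve)$ covers $\Omega$ for every $x \in \Omega$, so the quantity $f^-_{B(x,\ve)}(t,\xi)$ collapses to the $x$-independent quantity $f^-_\Omega(t,\xi) \coloneqq \essinf_{y \in \Omega} f(y,t,\xi)$, as already observed in item (ii) following Theorem~\ref{th-main-conv}. A uniform bound on $f$ restricted to $|(t,\xi)| \leq m$ will then follow by picking $\mathcal{k}_2$ large enough that the hypothesis of \eqref{eqHZconv} or \eqref{eqHZiso} is triggered.

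First I would produce a uniform upper bound on $f^-_\Omega$ on the compact set $\{|(t,\xi)| \leq m\}$. Since $f$ is Carath\'eodory, almost every $y_0 \in \Omega$ has the property that $(t,\xi) \mapsto f(y_0,t,\xi)$ is continuous on $\R \times \rn$; fixing such a $y_0$, continuity on a compact set yields $K \coloneqq \sup_{|(t,\xi)| \leq m} f(y_0,t,\xi) < \infty$, and hence $f^-_\Omega(t,\xi) \leq f(y_0,t,\xi) \leq K$ on the same set. Since the greatest convex minorant is majorized by the function itself, we also have $(f^-_\Omega(t,\cdot))^{**}(\xi) \leq K$ there.

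Next, taking $\mathcal{k}_1 \coloneqq m$ and $\mathcal{k}_2 \coloneqq (K + m^{\max(p,N)})(\diam \Omega)^N$, for every $(t,\xi)$ with $|(t,\xi)| \leq m$ we get
\[
(f^-_\Omega(t,\cdot))^{**}(\xi) + |\xi|^{\max(p,N)} \leq K + m^{\max(p,N)} = \mathcal{k}_2/(\diam \Omega)^N,
\]
so the implication in \eqref{eqHZconv} applied at $\ve = \diam \Omega$ gives
\[
f(x,t,\xi) \leq \wt{\cC}_\mathcal{k}\bigl((f^-_\Omega(t,\cdot))^{**}(\xi) + 1\bigr) \leq \wt{\cC}_\mathcal{k}(K+1) \eqqcolon \constK_m
\]
for a.e.\ $x \in \Omega$. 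The case \eqref{eqHZiso} (where $f$ is isotropic) is handled identically, using $f^-_\Omega(t,|\xi|)$ in place of its convex envelope and $\cC_\mathcal{k}$ in place of $\wt{\cC}_\mathcal{k}$.

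The argument is essentially a careful choice of parameters and presents no major obstacle. The only slightly delicate step, which I would single out, is the extraction of the uniform bound $K$: a merely measurable-in-$x$ function need not be continuous at any prescribed point, but the Carath\'eodory hypothesis supplies a full-measure set of points $y_0$ at which joint continuity in $(t,\xi)$ upgrades the pointwise finiteness of $f^-_\Omega$ into a locally uniform bound. Everything else is bookkeeping.
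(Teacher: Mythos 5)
Your strategy is the same as the paper's: choose $\ve = \diam\Omega$ so that $B(x,\ve)\cap\Omega$ essentially equals $\Omega$ for every $x\in\Omega$, use the Carath\'eodory hypothesis to obtain a finite uniform bound on $f^-_\Omega\coloneqq f^-_{B(x,\diam\Omega)}$ over $\{|(t,\xi)|\leq m\}$, and trigger the implication in \eqref{eqHZconv} (resp.\ \eqref{eqHZiso}). However, the step ``hence $f^-_\Omega(t,\xi)\leq f(y_0,t,\xi)$'' contains a genuine gap. For each \emph{fixed} $(t,\xi)$, the inequality $f^-_\Omega(t,\xi)\leq f(y,t,\xi)$ holds only for $y$ outside a null set $N_{t,\xi}$ which depends on $(t,\xi)$, and the union $\bigcup_{|(t,\xi)|\leq m}N_{t,\xi}$ over the uncountable ball need not be null. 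So there may be no single $y_0$ passing your stated selection criterion (continuity of $f(y_0,\cdot,\cdot)$) for which the inequality holds for \emph{all} $(t,\xi)$ at once. Concretely, take $f(y,t,\xi)=|\xi|^2(1+a(y))$ with $a(y)=-\tfrac12$ for $y\in\Q\cap\Omega$ and $a(y)=0$ otherwise. Then $f$ is Carath\'eodory, nonnegative, convex in $\xi$, satisfies \eqref{eqHZconv} with $\wt\cC_\mathcal{k}=1$ (since $f(x,t,\xi)\leq|\xi|^2=(f^-_{B(x,\ve)}(t,\cdot))^{**}(\xi)$ always), yet $f^-_\Omega(t,\xi)=|\xi|^2$ while $f(y_0,t,\xi)=|\xi|^2/2<f^-_\Omega(t,\xi)$ for every rational $y_0$ --- all of which pass your continuity test, since in this example $f(y,\cdot,\cdot)$ is continuous for \emph{every} $y$.

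The paper sidesteps this by working with $A_m\coloneqq\essinf_{y\in\Omega}\max_{|(t,\xi)|\leq m}f(y,t,\xi)$ instead of the value at a single $y_0$. This quantity is finite, since the inner max is finite for a.e.\ $y$ by the Carath\'eodory hypothesis, and it uniformly dominates $f^-_\Omega$ on the ball: given $a>A_m$, the set $S_a\coloneqq\{y\in\Omega:\max_{|(t,\xi)|\leq m}f(y,t,\xi)<a\}$ has positive measure; for every $|(t,\xi)|\leq m$ one has $f(y,t,\xi)<a$ on $S_a$, so $\essinf_{y\in\Omega}f(y,t,\xi)\leq a$; letting $a\downarrow A_m$ yields $f^-_\Omega(t,\xi)\leq A_m$ for all $|(t,\xi)|\leq m$, hence also $(f^-_\Omega(t,\cdot))^{**}(\xi)\leq A_m$. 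Replacing your $K$ by $A_m$ makes the argument watertight; the remainder --- the choice $\mathcal{k}=(m,(A_m+m^{\max(p,N)})(\diam\Omega)^N)$ and the application of the anti-jump condition --- is exactly what you and the paper both do.
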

\begin{proof}
For a.e. \(y\in \Omega\), the function \((t,\xi)\mapsto f(y,t,\xi)\) is continuous. Let \(A_m\coloneqq\essinf_{y\in \Omega}\max_{|(t,\xi)|\leq m} f(y,t,\xi)\). 
Setting  {$\mathcal{k}=(\mathcal{k}_1, \mathcal{k}_2)$  with $\mathcal{k}_1=m$  and $\mathcal{k}_2=(A_m+m^{\max(p,N)})(\diam\Omega)^N$}, it follows either from \eqref{eqHZconv} or from \eqref{eqHZiso} with $\ve=\diam \Omega $ that there exists \(C_{\mathcal{k}}>0\) such that  for a.e. \(x\in \Omega\) and every \(|(t,\xi)|\leq m\),
\[
f(x,t,\xi)\leq C_{\mathcal{k}}(A_m+1)\,.\qedhere
\] 
\end{proof}
\begin{lemma}\label{lm-reduction-2-0}
Let \(f:\Omega\times \R\times \R^N\to [0,\infty)\) be a Carathéodory function convex with respect to the last variable and \(g(x,t,\xi)\coloneqq(f(x,t,\xi)-f(x,t,0))_+\). 
 Then map \(g\) satisfies~\eqref{eqHZconv}, if and only if \(f\)  satisfies~\eqref{eqHZconv}. Moreover,   \(g\) satisfies~\eqref{eqHZiso}, if and only if \(f\)  satisfies~\eqref{eqHZiso}.
\end{lemma}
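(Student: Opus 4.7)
The plan is to derive all four implications from the elementary sandwich inequalities
$$g(x,t,\xi) \leq f(x,t,\xi) \leq g(x,t,\xi) + f(x,t,0),$$
which are immediate from the definition of $g$: the right-hand inequality is an equality when $f(x,t,\xi) \geq f(x,t,0)$, and it is trivial otherwise. Note that $g$ is Carath\'eodory because $f$ is, and convex in $\xi$ as the pointwise maximum of the two convex-in-$\xi$ functions $f-f(\cdot,\cdot,0)$ and $0$; moreover $g(x,t,0)=0$. Thus $g$ is a legitimate input to \eqref{eqHZconv} and \eqref{eqHZiso}.

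The first step is to take the essential infimum over $y\in B = B(x,\ve)\cap\Omega$ in the sandwich and to set $A_B(t) \coloneqq \esssup_{y\in B} f(y,t,0)$; this yields
$$g^{-}_{B}(t,\xi) \leq f^{-}_{B}(t,\xi) \leq g^{-}_{B}(t,\xi) + A_B(t).$$
Since adding a constant (in $\xi$) commutes with the $(\cdot)^{**}$ operation and since $(\cdot)^{**}$ is monotone for the pointwise order, the same sandwich carries over to the greatest convex minorants:
$$\bigl(g^{-}_{B}(t,\cdot)\bigr)^{**}(\xi) \leq \bigl(f^{-}_{B}(t,\cdot)\bigr)^{**}(\xi) \leq \bigl(g^{-}_{B}(t,\cdot)\bigr)^{**}(\xi) + A_B(t).$$
These two chains are the engine of the whole argument.

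Fix $\mathcal{k}=(L_1,L_2)$. Lemma~\ref{lem-bdd-f} applied to $f$ produces a constant $K_{L_1}$ such that $A_B(t) \leq K_{L_1}$ for every ball $B$ and every $|t|\leq L_1$. Since the balance conditions only need to be tested for $\ve \in (0,\diam\Omega]$, one has $K_{L_1} \leq K_{L_1}(\diam\Omega)^N\ve^{-N}$, which lets one absorb $K_{L_1}$ into the right-hand side of the smallness hypothesis. Each direction of each equivalence then follows by the same three-step scheme: use the sandwich on the envelopes to convert the smallness hypothesis on one function into a (slightly dilated) smallness hypothesis on the other; apply the assumed balance condition to the latter to bound the corresponding function at $(x,t,\xi)$; and convert back via $g\leq f$ or $f\leq g+f(\cdot,\cdot,0)\leq g+K_{L_1}$. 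Each step inflates multiplicative and additive constants but keeps them finite, producing the constant required by the conclusion. The argument for \eqref{eqHZiso} is entirely parallel, with $f^{-}_{B}(t,|\xi|)$ in place of $\bigl(f^{-}_{B}(t,\cdot)\bigr)^{**}(\xi)$ throughout and monotonicity of the essinf replacing that of $(\cdot)^{**}$.

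The delicate point is the direction ``$g$ satisfies the condition $\Rightarrow$ $f$ does'': since $g(\cdot,t,0)=0$, the hypothesis on $g$ provides no direct control on $f(\cdot,t,0)$, so the bound $A_B(t)\leq K_{L_1}$ has to come from Lemma~\ref{lem-bdd-f} applied to $f$ itself. In the usages of the present lemma within this paper (cf. the proofs of Theorem~\ref{theo:final} and Theorem~\ref{th-main-conv}), the function $f$ is a priori assumed to satisfy one of the two balance conditions, so this bound on $f(\cdot,t,0)$ is available and the three-step scheme closes without any circularity.
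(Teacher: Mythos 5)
Your proof is correct and follows essentially the same route as the paper's: the sandwich $g \le f \le g + f(\cdot,\cdot,0)$, transported to essential infima and to greatest convex minorants, with the uniform bound on $f(\cdot,\cdot,0)$ supplied by Lemma~\ref{lem-bdd-f} and then absorbed into $\mathcal{k}_2$ via the restriction $\ve\le\diam\Omega$. Your closing remark on the direction ``$g\Rightarrow f$'' is well placed: the paper's proof invokes the very same bound on $f(\cdot,\cdot,0)$ there, through \eqref{eq712} (which was derived by applying Lemma~\ref{lem-bdd-f} to $f$), and, as you observe, in every application of this reduction lemma $f$ is already known to satisfy one of the two balance conditions, so that dependence is harmless.
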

\begin{proof} {Let us prove the result for~\eqref{eqHZconv}}. We observe that  \(g\) is a Carath\'{e}odory non-negative function which is convex with respect to the last variable.
Assume first that~\eqref{eqHZconv} holds for \(f\).
 Let  \(\mathcal{k}=(\mathcal{k}_1,\mathcal{k}_2)\in (0,\infty)^2\) and \(\ve\in (0,\diam\Omega]\). For  a.e. \(x\in \Omega\), for  every \(t\in (-\mathcal{k}_1, \mathcal{k}_1)\), \(\xi \in \R^N\),
\begin{equation}\label{eq712}
f(x,t,\xi)\leq g(x,t,\xi)+f(x,t,0)\leq g(x,t,\xi)+\max_{|t|\leq \mathcal{k}_1}f(x,t,0)\leq g(x,t,\xi)+\constK_{\mathcal{k}_1}, 
\end{equation}  
where the constant \(\constK_{\mathcal{k}_1}\) in the last inequality is given by Lemma~\ref{lem-bdd-f}.
Hence,
\begin{equation}\label{eq279}
(f_{B(x,\ve)}^{-}(t,\cdot))^{**}(\xi)\leq (g_{B(x,\ve)}^{-}(t,\cdot))^{**}(\xi)+\constK_{\mathcal{k}_1}\,.
\end{equation}
Assuming further that $(g_{B(x,\ve)}^{-}(t,\cdot))^{**}(\xi)  + |\xi|^{\max(p, N)}\leq \mathcal{k}_2\ve^{-N}$, we can apply \eqref{eqHZconv} to \(f\) with $\wt{\mathcal{k}}=(\mathcal{k}_1,\mathcal{k}_2+(\diam \Omega)^N\constK_{\mathcal{k}_1})$. We infer that there exists \(\cC_{\wt{\mathcal{k}}}\geq 1\)  such that
\begin{equation*}
f(x,t,\xi)\leq \cC_{\wt{\mathcal{k}}} ((f_{B(x,\ve)}^{-}(t,\cdot))^{**}(\xi)+1)\,.
\end{equation*}
Using that \(g(x,t,\xi)\leq f(x,t,\xi) \) in the left-hand side and \eqref{eq279} in the right-hand side,
we obtain
\[
g(x,t,\xi)\leq \cC_{\wt{\mathcal{k}}} ((g_{B(x,\ve)}^{-}(t,\cdot))^{**}(\xi)+\constK_{\mathcal{k}_1}+1)\;,
\]
which shows that $g$ satisfies \eqref{eqHZconv}. 
This completes the proof of the \emph{if} part of the statement.

Conversely, assume that \eqref{eqHZconv} holds for \(g\).
Let \(\mathcal{k}=(\mathcal{k}_1, \mathcal{k}_2)\in (0,\infty)^2\) and \(\ve\in (0, \diam\Omega]\). For a.e. \(x\in \Omega\) and every \(t\in (-\mathcal{k}_1, \mathcal{k}_1)\), if \(\xi \in \R^N\) satisfies \((f_{B(x,\ve)}^-(t,\cdot))^{**}(\xi)  {+ |\xi|^{\max(p, N)}}\leq \mathcal{k}_2 \ve^{-N}\), then since \(g\leq f\), one has
\begin{equation*}
\left(g_{B(x,\ve)}^-(t,\cdot)\right)^{**}  {+ |\xi|^{\max(p, N)}}\leq \left(f_{B(x,\ve)}^-(t,\cdot)\right)^{**}  {+ |\xi|^{\max(p, N)}}\leq \mathcal{k}_2 \ve^{-N}\,.
\end{equation*}
Using that \(g\) satisfies \eqref{eqHZconv}, we deduce that there exists \(\cC_{\mathcal{k}}\geq 1\) such that
\[
g(x,t,\xi)\leq \cC_{\mathcal{k}}\left(\left(g_{B(x,\ve)}^-(t,\cdot)\right)^{**}(\xi)+1\right)\leq  \cC_{\mathcal{k}}\left(\left(f_{B(x,\ve)}^-(t,\cdot)\right)^{**}(\xi)+1\right).
\]
We can conclude by \eqref{eq712} that \(f\) satisfies \eqref{eqHZconv}.  The proof for~\eqref{eqHZiso} is essentially the same and we omit it.
\end{proof}
\section{Examples}
We provide an example illustrating that condition~\eqref{eqHZconv}  and conditions~\eqref{eqHZiso},~\eqref{eqHZ-iso-easy} are essentially different when considered in the fully anisotropic setting.
\begin{example}\label{ex:counter}
Let $N = 2$ and let the function $f : B(0, 2) \times \R^2 \to [0, \infty)$ be defined as
    \begin{equation*}
        f(x, \xi) \coloneqq |\langle x, \xi \rangle|^4 + |\xi|\,.
    \end{equation*}
    Then $f$ is of at least linear growth with respect to the last variable and vanishes for \(\xi=0\). Moreover, for every $L \in (0, \infty)$, there exists a constant \(C_{L}>0\)  such that for every \(\ve>0\), for every $x\in B(0, 2)$ and $\xi \in \R^2$, it holds
    \begin{equation}\label{eq:f-cond}
        |\xi| \leq L\ve^{-1} \Longrightarrow f(x, \xi) \leq C_L\left(f(y, \xi) + 1\right)\,.
    \end{equation}
    However, $f$ does not satisfy the condition $(H^{\text{conv}})$ with $p = 1$.
\begin{proof}
    Note that whenever $|\xi| \leq L\ve^{-1}$, then for every $x, y$ such that $|x - y| < \ve$, we have
    \begin{align*}
        f(x, \xi) 
        \leq \left(|\langle y, \xi \rangle| + |\langle x - y, \xi \rangle| \right)^4 + |\xi|
        \leq \left(|\langle y, \xi \rangle| + L \right)^4 + |\xi| \leq 8|\langle y, \xi \rangle|^4 + 8L^4 + |\xi|
        \leq (8L^4 + 8)(f(y, \xi) + 1)\,.
    \end{align*}
    Hence, \eqref{eq:f-cond} holds true with $C_L = 8L^4 + 8$. Let us now fix any $\ve \in (0, 1)$, and take $x = (0, \ve^{1/2})$. We consider the ball $B(x, \ve)$. Observe that for any $a \in \R$, we have
    \begin{equation}\label{eq:f-inq}
        f\left((-\ve, \ve^{1/2}), (\ve^{-1/2}a, a)\right) = (1 + \ve^{-1})^{1/2}|a| = f\left((\ve, \ve^{1/2}), (-\ve^{-1/2}a, a)\right)\,.
    \end{equation}
    Let us denote $g \coloneqq (f^-_{B(x, \ve)})^{**}$. By  convexity of $g$, we now get for any $a \in \R$ that
    \begin{align}\label{eq:inq-g}
        g\left((0, a)\right) &\leq \tfrac{1}{2}g\left(\left(\ve^{-1/2}a, a\right)\right) + \tfrac{1}{2}g\left(\left(-\ve^{-1/2}a, a\right)\right)\nonumber\\
        &\leq \tfrac{1}{2}f\left((-\ve, \ve^{1/2}), (\ve^{-1/2}a, a)\right) + \tfrac{1}{2}f\left((\ve, \ve^{1/2}), (-\ve^{-1/2}a, a)\right)
        = (1 + \ve^{-1})^{1/2}|a| \leq 2\ve^{-1/2}|a|\,,
    \end{align}
    where in the equality we used~\eqref{eq:f-inq}. Let us now take $\xi = (0, \ve^{-1})$. By~\eqref{eq:inq-g}, we get
    \begin{equation}\label{eq:andecent}
        g(\xi) + |\xi|^2 \leq 2\ve^{-3/2} + \ve^{-2} \leq 3\ve^{-2}\,.
    \end{equation}
    On the other hand, again by~\eqref{eq:inq-g}, we get
    \begin{equation}\label{eq:precedent}
        f(x, \xi) = |\langle x, \xi \rangle|^4 + |\xi| = \ve^{-2} + \ve^{-1} \geq \tfrac{1}{2}\ve^{-1/2}\left( g(\xi) + 1 \right)\,. 
    \end{equation}
    The two inequalities ~\eqref{eq:andecent} and~\eqref{eq:precedent} imply that $f$ cannot satisfy \eqref{eqHZconv}.
\end{proof}
\end{example}
Let us infer the absence of the Lavrentiev gap for a generalized double phase functional of Orlicz non-doubling type.
{ \begin{example}\label{ex:f-o}
Let $ \psi_0,  \psi_1:\rp\to \rp$ be two increasing non-negative convex functions such that \(\psi_1/\psi_0\) is non-decreasing. Let $a \in \SP_{\omega(\cdot, \cdot)}\cap L^{\infty}(\Omega\times\R)$ with $\omega$ satisfying~\eqref{eq:omega-cond}. Then, the Lavrentiev phenomenon does not occur with the integrand $f_{\text{o}}(x, t, \xi) \coloneqq  \psi_0(|\xi|) + a(x, t) \psi_1(|\xi|)$. 
    
    \begin{proof} Let us take any $\cal{k_2} > 0$ and consider $x, t, \xi, \ve$ such that ${(f_{\text{o}})}^-_{B(x,\ve)}(t,\xi)+|\xi|^{N}
\leq {\cal{k}_2}{\ve^{-N}}$. We get that $|\xi| \leq \min\left( \psi_0^{-1}({\cal{k}}_2\ve^{-N}), {\cal{k}}_2^{1/N}\ve^{-1}\right)$. Fix any $y \in B(x, \ve)$ and consider the function $C(\cdot)$ coming from the definition of $\SP_{\omega(\cdot,\cdot)}$. If $\frac{\ve^{-N}}{ \psi_1( \psi_0^{-1}({\cal{k}}_2\ve^{-N}))} \leq \frac{ \psi_0({\cal{k}}_2^{1/N}\ve^{-1})}{ \psi_1({\cal{k}}_2^{1/N}\ve^{-1})}$, we use the inequality ${|\xi| \leq {\cal{k}}_2^{1/N}\ve^{-1}}$ and condition~\eqref{eq:omega-cond} to get
\begin{equation}\label{eq:help-omega-1}
    \frac{f_{\text{o}}(x, t, \xi)}{f_{\text{o}}(y, t, \xi)} \leq 1 + C(t) + {C(t)}\omega(\ve)\frac{ \psi_1({\cal{k}}_2^{1/N}\ve^{-1})}{ \psi_0({\cal{k}}_2^{1/N}\ve^{-1})} \leq 1 + C(t) + {C(t)}\wt C(\cal{k}_2)\,.
\end{equation}
On the other hand, if $\frac{\ve^{-N}}{ \psi_1( \psi_0^{-1}({\cal{k}}_2\ve^{-N}))} \geq \frac{ \psi_0({\cal{k}}_2^{1/N}\ve^{-1})}{ \psi_1({\cal{k}}_2^{1/N}\ve^{-1})}$, we use the inequality $|\xi| \leq  \psi_0^{-1}({\cal{k}}_2\ve^{-N})$ and condition~\eqref{eq:omega-cond} to get
\begin{equation}\label{eq:help-omega-2}
    \frac{f_{\text{o}}(x, t, \xi)}{f_{\text{o}}(y, t, \xi)} \leq 1 + C(t) + {C(t)}\omega(\ve)\frac{ \psi_1( \psi_0^{-1}({\cal{k}}_2\ve^{-N}))}{{\cal{k}}_2\ve^{-N}} \leq 1 + C(t) + {C(t)}\wt C(\cal{k}_2)\,.
\end{equation}
 From~\eqref{eq:help-omega-1} and~\eqref{eq:help-omega-2}, we deduce that $f_{\text{o}}$ satisfies~\eqref{eqHZiso} under the condition $a \in \SP_{\omega(\cdot,\cdot)}$ with $\omega$ satisfying~\eqref{eq:omega-cond}. As $0\leq f(x, t, 0) \leq \psi_0(0)+\|a\|_{L^{\infty}}\psi_2(0)$, Theorem~\ref{theo:iso-ortho} applies.
 \end{proof}
\end{example}
The example below illustrates how to exclude the Lavrentiev phenomenon in case when the Lagrangian is not convex with respect to the last variable, but is comparable to a convex one.
\begin{example}[Koch, Ruf, Sch\"affner~\cite{Lukas23}] \label{ex:Lukas}
    Let us define $f_{\text{D}} \coloneqq |\xi|^p + a(x, t)\exp(|\xi|^q)$ for any $p \geq 1, q > 0$. Assume that $a \in \SP_{\omega(\cdot, \cdot)} \cap L^{\infty}(\Omega\times\R)$ with $\omega(s) \leq \exp(-s^{-\vk})$, $\vk > q\min(1, N/p)$. Then, there is no Lavrentiev gap for the functional~\eqref{def-E-of-u} with the integrand $f_{\text{D}}$.
   
\begin{proof} In  view of Proposition~\ref{prop:comp} and Theorem~\ref{theo:iso-ortho}, we shall find a function $f_{\text{W}}$ satisfying the assumptions of Theorem~\ref{theo:iso-ortho}, and being comparable to $f_{\text{D}}$. Let $r_* \coloneqq q^{-1/q}$. 
We define $f_{\text{W}}$ as
    \begin{equation*}
        f_{\text{W}}(x, t, \xi) \coloneqq \begin{cases}
            |\xi|^p + a(x, t)\exp(r_*^q)r_*^{-1}|\xi|\,,&  \text{  for } |\xi| \in [0, r_*]\,,\\
            |\xi|^p + a(x, t)\exp(|\xi|^q)\,,&  \text{  otherwise}.
        \end{cases}
    \end{equation*}
    The convexity of $f_{\text{W}}$ with respect to $\xi$ follows from the fact that the function $r \mapsto \exp(r^q)$ is convex on \([r_*,\infty)\) and its derivative at \(r=r_*\) is equal to  $\exp(r_*^q)r_*^{-1}$. It is also clear that $f_{\text{W}} \leq f_{\text{D}} \leq f_{\text{W}} + \|a\|_{L^{\infty}}\exp(r^q_*)$
    and also that $f_{\text{W}}$ satisfies~\eqref{eq:f-origin}, as $f_{\text{W}}(x, t, 0) \leq \|a\|_{L^{\infty}}$. We now aim at showing that $f_{\text{W}}$ satisfies~\eqref{eqHZiso}. By similar computations to those in Example~\ref{ex:f-o}, we only have to prove that $\omega$ satisfies~\eqref{eq:omega-cond} with $$
    \psi_0(s)=s^p \qquad\text{and}\qquad \psi_1(s)=\begin{cases}
    \exp(r_*^q)r_{*}^{-1}s & \textrm{ if } s\leq r_*\,,\\ \exp(s^q) & \textrm{ if } s\geq r_*\,.
    \end{cases}
    $$
   We note that for any $L > 0$ and sufficiently small $s$, we have
    \begin{equation*}
        \omega(s)\frac{ \psi_1( \psi_0^{-1}(Ls^{-N}))}{s^{-N}} {\leq} s^N\exp(L^{q/p}s^{-Nq/p} - s^{-\vk})\,, \quad \omega(s)\frac{ \psi_1(L^{1/N}s^{-1})}{\psi_0(L^{1/N}s^{-1})}{\leq} L^{-p/N}s^{p}\exp(L^{q/N}s^{-q} - s^{-\vk})\,. 
    \end{equation*}
    As $\vk > q\min(1, N/p)$, at least one of the expressions above tends to $0$ as $s$ tends to $0$, and therefore,~\eqref{eq:omega-cond} is satisfied for small $s$. For larger $s$,~\eqref{eq:omega-cond} is clear as the left-hand side is bounded from above while the right-hand side is bounded from below by positive constants independent of \(s\).   
    Hence, the function $f_{\text{W}}$ satisfies \eqref{eqHZiso}. From Theorem~\ref{theo:iso-ortho} and Proposition~\ref{prop:comp}, we can now infer the absence of the Lavrentiev phenomenon for the functional $u \mapsto \int_{\Omega} f_{\text{D}}(x, u, \nabla u)\dx$. 
    \end{proof}
\end{example} 
The last example illustrates the exclusion of the Lavrentiev gap in the doubling fully anisotropic setting.
\begin{example}\label{ex:f-a}
    Let $\psi : [0, \infty) \to [0, \infty)$ be a convex and non-decreasing function such that $\psi \in \Delta_2$ and let $\upsilon : \rn \to \rn$ be $C^{0, \gamma}$ for some $\gamma \in (0, 1]$. Then, the Lavrentiev phenomenon does not occur with the integrand $f_{\text{a}}(x, \xi) \coloneqq \psi(|\langle \upsilon(x), \xi \rangle|) + |\xi|^{N/\gamma}$.
\end{example}
\begin{proof} We shall show that $f$ satisfies~\eqref{eqHZD2}. Let $C_{\psi}\geq 1$ be a doubling constant of $\psi$ and $C_\upsilon$ be a H\"older constant of $\upsilon$. Note that whenever $f^-_{B(x, \ve)}(\xi)\leq L\ve^{-N}$, we have $|\xi| \leq L^{\gamma/N}\ve^{-\gamma}$. Therefore, for every $y \in B(x, \ve)$, we have
    \begin{align*}
        f_{\text{a}}(x, \xi) &= \psi(|\langle \upsilon(x), \xi \rangle|) + |\xi|^{N/\gamma} \leq C_{\psi}\left(\psi(|\langle \upsilon(y), \xi\rangle|) + \psi(|\langle \upsilon(x) - \upsilon(y), \xi\rangle|){+1}\right) + |\xi|^{N/\gamma}\\
        &\leq C_{\psi}\left(\psi(|\langle \upsilon(y), \xi\rangle|) + \max_{s \leq C_\upsilon L^{\gamma/N}}\psi(s){+1}\right) + |\xi|^{N/\gamma} \leq C_{\psi} \left(1 + \max_{s \leq C_\upsilon L^{\gamma/N}}\psi(s)\right)(f(y, \xi) + 1)\,,
    \end{align*}
    which is~\eqref{eqHZD2} with $C_L = C_{\psi}\left(1 + \max_{s \leq C_\upsilon L^{\gamma/N}}\psi(s)\right)$. As additionally $f_{\text{a}}(x, 0) ={\psi(0)}$, Theorem~\ref{theo:doubling} applies.

\end{proof}}

\printbibliography
\end{document}